\DeclareSymbolFontAlphabet{\mathbb}{AMSb}
\DeclareSymbolFontAlphabet{\mathbbl}{bbold}
\DeclareMathSymbol{\bbepsilon}{\mathord}{bbold}{"0F}
\newtheorem{thm}{Theorem}[section]
\newtheorem{prop}[thm]{Proposition}
\newtheorem{lemma}[thm]{Lemma}
\newtheorem{definition}[thm]{Definition}
\theoremstyle{definition}
 \newtheorem{remark}[thm]{Remark}
 \newtheorem{example}[thm]{Example}
\newcommand{\R}{\mathbb{R}}
\newcommand{\Z}{\mathbb{Z}}
\newcommand{\DD}{\mathbb{D}}
\newcommand{\Rn}{\mathbb{R}^n}
\newcommand{\N}{\mathbb{N}}
\newcommand{\e}{\varepsilon}
\newcommand{\rd}{\mathrm{d}}
\newcommand{\cA}{\mathcal{A}}
\newcommand{\cE}{\mathcal{E}}
\newcommand{\cD}{\mathcal{D}}
\newcommand{\cK}{\mathcal{K}}
\newcommand{\cO}{G}
\newcommand{\cY}{\mathcal{Y}}
\newcommand{\scrS}{\mathscr{S}}
\newcommand{\la}{\left\langle}
\newcommand{\ra}{\right\rangle}
\newcommand{\cX}{\mathcal{Y}}
\newcommand{\eps}{\varepsilon}
\newcommand{\genus}{\mathcal{G}}
\newcommand{\mm}{\mathfrak{m}}
\def\ov#1{\overline{#1}}
\def\wh#1{\widehat{#1}}
\def\wt#1{\widetilde{#1}}
\newcommand{\rad}{\mathrm{rad}}
\DeclareMathOperator{\diver}{div}
\DeclareMathOperator{\supp}{supp}
\DeclareMathOperator{\dist}{dist}
\DeclareMathOperator{\Lip}{Lip}
\DeclareMathOperator{\Int}{Int}
\DeclareMathOperator{\lip}{Lip}
\newcommand{\disp}{\displaystyle}
\newcommand{\di}{\mathrm{d}}
\newcommand{\loc}{\mathrm{loc}}
 \newcommand{\be}{\begin{equation}}
 \newcommand{\ee}{\end{equation}}
 \newcommand{\bd}{\begin{description}}
 \newcommand{\ed}{\end{description}}
 \newcommand{\bdp}{\begin{displaymath}}
 \newcommand{\edp}{\end{displaymath}}
 \newcommand{\bea}{\begin{eqnarray}}
 \newcommand{\eea}{\end{eqnarray}}
 \newcommand{\brr}{\begin{eqnarray*}}
 \newcommand{\err}{\end{eqnarray*}}
\renewenvironment{proof}[1][\proofname]{\par
	\pushQED{\qed}%
	\normalfont \topsep6\p@\@plus6\p@\relax
	\trivlist
	\item\relax
	{\bfseries
		#1\@addpunct{.}}\hspace\labelsep\ignorespaces
}{%
	\popQED\endtrivlist\@endpefalse
}
\begin{document}
\title{
	Compactness via monotonicity in nonsmooth critical point theory, with application to Born-Infeld type equations}

%
%
%

\author{Jaeyoung Byeon \and Norihisa Ikoma \and Andrea Malchiodi \and Luciano Mari}
\date{\today}
\maketitle

{\scriptsize \begin{center} Department of Mathematical Sciences, KAIST, \\ 
291 Daehak-ro, Yuseong-gu, Daejeon 305-701, Republic of Korea\\
E-mail: byeon@kaist.ac.kr
\end{center}}

{\scriptsize \begin{center} 
Department of Mathematics, Faculty of Science and Technology, Keio University,\\ 
Yagami Campus: 3-14-1 Hiyoshi, Kohoku-ku, Yokohama, Kanagawa 2238522, Japan\\
E-mail: ikoma@math.keio.ac.jp
\end{center}}

{\scriptsize \begin{center}
Scuola Normale Superiore,\\
Piazza dei Cavalieri, 7, 56126 Pisa, Italy\\
E-mail: andrea.malchiodi@sns.it
\end{center}}

{\scriptsize \begin{center} Dipartimento di Matematica, Universit\`a degli Studi di Milano,\\
Via Saldini 50, 20123 Milano, Italy\\
E-mail: luciano.mari@unimi.it
\end{center}}

\begin{abstract}
In this paper, we prove new existence and multiplicity results for critical points of lower semicontinuous functionals 
in Banach spaces, complementing the nonsmooth critical point theory set forth by Szulkin and avoiding the need of the Palais--Smale condition. 
We apply our  abstract results to get entire solutions with finite energy to Born-Infeld type autonomous equations.
More precisely, under almost optimal conditions on the nonlinearity, 
we construct a positive solution and infinitely many solutions 
both in the classes of radially symmetric functions and nonradiallly symmetric ones. 
\end{abstract}

\smallskip 

\noindent
\textbf{MSC2020:} 
35A15; 58E05; 58E35; 35B38; 35J62.

%
%
%
%
%
%

\smallskip 

\noindent
\textbf{Keywords:} Nonsmooth critical point theory; Monotonicity trick; critical points; Born--Infeld equations; 
Existence and nonexistence of solutions.

\maketitle



\tableofcontents

\section {Introduction and main abstract results}
Let $(X,\| \cdot \|)$ be a Banach space and consider functionals of the type 
	\[
		I \ : \ X \to (-\infty, \infty], \qquad I (u) \equiv  \Psi(u) - \Phi(u),
	\]
	where we assume that $\Psi$ and $\Phi$ satisfy the following properties: 
	\begin{enumerate}
		\item [($\Psi_1$)] 
		$\Psi : X \to [0,\infty]$ is lower semicontinuous and convex, and $\Psi(0) = 0$;
		\item [($\Phi_1$)] $\Phi \in C^1(X,\R)$.
		\end{enumerate} 
We denote
	\[
	D(\Psi) \equiv \Set{ u \in X | \Psi(u) < \infty }. 
	\]
This kind of functionals arises, for instance, in some problems involving PDEs with contraints for admissible functions.
A typical example we are interested in is the following: 

	\[
		\Psi(u) \equiv \begin{dcases} 
			\int_{\R^n} 1 - \sqrt{1 - |Du|^2} &  \ \ \text{if} \ \|Du\|_{\infty} \leq 1, \\
			\infty & \ \ \text{otherwise},
		\end{dcases}
		\qquad \Phi(u) \equiv  \int_{\R^n} F(u),
	\]
which comes from the prescribed mean curvature problem for space-like hypersurfaces in
 the Lorentz-Minkowski space 
	\[
	\mathbb{L}^{n+1} \equiv \R \times \R^n \qquad \text{with metric} \qquad - \di x_0^2 + \sum_{j=1}^n \di x_j^2. 
	\]
In Lorentzian geometry, 
space-like hypersurfaces with prescribed mean curvature play a major role highlighted, for instance, 
in Marsden \& Tipler \cite{mt}. 
A space-like hypersurface whose mean curvature is a given function $f: \mathbb{L}^{n+1} \to \R$ is described, 
at least locally, by the graph $x_0 = u(x_1,\dots, x_n)$ of a solution $u : \Omega \to \R$ to 
\begin{equation} \label{BSbd}
	\diver \left(\frac{D u}{\sqrt{1-|D u|^2}}\right) + f(x,u) = 0 \qquad \text { in } \ \ \Omega \subset \R^n.
\end{equation}
Formally, \eqref{BSbd} is the Euler-Lagrange equation of the action 
\[
I(u) \equiv \int_\Omega 1 - \sqrt{1 - |Du|^2} - \int_\Omega F(x,u), \qquad 
F(x,s) \equiv \int_0^s f(x,t) \, \di t, 
\]
which is non-smooth where $|Du|=1$ (in the terminology of \cite{mt}, where the graph of $u$ goes null). 
Therefore, even assuming that critical points of $I$ (in a non-smooth sense) exist, 
they may not correspond 
to solutions to \eqref{BSbd}, a fact that makes the existence problem quite challenging.

Equation \eqref{BSbd} also appears in the framework of the Born-Infeld theory for electromagnetism \cite{borninfeld}, 
according to which the identity describes the interplay between an electrostatic potential $u$ and the charge density it generates, 
required, in this specific example, to be $f(x,u)$. 
If $f$ is independent of $u$, including the case of $f$ a mere Radon measure, 
in recent years a few authors investigated the existence and regularity properties of solutions to \eqref{BSbd}. 
The resulting picture is still fragmentary, and many interesting open problems are yet to be solved, see \cite{bpd,bimm,bia}. 
We stress that the Born-Infeld Lagrangian also appears in other branches of theoretical physics. 
For more details we recommend the survey \cite{yang} and the references therein.

	In a pioneering paper, Bartnik \& Simon \cite{BS82} studied the Dirichlet problem for \eqref{BSbd} in bounded domains 
with boundary data $\varphi \in C(\partial \Omega)$ satisfying a very mild space-like condition, 
and proved a general existence theorem when $f \in C ( \ov{\Omega} \times \R)$. 
A candidate solution is produced by minimizing $I$, in their setting a consequence of the direct method. 
A core part of their work is to show that the minimizer $u$ actually solves \eqref{BSbd} and enjoys nice regularity properties. 
Remarkably, this is achieved with no regularity requirement on $\partial \Omega$. 
Uniqueness of solutions holds if $f(x,s)$ is non-increasing in $s$ (since $I$ becomes strictly convex),
but may fail otherwise. 
Indeed, under suitable conditions on $f$, 
for $\varphi= 0$ 
Bereanu, Jebelean \& Mawhin \cite{BJM2014} obtained a second solution of mountain pass type to \eqref{BSbd}. 
To reach the goal, they exploited a general nonsmooth critical point theory developed by Szulkin \cite{S86}. 
Here the boundedness of $\Omega$ implies the uniform boundedness in $C(\overline{\Omega})$ of any admissible function $u$ with $\Vert Du\Vert_\infty \le 1$ attaining the boundary value, an essential fact to guarantee the validity of the Palais-Smale condition. 
Szulkin's nonsmooth critical point theory is quite versatile, 
and applies to various problems that do not allow for a treatment via the classical theory by Ambrosetti and Rabinowitz \cite{AR}.

	Our starting point for the present work was the search for solutions to \eqref{BSbd} 
in the autonomous case on the entire $\R^n$: 
\begin{equation} \label{nbi_intro} \tag{$\mathcal{BI}_f$}
	\diver \left(\frac{D u}{\sqrt{1-|D u|^2}}\right) + f(u) = 0 \qquad \text { in } \ \ \R^n. 
\end{equation}
We focus on solutions $u$ vanishing at infinity. 
The problem was already considered in the literature, see below for more details. 
In this setting, as we shall see, Szulkin's theory is not sufficient anymore. 
Seeking for existence results tailored to \eqref{nbi_intro} led us to complement Szulkin's theory 
in its general framework.

	Our goal is to prove two existence theorems for minimax critical points of $I$, valid under fairly general conditions for $\Psi$ and $\Phi$ 
that are suited to applications and might be somehow optimal. 
Differently from \cite{S86}, we only require a form of \emph{bounded} Palais-Smale condition. 
To find bounded Palais-Smale sequences (and then, critical points of $I$), 
our approach is based on the monotonicity trick due to Struwe \cite{St88}, Jeanjean \& Toland \cite{JT98, J99}, 
a tool widely used in the literature, see for instance \cite{JR20}. 
We adapt the trick to the family of functionals 
\[
I_\lambda \ : \ X \to \R, \qquad I_\lambda(u) \equiv \lambda\Psi(u) - \Phi(u)
\] 
for $\lambda \in \R^+$. 
However, to make it effective for the applications we are considering, 
the adaptation is far from trivial. See below for more explanations.

Following Szulkin \cite{S86}, we set

\begin{definition}\label{def:cri-PS}
	Let $X$ be a Banach space and $I_\lambda = \lambda\Psi - \Phi$, with $\Psi,\Phi$ satisfying $(\Psi_1),(\Phi_1)$ and $\lambda \in \R^+$.
	\begin{itemize}
		\item An element $u \in X$ is called a \emph{critical point} of $I_\lambda$ if
		\begin{equation} \label{cp} 
			\lambda (\Psi(v)-\Psi(u)) -\Phi^\prime(u)(v-u) \ge 0 \qquad \text{for all }v \in X.
		\end{equation}
		\item A sequence $\{u_j\}_{j=1}^\infty \subset X$ is called a \emph{Palais-Smale sequence for $I_\lambda$ at level $c \in \R$} if
		\begin{itemize}
			\item[{\rm (i)}] $I_\lambda(u_j) \to c$ as $j \to \infty$, 
			\item[{\rm (ii)}] there exists $\{\e_j\}_{j=1}^\infty \subset \R^+$ with $\e_j \to 0$ as $j \to \infty$ such that
			\begin{equation} \label{ps}
				\lambda (\Psi(v)-\Psi(u_j)) -\Phi^\prime(u_j)(v-u_j) \ge -\e_j\Vert v-u_j\Vert \qquad \text{for all } v \in X.
			\end{equation} 
		\end{itemize}
	\end{itemize}
\end{definition}

\begin{remark}
	By choosing $v \in D(\Psi)$, notice that any critical point of $I_\lambda$ belongs to $D(\Psi)$. Also, it is easy to see that if $u \in D(\Psi)$ is a critical point of $I_\lambda$ and $\Psi$ is differentiable at $u$, then $u$ is a classical critical point of $I_\lambda$ that is, $\lambda \Psi^\prime(u)- \Phi^\prime(u) = 0.$ More generally, \eqref{cp} can be rephrased as $\lambda^{-1} \Phi'(u) \in \partial \Psi(u)$, where $\partial \Psi(u)$ denotes the subdifferential of $\Psi$ at $u$. 
\end{remark}

To state our results, we need some further conditions on $\Psi$ and $\Phi$.

\begin{enumerate}
	\item[{\rm ($\Psi_2$)}] 
	$\Psi(u) \to \infty$ as $\| u \| \to \infty$. 
	\item[($\Psi_3$)]
	If $\{u_j\}_{j=1}^\infty \subset D(\Psi)$ and $u \in D(\Psi)$ satisfy $u_j \rightharpoonup u$ weakly in $X$ and $\Psi(u_j) \to \Psi(u)$, then $\Vert u_j -u \Vert \to 0$.

	\item  [($\Phi_2$)] 
	Any bounded sequence $\{u_l\}_l$ in $X$ has a weakly convergent subsequence $\{u_{l_j}\}_j$ to $u$ in $X$ 
	such that
	\[
	\liminf_{j \to \infty}\Phi^\prime(u_{l_j})(v-u_{l_j}) \ge \Phi^\prime(u)(v-u) \ \ \text{ for any }\ \ v \in X.
	\]

	\item [${\rm (IB)}$] For any $0< a \le b < \infty$ and $c \in \R,$
	\[ 
	\cK_{[a,b]}^c \equiv \left\{ u \in X \ | \ \text{for some }  \lambda \in [a,b], \text{$u$ is a critical point of $I_\lambda $ with $I_\lambda(u) \le c$} \right\}
	\]
	is bounded in $X$. 
\end{enumerate}	

\begin{remark}
	Observe that $(\Phi_2)$ is satisfied if $X$ is reflexive and $\Phi' : X \to X^*$ is compact; in this case, the liminf can be replaced by a limit, and equality holds. Notice however that $(\Phi_2)$ is a strictly weaker condition: for instance, $(\Phi_2)$ holds if $X$ is a Hilbert space and 
	$\Phi(u) = -\Vert u\Vert^2.$
	This improvement will be useful in our application to the Born-Infeld equation. 
\end{remark}

\begin{remark}
	As we shall see in Proposition \ref{IBtoIC}, properties $(\Psi_3)$ and $(\Phi_2)$ are assumed so that $I_\lambda$ satisfies a bounded Palais-Smale condition in a strengthened form, namely including a dependence on $\lambda$.
\end{remark}	

\begin{remark}
	We assume (IB) as a replacement for the boundedness of Palais-Smale sequences, a condition which may fail for general $I$. For large classes of interesting functionals arising from nonlinear elliptic problems, (IB) may be proved via Pohozaev type identities.
\end{remark}		
Our main achievements are the following minimax theorems. 
We first suppose that $ I_\lambda$ has a uniform mountain pass geometry for $\lambda$ close to $1$:

\begin{equation}\label{mp}
	\begin{dcases}
		&\text{there exist $\rho_0, \alpha_0, \ov{\e} >0$ and $u_0 \in X$ such that} 
		\\[0.3cm]
		& \inf_{\| u \| = \rho_0} I_{1 - \ov{\e}} (u) \equiv \alpha_0 > 0, \qquad \|u_0\| > \rho_0, \qquad \max \left\{ I_{1 + \ov{\e} } (0), \  I_{1 + \ov{\e}} ( u_0 ) \right\} \leq 0. 
		\tag{MP}
	\end{dcases}
\end{equation}
For $\lambda \in [1 - \ov{\e} , 1 + \ov{\e}],$ we define the mountain pass value 
\begin{equation} \label{mpv}
	\begin{array}{l}
		\disp c_0(\lambda) \equiv \inf_{\gamma \in \Gamma_0} \sup_{t \in [0,1]} I_\lambda (\gamma(t)), \qquad \text{where} \\[0.5cm]
		\disp \Gamma_0 \equiv \left\{ \gamma \in C([0,1] , X) \ | \ \gamma(0)  = 0, \ \gamma(1) = u_0 \right\}.
	\end{array}
\end{equation}
We shall prove later that $\Gamma_0 \neq \emptyset$ and $0 < \alpha_0 \leq c_0(\lambda) < \infty$ for each $\lambda \in [1-\ov{\e} , 1 + \ov{\e}]$, 
see Lemma \ref{lem:Gkck}. 
We also note that, since $\Psi \ge 0$, $c_0(\lambda)$ is nondecreasing in $\lambda$.
\begin{thm}\label{mpt}
	Assume that $(\Psi_1)$, $(\Psi_2)$, $(\Psi_3),$ $(\Phi_1)$, $(\Phi_2)$, \textup{(IB)} and \eqref{mp} hold. 
	Then $I=I_1$ admits a critical point $u \in X$ with $I(u) = c_0(1)$. 
\end{thm}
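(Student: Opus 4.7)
The plan is to combine the monotonicity trick of Struwe and Jeanjean with the bounded Palais–Smale condition ensured by $(\Psi_3),(\Phi_2)$, and then to pass to a limit using the uniform bound supplied by $(\text{IB})$. Concretely, on the interval $[1-\ov\e,1+\ov\e]$ the map $\lambda\mapsto c_0(\lambda)$ is well defined (once $\Gamma_0\neq\emptyset$ is checked as in Lemma \ref{lem:Gkck}) and nondecreasing because $\Psi\geq 0$, so it is differentiable almost everywhere. I would fix a sequence $\lambda_n\uparrow 1$ of differentiability points.

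For each such $\lambda_n$, the key step is to produce a \emph{bounded} Palais–Smale sequence $\{u_k^{(n)}\}_k$ for $I_{\lambda_n}$ at level $c_0(\lambda_n)$. The idea is the classical one: take almost optimal paths $\gamma_k\in\Gamma_0$ for $I_{\lambda_n}$; on points of $\gamma_k$ whose $I_{\lambda_n}$-value is close to $c_0(\lambda_n)$, the existence of $c_0'(\lambda_n)$ controls $\Psi$ from above (via the incremental ratios $[c_0(\lambda_n)-c_0(\lambda_n-h)]/h$ and $[c_0(\lambda_n+h)-c_0(\lambda_n)]/h$), and by $(\Psi_2)$ this yields a uniform norm bound. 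A deformation lemma tailored to $I_\lambda=\lambda\Psi-\Phi$ (established earlier in the paper for Szulkin-type functionals, avoiding full Palais–Smale) then converts these approximate minimizers into a bounded Palais–Smale sequence. Applying the strengthened bounded Palais–Smale condition of Proposition \ref{IBtoIC} (which follows from $(\Psi_3)$ and $(\Phi_2)$) produces a critical point $u_n$ of $I_{\lambda_n}$ with $I_{\lambda_n}(u_n)=c_0(\lambda_n)$.

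Now I pass to the limit $\lambda_n\to 1^-$. Since $\lambda_n\in[1-\ov\e,1]$ and $I_{\lambda_n}(u_n)\leq c_0(1)$, the family $\{u_n\}$ lies in $\cK_{[1-\ov\e,1]}^{c_0(1)}$, hence is bounded by $(\text{IB})$. By $(\Phi_2)$ I extract $u_{n_j}\rightharpoonup u$ weakly with
\[
\liminf_j \Phi'(u_{n_j})(v-u_{n_j})\geq \Phi'(u)(v-u)\qquad\text{for every }v\in X.
\]
Writing the critical point inequality for $u_{n_j}$ as $\lambda_{n_j}\Psi(v)\geq \lambda_{n_j}\Psi(u_{n_j})+\Phi'(u_{n_j})(v-u_{n_j})$ and taking $\liminf$, the left side tends to $\Psi(v)$, the weak lower semicontinuity of $\Psi$ in $(\Psi_1)$ gives $\liminf\Psi(u_{n_j})\geq\Psi(u)$, and $(\Phi_2)$ controls the last term; this yields
\[
\Psi(v)-\Psi(u)-\Phi'(u)(v-u)\geq 0\qquad\text{for all }v\in X,
\]
i.e.\ $u$ is a critical point of $I=I_1$. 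Testing the same inequality for $u_{n_j}$ with $v=u$ and combining $(\Phi_2)$ with weak lower semicontinuity gives $\limsup\Psi(u_{n_j})\leq\Psi(u)$, so $\Psi(u_{n_j})\to\Psi(u)$; then $(\Psi_3)$ upgrades this to strong convergence $u_{n_j}\to u$, whence $I(u)=\lim I_{\lambda_{n_j}}(u_{n_j})=\lim c_0(\lambda_{n_j})=c_0(1)$ (the last equality is the continuity of the monotone $c_0$ from the left at $1$, which is itself a byproduct of the argument since $c_0(1)\geq I(u)\geq \alpha_0$ and $I(u)=\lim c_0(\lambda_{n_j})\leq c_0(1)$).

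The main obstacle I anticipate is the second step: producing bounded Palais–Smale sequences at the mountain pass level of $I_{\lambda_n}$. The Szulkin framework does not give a directional subdifferential structure as convenient as in the $C^1$ case, so the deformation has to be carried out carefully along admissible convex directions; the differentiability of $c_0$ at $\lambda_n$ is what makes this possible, since without it the approximate minimizers on $\gamma_k$ could escape to infinity in $\Psi$-norm. Everything else (bounded PS extraction, $(\text{IB})$ bound, weak limit) is comparatively routine once the monotonicity trick is in place.
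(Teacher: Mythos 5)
Your overall architecture coincides with the paper's: a.e.\ differentiability of the monotone function $\lambda\mapsto c_0(\lambda)$, the monotonicity-trick deformation (Theorem \ref{amps}) to extract a bounded Palais--Smale sequence for $I_{\lambda}$, the bounded Palais--Smale condition from Proposition \ref{IBtoIC} (driven by $(\Psi_3)$, $(\Phi_2)$) to pass from the PS sequence to a critical point $u_\lambda$ with $I_\lambda(u_\lambda)=c_0(\lambda)$, and then $({\rm IB})$ together with Proposition \ref{IBtoIC} once more to pass to the limit in $\lambda$. Up to that point you have correctly reconstructed the machinery.

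The genuine gap is in the \emph{direction} from which you approach $\lambda=1$. You pick $\lambda_n\uparrow 1$, obtain critical points $u_n$ with $I_{\lambda_n}(u_n)=c_0(\lambda_n)$, and after strong convergence conclude $I(u)=\lim c_0(\lambda_{n_j})=c_0(1^-)$. To identify this with $c_0(1)$ you invoke ``continuity of $c_0$ from the left,'' claiming it is a byproduct because $c_0(1)\ge I(u)\ge\alpha_0$ and $I(u)=\lim c_0(\lambda_{n_j})\le c_0(1)$. But those inequalities only restate $\lim c_0(\lambda_{n_j})\le c_0(1)$; they do not close the potential gap $c_0(1^-)<c_0(1)$. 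In fact the paper is careful to prove (Proposition \ref{monot1}) only that $c_0$ is \emph{right}-continuous: fixing a quasi-optimal path $\gamma_\e$ for $I_\lambda$, one bounds $I_\mu$ along the \emph{same} path as $\mu\downarrow\lambda$, which works because $\sup_{[0,1]}\Psi(\gamma_\e)$ is finite for that fixed path. A left-continuity argument would need, for $\mu\uparrow\lambda$, quasi-optimal paths for $I_\mu$ whose $\Psi$-values are uniformly bounded as $\mu\uparrow\lambda$; nothing in the hypotheses guarantees this (indeed $({\rm IB})$ bounds critical points, not paths), and if $c_0$ does jump at $1$ then the $\Psi$-supremum along the near-optimal paths for $I_\mu$ must diverge as $\mu\uparrow 1$. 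The paper sidesteps this entirely by taking $\lambda_l\downarrow 1$ with $\lambda_l>1$: then right-continuity gives $c_0(\lambda_l)\to c_0(1)$ for free, and the final uniform bound is $\{u_l\}\subset\cK_{[1,1+\ov\e]}^{c_0(1+\ov\e)+1}$. To repair your argument, simply replace $\lambda_n\uparrow 1$ by a sequence of differentiability points $\lambda_n\downarrow 1$ in $(1,1+\ov\e)$ and invoke the right-continuity established in Proposition \ref{monot1}; the rest of your outline then goes through as written.
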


The second result concerns the multiplicity of critical points of $I$ when $I$ is even, so we assume 

\begin{equation}\label{even}
	\Psi(-u) = \Psi(u), \quad \Phi(-u) = \Phi(u) \quad \text{for any $u \in X$}.\tag{E}
\end{equation}
For $k \in \N$, we let $\DD^k(r)$ be the closed disk in $\R^k$ with center $0$ and radius $r$.  
The disk $\DD^k(1)$ is simply denoted  by $\DD^k.$
We assume that $I$ has the following uniform symmetric mountain pass geometry:
\begin{equation}\label{smp}
	\begin{dcases}
		&\text{there exist $\rho_0,\alpha_0,\ov{\e} >0$ and an odd map $\pi_{0,k} \in C(\partial \DD^k, X)$ for each $k\in \N$ such that } 
		\\[0.3cm]
		& \inf_{\| u \| = \rho_0} I_{1 - \ov{\e}} (u)\equiv \alpha_0 > 0, \qquad \min_{\zeta \in \partial \DD^k} \Vert \pi_{0,k} (\zeta) \Vert > \rho_0. 
		\qquad 
		\sup_{ \zeta \in \partial \DD^k} I_{1 + \overline{\e} } (\pi_{0,k} (\zeta)) < 0. 
		\tag{SMP}
	\end{dcases}
\end{equation}
For $\lambda \in [1 - \ov{\e} , 1 + \ov{\e}]$ and $k =1,2,\dots,$
we define the minimax values
\begin{equation}\label{smpv}
	\begin{array}{l}
		\disp c_{k}(\lambda) \equiv \inf_{\gamma \in \Gamma_k} \sup_{ \zeta \in \DD^k} I_\lambda (\gamma (\zeta)), \qquad \text{where}\\[0.5cm]
		\disp \Gamma_k \equiv \left\{ \gamma \in C(\DD^k, X) \ | \ \text{$\gamma$ is odd}, \ \gamma = \pi_{0,k} \ \text{on} \ \partial \DD^k \right\}.
	\end{array}
\end{equation}
Again, we shall prove that $\Gamma_k \neq \emptyset$ and that $\alpha_0 \leq c_k(\lambda) < \infty$ holds, see Lemma \ref{lem:Gkck}.

\begin{thm}\label{smt}
	Assume that $(\Psi_1)$, $(\Psi_2)$, $(\Psi_3),$ $(\Phi_1)$, $(\Phi_2)$, \textup{(IB)}, \eqref{even}  and  \eqref{smp} hold. 
	Then, $I=I_1$ admits infinitely many critical points $\{u_k\}_{k=1}^\infty \subset X$ with $I(u_k) = c_k(1) \to  \infty$ as $k \to \infty.$   
\end{thm}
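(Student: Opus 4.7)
The strategy is to run the monotonicity trick of Struwe--Jeanjean--Toland on the family $I_\lambda$ and combine it with an equivariant minimax argument in the Szulkin framework. Since $\Psi \ge 0$, the maps $\lambda \mapsto c_k(\lambda)$ are nondecreasing on $[1-\ov{\e}, 1+\ov{\e}]$, hence differentiable almost everywhere. Let $\Lambda$ denote the set of full measure of common differentiability points of all $c_k$. For each fixed $k$ and $\lambda \in \Lambda$, pick $\lambda_n \nearrow \lambda$ and near--optimal paths $\gamma_n \in \Gamma_k$ with $\sup_{\zeta \in \DD^k} I_{\lambda_n}(\gamma_n(\zeta)) \le c_k(\lambda_n) + (\lambda - \lambda_n)$. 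Using the identity $I_{\lambda_n} = I_\lambda - (\lambda-\lambda_n)\Psi$ and the a.e. differentiability of $c_k$, one deduces that $\Psi(\gamma_n(\zeta))$ stays uniformly bounded on the near--top sub-level set; this is the classical mechanism that turns $(c_k(\lambda_n) - c_k(\lambda))/(\lambda_n-\lambda)$ into an $L^\infty$ control on $\Psi$ along the path. Condition $(\Psi_2)$ then yields boundedness of $\gamma_n$ there, and a standard equivariant deformation argument (applied to $I_\lambda$ in the Szulkin sense, respecting oddness) produces a bounded Palais--Smale sequence for $I_\lambda$ at level $c_k(\lambda)$.

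Once we have a bounded PS sequence for $I_\lambda$ at $c_k(\lambda)$, Proposition~\ref{IBtoIC} (consequence of $(\Psi_3)$ and $(\Phi_2)$) upgrades it to strong convergence along a subsequence to a critical point $u_{k,\lambda}$ of $I_\lambda$. Thus, for each $k$, the set of $\lambda \in \Lambda \cap [1-\ov{\e},1]$ admitting a critical point $u_{k,\lambda}$ with $I_\lambda(u_{k,\lambda}) = c_k(\lambda)$ has full measure. Choose $\lambda_m \in \Lambda$ with $\lambda_m \nearrow 1$: by (IB), the family $\{u_{k,\lambda_m}\}_m$ is bounded in $X$, so by $(\Phi_2)$ extract a weakly convergent subsequence $u_{k,\lambda_m} \rightharpoonup u_k$. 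Passing to the $\liminf$ in the critical point inequality
\[
\lambda_m (\Psi(v) - \Psi(u_{k,\lambda_m})) - \Phi'(u_{k,\lambda_m})(v - u_{k,\lambda_m}) \ge 0
\]
and using lower semicontinuity of $\Psi$ together with $(\Phi_2)$ shows $u_k$ is a critical point of $I$. Testing with $v = u_k$ yields $\Psi(u_{k,\lambda_m}) \to \Psi(u_k)$, so by $(\Psi_3)$ the convergence is strong, giving $I(u_k) = \lim_m I_{\lambda_m}(u_{k,\lambda_m}) = \lim_m c_k(\lambda_m) = c_k(1)$ by left continuity (which, for a monotone function, holds at $\lambda = 1$ up to a countable exceptional set that we avoid by the same construction applied to a slightly smaller endpoint, or, alternatively, by a direct minimax path estimate using $\Gamma_k$).

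It remains to show the sequence $\{u_k\}$ contains infinitely many distinct critical points, which is guaranteed once we prove $c_k(1) \to \infty$. Suppose by contradiction that $c_k(1) \le c^\ast$ for all $k$. Because the intervals of $\lambda$ on which we can produce critical points are dense in $[1-\ov{\e},1]$, fix $\lambda$ close to $1$: by (IB) the set $\cK_{\{\lambda\}}^{c^\ast+1}$ is bounded, and it has finite Krasnoselskii genus (the standard genus estimate via $(\Psi_3)$, $(\Phi_2)$ giving compactness on bounded sets). An equivariant pseudo-gradient deformation for $I_\lambda$ at level $c_k(\lambda)$---built in the Szulkin framework by taking descent directions of the form $\lambda^{-1}\Phi'(u) - \partial\Psi(u)$---lowers odd maps on $\DD^k$ past the critical level as soon as $k$ exceeds the genus of the critical set, contradicting the definition of $c_k(\lambda)$ for sufficiently large $k$. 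Thus $c_k(1) \to \infty$, and in particular infinitely many of the $u_k$ are distinct. Finally, if $c_k(1) = c_{k+1}(1) = \cdots = c_{k+m}(1)$, the same symmetric deformation argument forces the critical set at that level to have genus at least $m+1$, hence to be uncountable, again producing infinitely many critical points.

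\textbf{Main obstacle.} The most delicate point is the construction of the \emph{equivariant} deformation in the nonsmooth setting at the exact level $c_k(\lambda)$ without assuming a global Palais--Smale condition: we must glue the monotonicity--trick mechanism (which only delivers bounded PS sequences for a.e. $\lambda$) with a symmetric pseudo-gradient flow compatible with $\partial \Psi$, and then show that the passage $\lambda_m \to 1$ preserves all the critical levels $c_k$ simultaneously. A secondary obstacle is ensuring left-continuity of each $c_k$ at $\lambda=1$, which must be handled by a direct comparison of minimax paths rather than appealing to differentiability, since $\lambda=1$ is a specific point that need not lie in $\Lambda$.
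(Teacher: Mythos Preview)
Your overall architecture---monotonicity trick on $\lambda \mapsto c_k(\lambda)$, bounded Palais--Smale sequences at differentiability points, compactness via Proposition~\ref{IBtoIC}, then a limit $\lambda \to 1$---is essentially the paper's. Two points deserve comment.

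\textbf{Direction of approach.} You take $\lambda_m \nearrow 1$, which forces you to confront left continuity of $c_k$ at the specific point $\lambda = 1$; you correctly flag this as an obstacle. The paper sidesteps it entirely by taking $\lambda_l \searrow 1$ with $\lambda_l > 1$: Proposition~\ref{monot1} proves that each $c_k$ is \emph{right} continuous on $[1-\ov{\e},1+\ov{\e})$, so $c_k(\lambda_l) \to c_k(1)$ is automatic. This is a small but clean fix you should adopt.

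\textbf{Divergence of $c_k(1)$: a genuine gap.} Your argument that $c_k(1) \to \infty$ invokes a ``standard equivariant pseudo-gradient deformation'' that would lower maps in $\Gamma_k$ past the level $c_k(\lambda)$ once $k$ exceeds the genus of the (compact) critical set. This is the classical argument of Ambrosetti--Rabinowitz and Szulkin, but it requires the \emph{full} Palais--Smale condition, not merely the bounded one. Under only $(\Psi_3)$, $(\Phi_2)$ and (IB) you know that bounded PS sequences subconverge and that the critical set in a strip is compact, but nothing prevents the existence of \emph{unbounded} PS sequences for $I_\lambda$ at the relevant levels. Near such sequences the usual pseudo-gradient construction breaks down, and Szulkin's deformation (his Proposition~2.3) cannot be applied globally. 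The paper's own Remarks~\ref{rem_compa_mainthm} and~\ref{rem:Ekeland} explicitly single this out as the central difficulty.

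The paper's resolution (Theorem~\ref{t:diverge}) is substantially more delicate than what you sketch. It introduces auxiliary genus-type minimax values $d_k(\lambda) \le c_k(\lambda)$, assumes $d_k(1)$ stays bounded, locates a $\wt{\lambda}_0$ where the limit $d_\infty$ has controlled derivative, and then runs an \emph{iterative} deformation scheme: at each step one applies the symmetric deformation Lemma~\ref{sydefolemma}---which holds without any PS assumption but only deforms \emph{relatively compact} sets---to a carefully chosen compact piece of a near-optimal element of $\Lambda_{k_\infty}$, removes a genus-controlled neighborhood of the compact critical set, and iterates. The contradiction comes from showing the resulting sequence of compact sets is Cauchy in Hausdorff distance while the cumulative deformation times must both converge and stay bounded below. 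Your one-line appeal to a global deformation does not capture this mechanism and would not go through as stated.
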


The proofs of Theorems \ref{mpt} and \ref{smt} will be given in Section \ref{sec:pf-main}. 
Both results crucially rely on the monotonicity trick in Theorem \ref{amps} below, 
for which we do not need properties $(\Psi_3),(\Phi_2), {\rm (IB)}$. 
Comments on Theorems \ref{mpt} and \ref{smt} and their proof, and comparison with the literature, can be found in Subsection \ref{subsec:cp}.

	In Section \ref{sec:App}, we apply Theorems \ref{mpt} and \ref{smt} to obtain nontrivial solutions to  \eqref{nbi_intro}. Here, it is worth pointing out that the equivalence between critical points of $I$ and (weak) solutions of \eqref{nbi_intro} is not immediate, and requires a regularity result for critical points of $I$ of independent interest. To keep the paper at a reasonable length, 
we only consider Born-Infeld type equations \eqref{nbi_intro}, 
even though our main abstract achievements may be used in other settings: as an example, 
we mention quasilinear elliptic problems including (Euclidean) mean curvature type ones \cite{PS04, JR20, DeGM94}, 
plastoelasticity problems \cite{brezissibony,cellina}, nonlinear obstacle problems \cite{DeGM94}, 
solutions to the Lorentz force equation \cite{abt1,abt2}, see also the references therein.

\subsection{Previous related works and novelties of our approach}
\label{subsec:cp}

Various critical point theories based on nonsmooth analysis (for which we refer to \cite{clarke}) arose in the past 40 years: 
for locally Lipschitz functionals we quote \cite{chang}, 
for lower semicontinuous functionals in Banach spaces we stress the already mentioned \cite{S86}, while for continuous functionals in metric spaces we highlight the theory developed independently by Corvellec, Degiovanni \& Marzocchi \cite{C99, CDeGM93, DeGM94} 
and by Katriel \cite{katriel} (see also \cite{joffe_schw} for the Banach space setting). 
As described in \cite[$\S 4$]{CDeGM93}, functionals $I \equiv \Psi -\Phi$ satisfying $(\Psi_1),(\Phi_1)$ fit within their framework. 
However, to get a critical point the authors need the Palais-Smale condition, so Theorems \ref{mpt} and \ref{smt} do not follow from these results.

Regarding the monotonicity trick for nonsmooth functionals, we mention the work by Squassina \cite{squa_2}. Especially, 
\cite[Theorem 3.1]{squa_2} should be compared to our Theorem \ref{amps} 
and in this respect the discussion in \cite[p.161--163]{squa_2} is informative. 
However, the functionals $I_\lambda$ in \cite{squa_2} are assumed to be continuous on the whole Banach space where they are defined, 
and this makes a difference with our results in obtaining a deformation lemma and a bounded Palais-Smale sequence. 

	Next, we highlight the differences between our paper and \cite{squa_2,S86} from the technical point of view. 
It is known that the Palais-Smale condition 
plays a key role in obtaining existence and multiplicity of critical points; 
see \cite[section 2]{AR} and \cite[section 9]{Ra86}. 
Since the condition is weakened in this paper, the assertions in Theorem \ref{smt} are highly nontrivial. 
In fact, even though \cite{squa_2} considers higher minimax values, the existence of infinitely many critical points is not obtained. 
In this respect, a new idea is necessary to prove that the minimax values diverge. 
We next point out differences from \cite{S86}. 
First, for our purposes we need to refine a deformation lemma in \cite[Lemmas 2.1, 2.2 and Proposition 2.3]{S86}; see Lemma \ref{l:defor-lem}. 
The second point regards Ekeland's variational principle, exploited in \cite{S86} to ensure the existence of Palais--Smale sequences: in our setting, to obtain bounded Palais-Smale sequences we need to consider two different functionals $I_\lambda$ and $I_{\lambda + \e}$, a fact that seems to prevent the use of Ekeland's principle. We thus introduce a new iteration argument for deformations. This method is also useful to prove the divergence of minimax values without the full Palais-Smale condition.

We next describe the issues concerning a variational approach to \eqref{nbi_intro}. 
If we consider \eqref{BSbd} in a bounded domain $\Omega$, suitably choosing the function space $X$ we may prove that the domain $D(\Psi)$ itself is bounded in $X$ 
for any nonlinearity $f \in C(\overline{\Omega} \times \R)$, which leads to the Palais-Smale condition. 
However, for \eqref{nbi_intro}, this is no longer true. 
The difficulty to obtain the boundedness of Palais--Smale sequences already occurs in the study of the scalar field equation
\begin{equation} \label{cfe} 
	\Delta u + f(u) = 0 \qquad \text{ in } \,  \R^n,
\end{equation}
where the nonlinearity $f$ satisfies the so called Berestycki-Lions conditions (which are almost optimal for the solvability of \eqref{cfe}). 
In \cite{BL-1, BL-2, BL-3} 
the existence of least energy solutions and minimax solutions to \eqref{cfe} was proved by constrained minimization and minimax methods. 
The arguments there strongly depend on the difference between the homogeneity in $\lambda$ of 
\[
\int_{\R^n} |D u|^2 \qquad \text{and of} \qquad \int_{\R^n}F(u) = \Phi(u)
\]
with respect to the scaling $u(x) \to u(\lambda x)$, which is essential to normalize a Lagrange multiplier. This is not applicable to problem \eqref{nbi_intro} 
since, under the natural scaling $u(x) \to \lambda^{-1}u(\lambda x)$ keeping the restriction $|D u| \le 1,$ $\Psi$ and $\Phi$ display almost the same homogeneity:
\[
\Psi(u_\lambda) = \lambda^{-n} \Psi(u ) \qquad \text{and} \qquad \Phi(u_\lambda  ) = \lambda^{-n} \Phi(\lambda^{-1} u).
\]
For this reason, in our setting, the use of a constrained problem seems more difficult. 
By the same reason, arguments based on the scaling developed in \cite{HIT10,HiTa19,Ik20} 
are not straightforward to apply to the functional corresponding to \eqref{nbi_intro}.

An approach to recover the existence theorems in \cite{BL-1,BL-2,BL-3} by means of unconstrained minimax methods 
was due to Struwe in \cite{St82}, see also \cite[section 11 of chapter II]{St08}. 
However, to verify a variant of the Palais-Smale condition, 
he still needed to introduce some constraint. 
It seems to us that even this method may hardly apply to  \eqref{nbi_intro}. 
From a different viewpoint, seeking to weaken the regularity of $-u + f(u)$ in \eqref{cfe} we mention \cite{ST80} and \cite{BCKM21}. 
There, solutions were constructed for nonlinearities of bounded variation and locally integrable, respectively.

In \cite{Azz1,Azz2,BCD12,MP23}, commented later on in more detail, 
radial solutions to \eqref{nbi_intro} were obtained by either employing the shooting method or approximations of the operators. 
However, for the above reasons, nonsmooth critical point theories were never applied to \eqref{nbi_intro} in the literature. 
To our knowledge, the present work is the first attempt to do so.

	Finally, we mention the work by Jeanjean \& Lu \cite{JR20} where 
they employed the monotonicity trick to obtain infinitely many solutions to \eqref{cfe}. 
To this end, in \cite[Section 2]{JR20}, the monotonicity trick is proved for higher minimax levels as in Theorem \ref{smt} and in \cite{squa_2}. However, as pointed out in \cite[Remark 2.2]{JR20}, the authors did not obtain the divergence of the minimax levels in an abstract setting, 
the property was later shown in \cite[Lemma 5.7]{JR20} by using the specific structure of \eqref{cfe} and a comparison functional (the same as in \cite{HIT10}). 
To the authors' knowledge, Theorem \ref{smt} seems to be the first abstract result in the literature (even for smooth functionals) to show the divergence of minimax values under general conditions not including the Palais-Smale's one.

\section{Application to the Born-Infeld equation}
\label{sec:App}
We exploit Theorems \ref{mpt} and \ref{smt} to investigate the existence of nontrivial solutions to
\begin{equation} \label{nbi} \tag{$\mathcal{BI}_f$}
\diver \left(\frac{D u}{\sqrt{1-|D u|^2}}\right) + f(u) = 0 \qquad \text { in } \ \ \R^n,
\end{equation} 
in dimension $n \ge 3$. 
In principle, Theorems \ref{mpt} and \ref{smt} could be applied to the case $n=2$, too. However, some technical details in the functional-analytic setting 
need a certain care, and for the sake of simplicity the case $n=2$ will be deferred to a future investigation.
	\begin{definition}\label{def_weaksol} 
	Given a Radon measure $\rho$ on $\R^n$, a function $u \in \lip(\R^n)$ is called a \emph{weak solution} to 
	\[
	\diver \left(\frac{D u}{\sqrt{1-|D u|^2}}\right) + \rho = 0 \qquad \text { in } \ \ \R^n
	\]
	if 
	\begin{equation}\label{eq_condi_energy}
	|D u | \leq 1 \ \ \text{ a.e.}, \qquad \frac{1}{\sqrt{1 - |D u|^2}} \in L^1_{\loc}(\R^n) ,
	\end{equation}
and 
	\begin{equation}\label{soleq}
		\int_{\R^n} \frac{D u\cdot D \eta }{\sqrt{1-|D u|^2}}   = \langle \rho, \eta \rangle  \qquad \text{for any } 
		\eta \in \lip_c(\R^n) \equiv \lip (\Rn) \cap C_c(\Rn),
	\end{equation}
where $\langle \rho, \eta \rangle$ stands for the duality pairing.
	\end{definition}
The definition does not make any assumption at infinity on $u$. However, we will restrict ourselves to solutions (with $\rho = f(u)$) satisfying 
	\begin{equation}\label{eq_vanishing}
	u(x) \to 0 \qquad \text{as } \, |x| \to \infty,
	\end{equation}
and among them solutions lying in the energy space
	\begin{equation}\label{eq:def-D12}
	\mathcal{D}^{1,2}(\R^n) \equiv \overline{C^\infty_c(\R^n)}^{\|\cdot\|}, \qquad \|\phi\| \equiv \left( \int_{\R^n} |D\phi|^2  \right)^{\frac{1}{2}}.
	\end{equation}
Formally, they are critical points of the functional
\begin{equation}\label{def_I}
I(u) \equiv \int_{\R^n} \left( 1-\sqrt{1-|Du|^2} - F(u)\right) \qquad \text{with } \, F(t) = \int_0^t f(s) \di s.
\end{equation}
However, the behavior of the Lagrangian density on $\{|Du| = 1\}$ prevents the differentiability of $I$ and calls for care. 
Nonetheless, in Propositions \ref{prop_criticalpoints_2} and \ref{prop_criticalpoints_1} below  
we prove that  a critical point $u$ of $I$ satisfying \eqref{eq_vanishing} is a weak solution of \eqref{nbi},
and that $u$ belongs to $W^{2,q}_\loc(\R^n)$ for each $q \in [2,\infty)$ (hence, to $C^{1,\alpha}_\loc(\R^n)$ for each $\alpha \in (0,1)$) 
and is \emph{strictly spacelike}, in the sense that $|Du|<1$ on $\R^n$. 
The regularity, whose proof needs some subtle arguments, also holds under much more general growth conditions on $u$ at infinity. 
We expect it will be a handy tool for future works on Born-Infeld equations. 

\vspace{0.2cm}	
	To the best of our knowledge, so far the existence problem for \eqref{nbi} was only considered when restricted to radially symmetric functions. In \cite{Azz2}, Azzollini studied positive solutions in the model case $f(t) = |t|^{p-2}t$, and showed that the problem   
\be  \label{eqnon1} 
\diver \left( \frac{D u}{\sqrt{1-|D u|^2}}\right)  + |u|^{p-2}u = 0 \ \   \textrm {  in  } \ \  \R^n, \qquad  \lim_{|x| \to \infty} u(x) = 0, \ 
\ee
\begin{itemize} 
\item[-]  has a radially symmetric positive solution in $\mathcal{D}^{1,2}(\Rn)$ and infinitely many positive solutions not in $\mathcal{D}^{1,2}(\Rn) $ for $p \in (2^*,\infty)$, where $2^* = \frac{2n}{n-2}$;
\item[-] does not have radially symmetric positive solutions if $p \in (1, 2^*)$.
\end{itemize}

On the other hand, for  $p \in (2^*,\infty)$ infinitely many radially symmetric solutions in $\mathcal{D}^{1,2}(\Rn)$ to \eqref{eqnon1} were constructed by Bonheure, De Coster \& Derlet in \cite{BCD12} using a constrained minimax argument. In the same paper, the authors posed the existence problem for $p = 2^*$ as an interesting question, see \cite[page 262]{BCD12}.

	Other nonlinearities have also been considered. A problem corresponding to the ``positive mass case" of \eqref{eqnon1}, namely,
\be  \label{eqnon2} 
\diver \left( \frac{D u}{\sqrt{1-|D u|^2}}\right)  - u +  |u|^{p-2}u = 0, \qquad \lim_{|x| \to \infty} u(x) = 0 
\ee
was studied in \cite{Azz1} and it was shown that \eqref{eqnon2} admits a radially symmetric positive solution in $H^1(\R^n)$ for each $p \in (1, \infty)$. Indeed, the solution therein was obtained by a shooting argument, for a class of locally Lipschitz functions much larger than $|u|^{p-2}u$. 
A general existence result for one nontrivial radial solution, closely related to our main Theorem \ref{teo_exist_zeromass_intro} below, was recently obtained by Mederski \& Pomponio \cite{MP23}, and will be commented in detail in Remark \ref{rem_compa_mainthm_BI}.

About the existence of solutions, we shall consider both radial and non-radial ones. To construct the latter, we restrict to $n$ and $d \in \N$ satisfying
\begin{equation}\label{nek_intro}
n \ge 4, \qquad 2 \le d \le \frac{n}{2}, \qquad \text{and} \quad n - 2d \neq 1.
\end{equation}
Write a point of $\R^n$ as $x = (x_1,x_2,x_3) \in \R^d \times \R^d \times \R^{n-2d}$ and consider the subspace of functions $u \in \mathcal{D}^{1,2}(\R^n)$ with the following symmetries:
\begin{equation}\label{eq_symmetries}
\begin{array}{ll}
u(Ax_1,Bx_2,Cx_3) = u(x_1,x_2,x_3) & \quad \text{for each } \, A,B \in {\rm O}(d), \ C \in {\rm O}(n-2d), \\[0.3cm]
u(x_2,x_1,x_3) = - u(x_1,x_2,x_3).
\end{array}
\end{equation}
Since Bartsch \& Willem \cite{BW2}, the above subspace has been extensively used (we refer to \cite{M21} and references therein). Notice that the only radial function therein is $u \equiv 0$, whence nontrivial solutions to \eqref{nbi} with the symmetries described in \eqref{eq_symmetries} must be non-radial. Given a mass parameter $\mm \ge 0$ and $p \in [2,2^*]$, we define
\[
\mathcal{D}^{1,2}_{\mm,p}(\R^n) \equiv \overline{C^\infty_c(\R^n)}^{\|\cdot\|_{\mathcal{D}^{1,2}_{\mm,p}}}, \qquad \|\phi\|_{\mathcal{D}^{1,2}_{\mm,p}} \equiv \sqrt{\|D\phi\|_2^2 + \mm \|\phi\|_p^2}.
\]
The exponent $p$ will be related to $\mm$ according to assumption (f1) below. If $\mm = 0$, then $\mathcal{D}^{1,2}_{0,p}(\R^n) = \mathcal{D}^{1,2}(\R^n)$. In this case, we agree to set $p = 2^*$. Likewise, by Sobolev's embedding, for $\mm>0$ and $p = 2^*$ the space $\mathcal{D}^{1,2}_{\mm,p}(\Rn)$ is $\mathcal{D}^{1,2}(\Rn)$ with an equivalent norm.

We are ready to state our main existence theorem:

\begin{thm}\label{teo_exist_zeromass_intro}
Assume that $n \ge 3$ and that $f \in C(\R)$, $\mm \ge 0$ satisfy 
	\begin{enumerate}[{\rm (f1)}]
		\item either 
		\[
		\begin{array}{lll}
		{\rm (f1_0)} & \mm = 0, & \disp -\infty <  \liminf_{s \to 0} \frac{f(s)}{|s|^{2^*-2}s}  \le \limsup_{s \to 0} \frac{f(s)}{|s|^{2^*-2}s} = 0, \qquad \text{or} \\[0.5cm]
		{\rm (f1_{\mm,p})} & \mm > 0, &  \disp -\infty <  \liminf_{s \to 0} \frac{f(s)}{|s|^{p-2}s}  \le \limsup_{s \to 0} \frac{f(s)}{|s|^{p-2}s} \equiv - \mm \qquad \text{for some } \, p \in [2,2^*];
		\end{array}
		\]
		\item  there exists $t_0 > 0$ such that $F(t_0) \equiv \int_0^{t_0}f(t)\di t > 0.$
	\end{enumerate}
Then, \eqref{nbi} admits a positive radial weak solution $u \in \mathcal{D}^{1,2}_{\mm,p}(\R^n)$ with $I(u) \in (0,\infty)$, where $I$ is defined in \eqref{def_I}.
Moreover, if $f$ is odd, then \eqref{nbi} admits 
\begin{itemize}
\item[-] infinitely many distinct radial weak solutions $\{u_k\}_k \subset \mathcal{D}^{1,2}_{\mm,p}(\R^n)$, possibly sign-changing, 
with $I(u_k) \in (0,\infty)$ and $I(u_k) \to \infty$ as $k \to \infty$; 
\item[-] infinitely many distinct nonradial, sign-changing weak solutions $\{u_k\}_k \subset \mathcal{D}^{1,2}_{\mm,p}(\R^n)$  satisfying \eqref{eq_symmetries} with $I(u_k) \in (0,\infty)$ and $I(u_k) \to \infty$ as $k \to \infty$, provided that $n,d$ satisfy \eqref{nek_intro}.
\end{itemize}
Finally, any weak solution $v \in \mathcal{D}^{1,2}_{\mm,p}(\R^n)$ to \eqref{nbi} satisfies $v\in W^{2,q}_\loc(\R^n)$ for each $q \in [2,\infty)$, and $\norm{Dv}_\infty <1$ in $\R^n$. 
\end{thm}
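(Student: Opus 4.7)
The strategy is to realize the solutions we seek as critical points of $I_\lambda = \lambda \Psi - \Phi$ on the space $X = \mathcal{D}^{1,2}_{\mm,p}(\R^n)$ with
\[
\Psi(u) \equiv \int_{\R^n}\Big(1 - \sqrt{1 - |Du|^2}\Big) \text{ if } |Du|\le 1 \text{ a.e., } +\infty \text{ otherwise,} \qquad \Phi(u) \equiv \int_{\R^n} F(u),
\]
and to apply Theorems \ref{mpt} and \ref{smt} in two symmetry subspaces: the radial subspace $X_{\rad}$, for the positive radial solution and the radial multiplicity statement, and the subspace $X_{d,n}\subset X$ of functions satisfying \eqref{eq_symmetries}, for the nonradial multiplicity statement. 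Since $\Psi$ and $\Phi$ are invariant under the acting groups, the principle of symmetric criticality transfers critical points from these subspaces back to $X$. The concluding regularity and strict-spacelikeness claim is invoked directly from Propositions \ref{prop_criticalpoints_1}--\ref{prop_criticalpoints_2}, once one observes that any $v\in \mathcal{D}^{1,2}_{\mm,p}(\R^n)$ satisfies \eqref{eq_vanishing}.

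\textbf{Modification of $f$ and verification of the abstract hypotheses.}
Because $f$ is only controlled near $0$, I would first replace it by a truncation $\tilde f$ of global subcritical growth ($|\tilde f(s)|\lesssim |s|^{p-1}$ in the positive-mass case, $|s|^{2^*-1}$ in the zero-mass case), chosen so that $(\textup{f}2)$ is retained; for the positivity statement I would additionally extend $\tilde f$ by $0$ on $(-\infty,0)$. Then $(\Psi_1)$ follows from convexity of $t\mapsto 1-\sqrt{1-t}$ on $[0,1]$ and Fatou's lemma; $(\Psi_2)$ from the bound $\Psi(u)\ge \tfrac12\int |Du|^2$ on $\{|Du|\le 1\}$ combined with Sobolev's inequality (if $\mm=0$) or the $L^p$-part of the norm (if $\mm>0$); and $(\Psi_3)$ from uniform convexity of the same integrand on the feasible set. $(\Phi_1)$ is routine given the controlled growth of $\tilde f$, while $(\Phi_2)$ is where symmetry enters: the compact embeddings $X_\rad\hookrightarrow L^p(\R^n)$ (Strauss) and $X_{d,n}\hookrightarrow L^p(\R^n)$ (Bartsch--Willem, for which \eqref{nek_intro} is needed) allow passage to the limit in $\Phi'(u_j)(v-u_j)$. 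The geometric assumptions \eqref{mp} and \eqref{smp} are obtained from $(\textup{f}2)$ via the scaling $v_t(x) = u(x/t)$: one has $\Psi(v_t)\le \tfrac{t^{n-2}}{2}\|Du\|_2^2$ while $\int F(v_t)=t^n\int F(u)$, so $I_{1+\ov{\e}}(v_t)\to -\infty$ as $t\to\infty$ whenever $\int F(u)>0$; assembling $k$ bump functions with disjoint supports in $X_\rad$ (or $X_{d,n}$) and scaling uniformly provides the odd embeddings $\pi_{0,k}$. Finally, $(\textup{IB})$ is secured by a Pohozaev identity: the regularity furnished by Propositions \ref{prop_criticalpoints_1}--\ref{prop_criticalpoints_2} legitimizes testing the equation with $x\cdot Dv$, and the resulting identity links $\int(1-\sqrt{1-|Dv|^2})$, $\int |Dv|^2(1-|Dv|^2)^{-1/2}$ and $\int F(v)$, yielding a uniform bound on $\|v\|_X$ for $v\in \cK_{[a,b]}^c$.

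\textbf{From abstract critical points to solutions of \eqref{nbi}.}
Theorems \ref{mpt} and \ref{smt} then produce critical points of $I_1$ (in the sense of \eqref{cp}) for the functional built from $\tilde f$, giving weak solutions of \eqref{nbi} with $\tilde f$ in place of $f$. An $L^\infty$-bound for such solutions, derived from the equation via Propositions \ref{prop_criticalpoints_1}--\ref{prop_criticalpoints_2}, together with the explicit shape of the truncation, will ensure $\tilde f(u) = f(u)$ on the range of $u$, so that $u$ solves the original equation. Positivity of the mountain pass solution follows by working with the one-sided truncation (vanishing on negatives): testing with $u^- \equiv \max(-u,0)$ gives $u\ge 0$, then the strong maximum principle, valid thanks to uniform ellipticity in the region $|Du|<1$, upgrades this to $u>0$. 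For the nonradial case, distinctness from the radial solutions is automatic, since the only radial element of $X_{d,n}$ is $0$, and the divergence $I(u_k)\to\infty$ is furnished by Theorem \ref{smt} directly.

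\textbf{Main obstacles.}
I expect three steps to be the most delicate. First, the compact embedding $X_{d,n}\hookrightarrow L^p(\R^n)$ under \eqref{nek_intro}, on which the verification of $(\Phi_2)$ in the nonradial setting hinges and which dictates the admissible range of $p$. Second, the establishment of the Pohozaev identity for $(\textup{IB})$, which presupposes the regularity propositions and must yield bounds that are uniform for $\lambda$ in a neighborhood of $1$; this is the chain along which the technical cost of the problem is concentrated. Third, the closure step from the modified to the original equation: one must ensure that the $L^\infty$-estimate places the solution inside the region where $\tilde f=f$, something that demands a careful choice of the truncation. Once these are in place, the rest of the argument is essentially the mechanical verification of the abstract hypotheses of Theorems \ref{mpt} and \ref{smt}.
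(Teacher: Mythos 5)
Your high-level scheme (instantiate $\Psi,\Phi$, verify $(\Psi_1)$--$(\Psi_3),(\Phi_1),(\Phi_2),\mathrm{(IB)}$, check \eqref{mp} and \eqref{smp} in the radial and $X_{d,n}$ subspaces, apply Theorems \ref{mpt}/\ref{smt} via symmetric criticality, then appeal to the regularity propositions) matches the paper. However, several specific steps as stated would not go through.

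First, your definition $\Psi(u)\equiv\int(1-\sqrt{1-|Du|^2})$, $\Phi(u)\equiv\int F(u)$ omits the mass split that the paper builds in, namely
\[
\Psi(u)=\int\Big(1-\sqrt{1-|Du|^2}+\tfrac{\mm}{2p}|u|^p\Big),\qquad
\Phi(u)=\int\Big(F(u)+\tfrac{\mm}{2p}|u|^p\Big).
\]
Without moving the $\mm|u|^p$ term into $\Psi$, the coercivity $(\Psi_2)$ fails when $\mm>0$: your $\Psi$ only sees $Du$, so it cannot control the $L^p$ component of $\|u\|_{\cD^{1,2}_{\mm,p}}$, and your remark that this is recovered ``from the $L^p$-part of the norm'' is not an argument. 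The split is also what makes the new nonlinearity $g(s)=f(s)+\tfrac{\mm}{2}|s|^{p-2}s$ satisfy $g(s)=o(|s|^{p-1})$ as $s\to 0$, which is used repeatedly when checking $(\Phi_2)$ and \eqref{mp}. Relatedly, your proposal to extend $\tilde f$ by $0$ on $(-\infty,0)$ for the positivity step is not admissible when $\mm>0$: the extension must be $-\mm|s|^{p-2}s$ on negatives so that $(\mathrm{f1}_{\mm,p})$ is preserved for the modified nonlinearity.

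Second, your claim that $(\Phi_2)$ is secured by compact embeddings $X_{\rad},X_{d,n}\hookrightarrow L^p(\R^n)$ is false in the borderline cases that the theorem precisely allows. For $\mm=0$ one has $p=2^*$, and neither $\cD^{1,2}_{\rad}\hookrightarrow L^{2^*}$ nor $(\cD^{1,2})_{H,\tau}\hookrightarrow L^{2^*}$ is compact; the same issue arises for $\mm>0$, $p=2^*$. This is exactly why the paper formulates $(\Phi_2)$ with a $\liminf$ rather than requiring compactness of $\Phi'$: it proves strong $L^q$ convergence only for $q\in(p,\infty)$ (Proposition \ref{prop:1}), splits $g(s)s=\wt g_+(s)-\wt g_-(s)$, uses Fatou for the $\wt g_-$-part and the $L^q$ compactness for the $\wt g_+$-part (Proposition \ref{prop:3}). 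Your route would need to be replaced by this argument.

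Third, you work in $\mathcal{D}^{1,2}_{\mm,p}(\R^n)$, whereas the paper works in the smaller space $\cX_{\mm,p}(\R^n)$ whose norm carries the extra term $\|Dv\|_{p_1}$ with $p_1>\max\{n,p\}$. This is not cosmetic: $\cX_{\mm,p}(\R^n)\hookrightarrow C_0(\R^n)$ (Lemma \ref{lem_reflexi_embe}) is what guarantees local uniform convergence of bounded sequences in $D(\Psi)$, what makes the verification of $(\Psi_3)$ via the series $1-\sqrt{1-t^2}=\sum b_k t^{2k}$ and uniform convexity of $L^{2k}$-norms work, and what renders $\Phi$ of class $C^1$ without any truncation of $f$. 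Which brings up a final point: your preliminary truncation of $f$ is unnecessary in the paper's framework (the $C_0$ embedding already bounds $u$ in $L^\infty$, so $(\mathrm{f1})$ suffices to define and differentiate $\Phi$), and carrying it through would create the very closure problem you flag at the end as your third main obstacle. The paper avoids that obstacle entirely by never truncating.
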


\begin{remark}\label{rem_compa_mainthm_BI}
One main novelty of Theorem \ref{teo_exist_zeromass_intro} is the first construction of non-radial solutions to \eqref{nbi}. Even in the radial case, however, the result is not contained in the previous literature, and indeed, under the only conditions ${\rm (f1)}, {\rm (f2)}$, to the best of our knowledge the existence of a radial solution in $\mathcal{D}^{1,2}(\R^n)$ was still unknown. The most general existence result we are aware of is \cite[Theorem 1.1]{MP23} by Mederski \& Pomponio, where the authors produce a nontrivial radial solution in dimension $n \ge 3$ for odd nonlinearities $f \in C(\R)$ satisfying (f2) and either 
\begin{itemize}
\item[(i)] assumption ${\rm (f1_{\mm,p})}$, for $\mm > 0$ and some $p \in [2,\infty)$, or
\item[(ii)] the following assumption:
\[
- \infty < \liminf_{s \to 0} \frac{f(s)}{|s|^{\gamma-1}} \le  \limsup_{s \to 0} \frac{f(s)}{|s|^{\gamma-1}} = 0 \qquad \text{for some } \, \gamma > 2^* 
\]
and, if $2^* < \gamma \le n$, 
\[
\limsup_{s \to \infty} \frac{f(s)}{|s|^{q^*-1}} = 0 \quad \text{for some } \, q \in \left( \frac{n\gamma}{n+\gamma}, n \right), \quad \text{where } \, q^* = \frac{nq}{n-q}.
\]
\end{itemize}
Notice that (ii) and (i) for $p > 2^*$ are stronger than ${\rm (f1_0)}$, while (i) with $p \in [2,2^*]$ corresponds to ${\rm (f1_{\mm,p})}$. 
\end{remark}

\begin{remark}
By appropriately defining $\Psi, \Phi$ in Theorem \ref{teo_exist_zeromass_intro} so that weak solutions $u$ to \eqref{nbi} correspond to critical points of $I = \Psi - \Phi$, the symmetry group $G$ of $u$ is chosen to ensure $(\Phi_2)$ once $\Phi$ is restricted to the subspace of $G$-invariant functions, and to guarantee that the corresponding restriction of $I_\lambda$ still has a uniform (symmetric) mountain pass geometry. The method should be flexible enough to produce other types of nontrivial solutions. 
\end{remark}

\begin{remark}
In \cite{vansha}, Van Schaftingen described an abstract framework to find $G$-invariant minimax critical points without restricting, a priori, the functional $I$ to a $G$-invariant subspace $X_G$ (see also \cite{squa_2}). One advantage of the method is that the minimax values are taken among maps valued in $X$, not only in $X_G$; hence, the approach in \cite{vansha} may help to establish whether the positive, radially symmetric solution produced in Theorem \ref{teo_exist_zeromass_intro} is a ground state, namely, a minimizer for $I$ among all nonzero solutions. The semilinear case was studied in \cite{jeanjean_tanaka}. 
\end{remark}

The following Pohozaev identity will be crucial to check (IB) for problem \eqref{nbi}.
\begin{prop} \label{poid}
	For $f \in C(\R)$, let $u$ be a weak solution to \eqref{nbi} satisfying 
	\begin{equation}\label{as:finite}
	\int_{\Rn} \qty( \frac{\abs{Du}^2}{\sqrt{1 - \abs{Du}^2}} + F(u) ) < \infty. 
	\end{equation}
	Then, $u \in W^{2,q}_\loc(\R^n)$ for each $q \in [2,\infty)$, $|Du|<1$ on $\R^n$ and 
	\begin{equation}\label{eq_poho_main}
	\int_{\Rn} 
	\frac{|D u|^2}{\sqrt{1 - |D u|^2 }} = n \int_{\Rn} \left( 1 - \sqrt{1 - |D u|^2} - F(u)\right) < \infty.
	\end{equation}
	\end{prop}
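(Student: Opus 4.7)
The plan is to establish \eqref{eq_poho_main} via the classical Pohozaev multiplier $x \cdot Du$, after first securing enough regularity for $u$ to make the calculation rigorous, and then handling the boundary terms at infinity along a carefully chosen subsequence of radii.

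For the regularity, I would apply Proposition \ref{prop_criticalpoints_1}(i). The finite-energy assumption \eqref{as:finite} forces $|Du|^2 \in L^1(\R^n)$ (since $|Du|^2 \le |Du|^2/\sqrt{1-|Du|^2}$), so $u$ belongs to $\mathcal{D}^{1,2}(\R^n)$ up to an additive constant; together with the uniform continuity of $u$ (which is $1$-Lipschitz) and the integrability of $F(u)$, one verifies the growth condition \eqref{eq_growth_infty}, e.g.\ with $h(x) = \sqrt{|x|}$ for large $|x|$. Proposition \ref{prop_criticalpoints_1} then yields $u \in W^{2,q}_\loc(\R^n)$ for every $q \in [2,\infty)$ and $|Du| < 1$ pointwise on $\R^n$; in particular the equation holds a.e.\ and $|Du|^2/\sqrt{1-|Du|^2}$ is locally bounded.

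Next I would multiply the equation by $x \cdot Du$ and integrate over $B_R \equiv B_R(0)$. Setting $\varphi \equiv 1 - \sqrt{1-|Du|^2}$, a short computation based on the pointwise identity
\[
\sum_{i} \frac{u_{x_i}}{\sqrt{1 - |Du|^2}} \bigl(u_{x_i} + \textstyle\sum_j x_j u_{x_i x_j} \bigr) = \frac{|Du|^2}{\sqrt{1-|Du|^2}} + x \cdot D\varphi,
\]
together with the divergence theorem identities $\int_{B_R} x \cdot D\varphi = R\int_{\partial B_R}\varphi - n\int_{B_R}\varphi$ and $\int_{B_R} f(u)(x\cdot Du) = R\int_{\partial B_R}F(u) - n\int_{B_R}F(u)$, and the boundary relation $(x\cdot Du)(Du\cdot \nu) = R(Du\cdot\nu)^2$ on $\partial B_R$, yields
\[
\int_{B_R} \frac{|Du|^2}{\sqrt{1-|Du|^2}} = n \int_{B_R}\bigl(\varphi - F(u)\bigr) + R\int_{\partial B_R}\biggl(F(u) - \varphi + \frac{(Du\cdot\nu)^2}{\sqrt{1-|Du|^2}}\biggr).
\]
The integration by parts is justified since $\sup_{B_R}|Du|<1$ by the regularity step.

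Finally, to let $R\to\infty$, I would note that $\varphi \le |Du|^2/\sqrt{1-|Du|^2}$ and $(Du\cdot\nu)^2/\sqrt{1-|Du|^2}\le |Du|^2/\sqrt{1-|Du|^2}$, so the first and third boundary integrands are majorized by $g(r) \equiv \int_{\partial B_r}|Du|^2/\sqrt{1-|Du|^2}$. By the coarea formula and \eqref{as:finite}, $g\in L^1(0,\infty)$; if $Rg(R)\ge c>0$ for all large $R$ then $g\notin L^1$, so $\liminf_{R\to\infty} Rg(R) = 0$. An analogous argument applies to $R\int_{\partial B_R}|F(u)|$. Selecting a common sequence $R_k\to\infty$ along which all three boundary terms vanish, and using dominated convergence for the bulk integrals, produces \eqref{eq_poho_main}. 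The main obstacle is precisely this boundary analysis: the density $|Du|^2/\sqrt{1-|Du|^2}$ need not decay pointwise (in principle $|Du|$ could approach $1$ along a sequence at infinity), so no naive limit as $R\to\infty$ exists and the averaging via coarea is essential.
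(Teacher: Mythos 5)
Your proposal is correct and follows essentially the same strategy as the paper: secure $W^{2,q}_\loc$ regularity and $|Du|<1$ via Proposition~\ref{prop_criticalpoints_1}, derive a Pohozaev identity on $B_R$, and pass to the limit along a subsequence of radii selected via the coarea formula so that the boundary contribution vanishes, with the key observation (which you correctly emphasize) that $|Du|^2/\sqrt{1-|Du|^2}$ need not decay pointwise so the averaging is essential. The only inconsequential differences are in execution: the paper checks the growth hypothesis \eqref{eq_growth_infty} via Morrey's embedding applied to $Du\in L^q$ for large $q$, whereas you use the $\mathcal{D}^{1,2}$ structure and decay of Lipschitz functions with $L^2$ gradient (both work, and your claimed reliance on the integrability of $F(u)$ for this step is superfluous); and the paper obtains the local Pohozaev identity by inserting a piecewise-affine cutoff into the weak formulation and letting the cutoff parameter go to zero, whereas you multiply the strong form of the equation (valid a.e.\ once regularity is established) by $x\cdot Du$ and integrate by parts directly.
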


 As an application of Proposition \ref{poid}, we rule out the existence of finite energy solutions for $f$ not supercritical. Hence, condition (f1) in Theorem \ref{teo_exist_zeromass_intro} is necessary for the existence of a solution to \eqref{nbi}. 
On the other hand, condition (f2) is needed to show the mini-max structure \eqref{mp} or \eqref{smp} of the energy functional, and we believe that it is necessary as well. In this respect, it is well known that (f2) is a necessary condition for the existence of a finite energy solution to \eqref{cfe}. 

\begin{prop} \label{nonexistence_intro}
	For $n \ge 3$ there exist no weak solutions $u \not\equiv 0$ to 
	\begin{equation}\label{eq_BI_up}
		\diver\left( \frac{D u}{\sqrt{1-|D u|^2}}\right) + |u|^{p-2}u  = 0  \qquad \text{  in  } \,  \R^n, \quad p \in \left[2, 2^*\right]
	\end{equation} 
	satisfying 
	\begin{equation}\label{eq_bounds_poho}
		\int_{\Rn} \left( |Du|^2 + |u|^{p} \right)  < \infty. 
	\end{equation}
	In particular, \eqref{eq_BI_up} for $p = 2^*$ does not admit any weak solution $u \in \mathcal{D}^{1,2}(\R^n)\setminus \{0\}$.
\end{prop}
\begin{remark}
	For $p = 2^*$, we thus answer in the negative the question raised in \cite[page 262]{BCD12} for solutions in $\mathcal{D}^{1,2}(\Rn)$. We expect that there exist no positive weak solutions to \eqref{eq_BI_up}  for $p \in (2,2^*)$ without any restriction, as shown in \cite{GiSp81} for the semilinear case $\Delta u + |u|^{p-2}u = 0$. We also believe that the critical case $p = 2^*$ does not admit any positive weak solutions to \eqref{eq_BI_up}. As a related problem, studying the behavior of mountain pass solutions to \eqref{eq_BI_up} as $p \searrow 2^\ast$ seems also a quite interesting issue.
\end{remark}

\subsection{Critical points of $I$ and weak solutions to \eqref{nbi}}	
	
We first discuss the regularity properties of weak solutions in Definition \ref{def_weaksol}. Given a Radon measure $\rho$ on $\R^n$, define the action $I_\rho$ on a bounded domain $\Omega \Subset \R^n$ as
\[
I_\rho^\Omega(\psi) \equiv \int_{\Omega} \Big( 1- \sqrt{1-|D\psi|^2}\Big)  - \langle \rho, \psi \rangle_\Omega,
\]
with $\langle \, , \, \rangle$ the duality pairing and $\langle \rho, \psi \rangle_\Omega =\langle \rho_{|\Omega}, \psi \rangle .$
For $\phi \in C(\partial \Omega)$, we consider the convex set 
\[
\cX_\phi(\Omega) \equiv \Set{ \psi \in W^{1,\infty}(\Omega) |  \abs{D\psi} \le 1, \ \psi = \phi \ \text{ on } \, \partial \Omega }.
\]
\begin{remark}\label{rem_defboundary}
If $\partial \Omega$ is not regular enough to achieve $\psi = \phi$ in the trace sense, the boundary condition has to be intended as in \cite{BS82}. 
However, by \cite[Proposition 3.5]{bimm} this definition suffices to guarantee that functions $u \in \cX_\phi(\Omega)$ can be extended continuously to $\partial \Omega$ with boundary datum $\phi$.
\end{remark}

We say that 
\begin{itemize}
\item[-] \emph{$u$ minimizes $I_\rho$ on $\Omega$ with boundary value $\phi \in C(\partial \Omega)$} if 
\[
I^\Omega_\rho(u) \le I^\Omega_\rho(\psi) \qquad \text{for each } \, \psi \in \cX_\phi(\Omega);
\] 
\item[-] \emph{$u$ is a local minimizer for $I_\rho$} if it minimizes $I^\Omega_\rho$ on each domain $\Omega \Subset \R^n$ with respect to the boundary value $\phi = u_{|_{\partial \Omega}}$. 
\end{itemize}

The regularity result we need is encoded in the next proposition of independent interest, proved in Appendix \ref{appe_1}. 
 
\begin{prop}\label{prop_criticalpoints_1}
Let $u \in \lip (\Rn)$ satisfy $\abs{Du} \leq 1$ a.e. on $\Rn$, and let $\rho \in L^\infty_\loc(\Rn)$. 
\begin{enumerate}[{\rm (i)}]
	\item 
	Suppose that there exist $c_1>0$ and a function $h : \Rn \to (0,\infty)$ such that
		\begin{equation}\label{eq_growth_infty}
	\begin{array}{l}
	\disp h(x) \to \infty \quad \text{as } \, |x| \to \infty, \qquad \lim_{|x|\to \infty} \frac{h(x)}{|x|} = 0, \\[0.5cm] 
	|u(x) |
	\le c_1 + |x| - h(x) \qquad \text{on } \, \Rn.
	\end{array}
		\end{equation}
		Then, the following are equivalent:
		\begin{itemize}
		\item[{\rm (a)}] $u$ is a weak solution to 
			\begin{equation}\label{eq:BIrho}
				\tag{$\mathcal{BI}_\rho$}
				\diver \left( \frac{Du}{\sqrt{1-|Du|^2}} \right) + \rho = 0 \quad \text{in} \ \Rn
			\end{equation}
			as stated in Definition \ref{def_weaksol}. 
		\item[{\rm (b)}] $u$ is a local minimizer for $I_\rho$.
	\end{itemize}
	Furthermore, if any between {\rm (a)} and {\rm (b)} occurs then $u \in W^{2,q}_\loc(\R^n)$ for each $q \in [2,\infty)$, and $|Du|<1$ on $\Rn$. Moreover, for each $R, c>0$ and $q \ge 2$ there exist $\hat R = \hat R(R,c_1,h)$ and $\delta = \delta(\hat R,c)$, $C_q=C(\hat R,c,q)$ with the following properties: if
	\[
	\|\rho\|_{L^\infty(B_{\hat R}(0))} \le c, 
	\]
then 
	\[
	\sup_{B_R(0)} |Du| \le 1-\delta, \qquad \|u\|_{W^{2,q}(B_R(0))} \le C_q.
	\]	
	\item 
	Assume that $\rho \in L^\infty(\Rn)$ and $u \in L^\infty (\Rn)$ is a weak solution to \eqref{eq:BIrho} with $\|\rho\|_\infty + \|u\|_\infty \le c$. Then, there exists $\delta = \delta(c) \in (0,1)$ such that $\|Du\|_\infty \le 1-\delta.$
\end{enumerate}
\end{prop}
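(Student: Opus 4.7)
The proof would split into two mostly independent pieces: the equivalence (a)$\,\Leftrightarrow\,$(b), which rests on the convexity of the Lagrangian density $L(p) = 1 - \sqrt{1-|p|^2}$; and the quantitative strict spacelike bound $\sup_{B_R(0)} |Du| \le 1 - \delta$, to be obtained via a barrier argument exploiting the growth condition \eqref{eq_growth_infty}. Once the spacelike bound is available, the equation becomes uniformly elliptic on $B_R(0)$ with Lipschitz principal part and bounded right-hand side, so the $W^{2,q}_\loc$ estimate follows by standard interior Calder\'on--Zygmund theory. Part (ii) would come from running the same barrier scheme, with the global bound $\|u\|_\infty \le c$ replacing \eqref{eq_growth_infty}.

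\textbf{Equivalence (a)$\,\Leftrightarrow\,$(b).} For (a)$\,\Rightarrow\,$(b), I would fix $\Omega \Subset \R^n$ and $\psi \in \cX_{u|_{\partial\Omega}}(\Omega)$, and then exploit the pointwise convexity of $L$ on $\{|Du| < 1\}$ (which, by the spacelike bound, is all of $\R^n$):
\[
L(D\psi) - L(Du) \;\ge\; \frac{Du\cdot (D\psi - Du)}{\sqrt{1 - |Du|^2}} \qquad \text{a.e. in } \Omega,
\]
so that integration and using $\psi - u \in \lip(\Omega) \cap C_0(\Omega)$ as a test function in \eqref{soleq} would yield $I_\rho^\Omega(\psi) \ge I_\rho^\Omega(u)$. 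For (b)$\,\Rightarrow\,$(a), given $\eta \in \lip_c(\R^n)$ with $\supp\eta \Subset \Omega$, I would consider the perturbations $\psi_t = u + t\eta$, which are admissible for $|t|$ small (since $|Du|<1$ on $\supp\eta$ by the spacelike estimate), and let $t \to 0^\pm$ in $t^{-1}\bigl(I_\rho^\Omega(\psi_t) - I_\rho^\Omega(u)\bigr) \ge 0$ to get \eqref{soleq} via dominated convergence. Both directions lean on the strict spacelike bound in an interior neighbourhood of the test set, which links this step to the barrier analysis below.

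\textbf{Barrier construction.} For the quantitative part of (i), I would fix $R, c > 0$, $x_0 \in B_R(0)$, and assume $\|\rho\|_{L^\infty(B_{\hat R}(0))} \le c$ for a large $\hat R$ to be chosen. The plan is to bracket $u - u(x_0)$ on $B_{\hat R - R}(x_0)$ between two radial barriers $w^\pm(y) = u(x_0) \pm \phi(|y - x_0|)$, where $\phi \in C^2([0, \hat R - R])$ is a smooth strictly spacelike (super/sub)solution of the radial Lorentzian mean curvature equation with forcing of size $c$ and prescribed boundary value at $\hat R - R$. The relevant ODE $\bigl(r^{n-1}\phi'/\sqrt{1-\phi'^2}\bigr)' = \pm c\,r^{n-1}$ can be integrated explicitly, producing $\phi$ with slope bound $\sup_r |\phi'(r)| \le 1 - \delta(\hat R, c)$ and $\phi(\hat R - R)$ as large as needed. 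The growth hypothesis \eqref{eq_growth_infty} is precisely what makes the boundary condition fit: since $|u(y)| \le c_1 + |y| - h(y)$ with $h \to \infty$ and $h(y)/|y| \to 0$, I would pick $\hat R = \hat R(R, c_1, h)$ large enough that the boundary oscillation $\sup_{\partial B_{\hat R-R}(x_0)} |u - u(x_0)|$ is dominated by $\phi(\hat R - R)$. Comparison would then force $w^- \le u \le w^+$ in $B_{\hat R - R}(x_0)$, whence $|Du(x_0)| \le \phi'(0^+) \le 1 - \delta$ uniformly in $x_0 \in B_R(0)$. Interior elliptic $L^q$ theory then closes the regularity part, and part (ii) follows from the same argument with $\hat R$ depending only on $c$ (the bound $|u(y) - u(x_0)| \le 2c$ replacing \eqref{eq_growth_infty}).

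\textbf{Main obstacle.} The hard part is precisely the barrier comparison: the Euler--Lagrange PDE loses ellipticity at $|p| = 1$, so the classical weak comparison principle for quasilinear equations does not apply directly. My plan would be to run the comparison at the level of the convex functional $I_\rho^\Omega$ using the minimizing characterization (b): if $w^+$ is a local minimizer for $I_\rho$ and $w^+ \ge u$ on $\partial B_{\hat R - R}(x_0)$, then testing with the lattice pair $(\min(u,w^+), \max(u,w^+))$ and exploiting the strict convexity of $L$ would force $w^+ \ge u$ inside. This is why the two pieces of the proof---the convex-analytic equivalence and the geometric barrier estimate---must be interlocked rather than run independently.
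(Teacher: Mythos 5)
Your proof of (a)$\,\Rightarrow\,$(b) matches the paper's: plug the extension of $u-\psi$ into the weak formulation and apply the convexity inequality \eqref{eq_cauchyschwarz} (the one caveat is that the a.e.\ inequality is justified by \eqref{eq_condi_energy} in the definition of weak solution, not by an already-established strict spacelike bound, so the phrase ``which, by the spacelike bound, is all of $\R^n$'' should be replaced by a reference to the definition). The rest, however, contains a genuine gap.

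\textbf{The barrier scheme fails.} The radial upper barrier you propose, $w^+(y) = u(x_0) + \phi(|y-x_0|)$ with $\phi(0)=0$, $\phi$ increasing and $\sup|\phi'| \le 1-\delta$, cannot be a supersolution of $\diver\bigl(Dw^+/\sqrt{1-|Dw^+|^2}\bigr) \le -c < 0$. Indeed, writing the radial form and integrating the supersolution inequality $\bigl(r^{n-1}\phi'/\sqrt{1-\phi'^2}\bigr)' \le -c\,r^{n-1}$ from $0$ to $r$ gives $r^{n-1}\phi'/\sqrt{1-\phi'^2} \le -cr^n/n < 0$, forcing $\phi' < 0$ — the opposite of what you need. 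This is a structural obstruction, not a technicality: a smooth, strictly spacelike radial supersolution touching $u$ from above at a strict interior minimum does not exist, so a pointwise maximum-principle comparison cannot produce the interior gradient bound. This reflects a well-known phenomenon: interior gradient estimates for the Lorentzian mean curvature operator (like for the Euclidean minimal surface equation) cannot be obtained by naive barriers; they require an integral argument. The hypothesis \eqref{eq_growth_infty} does \emph{not} enter the paper's argument as boundary data for a barrier; it enters solely to ensure that the Lorentzian balls $L_R = \{x : |x|^2 - (u(x)-u(0))^2 < R^2\}$ are relatively compact in $\R^n$, which is what the integral method needs.

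\textbf{What the paper actually does.} For (b)$\,\Rightarrow\,$(a) plus the quantitative regularity, the paper takes $\Omega = L_{4R}$, mollifies $\rho$ to $\rho_j$, and solves Dirichlet problems for the Lorentzian mean curvature equation on $\Omega$ with boundary data $t_j u$ (with $t_j \nearrow 1$ so the data is strictly spacelike), invoking the Bartnik--Simon existence theorem \cite{BS82}. These $u_j$ are smooth and strictly spacelike. Uniqueness of the minimizer of the convex functional, plus lower semicontinuity (via the power series of $1-\sqrt{1-t^2}$), identifies the limit of $u_j$ with $u$. The uniform interior bound $|Du_j| \le 1-\delta$ on inner Lorentzian balls comes from the Bartnik--Simon interior gradient estimate \cite[Lemma 2.1]{BS82}, which is a monotonicity/integral estimate, not a barrier; passing this to the limit gives the strict spacelike bound for $u$, dominated convergence gives \eqref{soleq} (hence (a)), and classical elliptic theory on the now uniformly elliptic equation gives the $W^{2,q}$ bound. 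Part (ii) is then a blow-up/compactness argument: assume $|Du_i(x_i)|\to 1$, translate, extract a limit, apply the same monotonicity formula and the non-existence of light segments (\cite[Theorem 3.2]{BS82}) to force the limit to be a global lightlike affine function, contradicting the $L^\infty$ bound.

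\textbf{On (b)$\,\Rightarrow\,$(a) via perturbations.} Your direct variation $\psi_t = u + t\eta$ is legitimate once $|Du| < 1$ is known on a neighbourhood of $\supp\eta$, but your sole route to that fact is the barrier argument, which does not work. The paper sidesteps this by proving (a) and the strict spacelike bound simultaneously from the approximation scheme.
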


\begin{remark}
A growth requirement like \eqref{eq_growth_infty} is necessary for the equivalence (a) $\Leftrightarrow$ (b) to hold. To see this, consider the example $\rho \equiv 0$ and $u$ an affine function with slope $1$. Then, $u$ satisfies (b) but clearly does not satisfy \eqref{eq_condi_energy} in the definition of weak solution, and \eqref{soleq} cannot be given a reasonable meaning.
\end{remark}

\begin{remark}
Case (ii) in the above proposition improves on \cite[Theorem 1.5]{bpd}.
\end{remark}

We next specify the above proposition to our case of interest, the equivalence of critical points of $I$ and solutions
of \eqref{nbi}.
\begin{prop}\label{prop_criticalpoints_2}
Assume that $n \ge 3$ and $\mm \ge 0$, that $u \in \mathcal{D}^{1,2}_{\mm,p}(\R^n)$, and that the functional 
\begin{equation}\label{eq_funct_f}
\eta \in C^\infty_c(\R^n) \mapsto \langle f(u),\eta \rangle \equiv  \int_{\R^n} f(u)\eta
\end{equation}
extends to a continuous one on $\mathcal{D}^{1,2}_{\mm,p}(\R^n)$. Then, the following are equivalent: 
\begin{itemize}
\item[{\rm (a)}] $u$ is a weak solution to \eqref{nbi};
\item[{\rm (b)}] $u$ is a local minimizer for $I_\rho$ with $\rho = f(u)$;
\item[{\rm (c)}] The following inequality holds for each $\psi \in \mathcal{D}^{1,2}_{\mm,p}(\R^n)$ with $\norm{D\psi}_\infty \le 1$:
\begin{equation}\label{eq_szulkin_BI}
\int_{\R^n} \Big(\sqrt{1-|Du|^2} - \sqrt{1-|D\psi|^2}\Big)  - \int_{\R^n} f(u)(\psi - u) \ge 0.
\end{equation}
\end{itemize}
\end{prop}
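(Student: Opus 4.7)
My plan is to prove (a)$\Leftrightarrow$(b) via Proposition \ref{prop_criticalpoints_1}, (c)$\Rightarrow$(b) by localization, and (b)$\Rightarrow$(c) through a weak-formulation plus convexity argument. A key preliminary observation: each of (a), (b), (c) forces $u \in \lip(\R^n)$ with $|Du| \le 1$ a.e.---for (c), this follows by testing with $\psi \equiv 0$, since the integrand $\sqrt{1-|Du|^2}$ must be real. Since $u \in \mathcal{D}^{1,2}_{\mm,p}(\R^n) \hookrightarrow L^{2^*}(\R^n)$ is Lipschitz, a standard argument (a value $|u(x)|=M$ forces $|u|\ge M/2$ on $B(x,M/2)$, contradicting the $L^{2^*}$ tail at infinity) yields $u \in L^\infty(\R^n)$ with $u(x)\to 0$. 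Hence \eqref{eq_growth_infty} holds with $h(x)=\sqrt{1+|x|}$, so Proposition \ref{prop_criticalpoints_1}(i) delivers (a)$\Leftrightarrow$(b) and moreover $u \in W^{2,q}_\loc$ with $|Du|<1$ pointwise. Continuity of $f$ gives $f(u) \in L^\infty(\R^n)$, and Proposition \ref{prop_criticalpoints_1}(ii) then upgrades to the crucial \emph{global} estimate $\|Du\|_\infty \le 1-\delta$. Consequently $\Phi := Du/\sqrt{1-|Du|^2}$ satisfies $|\Phi|\le C|Du|$, so $\Phi \in L^\infty(\R^n)\cap L^2(\R^n)$.

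\textbf{(c)$\Rightarrow$(b).} Given $\Omega \Subset \R^n$ and $\tilde\psi \in \cX_{u|_{\partial\Omega}}(\Omega)$, I extend $\tilde\psi$ to $\psi:\R^n\to\R$ by setting $\psi = u$ on $\R^n \setminus \Omega$. Then $\psi - u \in W^{1,\infty}_c \subset \mathcal{D}^{1,2}_{\mm,p}$ and $\|D\psi\|_\infty \le 1$, so (c) applies. Since both $\sqrt{1-|Du|^2}-\sqrt{1-|D\psi|^2}$ and $f(u)(\psi-u)$ vanish outside $\Omega$, the global inequality localizes to $I_\rho^\Omega(\tilde\psi) \ge I_\rho^\Omega(u)$ with $\rho=f(u)$.

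\textbf{(b)$\Rightarrow$(c) --- the main step.} By (a) (available thanks to (a)$\Leftrightarrow$(b) above), the weak formulation $\int \Phi \cdot D\eta = \int f(u)\eta$ holds for $\eta \in \lip_c(\R^n)$. Since $\Phi \in L^2(\R^n)$, by Cauchy--Schwarz $\eta \mapsto \int \Phi \cdot D\eta$ extends continuously to $\mathcal{D}^{1,2}_{\mm,p}(\R^n)$; combined with the continuity hypothesis on $\eta \mapsto \int f(u)\eta$ and the density of $\lip_c$, both maps extend to the same continuous linear functional $T$. For $\psi \in \mathcal{D}^{1,2}_{\mm,p}$ with $\|D\psi\|_\infty \le 1$, pointwise convexity of $\xi \mapsto 1-\sqrt{1-|\xi|^2}$ yields
\[
\sqrt{1-|Du|^2} - \sqrt{1-|D\psi|^2} \ge \Phi \cdot D(\psi-u) \quad \text{a.e.}
\]
Multiplying by a cutoff $\chi_R \in C^\infty_c(\R^n)$ (with $\chi_R\equiv 1$ on $B_R$, $\supp\chi_R\subset B_{2R}$, $|D\chi_R|\le 2/R$), integrating, and testing the weak formulation against $\chi_R(\psi-u)\in\lip_c$ yields
\[
\int \chi_R (\sqrt{1-|Du|^2}-\sqrt{1-|D\psi|^2}) \ge \int f(u) \chi_R(\psi-u) - B_R,
\]
where $B_R := \int (\psi-u)\Phi\cdot D\chi_R$. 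As $R\to\infty$, dominated convergence (majorant $|Du|^2+|D\psi|^2 \in L^1$) makes the LHS tend to $\int (\sqrt{1-|Du|^2}-\sqrt{1-|D\psi|^2})$, while $\int f(u)\chi_R(\psi-u)\to\int f(u)(\psi-u)$ by the continuity of $T$.

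\textbf{Resolution of the main obstacle.} The remaining---and main technical---step is $B_R \to 0$. By Cauchy--Schwarz,
\[
|B_R| \le \|(\psi-u)D\chi_R\|_{L^2(\R^n)} \cdot \|\Phi\|_{L^2(B_{2R}\setminus B_R)}.
\]
The second factor tends to zero by absolute continuity of $\int |\Phi|^2$, since $\Phi \in L^2(\R^n)$; for the first, H\"older with $1/2=1/2^*+1/n$ gives
\[
\|(\psi-u)D\chi_R\|_{L^2} \le \|\psi-u\|_{L^{2^*}(B_{2R}\setminus B_R)}\cdot\|D\chi_R\|_{L^n(B_{2R}\setminus B_R)} \lesssim \|\psi-u\|_{L^{2^*}(B_{2R}\setminus B_R)} \to 0,
\]
using $\|D\chi_R\|_{L^n(B_{2R}\setminus B_R)} \lesssim (2/R)|B_{2R}\setminus B_R|^{1/n}\lesssim 1$ and $\psi-u \in L^{2^*}(\R^n)$ by Sobolev. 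Hence $B_R \to 0$, and passing to the limit in the displayed inequality above gives exactly the condition in (c).
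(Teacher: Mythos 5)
Your proof is correct. The equivalence (a)$\Leftrightarrow$(b) and the direction (c)$\Rightarrow$(b) follow essentially the same lines as the paper (modulo the fact that the paper further inserts the truncation $\psi_j$ from \eqref{def_psij} in (c)$\Rightarrow$(b) as a safety net for irregular $\partial\Omega$; your direct gluing of $\tilde\psi$ with $u$ works because $u$ is globally $1$-Lipschitz, which lets you bound the increment of $\psi$ along any segment by splitting at the first exit from $\Omega$ and the last re-entry). The genuine difference is in the hard direction to \eqref{eq_szulkin_BI}.

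The paper proves (a)$\Rightarrow$(c) by truncating the \emph{test function}: the auxiliary functions $\psi_j$ defined in \eqref{def_psij} make $u-\psi_j$ compactly supported, so \eqref{soleq} and the pointwise inequality \eqref{eq_cauchyschwarz} give \eqref{eq_szulkin_BI} for $\psi_j$ directly, and then one passes to the limit $j\to\infty$ by dominated convergence. Crucially this needs only $Du/\sqrt{1-|Du|^2}\in L^1_{\loc}$, which is already part of Definition~\ref{def_weaksol}. You instead prove (b)$\Rightarrow$(c) by keeping $\psi$ fixed and truncating with a spatial cutoff $\chi_R$, which produces a boundary term $B_R=\int(\psi-u)\,\Phi\cdot D\chi_R$. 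Killing $B_R$ requires $\Phi=Du/\sqrt{1-|Du|^2}\in L^2(\R^n)$, which you obtain from the uniform bound $\|Du\|_\infty\le 1-\delta$ delivered by Proposition~\ref{prop_criticalpoints_1}(ii). So your route leans on a quantitative global gradient estimate that the paper's proof avoids; in exchange, once that estimate is in hand, your computation is cleaner (a single annulus estimate via H\"older with exponents $1/2=1/2^*+1/n$), and the convergence $\chi_R(\psi-u)\to\psi-u$ in $\mathcal{D}^{1,2}_{\mm,p}$ together with the assumed continuity of $\eta\mapsto\int f(u)\eta$ closes the argument just as the paper's $\psi_j\to\psi$ does. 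The one small point you should make explicit is that $\chi_R(\psi-u)\to\psi-u$ also in the $L^p$-component of $\|\cdot\|_{\mathcal{D}^{1,2}_{\mm,p}}$ when $\mm>0$; this is an easy dominated-convergence statement but is used when you invoke the continuity of the extended functional.
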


\begin{remark}\label{rem_useful_identities}
Hereafter, we will often employ the following useful properties valid for $t \in [0,1]$:
\[
\frac{t^2}{2} \le 1- \sqrt{1-t^2} \le t^2, \qquad 1 - \sqrt{1 - t^2} = \sum_{k=1}^\infty b_k t^{2k} \qquad \text{with } b_k > 0,
\]
and the pointwise inequality
\begin{equation}\label{eq_cauchyschwarz}
\frac{Du \cdot (Du-D\psi)}{\sqrt{1-|Du|^2}} \ge \sqrt{1-|D\psi|^2} - \sqrt{1-|Du|^2}
\end{equation}
on the set of points $\{|Du|<1\} \cap \{|D\psi| \le 1\}$, see \cite[Remark 3.15]{bimm}.
\end{remark}



\begin{proof}[Proof of Proposition \ref{prop_criticalpoints_2}]
\textbf{(a) $\Leftrightarrow$ (b)} 
Since $n \ge 3$, from the Sobolev embedding $\mathcal{D}^{1,2}_{\mm,p}(\R^n) \hookrightarrow L^{2^*}(\R^n)$, and from $u \in \mathcal{D}^{1,2}_{\mm,p}(\R^n)$, $\norm{Du}_\infty \le 1$ we deduce that $u(x) \to 0$ as $|x| \to \infty$. In particular, Proposition \ref{prop_criticalpoints_1} ensures that (a) $\Leftrightarrow$ (b) and that $u \in W^{2,q}_\loc(\R^n)$ with $\|Du\|_\infty< 1$. \\[0.2cm]
\textbf{(a) $\Rightarrow$ (c)} 
Given $\psi \in \mathcal{D}^{1,2}_{\mm,p}(\R^n)$ with $\norm{D\psi}_\infty \le 1$ and a sequence $\eps_j \to 0,$ we define 
		\begin{equation}\label{def_psij}
		\psi_j(x) \equiv \max\{u(x), \psi(x) - \eps_j\} + \min\{u(x), \psi(x) + \eps_j\} - u(x) = \left\{ \begin{array}{ll}
		u(x) & \text{if } \, |\psi(x) - u(x)| < \eps_j, \\[0.2cm]
		\psi(x) + \eps_j & \text{if } \, u(x) \ge \psi(x) + \eps_j, \\[0.2cm]
		\psi (x)-\eps_j & \text{if } \, u(x) \le \psi(x) -\eps_j.
		\end{array}\right.
		\end{equation}
Since both $u$ and $\psi$ vanish as $|x| \to \infty$, by construction $u-\psi_j$ has compact support for each $j$. 
Notice also that $\abs{\psi_j(x)} \leq \abs{u(x)} + \abs{\psi(x) - u(x)}$ holds for any $x \in \Rn$. 
Thus, the dominated convergence theorem shows that 
$\psi_j \to \psi$ strongly in $\mathcal{D}^{1,2}_{\mm,p}(\R^n)$ as $j \to \infty$ (see \cite[Lemma 3.7]{bimm} for more details in the case $\mm =0$. The case $\mm>0$ follows by minor modifications). Plugging $u-\psi_j \in \lip_c(\R^n)$ in the  definition of weak solution to \eqref{nbi} and using \eqref{eq_cauchyschwarz} we get
\[ 
\int_{\R^n} \Big(\sqrt{1-|D\psi_j|^2} - \sqrt{1-|Du|^2}\Big)  \le \int_{\R^n} \frac{Du \cdot (Du - D\psi_j)}{\sqrt{1-|Du|^2}} = \int_{\R^n} f(u)(u-\psi_j) .
\]
Rearranging, we infer \eqref{eq_szulkin_BI} for the test function $\psi_j$. On the other hand, from the definition of $\psi_j$ and the first in Remark \ref{rem_useful_identities} we observe that
\[
\Big|\sqrt{1-|D\psi_j|^2} - \sqrt{1-|Du|^2}\Big| \le 1- \sqrt{1-|D\psi|^2} + 1-  \sqrt{1-|Du|^2} \le |D\psi|^2 + |Du|^2.
\]
Whence, inequality \eqref{eq_szulkin_BI} follows by letting $j \to \infty$ and using the dominated convergence theorem 
together with the assumed continuity of \eqref{eq_funct_f}.	\\[0.2cm]
\textbf{(c) $\Rightarrow$ (b)}
Fix $\Omega \Subset \R^n$ and $\psi \in \cX_u(\Omega)$. Up to replacing $\psi$ with $\psi_j$ as in \eqref{def_psij}, we can assume that $\mathrm{supp}(\psi -u) \Subset \Omega$. Extend $\psi$ to $\R^n$ by setting $\psi = u$ away from $\Omega$. Inequality \eqref{eq_szulkin_BI} implies
\[
\int_{\Omega} \Big(\sqrt{1-|Du|^2} - \sqrt{1-|D\psi|^2}\Big) - \int_{\Omega} f(u)(\psi - u) \ge 0.
\]
Then,  setting $\rho \equiv  f(u)$ and rearranging we get $I^\Omega_\rho(u) \le I^\Omega_\rho(\psi)$, thus $u$ satisfies (b).
\end{proof}

\subsection{Proofs of Propositions \ref{poid} and \ref{nonexistence_intro} }


\begin{proof}[Proof of Propositions \ref{poid}]
By \eqref{as:finite}, $|Du| \in L^2(\Rn)$ and 
the interpolation inequality with $|Du|\le 1$ implies $\|Du\|_q < \infty$ for each $q \ge 2$. 
Hence, by the proof of Morrey's embedding theorem (for instance, see \cite[Theorem 9.12]{brezis}), 
for $q>n$ the $C^{0,1-n/q}$ H\"older seminorm of $u$ on $\R^n$ is globally bounded, thus $|u(x)| = \mathcal{O}\left(|x|^{1-\frac{n}{q}}\right)$ as $|x| \to \infty$. 
Proposition \ref{prop_criticalpoints_1} guarantees the stated regularity of $u$. Set for convenience $r(x) = |x|$. 
We prove that for every $R \in (0,\infty)$ 
	\begin{equation}\label{eq:Poh1}
		\begin{aligned} 
			&\int_{B_R} \left\{ \frac{|D u|^2}{\sqrt{1 - |D u|^2}} -n \left( 1 - \sqrt{1 - |D u|^2} \right) + n F(u) \right\}  
			\\
			= \ & R \int_{\partial B_R} 
			\left\{
			F(u) + \frac{ \left( D u \cdot Dr \right)^2 }{\sqrt{1 - |D u|^2}} - \left( 1 - \sqrt{1 - |D u|^2} \right) 
			\right\} \di \sigma, 
		\end{aligned}
	\end{equation}
where $B_R = B_R(0)$. We consider the piecewise affine function 
	\[
	\tau_\eps(t) = \left\{ \begin{array}{ll}
	1 & \quad \text{if } \, t \leq R - \e/2, \\[0.1cm]
	1 - \e^{-1} (t - R + \e/2) & \quad \text{if } \, R - \e/2 < t < R+\e/2, \\[0.1cm]
	0 & \quad \text{if } \, t \geq R+\e/2.
	\end{array}\right.
	\] 
Because of the regularity of $u$, by density \eqref{soleq} also holds for $W^{1,q}$ test functions with compact support, 
so we can use $\eta_\e \equiv \tau_\e (r) r Dr \cdot D u$ in \eqref{soleq} with $\rho = f(u)$ to obtain
	\[
		\int_{\Rn} \frac{D u \cdot D \eta_\e}{\sqrt{ 1 - |D u|^2 }}  = \int_{\Rn} f(u) \eta_\e .
	\] 
From $\diver (rDr) = n$ we get
	\[
	f(u) \eta_\e = \diver \left( \tau_\e F(u) rDr \right) - r\tau_\e'(r) F(u) - n \tau_\e (r) F(u),
	\]
hence
	\[
		\int_{\Rn} f(u) \eta_\e  
		= - \int_{\Rn} r\tau_\e'(r) F(u)  - n \int_{\Rn} \tau_\e (r) F(u) .
	\]
Notice also that 
	\[
		\begin{aligned}
			\frac{D u \cdot D \eta_\e}{\sqrt{1 - |D u|^2}} 
			&= \frac{r\tau_\e'(r) ( D u \cdot Dr)^2 }{\sqrt{1 - |D u|^2}} 
			+ \frac{ \tau_\e(r) |D u|^2}{\sqrt{1 - |D u|^2}} 
			+ \tau_\e(r) rDr \cdot D \left( 1 - \sqrt{1 - |D u|^2} \right) .
		\end{aligned}
	\]
Therefore, integrating by parts yields 
	\[
		\begin{aligned}
			&\int_{\Rn} \frac{D u \cdot D \eta_\e}{\sqrt{1 - | D u|^2}}  
			\\
			= \ &
			\int_{\Rn} 
			\left\{
			\frac{r\tau_\e'(r) ( D u \cdot Dr)^2 }{\sqrt{1 - |D u|^2}} 
			+ \frac{ \tau_\e(r) |D u|^2}{\sqrt{1 - |D u|^2}} 
			- \left\{ r\tau_\e'(r) + n \tau_\e(r) \right\} \left( 1 - \sqrt{1 - |D u|^2} \right)
			\right\} 
		\end{aligned}
	\]
and we conclude
	\begin{equation}\label{eq:Pho1-e}
		\begin{aligned}
			&\int_{\Rn} \tau_\e(r) 
			\left[  \frac{ |D u|^2}{\sqrt{1- |D u|^2}} - n \left( 1 - \sqrt{1 - |D u|^2} \right) 
			+ n F(u) \right] 
			\\
			= \ & \int_{\Rn} r\tau_\e'(r) 
			\left[ - \frac{ \left( D u \cdot Dr \right)^2 }{\sqrt{1 - |D u|^2}} +  1 - \sqrt{1 - |D u|^2} - F(u)  \right] .
		\end{aligned}
	\end{equation}
The coarea formula guarantees that
	\[
		t \mapsto h(t) \equiv  \int_{\partial B_t} 
		\left[ 
		F(u ) + \frac{ \left( D u  \cdot Dr \right)^2  }{\sqrt{ 1 - |D u|^2 }} 
		- \left( 1 - \sqrt{ 1 - |D u|^2 } \right)
		\right] \di \sigma \in L^1_\loc ((0,\infty)),
	\]
thus, letting $\e \to 0$ in \eqref{eq:Pho1-e} and using the definition of $\tau_\eps$ and the fact $Du \in C(\Rn)$, 
we obtain \eqref{eq:Poh1} for any $R$. 
Moreover, by the triangle inequality and the observation that $1-\sqrt{1-t^2} \le t^2$, see Remark \ref{rem_useful_identities}, we get  
\[
|h(t)| \le \int_{\partial B_t} \left[ F(u) + \frac{|D u|^2}{\sqrt{1-|D u|^2}}
	+ |D u|^2\right] \di \sigma,
\]
hence from \eqref{as:finite} and the coarea formula we infer that $h \in L^1([1,\infty))$. 
Therefore, there exists a sequence $\{R_k\}$ such that $R_k \to \infty$ and $R_k h(R_k) \to 0$ as $k \to \infty$. 
Observing that \eqref{as:finite} and the first in Remark \ref{rem_useful_identities} guarantee the finiteness of the right-hand side of \eqref{eq_poho_main}, letting $R=R_k \to \infty$ in \eqref{eq:Poh1} we conclude the desired identity. 
\end{proof}

With the aid of the Pohozaev identity in Propositions \ref{poid}, we can show Proposition \ref{nonexistence_intro}. 

%

\begin{proof}[Proof of Proposition \ref{nonexistence_intro}]
Let $u$ be a weak solution to \eqref{eq_BI_up} verifying  \eqref{eq_bounds_poho}. From $u \in L^p(\Rn)$ and $\|Du\|_\infty \le 1$, we deduce that $u$ vanishes at infinity, hence $u \in L^\infty(\Rn)$. 
Since $f(u(x)) = \abs{u}^{p-2} (x) u(x) \in L^\infty(\Rn)$, Proposition \ref{prop_criticalpoints_1} (ii) guarantees that $\norm{Du}_\infty < 1$, and 
\eqref{eq_bounds_poho} implies 
	\[
		\int_{\Rn} \frac{\abs{Du}^2}{\sqrt{1 -\abs{Du}^2 }} \leq \frac{1}{ \sqrt{ 1 - \norm{Du}_\infty^2 } } \int_{\Rn} \abs{Du}^2 < \infty. 
	\]
Thus, we apply Proposition \ref{poid} with $f(u) = |u|^{p-2}u$ to deduce that
\begin{equation}\label{eq_primapoho}
\int_{\R^n} \frac{|D u|^2}{\sqrt{1-|D u|^2}} = n \int_{\R^n} \left(1-\sqrt{1-|D u|^2} - \frac{1}{p} |u|^{p}\right) < \infty.
\end{equation}
For $\eps>0$, we plug the compactly supported test function $\eta = \zeta_\e(u)$ with $\zeta_\e(t) \equiv (t-\eps)_+ - (t+\eps)_-$ 
in the weak definition of \eqref{eq_BI_up} to obtain
\[
	\int_{\{|u|>\eps\}} \abs{u}^{p-2} u \zeta_\e(u) = \int_{\{|u|>\eps\}} \frac{|Du|^2}{\sqrt{1-|Du|^2}}.
\]
Letting $\eps \to 0$, using the dominated convergence theorem, the identity $|Du|\equiv 0$ a.e. on $\{u=0\}$, 
and substituting in \eqref{eq_primapoho} we thus get
\brr
0 & = &\int_{\R^n} 
\left\{
\frac{p+n}{p} \frac{|D u|^2}{\sqrt{1-|D u|^2}} - n \left( 1-\sqrt{1-|D u|^2} \right) \right\}. 
\err
Since $p \le 2^*$ and 
\[
\frac{p+n}{p} \frac{t}{\sqrt{1-t}} - n\left( 1-\sqrt{1-t} \right) 
\geq \frac{n}{2} \qty( \frac{t}{\sqrt{1-t}} - 2 \qty( 1 - \sqrt{1-t} )  ) > 0 
\quad \text{for $t \in (0,1)$},
\]
we conclude that $|D u| \equiv 0$, thus $u \equiv 0$, a contradiction. 
\end{proof}

\subsection{Proof of Theorem \ref{teo_exist_zeromass_intro}}

For convenience, having fixed $p_1> \max\{n,p\},$ we define a subspace $\cX_{\mm,p}(\R^n)$  of  $\mathcal{D}^{1,2}_{\mm,p}(\R^n)$ by
			\[
			\cX_{\mm,p}(\R^n) \equiv \overline{C^\infty_c(\R^n)}^{\|\cdot\|_{\cX_{\mm,p}}}, \qquad 
				\| v \|_{\cX_{\mm,p}} \equiv \sqrt{ \mm\norm{v}^2_p + \norm{Dv}_2^2 + \norm{Dv}_{p_1}^2},
			\]
where we agree that $p=2^*$ if $\mm = 0$. Notice that, by Sobolev's embedding, $\cX_{\mm,p}(\R^n) \hookrightarrow \cX_{0,2^*}(\R^n)$.		
%

\begin{lemma}\label{lem_reflexi_embe}
Let $n \ge 3$. Then $\cX_{\mm,p}(\R^n)$ is a reflexive Banach space and satisfies 
	\begin{equation}\label{eq_embe}
	\begin{aligned}
	&{\rm (i)} & & \cX_{\mm,p}(\R^n) \hookrightarrow W^{1,p}(\R^n), \\
	&{\rm (ii)} & & \cX_{\mm,p}(\R^n) \hookrightarrow C_0(\R^n) \equiv \Set{ u \in C(\R^n)  | \lim_{|x| \to \infty} u(x) = 0 }.
	\end{aligned}
	\end{equation}
\end{lemma}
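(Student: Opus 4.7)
The plan is to realize $\cX_{\mm,p}(\R^n)$ as a closed subspace of a reflexive product of Lebesgue spaces, and then derive the two embeddings from classical inequalities (Sobolev, log-convexity of $L^q$-norms, Morrey) on $C^\infty_c(\R^n)$, extending them to the completion by density.

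\textbf{Reflexivity.} I would introduce $E := L^p(\R^n) \times L^2(\R^n;\R^n) \times L^{p_1}(\R^n;\R^n)$ if $\mm > 0$, and $E := L^2(\R^n;\R^n) \times L^{p_1}(\R^n;\R^n)$ if $\mm = 0$, each equipped with a product norm designed so that the map $T$ defined on $C^\infty_c(\R^n)$ by $T(v) := (\sqrt{\mm}\,v, Dv, Dv)$, respectively $T(v) := (Dv, Dv)$, is a linear isometry with respect to $\|\cdot\|_{\cX_{\mm,p}}$. Since each factor is reflexive ($1 < p, 2, p_1 < \infty$), so is $E$. Then $T$ extends uniquely to an isometric embedding $\tilde T : \cX_{\mm,p}(\R^n) \to E$ whose image is the closure of $T(C^\infty_c(\R^n))$; a closed subspace of a reflexive space is reflexive, so $\cX_{\mm,p}(\R^n)$ is reflexive.

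\textbf{The embeddings (i) and (ii).} For (i), it suffices to prove $\|v\|_{W^{1,p}} \leq C\|v\|_{\cX_{\mm,p}}$ for every $v \in C^\infty_c(\R^n)$. The $L^p$ bound on $v$ is immediate when $\mm > 0$, and follows from Sobolev's inequality $\|v\|_{2^*} \leq C_S \|Dv\|_2$ when $\mm = 0$. The $L^p$ bound on $Dv$ comes from log-convexity of $L^q$-norms, since $p \in [2, p_1]$ (indeed $p_1 > \max\{n,p\} \geq p$ when $\mm>0$, and $p = 2^* < p_1$ when $\mm = 0$), yielding $\|Dv\|_p \leq \|Dv\|_2^\theta \|Dv\|_{p_1}^{1-\theta}$ for some $\theta \in [0,1]$. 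For (ii), Morrey's inequality on the unit ball combined with H\"older's inequality gives, for every $x \in \R^n$,
\[
|v(x)| \leq |v(x) - \bar v_{B_1(x)}| + |\bar v_{B_1(x)}| \leq C\bigl(\|Dv\|_{L^{p_1}(B_1(x))} + \|v\|_{L^p(B_1(x))}\bigr) \leq C'\|v\|_{\cX_{\mm,p}},
\]
and $|v(x) - v(y)| \leq C|x-y|^{1-n/p_1}\|Dv\|_{p_1}$ for $|x-y| \leq 1$, since $p_1 > n$. Hence $C^\infty_c(\R^n)$ embeds into the space of bounded, uniformly H\"older continuous functions with constant controlled by $\|v\|_{\cX_{\mm,p}}$. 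A uniformly continuous function that also lies in $L^p(\R^n)$ with $p < \infty$ must vanish at infinity (standard disjoint-balls argument), so the target is $C_0(\R^n)$.

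\textbf{Main obstacle.} The technical point is to verify that the induced maps $\cX_{\mm,p}(\R^n) \to W^{1,p}(\R^n)$ and $\cX_{\mm,p}(\R^n) \to C_0(\R^n)$ are well-defined and injective on the abstract completion; i.e., any $\{v_k\} \subset C^\infty_c(\R^n)$ that is Cauchy in $\|\cdot\|_{\cX_{\mm,p}}$ and converges to $0$ in the target must also satisfy $\|v_k\|_{\cX_{\mm,p}} \to 0$. I would handle this by extracting the Lebesgue limits of $v_k$ in $L^p$ (or $L^{2^*}$) and of $Dv_k$ in $L^2$ and $L^{p_1}$ separately, then showing, by testing against functions in $C^\infty_c(\R^n)$, that each limit coincides with the distributional limit $0$.
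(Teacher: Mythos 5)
Your proof is correct, but it follows a genuinely different route from the paper's. For reflexivity, the paper notes that $\|\cdot\|_{\cX_{\mm,p}}$ is uniformly convex (being an $\ell^2$-combination of uniformly convex $L^q$-norms, via \cite[Exercise 3.29]{brezis}) and invokes Milman--Pettis; you instead realize $\cX_{\mm,p}(\R^n)$ isometrically as a closed subspace of a finite product of reflexive Lebesgue spaces. Both are standard; yours is more elementary in that it avoids Milman--Pettis, while the paper's conclusion (uniform convexity) is nominally stronger. For the embeddings, the paper simply cites \cite[Proposition 3.3]{bimm} for the case $\mm = 0$, and reduces $\mm > 0$ to it: (ii) follows from the inclusion $\cX_{\mm,p}(\R^n) \hookrightarrow \cX_{0,2^*}(\R^n)$, and (i) from the same interpolation inequality you wrote. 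You prove both cases directly and self-containedly, combining Sobolev (for $\|v\|_p$ when $\mm=0$), log-convexity of $L^q$-norms (for $\|Dv\|_p$), and a local Morrey estimate plus a disjoint-balls argument (for (ii)). The latter also buys you a uniform H\"older modulus, which the lemma doesn't ask for but which the paper uses later (Remark \ref{rem_D12_in_Linfty}) anyway. Your closing paragraph, on the well-definedness and injectivity of the induced maps from the abstract completion, is a genuine technical point that the paper glosses over by delegating to \cite{bimm}; the sketch you give (extract the separate $L^p$, $L^2$, $L^{p_1}$ limits of a Cauchy sequence and match them against the distributional limit, using convergence in measure to reconcile different $L^q$'s) is the right resolution. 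One small streamlining for (ii): once you have the uniform estimate $\|v\|_\infty \le C\|v\|_{\cX_{\mm,p}}$ on $C^\infty_c(\R^n)$, you may appeal directly to the fact that $C_0(\R^n)$ is the uniform closure of $C^\infty_c(\R^n)$, without routing through the disjoint-balls argument; but your version is also correct.
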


\begin{proof}
The norm $\norm{\cdot}_{\cX_{\mm,p}}$ is uniformly convex by the uniform convexity of $L^q$-spaces and \cite[Exercise 3.29]{brezis}, 
thus $\cX_{\mm,p}(\R^n)$ is reflexive. 
If $\mm = 0$, then (i) and (ii) have been shown in \cite[Proposition 3.3]{bimm} (the norm used there is equivalent to $\|\cdot\|_{\cX_{0,2^*}}$), 
hence (ii) for $\mm > 0$ is a consequence of $\cX_{\mm,p}(\R^n) \hookrightarrow \cX_{0,2^*}(\R^n)$. On the other hand, interpolation gives 
\[
\|Du\|_p \le \norm{Du}_2^\theta \norm{Du}_{p_1}^{1-\theta} \le \norm{u}_{\cX_{\mm,p}}, \qquad \frac{1}{p} = \frac{\theta}{2} + \frac{1-\theta}{p_1},
\]
thus (i) holds for $\mm>0$ as well. 
\end{proof}
In view of the above Lemma, it will be convenient to use $\cX_{\mm,p}(\R^n)$ instead of $\mathcal{D}^{1,2}_{\mm,p}(\R^n)$. Notice however the following properties: for any constant $c > 0$, 
\begin{equation}\label{eq_confronorm}
\begin{aligned}
&{\rm (a)} & &\text{for $u$ with $\norm{Du}_\infty \le c$, } \, u \in \mathcal{D}_{\mm,p}^{1,2}(\R^n) \ \Longleftrightarrow \ u \in \cX_{\mm,p}(\R^n), \ \ \text{ and}
\\
& & & \|u\|_{\mathcal{D}_{\mm,p}^{1,2}} \le \|u\|_{\cX_{\mm,p}} \le \norm{u}_{\cD_{\mm,p}^{1,2}} + c^{ 1 - \frac{2}{p_1} }  \norm{u}^{\frac{2}{p_1}}_{ \cD_{\mm,p}^{1,2} };
\\
&{\rm (b)} & &\text{if $\{u_j\} \subset \cX_{\mm,p}(\R^n), \ u \in \cX_{\mm,p}(\R^n)$ and $\norm{Du_j}_\infty \le c$, $\norm{Du}_\infty \le c$, then} \\
& & & \|u_j-u\|_{\cX_{\mm,p}} \to 0 \ \Longleftrightarrow \  \|u_j-u\|_{\mathcal{D}_{\mm,p}^{1,2}} \to 0.
\end{aligned}
\end{equation}
We define $\Phi,\Psi : \cX_{\mm,p}(\R^n) \to (-\infty,\infty]$ as follows:
\begin{equation}\label{eq_PhiPsi_BI}
\begin{aligned}
\Psi(u) &\equiv \begin{dcases} \int_{\Rn} \qty(1 - \sqrt{1 - |D u|^2} + \frac{\mm |u|^{p}}{2p} ) \,   & \quad  \text{ if } \norm{D u}_\infty \le 1; \\
\infty & \quad \text{ if } \norm{D u}_\infty > 1,
\end{dcases} \\
\Phi(u) &\equiv \int_{\Rn} G(u), \qquad \text{with } \, G(s) \equiv F(s) + \frac{\mm}{2p} \abs{s}^p. 
\end{aligned}
\end{equation}
Because of the first in Remark \ref{rem_useful_identities} and (a) in \eqref{eq_confronorm}, the domain of $\Psi$ is 
\begin{equation}\label{eq_dom_psi}
D(\Psi) \equiv \Set{u \in \cX_{\mm,p}(\R^n) | \Psi(u) < \infty } = \Set{ u \in \cX_{\mm,p}(\R^n) | \norm{Du}_\infty \le 1 }.
\end{equation}

\begin{remark}\label{rem_D12_in_Linfty}
By combining \eqref{eq_embe}, the reflexivity of $\cX_{\mm,p}(\R^n)$ and the fact that functions in $D(\Psi)$ are equi-Lipschitz, the following holds: from every bounded sequence $\{u_k\}_{k=1}^\infty \subset D(\Psi)$ we can extract a subsequence (still labelled the same) such that $u_k \to u$ weakly in $\cX_{\mm,p}(\R^n)$ 
and locally uniformly in $\R^n$.
\end{remark}

Define also
\[
I_\lambda \ \ : \ \ \cX_{\mm,p}(\R^n) \to (-\infty, \infty], \qquad I_\lambda(u) \equiv \lambda \Psi(u) - \Phi(u).
\]	
We first check the conditions for $\Psi$.  

\begin{lemma}\label{lem_psi123}
$\Psi : \cX_{\mm,p}(\R^n) \to [0,\infty]$ satisfies $(\Psi_1)$, $(\Psi_2)$, $(\Psi_3)$. 
\end{lemma}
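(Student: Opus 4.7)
The plan is to verify the three conditions separately, exploiting the additive decomposition $\Psi(u) = \int_{\R^n} G(Du) + \frac{\mm}{2p}\int_{\R^n} |u|^p$, with $G(v) \equiv 1 - \sqrt{1 - |v|^2}$ defined on $\{|v|\le 1\}$, and the characterization $D(\Psi) = \{u \in \cX_{\mm,p}(\R^n) : \|Du\|_\infty \le 1\}$ already recorded in \eqref{eq_dom_psi}.

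For $(\Psi_1)$, $\Psi(0)=0$ is immediate. Convexity on $\cX_{\mm,p}(\R^n)$ follows from the convexity of $s \mapsto |s|^p$ for $p \ge 1$ and of $G$ on its domain (best seen from the Hessian bound $D^2 G(v) = I/\sqrt{1-|v|^2} + v \otimes v/(1-|v|^2)^{3/2} \ge I$ on $\{|v|<1\}$, extended by continuity to $|v|=1$), together with the convexity of $D(\Psi)$. For the lower semicontinuity, given $u_j \to u$ strongly in $\cX_{\mm,p}(\R^n)$ with $\liminf_j \Psi(u_j) < \infty$, I would pass to a subsequence along which $u_j \in D(\Psi)$, $Du_j \to Du$ and $u_j \to u$ almost everywhere, and $\Psi(u_j) \to \liminf_j \Psi(u_j)$; this forces $|Du|\le 1$ a.e., hence $u \in D(\Psi)$, and Fatou's lemma applied to the nonnegative integrands concludes the argument. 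Weak lower semicontinuity then follows from strong l.s.c. together with convexity, via Mazur's theorem.

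For $(\Psi_2)$, the key observation is that on $D(\Psi)$, $|Du| \le 1$ yields the pointwise bound $|Du|^{p_1} \le |Du|^2$, hence $\|Du\|_{p_1}^{p_1} \le \|Du\|_2^2$. Combined with $G(v) \ge |v|^2/2$ from Remark \ref{rem_useful_identities}, we obtain $\Psi(u) \ge \tfrac{1}{2}\|Du\|_2^2 + \tfrac{\mm}{2p}\|u\|_p^p$, so a bound on $\Psi(u)$ controls each summand in $\|u\|_{\cX_{\mm,p}}^2 = \mm\|u\|_p^2 + \|Du\|_2^2 + \|Du\|_{p_1}^2$, which is the contrapositive of $(\Psi_2)$.

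For $(\Psi_3)$, the main step, I would first split off the two nonnegative contributions: by weak lower semicontinuity (from convexity and the strong l.s.c. established above) of each of $u \mapsto \int G(Du)$ and $u \mapsto \int |u|^p$, the hypothesis $\Psi(u_j) \to \Psi(u)$ combined with $u_j \rightharpoonup u$ forces $\int G(Du_j) \to \int G(Du)$ and, when $\mm>0$, $\int |u_j|^p \to \int |u|^p$ separately. Convergence $u_j \to u$ in $L^p$ (when $\mm > 0$) then follows from weak convergence in $L^p$ (via Lemma \ref{lem_reflexi_embe}), norm convergence, and uniform convexity of $L^p$ (Radon--Riesz). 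The core step is strong $L^2$ convergence of the gradients: exploiting the $1$-uniform convexity estimate $G(a) + G(b) - 2G((a+b)/2) \ge \tfrac{1}{4}|a-b|^2$ for $|a|,|b|\le 1$ (which follows from $D^2 G \ge I$), I would apply it to $a = Du_j$, $b = Du$, integrate, observe that $(u_j+u)/2 \in D(\Psi)$, and invoke weak lower semicontinuity of $\int G(D\cdot)$ at the midpoint to estimate the left-hand side from above by zero. Finally, strong $L^{p_1}$ convergence of $Du_j$ is automatic from $|Du_j - Du| \le 2$ and the inequality $|Du_j - Du|^{p_1} \le 2^{p_1-2}|Du_j - Du|^2$. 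The main obstacle is this $L^2$ step: it crucially requires a \emph{quantitative} strict convexity of $G$ rather than mere strict convexity, available here thanks to the Hessian bound $D^2 G \ge I$.
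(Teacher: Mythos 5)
Your proof is correct, and for $(\Psi_1)$ and $(\Psi_2)$ it is close in spirit to the paper's. The substantive difference is in $(\Psi_3)$. You prove strong $L^2$ convergence of the gradients in one stroke by combining the midpoint estimate
$G(a)+G(b)-2G\bigl(\tfrac{a+b}{2}\bigr)\ge \tfrac14|a-b|^2$
(from $D^2G \ge I$ on $\{|v|<1\}$, extended by continuity) with the observations that $(u_j+u)/2 \in D(\Psi)$ and that $\int G(D\cdot)$ is weakly lower semicontinuous at the midpoint; the remaining $L^{p_1}$ and $L^p$ convergences then follow from the pointwise domination $|Du_j-Du|\le 2$ and from Radon--Riesz. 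The paper proceeds quite differently: it exploits the power series $1-\sqrt{1-t^2}=\sum_{k\ge1}b_k t^{2k}$ with $b_k>0$, decomposing $\Psi$ into the sum $\frac{\mm}{2p}\|u\|_p^p + \sum_k b_k\|Du\|_{2k}^{2k}$ of weakly lower semicontinuous terms with strictly positive weights; the hypothesis $\Psi(u_j)\to\Psi(u)$ then forces each term to converge (otherwise a strict inequality propagates through the sum, a contradiction), and Radon--Riesz in $L^2$ gives the conclusion. Your route is more quantitative---it replaces the series manipulation by a uniform-convexity estimate that is intrinsic to $G$---and would work for any integrand with a uniform Hessian lower bound, while the paper's series decomposition has the advantage of unifying the lower-semicontinuity argument for $(\Psi_1)$ and the convergence argument for $(\Psi_3)$ under one identity, which it also reuses elsewhere (inequality \eqref{eq_lowsem_2}). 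Both arguments are valid.
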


\begin{proof}
Observe first that if $\{u_j\}_j \subset D(\Psi)$, $u \in D(\Psi)$ and
\begin{equation}\label{eq_lowsem}
\mm \|u\|_p \le \liminf_{j \to \infty} \mm \|u_j\|_p, \qquad \|Du\|_{2k} \le \liminf_{j \to \infty} \|Du_j\|_{2k} \ \  \ 
\text{for every } k \ge 1, 
\end{equation}
then by the second in Remark \ref{rem_useful_identities} the following chain of inequalities holds:
\begin{align}\label{eq_lowsem_2}
	\Psi(u) =   \frac{\mm}{2p}\|u\|_p^p + \sum_{k=1}^\infty b_k\int_{\Rn}|D u|^{2k}   
	& \le \frac{\mm}{2p}\|u\|_p^p + \sum_{k=1}^\infty b_k\liminf_{j\to \infty}\int_{\Rn}|D u_j|^{2k} \\
	& \le  \liminf_{j\to \infty} \left[ \frac{\mm}{2p}\|u_j\|_p^p + \sum_{k=1}^\infty b_k\int_{\Rn}|D u_j|^{2k}\right]  =\liminf_{j\to \infty}\Psi(u_j).
\end{align}
We prove $(\Psi_1)$. Since $t \mapsto 1-\sqrt{1-t^2}$ is convex for $t \in (0,1)$, $\Psi$ is convex on $\cX_{\mm,p}(\Rn)$. To check that $\Psi$ is lower-semicontinuous, let $u_j \to u$ in $\cX_{\mm,p}(\R^n)$. 
The claim is obvious if $\|Du_j\|_\infty > 1$ for all large $j$. On the other hand, if $\|Du_{j_i}\|_\infty \le 1$ for a subsequence $j_i \to \infty$, then $\norm{Du_{j_i}}_{2k} \to \norm{Du}_{2k}$ for all $k \geq 1$. Using also $\mm \norm{u_{j_i}}_p \to \mm \norm{u}_p$, we can apply \eqref{eq_lowsem_2} to deduce that $\Psi(u) \leq \liminf_{i \to \infty} \Psi(u_{j_i})$.

	For property $(\Psi_2)$, if $\norm{Du}_\infty \leq 1$, then $\norm{Du}_q^q \leq \norm{Du}_2^2$ for each $q \geq 2$. 
Hence, the first in Remark \ref{rem_useful_identities} and the definition of $\Psi$ yield 
	\[
		\begin{aligned}
		\norm{u}_{\cX_{\mm,p}}^2 = \mm \norm{u}_p^2 + \norm{Du}_2^2 + \norm{Du}_{p_1}^2 
		&\leq C_{p,\mm} \qty( \Psi(u) )^{\frac{2}{p}} + \norm{Du}_2^2 + \norm{Du}_2^{\frac{4}{p_1}} 
		\\
		&\leq C_{p,\mm} \qty( \Psi(u) )^{ \frac{2}{p} } + b_1^{-1} \Psi(u) + \qty( b_1^{-1} \Psi(u) )^{\frac{2}{p_1}}.
		\end{aligned}
	\]
Thus, $(\Psi_2)$ holds. 

%
%

	To show $(\Psi_3)$, assume that $\{u_j\}_j \subset D(\Psi)$ and $u \in D(\Psi)$ 
satisfy $u_j \rightharpoonup u$ in $\cX_{\mm,p}(\R^n)$ and $\Psi(u_j) \to \Psi(u)$ as $j \to \infty$. The weak convergence and $u_j,u \in D(\Psi)$ yields 
$Du_j \rightharpoonup Du$ in $L^{2k} (\R^n; \Rn)$ as $j \to \infty$ for each fixed $k \geq 1$, hence both of the inequalities in \eqref{eq_lowsem} hold. From the uniform convexity of $L^q$-spaces, it suffices to prove that $\mm\norm{u_j}_{p} \to \mm\norm{u}_p$ and that $\norm{Du_j}_{2 k} \to \norm{Du}_{2 k}$ for each $k \in \N$. By contradiction, if one of these fails, then up to passing to a subsequence one of the inequalities in \eqref{eq_lowsem} is strict. As a consequence, one of the two inequalities in \eqref{eq_lowsem_2} is strict as well (recall: here we use the fact $b_k > 0$ for each $k$), whence we would conclude from \eqref{eq_lowsem_2} that
\[
\Psi(u) < \liminf_{j \to \infty} \Psi(u_j) = \Psi(u),
\]
a contradiction. 
\end{proof}

\begin{lemma}\label{lem_phi1}
If $n \ge 3$ and $f \in C(\R)$ satisfies
\begin{equation}\label{eq_phi_basic}
\limsup_{s \to 0} \frac{|f(s)|}{|s|^{p-1}} < \infty \qquad \text{with } \, \left\{ \begin{array}{ll}
p = 2^* & \text{if } \, \mm = 0, \\[0.2cm]
p \in [2,2^*] & \text{if } \, \mm > 0,
\end{array}\right.
\end{equation}
then $\Phi : \cX_{\mm,p}(\R^n) \to \R$ satisfies $(\Phi_1)$.
\end{lemma}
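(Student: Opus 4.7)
The argument proceeds in three steps: showing $\Phi$ is finite-valued, computing its G\^ateaux derivative, and verifying norm-continuity of the derivative; a standard criterion then yields $\Phi \in C^1$. Set $g(s) := G'(s) = f(s) + \tfrac{\mm}{2}|s|^{p-2}s$, and by \eqref{eq_phi_basic} fix $\delta, C_0 > 0$ with $|g(s)| \le C_0|s|^{p-1}$ for $|s| \le \delta$. The essential input is Lemma \ref{lem_reflexi_embe}: $\cX_{\mm,p}(\R^n) \hookrightarrow L^p(\R^n) \cap C_0(\R^n)$; in particular each $u \in \cX_{\mm,p}(\R^n)$ is bounded, vanishes at infinity, and $|\{|u|>\delta'\}| \le (\delta')^{-p}\|u\|_p^p < \infty$ for every $\delta'>0$.

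\textbf{Well-definedness and G\^ateaux derivative.} For $u \in \cX_{\mm,p}(\R^n)$ I split $\R^n = \{|u|\le\delta\} \cup \{|u|>\delta\}$: on the first set $|G(u)| \le C_1|u|^p$, while on the second, of finite measure, $|G(u)| \le \sup_{|s|\le\|u\|_\infty}|G(s)|$. This yields $\Phi(u) \in \R$. The same dichotomy combined with H\"older's inequality,
\[
\int_{\{|u|\le\delta\}}|g(u)||v| \le C_0\|u\|_p^{p-1}\|v\|_p, \qquad \int_{\{|u|>\delta\}}|g(u)||v| \le M_u |\{|u|>\delta\}|\cdot\|v\|_\infty,
\]
together with $\|v\|_p + \|v\|_\infty \le C\|v\|_{\cX_{\mm,p}}$ from Lemma \ref{lem_reflexi_embe}, shows that $L_u(v) := \int_{\R^n} g(u) v$ is a bounded linear functional on $\cX_{\mm,p}(\R^n)$. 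To identify $L_u$ as the G\^ateaux derivative I write
\[
\frac{\Phi(u+tv)-\Phi(u)}{t} = \int_{\R^n}\int_0^1 g(u+stv)v\, ds\, dx \qquad (|t|\le 1),
\]
and observe that $u+stv$ stays in the fixed interval $[-\|u\|_\infty-\|v\|_\infty,\|u\|_\infty+\|v\|_\infty]$; splitting one more time at $\delta$, the polynomial bound on $\{|u|,|v|\le\delta/2\}$ and the finite-measure bound on its complement produce an $L^1(\R^n\times[0,1])$ dominant, so Lebesgue's theorem gives $\Phi'(u)v = L_u(v)$.

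\textbf{Continuity of $u\mapsto L_u$.} Let $u_j \to u$ in $\cX_{\mm,p}(\R^n)$. By the embedding $u_j \to u$ uniformly and in $L^p$, and $K := \sup_j\|u_j\|_\infty + \|u\|_\infty < \infty$. Fix a small $\delta'\in(0,\delta]$: on $\{|u|\le\delta'/2\}$ one has $|u_j|\le\delta'$ for all large $j$, so the polynomial control on $g$ together with the $L^p$ convergence of $u_j$ and a Vitali-type argument give $\|g(u_j)-g(u)\|_{L^{p/(p-1)}(\{|u|\le\delta'/2\})}\to 0$; on the complementary finite-measure set $\{|u|>\delta'/2\}$, uniform continuity of $g$ on $[-K,K]$ and the uniform convergence $u_j\to u$ give $\|g(u_j)-g(u)\|_{L^\infty(\{|u|>\delta'/2\})}\to 0$. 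Pairing against $v$ exactly as in the previous step, both contributions are $o(1)\cdot\|v\|_{\cX_{\mm,p}}$ uniformly in $v$, whence $\|L_{u_j}-L_u\|_{\cX_{\mm,p}^*}\to 0$. A norm-continuous G\^ateaux derivative implies Fr\'echet differentiability and $\Phi\in C^1$.

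\textbf{Main obstacle.} The only delicate issue is that \eqref{eq_phi_basic} controls $f$ only near the origin, while away from zero $f$ is merely continuous; in particular there is no global Nemytskii estimate in $L^p$-type spaces. This is exactly remedied by the embedding $\cX_{\mm,p}(\R^n)\hookrightarrow C_0(\R^n)$ of Lemma \ref{lem_reflexi_embe}, which confines the range of admissible $u$ to a bounded interval and forces the ``non-small'' part of $u$ to live on a set of finite measure, making the two-region splitting used above viable.
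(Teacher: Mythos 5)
Your proposal is correct and fills in exactly the argument that the paper dismisses as ``standard'': the paper's own proof of this lemma is two sentences --- it invokes the embedding $\cX_{\mm,p}(\Rn) \hookrightarrow C_0(\Rn)$ from Lemma \ref{lem_reflexi_embe} and then asserts that $\Phi\in C^1$ follows from \eqref{eq_phi_basic} by a standard Nemytskii-type argument. Your two-region decomposition (near-zero where the power growth \eqref{eq_phi_basic} applies versus the finite-measure superlevel set $\{|u|>\delta\}$ where one only uses continuity of $g$ on the compact range of $u$) is precisely the mechanism that makes the standard argument go through given that $f$ is only controlled near the origin, and you correctly identify the embedding into $C_0(\Rn)$ as the essential input.
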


\begin{proof}
By Lemma \ref{lem_reflexi_embe}, $\cX_{\mm,p}(\Rn) \hookrightarrow C_0(\Rn)$. From \eqref{eq_phi_basic}, 
it is standard to see that $\Phi \in C^1(\cX_{\mm,p}(\Rn) , \R)$. 
\end{proof}

Next, we check that critical points for $I$ are weak solutions to \eqref{nbi}. In the positive mass case, this is not entirely trivial due to the asymmetric role of $\Psi$ and $\Phi$ in the definition of a critical point. We have

\begin{prop}\label{prop_sol_crit}
Let $\Phi,\Psi$ be as in \eqref{eq_PhiPsi_BI}, 
and assume \eqref{eq_phi_basic}. Then, for each $\lambda \in \R^+$ the following are equivalent:
\begin{itemize}
\item[-] $u \in \mathcal{D}^{1,2}_{\mm,p}(\Rn)$ is critical for $I_\lambda \equiv \lambda \Psi - \Phi$; 
\item[-] $u \in \mathcal{D}^{1,2}_{\mm,p}(\Rn)$ is a weak solution to
\begin{equation}\label{eq_weaksol_lambda}
\diver \left( \frac{Du}{\sqrt{1-|Du|^2}} \right) + \frac{\mm(1-\lambda)}{2\lambda}|u|^{p-2}u + \lambda^{-1} f(u) = 0, 
\end{equation}
in particular, $u \in W^{2,q}_\loc(\R^n)$ for $q \in [2,\infty)$, and $\|Du\|_\infty<1$.
\end{itemize}
\end{prop}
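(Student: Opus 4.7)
The plan is to apply Proposition \ref{prop_criticalpoints_2} with an \emph{effective} nonlinearity
\[
\tilde f(s) \equiv \lambda^{-1} f(s) + \frac{\mm(1-\lambda)}{2\lambda}|s|^{p-2}s,
\]
so that \eqref{eq_weaksol_lambda} is precisely the equation $\mathcal{BI}_{\tilde f(u)}$. The core of the argument is to rewrite the abstract critical point condition for $I_\lambda$ as condition (c) of Proposition \ref{prop_criticalpoints_2} with $\tilde f$ in place of $f$.

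First I would split $\Psi = A + B$, where
\[
A(u) \equiv \int_{\Rn}\!\!\left(1-\sqrt{1-|Du|^2}\right) \text{ on } D(\Psi), \qquad B(u) \equiv \frac{\mm}{2p}\int_{\Rn}|u|^p.
\]
Since $\cX_{\mm,p}(\Rn) \hookrightarrow L^p(\Rn)$ by Lemma \ref{lem_reflexi_embe}, the functional $B$ is a real-valued, convex, $C^1$ functional with $B'(u)=\frac{\mm}{2}|u|^{p-2}u$, while $A$ remains convex and lower semicontinuous. Because $B$ is everywhere finite and continuous, an elementary computation (or the Moreau--Rockafellar sum rule, whose qualification is trivially satisfied) gives $\partial\Psi(u)=\partial A(u)+B'(u)$. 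Together with $\Phi'(u)=f(u)+\frac{\mm}{2}|u|^{p-2}u$, rearranging $\lambda^{-1}\Phi'(u)\in\partial\Psi(u)$ produces $\tilde f(u)\in\partial A(u)$, that is,
\[
A(v)-A(u)\ \ge\ \int_{\Rn}\tilde f(u)(v-u) \qquad \text{for all } v \in D(\Psi).
\]
Since every critical point of $I_\lambda$ lies in $D(\Psi)$, one gets $\|Du\|_\infty\le 1$ and $u\in\mathcal{D}^{1,2}_{\mm,p}(\Rn)$ via \eqref{eq_confronorm}(a); moreover the admissible $v$ range precisely over $\{\psi\in\mathcal{D}^{1,2}_{\mm,p}(\Rn):\|D\psi\|_\infty\le 1\}$, while for $v\notin D(\Psi)$ the inequality \eqref{cp} is trivial. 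Using the cancellation $A(v)-A(u)=\int(\sqrt{1-|Du|^2}-\sqrt{1-|Dv|^2})$, the displayed inequality is exactly condition (c) of Proposition \ref{prop_criticalpoints_2} applied to $\tilde f$.

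To invoke Proposition \ref{prop_criticalpoints_2}, one must check that $\eta\mapsto\int\tilde f(u)\eta$ extends continuously to $\mathcal{D}^{1,2}_{\mm,p}(\Rn)$. The term $|u|^{p-2}u$ lies in $L^{p'}(\Rn)$ since $u\in L^p(\Rn)$; for $f(u)$, combining $u\in C_0(\Rn)\cap L^p(\Rn)$ (Lemma \ref{lem_reflexi_embe}(ii)) with the growth control \eqref{eq_phi_basic} yields $f(u)\in L^{p'}(\Rn)$ as well. Proposition \ref{prop_criticalpoints_2} then provides both directions of the equivalence, and transports the regularity $u\in W^{2,q}_\loc(\Rn)$ and $\|Du\|_\infty<1$ to the present setting.

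The main obstacle I anticipate is the careful bookkeeping between the ambient space $\cX_{\mm,p}(\Rn)$, where $I_\lambda$ is defined, and $\mathcal{D}^{1,2}_{\mm,p}(\Rn)$, where weak solutions live; this matching relies on \eqref{eq_confronorm} combined with the fact that the subdifferential inequality needs only be tested against $v$ of bounded gradient. The asymmetric role played by $\Psi$ (which now contains both a smooth and a nonsmooth convex piece involving $|s|^p$) and $\Phi$ (which carries the compensating $|s|^p$ piece) is the reason why, in the positive-mass case $\mm>0$, the effective nonlinearity $\tilde f$ differs from $\lambda^{-1}f$; once the sum rule is set up and the continuity check is performed, the proof reduces to invoking Proposition \ref{prop_criticalpoints_2}.
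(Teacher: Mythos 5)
Your proof is correct and follows essentially the same route as the paper. The paper's Lemma~\ref{lem_critpoint} is precisely the self-contained version of the subdifferential sum rule $\partial(\hat\Psi + T) = \partial\hat\Psi + T'$ for a $C^1$ convex $T$ that you invoke (via Moreau--Rockafellar or by direct computation); the paper then applies it with $T(u) = \frac{\mm\lambda}{2p}\|u\|_p^p$ to shift the $|u|^p$ term from $\lambda\Psi$ into the smooth part, which is exactly your decomposition $\Psi = A + B$ and the identification of the effective nonlinearity $\tilde f$ feeding into Proposition~\ref{prop_criticalpoints_2}(c). Your check that $\eta\mapsto\int\tilde f(u)\eta$ extends continuously (via $u\in C_0\cap L^p$ and \eqref{eq_phi_basic}) supplies the same hypothesis that the paper covers through Lemma~\ref{lem_phi1}.
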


To prove the result, we need the following general lemma. 

\begin{lemma}\label{lem_critpoint}
Let $X$ be a normed space, $u \in X$ and $\hat \Psi$, $\hat \Phi$ satisfy $(\Psi_1),(\Phi_1)$, respectively. Then, the following are equivalent:
\begin{itemize}
\item[{\rm (i)}] $u$ is a critical point for $\hat{\Psi} - \hat{\Phi}$;
\item[{\rm (ii)}] for some (or equivalently, every) convex, $C^1$ functional $T : X \to \R$, 
$u$ is a critical point for $\hat\Psi_T - \hat\Phi_T$ with $\hat \Psi_T \equiv \hat \Psi + T$ and $\hat \Phi_T \equiv \hat \Phi + T$. 
\end{itemize}
\end{lemma}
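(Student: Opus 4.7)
The plan rests on the Remark following the definition of critical point, which reformulates (i) as $\hat\Phi'(u)\in \partial\hat\Psi(u)$ and (ii) as $\hat\Phi'(u)+T'(u)\in\partial(\hat\Psi+T)(u)$, where $\partial$ is the convex subdifferential. Rather than invoking the Moreau--Rockafellar sum rule $\partial(\hat\Psi+T)(u)=\partial\hat\Psi(u)+\{T'(u)\}$, I would argue the equivalence (i) $\Leftrightarrow$ (ii) elementarily by manipulating the inequalities defining criticality. Note that the ``some or every'' clause is then an automatic consequence, since (i) depends only on $\hat\Psi,\hat\Phi,u$ and not on $T$.

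The direction (i) $\Rightarrow$ (ii) is immediate: the convexity and $C^1$-regularity of $T$ give the tangent estimate $T(v)-T(u)\ge T'(u)(v-u)$ for every $v\in X$, and adding this to the critical point inequality for $\hat\Psi-\hat\Phi$ yields, term by term, the critical point inequality for $\hat\Psi_T-\hat\Phi_T$.

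The main obstacle is the converse (ii) $\Rightarrow$ (i), for which naively subtracting the same convexity inequality for $T$ from (ii) points the wrong way. My fix would be to test (ii) along the convex combination $w_t \equiv u+t(v-u)$, $t\in(0,1]$, and divide by $t$ to obtain
\[
\frac{\hat\Psi(w_t)-\hat\Psi(u)}{t}+\frac{T(w_t)-T(u)}{t}\ \ge\ \hat\Phi'(u)(v-u)+T'(u)(v-u).
\]
Preliminarily one notes that $\hat\Psi(u)<\infty$ (test (ii) with $v=0$, using $\hat\Psi(0)=0$ from $(\Psi_1)$ together with the finiteness of $T$ and $T'(u)$), and one may assume $\hat\Psi(v)<\infty$, since otherwise (i) is trivial. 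As $t\to 0^+$, the $T$-quotient converges to $T'(u)(v-u)$ by differentiability of $T$, while convexity of $\hat\Psi$ makes $t\mapsto t^{-1}[\hat\Psi(w_t)-\hat\Psi(u)]$ nondecreasing on $(0,1]$ with upper bound $\hat\Psi(v)-\hat\Psi(u)$, hence it decreases to the right directional derivative $\hat\Psi'_+(u;v-u)\in\R$. Passing to the limit and canceling the identical $T'(u)(v-u)$ terms gives $\hat\Psi'_+(u;v-u)\ge \hat\Phi'(u)(v-u)$; combined with $\hat\Psi(v)-\hat\Psi(u)\ge \hat\Psi'_+(u;v-u)$, again from convexity of $\hat\Psi$, this chains to (i).
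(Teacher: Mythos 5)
Your argument is correct and essentially matches the paper's proof: the forward direction is identical (add the tangent inequality $T(v)-T(u)\ge T'(u)(v-u)$ to the criticality inequality), and the reverse direction in the paper likewise tests along the segment $w=(1-t)u+tv$, uses convexity of $\hat\Psi$ together with $T(w)-T(u)=tT'(u)(v-u)+o(t)$, divides by $t$, and sends $t\to0^+$. The only cosmetic difference is that you route through the right directional derivative $\hat\Psi'_+(u;v-u)$ and then invoke the subgradient inequality $\hat\Psi(v)-\hat\Psi(u)\ge\hat\Psi'_+(u;v-u)$, whereas the paper replaces the quotient $t^{-1}(\hat\Psi(w)-\hat\Psi(u))$ directly by its convexity upper bound $\hat\Psi(v)-\hat\Psi(u)$ before passing to the limit, which packages your two steps into one and sidesteps the (minor, in any case harmless) question of whether the directional derivative is a priori finite.
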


\begin{proof}
(i) $\Rightarrow$ (ii) for each $T$.

	Since $T$ is convex, $T(v) - T(u) - T'(u)(v-u) \ge 0$. 
By combining this inequality with the definition of $u$ being a critical point for $\hat\Psi - \hat\Phi$ 
	we deduce 
\[
\hat\Psi_T(v) - \hat\Psi_T(u) - \hat\Phi_T'(u)(v-u) \ge 0 \qquad \text{for all $v \in X$}, 
\]
thus $u$ is critical for $\hat\Psi_T - \hat\Phi_T$.

(ii) for some $T$ $\Rightarrow$ (i).

Assume that there exists a convex functional $T \in C^1(X,\R)$ for which 
\[
\big(\hat\Psi(w) + T(w)\big) - \big(\hat\Psi(u) + T(u)\big) - \big( \hat\Phi'(u) + T'(u)\big)(w-u) \ge 0 \qquad \text{for all }  w \in X.
\]
Fix $v \in X$ and write $w = (1-t)u + tv$, $t \in (0,1]$. Since $\hat\Psi$ is convex and $T$ is of class $C^1$, 
\[
\hat\Psi(w) - \hat\Psi(u) \le t(\hat\Psi(v) - \hat\Psi(u)), \qquad T(w)-T(u) = t T'(u)(v-u) + o(t),
\]
which yields 
\[
t(\hat\Psi(v)-\hat\Psi(u)) + t T'(u)(v-u) + o(t) - t\big( \hat\Phi'(u) + T'(u)\big)(v-u) \ge 0.
\]
Dividing by $t$ and letting $t \to 0$ we conclude 
\[
\hat\Psi(v)-\hat\Psi(u) - \hat\Phi'(u)(v-u) \ge 0,
\]
as claimed.
\end{proof}

\begin{proof}[Proof of Proposition \ref{prop_sol_crit}]
Applying Lemma \ref{lem_critpoint} with 
\[
T(u) = \frac{\mm \lambda}{2p}\norm{u}_p^p, \quad \hat\Psi(u) = \lambda\int_{\Rn} \left( 1- \sqrt{1-|Du|^2}\right), \quad \hat\Phi(u) = \int_{\Rn} \left( F(u) + \frac{\mm(1-\lambda)}{2p} \abs{u}^p \right)
\]
we deduce that $u$ is critical for $I_\lambda$ if and only if it is critical for $\hat \Psi - \hat \Phi$. In this respect, 
notice that $\Phi,\hat \Phi$ and $T$ are of class $C^1$ by Lemma \ref{lem_phi1}. 
Proposition \ref{prop_criticalpoints_2} then ensures that critical points of $\hat \Psi - \hat \Phi$ coincide with weak solutions to \eqref{eq_weaksol_lambda}, and enjoy the claimed regularity properties. 
\end{proof}

We next prove that under our assumptions $I_\lambda$ satisfies $\rm (IB)$.

\begin{lemma}\label{lem_IB}
Assume that $f \in C(\R)$ satisfies {\rm (f1)}. Then, $I_\lambda: \cX_{\mm,p}(\R^n) \to \R$ satisfies ${\rm (IB)}$.
\end{lemma}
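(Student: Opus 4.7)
The plan is to exploit the Pohozaev identity of Proposition \ref{poid} together with the energy bound $I_\lambda(u) \le c$ to control $\norm{Du}_2$, and then to deduce bounds on the remaining components of the $\cX_{\mm,p}$-norm.

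Fix $u \in \cK_{[a,b]}^c$. By Proposition \ref{prop_sol_crit}, $u$ is a weak solution to \eqref{eq_weaksol_lambda} for some $\lambda \in [a,b]$, with $u \in W^{2,q}_\loc(\Rn)$ and $\norm{Du}_\infty < 1$. Writing $f_\lambda(s) = \tfrac{\mm(1-\lambda)}{2\lambda}|s|^{p-2}s + \lambda^{-1}f(s)$ and denoting by $F_\lambda$ its primitive, the identity $\lambda F_\lambda(s) = \tfrac{\mm(1-\lambda)}{2p}|s|^p + F(s)$ gives directly
\[
I_\lambda(u) = \lambda \left[\int_{\Rn} \left(1 - \sqrt{1-|Du|^2}\right) - \int_{\Rn} F_\lambda(u)\right].
\]
The strict bound $\norm{Du}_\infty < 1$ yields $\int |Du|^2/\sqrt{1-|Du|^2} < \infty$, while (f1) and $u \in L^\infty \cap L^p$ (from $\cX_{\mm,p}\hookrightarrow C_0$ together with Sobolev) give $\int |F_\lambda(u)| < \infty$, so assumption \eqref{as:finite} holds for the nonlinearity $f_\lambda$. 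Applying Proposition \ref{poid} and subtracting the relation $\int(1 - \sqrt{1-|Du|^2}) - \int F_\lambda(u) \le c/\lambda$, I obtain
\[
\frac{1}{n}\int_{\Rn} \frac{|Du|^2}{\sqrt{1-|Du|^2}} \le \frac{c}{\lambda} \le \frac{c}{a}
\]
(and $\cK_{[a,b]}^c = \emptyset$ if $c<0$). Then $\tfrac{1}{2}|Du|^2 \le 1-\sqrt{1-|Du|^2} \le |Du|^2/\sqrt{1-|Du|^2}$ yields a uniform bound on $\norm{Du}_2$, and consequently also on $|\int F_\lambda(u)|$.

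By Sobolev's inequality $\norm{u}_{2^*}$ is bounded; from $\norm{Du}_\infty \le 1$ and H\"older, $\norm{Du}_{p_1}^{p_1} \le \norm{Du}_2^2$, so $\norm{Du}_{p_1}$ is bounded. If $\mm = 0$ this already closes the estimate. For $\mm > 0$, assumption (f1$_{\mm,p}$) combined with $\lambda \ge a$ allows to choose $\epsilon > 0$ small enough and $\delta > 0$ (both independent of $\lambda \in [a,b]$) such that $F_\lambda(s) \le -\gamma|s|^p$ for $|s| \le \delta$, with $\gamma > 0$ uniform. Then
\[
\gamma \int_{\{|u|\le\delta\}} |u|^p \le -\int_{\Rn} F_\lambda(u) + \int_{\{|u|>\delta\}} |F_\lambda(u)|,
\]
the first term being already controlled. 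For the second, a Gagliardo--Nirenberg bound of the form $\norm{u}_\infty \le C \norm{Du}_\infty^{n/(n+2^*)}\norm{u}_{2^*}^{2^*/(n+2^*)}$ (using $\norm{Du}_\infty \le 1$) provides uniform control on $\norm{u}_\infty$, hence on $\sup_{|s|\le \norm{u}_\infty}|F_\lambda(s)|$, while Chebyshev gives $|\{|u|>\delta\}| \le \norm{u}_{2^*}^{2^*}/\delta^{2^*}$. The tail $\int_{\{|u|>\delta\}}|u|^p \le \norm{u}_\infty^p |\{|u|>\delta\}|$ is then controlled, so $\norm{u}_p$ is bounded.

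The main obstacle is the $\mm > 0$ case: the negativity and size of $F_\lambda$ near $0$, together with the uniformity of the constants $\gamma,\delta$ over $\lambda \in [a,b]$, are delicate, and the $L^\infty$-bound on $u$ (used both to bound $F_\lambda$ on $\{|u|>\delta\}$ and to control the measure of this set) must be extracted purely from $\norm{Du}_2$ and $\norm{Du}_\infty \le 1$, without presuming a priori boundedness in $\cX_{\mm,p}$.
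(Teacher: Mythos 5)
Your proof is correct and follows the same overall strategy as the paper: apply the Pohozaev identity to get $\frac{1}{n}\int |Du|^2/\sqrt{1-|Du|^2} = I_\lambda(u)/\lambda \le c/a$, hence uniform bounds on $\norm{Du}_2$, $\norm{u}_{2^*}$, $\norm{Du}_{p_1}$ and $\norm{u}_\infty$, and then exploit the negativity of the potential near $s=0$ to close the $\mm>0$ case. The one structural difference is in how the $\|u\|_p$-bound is obtained when $\mm>0$: you work with the $\lambda$-dependent primitive $F_\lambda$ of the modified nonlinearity $f_\lambda$ and have to check that $\gamma, \delta$ can be chosen uniformly over $\lambda\in[a,b]$, whereas the paper simply uses $\lambda\Psi(u)\ge 0$ to drop the convex part of $I_\lambda(u)\le c$ entirely and obtains $-\Phi(u)=-\int\bigl(F(u)+\tfrac{\mm}{2p}|u|^p\bigr)\le c$ with a $\lambda$-independent potential; that makes the uniformity automatic and the computation shorter. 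Your Gagliardo--Nirenberg step $\norm{u}_\infty\le C\norm{Du}_\infty^{n/(n+2^*)}\norm{u}_{2^*}^{2^*/(n+2^*)}$ is a valid substitute for the paper's appeal to $\cX_{\mm,p}(\Rn)\hookrightarrow C_0(\Rn)$. Finally, the worry you raise in the closing paragraph is unfounded: the $L^\infty$-bound genuinely does follow from $\norm{u}_{2^*}$ and $\norm{Du}_\infty\le 1$ alone, and the $\lambda$-uniformity of $\gamma,\delta$ is elementary since $F_\lambda(s)=\tfrac{\mm(1-\lambda)}{2p\lambda}|s|^p+\lambda^{-1}F(s)\approx -\tfrac{\mm(1+\lambda)}{2p\lambda}|s|^p$ near $0$ by $({\rm f1}_{\mm,p})$, with coefficient bounded away from zero for $\lambda\in[a,b]$; so your argument is in fact complete.
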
	

\begin{proof}
Without loss of generality, we can assume $c>0$ in the definition of $\cK_{[a,b]}^c$. By Proposition \ref{prop_sol_crit}, critical points $u \in \cK_{[a,b]}^c$ for $I_\lambda$ are weak solutions to \eqref{eq_weaksol_lambda}. Moreover, $u \in W^{2,q}_\loc(\R^n)$ for $q \in [2,\infty)$, and $\|Du\|_\infty<1$. The latter, together with $u \in \cX_{\mm,p}(\R^n)$ and (f1), imply
\[
\int_{\Rn} \left( \frac{|Du|^2}{\sqrt{1-|Du|^2}} + \mm|u|^p + |F(u)| \right) < \infty,
\]
thus by combining the Pohozaev identity \eqref{eq_poho_main} and $I_\lambda(u) \le c$ we deduce
	\begin{equation}\label{eq_poho_mm}
	\begin{aligned}
	\int_{\Rn} \frac{|D u|^2}{\sqrt{1 - |D u|^2 }} &= n\int_{\R^n} \left( 1-\sqrt{1-|Du|^2} \right) 
	- \frac{n}{\lambda} \int_{\Rn} \left\{  \frac{\mm (1-\lambda)}{2p} \abs{u}^p + F(u)  \right\} 
	\\
	& = \frac{n}{\lambda} I_\lambda (u) \leq \frac{nc}{\lambda}  \leq \frac{nc}{a}. 
	\end{aligned}
	\end{equation}
Therefore, $\cK_{[a,b]}^c \subset D(\Psi)$ is bounded in $\mathcal{D}^{1,2}(\R^n)$, hence in $\cX_{0,2^*}(\R^n)$ and in $C(\R^n)$ because of \eqref{eq_embe} and \eqref{eq_confronorm}. This settles the case $\mm =0$. Assume $\mm > 0$, and 
fix $\delta$ such that 
	\[
		f(s) s + \frac{\mm}{2} \abs{s}^p \leq - \frac{\mm}{4} \abs{s}^p \quad \text{if $\abs{s} \leq \delta$}.
	\]
This is possible by the definition of $\mm$. Choose a constant $\kappa$ so that $\|u\|_{2^*} + \|u\|_\infty \le \kappa$ for each $u \in \cK_{[a,b]}^c$. Then, there exists a constant $C' = C'(\delta,\kappa)$ such that
	\[
		f(s) s + \frac{\mm}{2} \abs{s}^p \leq - \frac{\mm}{4}\abs{s}^p + C' \abs{s}^{2^\ast} \quad \text{for each $\abs{s} \leq \kappa$}. 
	\]
Using this with $I_\lambda (u) \leq c$ and $\Psi(u) \geq 0$, we get 
	\[
		c \geq I_\lambda (u) \geq - \int_{\Rn} \qty{ F(u) + \frac{\mm}{2p} \abs{u}^p } \geq \frac{\mm}{4p} \norm{u}_p^p - \frac{C'}{2^\ast} \norm{u}_{2^\ast}^{2^\ast}. 
	\]
Therefore, $\cK_{[a,b]}^c$ is bounded in $\mathcal{D}^{1,2}_{\mm,p}(\R^n)$ and, by \eqref{eq_confronorm}, in $\cX_{\mm,p}(\Rn)$.
\end{proof}

%

Property $(\Phi_2)$ does not hold in $\cX_{\mm,p}(\R^n)$ because of the translation invariance of $\Phi$, and forces us to restrict $\Psi,\Phi$ to suitable closed subspaces $X \hookrightarrow \cX_{\mm,p}(\R^n)$ where $(\Phi_2)$ is restored. Notice that any of $(\Psi_1),(\Psi_2),(\Psi_3),(\Phi_1)$ is inherited by $\Psi,\Phi$ when restricted to any closed subspace $X$. Assume that a topological group $G$ acts continuously by isometries on $\cX_{\mm,p}(\R^n)$ with an action $\cdot \ : G \times X \to X$, and define the subspace of $G$-invariant functions
	\begin{equation}\label{def_Xo}
		\cX_{\mm,p}(\R^n)_G \equiv \Set{ u \in \cX_{\mm,p}(\R^n) |  g \cdot u = u \ \text{for all $g \in \cO$} }.
	\end{equation}	

\begin{example}\label{ex_fundamental}
Let $G$ be a closed subgroup of the isometry group $\mathrm{Iso}(\R^n)$ of $\R^n$, and consider the natural action induced on $\cX_{\mm,p}(\R^n)$ by defining $(g \cdot u)(x) \equiv u(g^{-1}(x))$. The action is continuous and, for each $g \in G,$ $u \to  g \cdot u$ is an isometry of $\cX_{\mm,p}(\R^n) $. 
\end{example}

In the present paper we shall focus for simplicity on the next two examples:

\begin{example}\label{ex_rad}
Choosing in Example \ref{ex_fundamental} the orthogonal group ${\rm O}(n) \le \mathrm{Iso}(\R^n)$ acting in the standard way by matrix multiplication on $\Rn$, we obtain the subspace of radially symmetric functions with respect to the origin.
\end{example}

\begin{example}\label{ex_skew} 
For $n$ and $d \in \N$ satisfying
\begin{equation}\label{nek_intro2}
n \ge 4, \qquad 2 \le d \le \frac{n}{2}, \qquad \text{and} \quad n - 2d \neq 1,
\end{equation}
write a point of $\R^n$ as $x = (x_1,x_2,x_3) \in \R^d \times \R^d \times \R^{n-2d}$. Consider the subgroup
\[
H \equiv {\rm O}(d) \times {\rm O}(d)\times {\rm O}(n-2d) \le  {\rm O}(n)
\]
sitting in ${\rm O}(n)$ as block-diagonal matrices, and the action induced by that of Example \ref{ex_rad} on $\cX_{\mm,p}(\R^n)$. Consider also the discrete subgroup $\la\tau \ra$ generated by the involution $\tau \in \mathrm{Iso} (\Rn)$, $\tau (x_1,x_2,x_3) \equiv (x_2,x_1,x_3)$, and its action on $\cX_{\mm,p}(\R^n)$ given by
\[
(\tau \circ u)(x) \equiv - u\big( \tau(x) \big).
\]
Then, $\tau$ induces an action on $\cX_{\mm,p}(\Rn)_H$, and $\big( \cX_{\mm,p}(\R^n)_H \big)_{\la\tau \ra}$ is the subspace of $H$-invariant functions which are odd with respect to $\tau$. To our knowledge, the idea of considering such symmetries to produce non-radial solutions was first introduced in \cite{BW2}.
\end{example}

Following \cite[\S1.5]{Wi96}, we describe a sufficient condition on a group $G \le \mathrm{Iso}(\R^n)$, acting on $\cX_{\mm,p}(\R^n)$ as in Example \ref{ex_fundamental}, so that $\Phi$ restricted to $\cX_{\mm,p}(\R^n)_G$ satisfies $(\Phi_2)$. Let $m(y,r,\cO)$ be the maximal number of disjoint balls of radius $r$ centered at points in the $G$-orbit of $y$, namely:
	\[
	m(y,r,\cO) \equiv \sup \Set{ \ell \in \mathbb{N} | \begin{array}{l}
	\text{there exist $g_1,\dots, g_\ell \in \cO$ such that} \\[0.2cm]
	\text{$B( g_i y , r  ) \cap B(g_j y, r) = \emptyset$ if $i \neq j$}.
	\end{array} }
	\]
The following property is proved in \cite[Th\'eor\`eme III.4]{Li82} (see also \cite[\S 1.5]{Wi96}). 
We include the argument for the sake of completeness.
	\begin{prop}\label{prop:1}
		Let $\{u_k\}_{k=1}^\infty \subset \cX_{\mm,p}(\Rn)_G \cap D(\Psi)$ be bounded in $\cX_{\mm,p}(\Rn)$, and assume that  
		for some $r>0$, 
	\begin{equation}\label{1}
		\lim_{|y| \to \infty} m(y,r,\cO) = \infty.
	\end{equation}
Then, there exists $u \in \cX_{\mm,p}(\Rn)_G$ such that, up to a subsequence, $\norm{u_k - u }_{q} \to 0$ holds for any $q \in (p, \infty)$. 
	\end{prop}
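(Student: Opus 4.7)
The plan is to follow P.-L.~Lions' classical orbit-concentration method, adapted to the equi-Lipschitz setting of $D(\Psi)$. First I would extract a candidate limit: by reflexivity of $\cX_{\mm,p}(\R^n)$ (Lemma \ref{lem_reflexi_embe}) and Remark \ref{rem_D12_in_Linfty}, up to a subsequence $u_k \rightharpoonup u$ in $\cX_{\mm,p}(\R^n)$ and $u_k \to u$ locally uniformly on $\R^n$. The subspace $\cX_{\mm,p}(\R^n)_G$ is weakly closed (it is the fixed set of a continuous linear action by isometries), so $u \in \cX_{\mm,p}(\R^n)_G$. Passing to the limit in the $1$-Lipschitz estimate $|u_k(x)-u_k(y)|\le|x-y|$ yields $|Du|\le 1$ a.e., so $u \in D(\Psi)$ by \eqref{eq_dom_psi} and $v_k \equiv u_k-u$ satisfies $|Dv_k|\le 2$ a.e.\ together with $v_k \to 0$ locally uniformly.

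Next, I would reduce $L^q$-convergence to a sup-over-balls concentration statement. Fix $q \in (p,\infty)$. The embedding $\cX_{\mm,p}(\R^n) \hookrightarrow C_0(\R^n)$ and the boundedness of $\{u_k\}$ guarantee that $\{v_k\}$ is uniformly bounded in $L^\infty(\R^n) \cap L^p(\R^n)$ (with $L^{2^\ast}$ replacing $L^p$ when $\mm=0$), hence uniformly bounded in $L^q(\R^n)$ by interpolation. Covering $\R^n$ by balls $\{B_r(z_j)\}$ of finite overlap $N$ and using the pointwise bound $|v_k|^q \le \|v_k\|_{L^\infty(B_r(z_j))}^{q-p}\,|v_k|^p$ on each $B_r(z_j)$, summation gives
\[
\|v_k\|_q^q \;\le\; CN\,\|v_k\|_p^p \; \sup_{y\in\R^n}\|v_k\|_{L^\infty(B_r(y))}^{q-p}.
\]
It therefore suffices to show that $\sup_y \|v_k\|_{L^\infty(B_r(y))} \to 0$.

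The heart of the argument exploits \eqref{1} to exclude the failure of this vanishing. Suppose, for contradiction, that along a subsequence there exist $y_k\in\R^n$ and $\eta>0$ with $\|v_k\|_{L^\infty(B_r(y_k))}\ge\eta$; by continuity pick $\tilde y_k\in\ov{B_r(y_k)}$ with $|v_k(\tilde y_k)|\ge\eta$. Local uniform convergence of $v_k \to 0$ forces $|\tilde y_k|\to\infty$. The equi-Lipschitz bound $|Dv_k|\le 2$ gives $|v_k|\ge \eta/2$ on $B_\rho(\tilde y_k)$ with $\rho=\min\{\eta/4,r\}$, hence
\[
\int_{B_r(\tilde y_k)} |v_k|^q \;\ge\; \int_{B_\rho(\tilde y_k)} |v_k|^q \;\ge\; \delta
\]
for a universal $\delta>0$. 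Since $v_k \in \cX_{\mm,p}(\R^n)_G$ and $G \le \mathrm{Iso}(\R^n)$ acts by Lebesgue-measure-preserving isometries, a change of variables shows the same lower bound on every orbit copy $B_r(g\tilde y_k)$, $g\in\cO$. By \eqref{1} applied at $\tilde y_k$ I would then pick $m_k \to \infty$ elements $g_{k,1},\dots,g_{k,m_k}\in\cO$ for which $\{B_r(g_{k,i}\tilde y_k)\}_{i=1}^{m_k}$ is pairwise disjoint; summing yields $\|v_k\|_q^q \ge m_k\delta \to \infty$, contradicting the uniform $L^q$-bound of the second step. Thus $\sup_y \|v_k\|_{L^\infty(B_r(y))}\to 0$, and the reduction gives $\|u_k-u\|_q \to 0$ for every $q \in (p,\infty)$.

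The main subtlety is the scale-matching between the concentration ball and the orbit-counting: the radius at which mass concentrates must be no larger than the $r$ appearing in \eqref{1}, which is secured by capping $\rho$ at $r$ in the equi-Lipschitz step. The bound $|Dv_k|\le 2$ thereby plays the role that Sobolev gradient control usually plays in the classical Lions argument, making the machinery transfer cleanly to the present lower-semicontinuous setting.
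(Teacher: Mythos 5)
Your proof is correct, and while it follows the same broad Lions-type concentration strategy as the paper, it reorganizes the argument in a way that genuinely bypasses the paper's Lemma \ref{lem:2}. The paper measures the vanishing via $\sup_{y}\int_{B(y,r)}|v_k|^p$, proves a separate P.-L.~Lions-type lemma (using a tiling of $\Rn$ by unit cubes together with the Sobolev embedding $W^{1,p}(Q)\hookrightarrow L^q(Q)$) to convert that vanishing into $L^q$-decay, and then establishes the vanishing by a direct orbit-counting estimate $\norm{v_k}_p^p \ge m(y,r,\cO)\int_{B(y,r)}|v_k|^p$. You instead measure the vanishing via $\sup_y\norm{v_k}_{L^\infty(B_r(y))}$, which lets you replace the Sobolev embedding step by the elementary interpolation $\|v_k\|_q^q \le N\|v_k\|_p^p\sup_y\|v_k\|_{L^\infty(B_r(y))}^{q-p}$, and you exploit the equi-Lipschitz bound $|Dv_k|\le 2$ (which holds precisely because $\{u_k\}\subset D(\Psi)$) to upgrade a pointwise concentration to a lower bound on the local $L^q$ mass. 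Both routes are valid; yours is arguably more elementary in that it does not invoke the Rellich-type cube argument, but it is more tightly tied to the Lipschitz constraint of $D(\Psi)$, while the paper's Lemma \ref{lem:2} is stated for any bounded sequence in $\cX_{\mm,p}$ with uniform $L^\infty$ bound and would work more generally. The contradiction argument at the end and the paper's direct estimate are equivalent; you could streamline by running the paper's direct computation instead of extracting concentration points. One small point worth making explicit: the uniform $L^p$-boundedness of $v_k$ when $\mm=0$ (where the norm does not contain $\|\cdot\|_p$) is by the Sobolev inequality $\|v_k\|_{2^*}\lesssim\|Dv_k\|_2$, which you implicitly assume.
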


\begin{remark}\label{rem_propriety1}
A direct check shows that both of the groups ${\rm O}(n)$ in Example \ref{ex_rad} and $H$ in Example \ref{ex_skew} satisfy \eqref{1}. 
\end{remark}

To prove \Cref{prop:1}, we need the following lemma:

	\begin{lemma}\label{lem:2}
		Let $r>0$ and $\{u_k\}_{k=1}^\infty \subset D(\Psi)$ be a bounded sequence in $\cX_{\mm,p}(\R^n)$. If 
		\[
			\lim_{k \to \infty} \sup_{y \in \Rn} \int_{B(y,r)} \abs{u_k}^{p} = 0,
		\]
	then $u_k \to 0$ strongly in $L^q(\Rn)$ for any $q \in (p,\infty)$. 
	\end{lemma}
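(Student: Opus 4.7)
The key observation is that every element of $D(\Psi)$ is automatically $1$-Lipschitz, since $\norm{Du_k}_\infty \le 1$. This built-in regularity allows me to bypass the usual Sobolev/Gagliardo--Nirenberg interpolation of the classical Lions vanishing lemma and, in one step, upgrade the assumed vanishing of local $L^p$ masses to vanishing of $\norm{u_k}_\infty$. The $L^q$ conclusion then follows by an elementary Hölder estimate.

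\emph{Step 1 (uniform global bounds).} By the embedding $\cX_{\mm,p}(\R^n) \hookrightarrow C_0(\R^n)$ from Lemma~\ref{lem_reflexi_embe}(ii), the sequence $\{u_k\}$ is uniformly bounded in $L^\infty(\R^n)$. Moreover $\norm{u_k}_p$ is uniformly bounded: directly from the $\cX_{\mm,p}$-norm when $\mm>0$, and via the Sobolev embedding $\mathcal{D}^{1,2}(\R^n) \hookrightarrow L^{2^\ast}(\R^n)$ when $\mm=0$ (recalling that the convention $p=2^\ast$ is in force in that case).

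\emph{Step 2 (from local $L^p$ vanishing to $L^\infty$ vanishing).} Assume without loss of generality that $\norm{u_k}_\infty>0$. Since $u_k \in C_0(\R^n)$, choose $x_k \in \R^n$ with $|u_k(x_k)| = \norm{u_k}_\infty$, and set $\delta_k \equiv \norm{u_k}_\infty/2$. The $1$-Lipschitz bound gives $|u_k| \ge \delta_k$ on $B(x_k,\delta_k)$. Two alternatives arise. If $\delta_k \le r$, then $B(x_k,\delta_k) \subset B(x_k,r)$ and integration yields $c_n \delta_k^{n+p} \le \int_{B(x_k,r)}|u_k|^p \le \sup_{y} \int_{B(y,r)}|u_k|^p$. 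If instead $\delta_k > r$, the lower bound $|u_k| \ge \delta_k$ holds on all of $B(x_k,r)$, so $c_n r^n \delta_k^p \le \sup_{y} \int_{B(y,r)}|u_k|^p$. In either case, the assumption forces $\delta_k \to 0$, hence $\norm{u_k}_\infty \to 0$.

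\emph{Step 3 (conclusion).} For every $q \in (p,\infty)$ the pointwise inequality $|u_k|^q \le \norm{u_k}_\infty^{q-p}|u_k|^p$ combined with the uniform bound on $\norm{u_k}_p$ from Step~1 immediately gives
\[\norm{u_k}_q^q \le \norm{u_k}_\infty^{q-p}\,\norm{u_k}_p^p \longrightarrow 0,\]
as desired.

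The main (and essentially only) nontrivial point is Step~2: once one exploits the Lipschitz regularity built into $D(\Psi)$, both the dichotomy on $\delta_k$ and the reduction to an $L^\infty$ statement are straightforward; the standard Lions-type interpolation, which would only cover $q \in (p, p^\ast)$, is not needed here, and this is precisely why the range $q \in (p,\infty)$ (all the way to $\infty$) can be reached.
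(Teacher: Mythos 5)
Your proof is correct, and it takes a genuinely different route from the paper's. The paper runs the standard Lions vanishing-lemma computation: after establishing the $L^\infty$ bound, it decomposes $\R^n$ into unit cubes, applies the Sobolev embedding $W^{1,p}(Q)\hookrightarrow L^q(Q)$ on each cube, and sums the resulting series over $\Z^n$ to obtain $\norm{u_k}_q \to 0$ for a single $q\in(p,p^\ast)$, then interpolates to cover all $q\in(p,\infty)$. You instead exploit the pointwise constraint $\norm{Du_k}_\infty\le 1$ that is baked into $D(\Psi)$: around a point where $|u_k|$ attains its maximum, the $1$-Lipschitz bound yields a positive lower bound for $|u_k|$ on a ball of radius $\norm{u_k}_\infty/2$, and the dichotomy on whether this radius exceeds $r$ or not forces $\norm{u_k}_\infty\to 0$ from the vanishing of local $L^p$ masses; a single H\"older estimate against the bounded $\norm{u_k}_p$ then gives all $q\in(p,\infty)$ at once. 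Your argument is shorter, avoids the cube decomposition and Sobolev embedding on cubes entirely, and delivers the stronger $L^\infty$ conclusion directly. The trade-off is scope: the paper's estimate only needs a uniform $W^{1,p}$ bound and thus would apply to any bounded sequence in $\cX_{\mm,p}(\R^n)$, whereas your Step~2 genuinely requires the Lipschitz constraint and so is specific to sequences in $D(\Psi)$ (which, of course, is exactly the hypothesis of the lemma).
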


	\begin{proof}
First, by \eqref{eq_embe} we know that $\{u_k\}_k$ is bounded in $L^\infty(\Rn)$. Hence, replacing balls with cubes, our assumption implies 
that for each $q \in [p,\infty)$
	\begin{equation}\label{2}
		\lim_{k \to \infty} \sup_{y \in \Rn} \int_{y+Q} \abs{u_k}^{q} = 0, \quad 
		\text{where} \ Q \equiv [0,1]^n. 
	\end{equation}
Moreover, in view of the $L^\infty$ bound and the interpolation inequality, 
it suffices to show that $\norm{u_k}_q \to 0$ for some $q \in (p,p^\ast)$. 
By Sobolev's embedding $W^{1,p} (Q) \hookrightarrow L^q(Q)$, we compute 
	\[
		\begin{aligned}
			\norm{u_k}^q_{q} 
			= \sum_{y \in \Z^n} \int_{y+Q} \abs{u_k}^q 
			&\leq \qty( \sup_{y \in \Z^n} \int_{y+Q} \abs{u_k}^{q} )^{1-\frac{p}{q}} 
			\sum_{y \in \Z^n} \qty( \int_{y+Q} \abs{u_k}^q )^{\frac{p}{q}} 
			\\ 
			&\leq C_0 \qty( \sup_{y \in \Z^n} \int_{y+Q} \abs{u_k}^{q} )^{1 - \frac{p}{q}  } 
			\sum_{y \in \Z^n} \norm{u_k}_{W^{1,p} (y+Q)}^{p} 
			\\
			&= C_0 \qty( \sup_{y \in \Z^n} \int_{y+Q} \abs{u_k}^{q} )^{1 - \frac{p}{q}  } \norm{u_k}_{W^{1,p} (\Rn)}^{p}. 
		\end{aligned}
	\]
Because of \eqref{eq_embe} (i) and our assumptions, $\norm{u_k}_{W^{1,p} (\Rn)}$ is bounded, so the conclusion follows from \eqref{2}.
	\end{proof}

	\begin{proof}[Proof of \Cref{prop:1}]
Let $\{u_k\}_{k=1}^\infty \subset \cX_{\mm,p}(\R^n)_G \cap D(\Psi)$ be bounded in $\cX_{\mm,p}(\R^n)$. By Remark \ref{rem_D12_in_Linfty}, we can assume that $u_k \rightharpoonup u$ weakly in $\cX_{\mm,p}(\R^n)$ and locally uniformly in $\Rn$. The local uniform convergence guarantees that $u \in \cX_{\mm,p}(\R^n)_G$ and that $\norm{Du}_\infty \le 1$, whence $u \in D(\Psi)$ by \eqref{eq_dom_psi}. Setting $v_k:= u_k - u$, the claim follows from Lemma \ref{lem:2} once we show that 
	\begin{equation}\label{3}
		\lim_{k \to \infty} \sup_{y \in \Rn} \int_{B(y,r)} \abs{v_k}^{p} \to 0.
	\end{equation}
In fact, for $y \in \Rn$, choose $g_1,\dots, g_{m(y,r,\cO)} \in \cO$ so that $\{B(g_i y,r)\}_i$ are pairwise disjoint. By \eqref{eq_embe} (i), 
in our assumptions $\norm{v_k}_{p} \le C$ for some constant $C$. On the other hand,  
	\[
		C \ge \norm{v_k}_{ p}^{p} \geq \int_{ \bigcup_{i=1}^{m(y,r,\cO)} B(g_i y,r)} \abs{v_k}^{p} 
		= m(y,r,\cO) \int_{B(y,r)} \abs{v_k}^{p}.
	\]
Let $\e>0$ be arbitrary. It follows from \eqref{1} that there exists $R_\e>0$ such that 
	\[
		\sup_{k \geq 1} \int_{B(y,r)} \abs{v_k}^{p}  < \e \quad \text{for all $\abs{y} \geq R_\e$}.
	\]
From $v_k \to 0$ locally uniformly in $\R^n$, 
	\[
		\lim_{k \to \infty} \sup_{\abs{y} \leq R_\e} \int_{B(y,r)} \abs{v_k}^{p} = 0,
	\]
which implies 
	\[
		\limsup_{k \to \infty} \sup_{y \in \Rn} \int_{B(y,r)} \abs{v_k}^{p}  \leq \e.
	\]
Thus, \eqref{3} holds by the arbitrariness of $\e$. 
	\end{proof}

\begin{prop}\label{prop:3}
Let $G \le \mathrm{Iso}(\R^n)$ act on $\cX_{\mm,p}(\R^n)$ as in Example \ref{ex_fundamental}, and assume that
	\[
		\quad \lim_{|y| \to \infty} m(y,r,\cO) = \infty \qquad \text{for some } \,  r>0 .
	\]
Then, $\Phi : \cX_{\mm,p} (\Rn)_G \to \R$ satisfies $(\Phi_2)$. 
\end{prop}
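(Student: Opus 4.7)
The plan is to extract a weakly convergent subsequence in $\cX_{\mm,p}(\Rn)_\cO$, upgrade it to locally uniform and $L^q$-strong convergence for $q > p$ via Proposition \ref{prop:1}, and then pass to the liminf in $\Phi'(u_l)(v - u_l)$. First, let $\{u_l\}$ be bounded in $\cX_{\mm,p}(\Rn)_\cO$. By reflexivity (Lemma \ref{lem_reflexi_embe}), up to a subsequence $u_{l_j} \rightharpoonup u$ weakly in $\cX_{\mm,p}(\Rn)$; since each element of $\cO$ acts as a (weakly continuous) linear isometry, the invariance passes to the limit and $u \in \cX_{\mm,p}(\Rn)_\cO$. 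Because $p_1 > n$, the chain $\cX_{\mm,p}(\Rn) \hookrightarrow W^{1,p_1}(\Rn) \hookrightarrow C^{0,\alpha}_\loc(\Rn)$ combined with Ascoli–Arzel\`a yields, along a further subsequence, locally uniform (hence a.e.) convergence. Moreover, the proof of Proposition \ref{prop:1} applies essentially verbatim to our setting: the assumption $u_l \in D(\Psi)$ was used there only to obtain a uniform $L^\infty$-bound, which in our case follows for free from the embedding $\cX_{\mm,p}(\Rn) \hookrightarrow C_0(\Rn)$. This produces $u_{l_j} \to u$ strongly in $L^q(\Rn)$ for every $q \in (p,\infty)$.

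Write $g(s) \equiv f(s) + \frac{\mm}{2}|s|^{p-2}s$, so that $\Phi'(w)(\phi) = \int_{\Rn} g(w)\phi$, and split
\[
\Phi'(u_{l_j})(v - u_{l_j}) = \int_{\Rn} g(u_{l_j})\, v - \int_{\Rn} g(u_{l_j})\, u_{l_j}.
\]
From \eqref{eq_phi_basic} and the uniform $L^\infty$-bound on $u_{l_j}$, one has $|g(u_{l_j})| \leq C|u_{l_j}|^{p-1}$ pointwise. I would first show $\int g(u_{l_j})\, v \to \int g(u)\, v$ by splitting into $B_R$, where locally uniform convergence applies, and $\Rn \setminus B_R$, where H\"older gives $\int_{\Rn \setminus B_R}|g(u_{l_j}) v| \leq C\|u_{l_j}\|_p^{p-1}\|v\|_{L^p(\Rn \setminus B_R)}$, a tail that vanishes uniformly in $j$ as $R \to \infty$ since $v \in L^p(\Rn)$. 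The same argument with $v$ replaced by $u$ gives $\int g(u_{l_j})\, u \to \int g(u)\, u$.

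The main obstacle is to establish $\limsup_j \int g(u_{l_j})(u_{l_j} - u) \leq 0$, because $u_{l_j}$ is not known to converge in $L^p$. My plan is to decompose $g(s) = \tilde f(s) - \frac{\mm}{2}|s|^{p-2}s$ with $\tilde f(s) \equiv f(s) + \mm|s|^{p-2}s$; by (f1), $\tilde f(s) = o(|s|^{p-1})$ as $s \to 0$ (when $\mm = 0$ this reduces to $\tilde f = f$). Given $\eps>0$, choose $\delta$ so that $|\tilde f(s)| \leq \eps|s|^{p-1}$ for $|s|\leq \delta$, and split $\int \tilde f(u_{l_j})(u_{l_j} - u)$ into small and large values of $u_{l_j}$: the small-value part is controlled by $\eps\|u_{l_j}\|_p^{p-1}\|u_{l_j}-u\|_p = O(\eps)$ via H\"older, while the large-value part is dominated by $C_{\eps}|\{|u_{l_j}|>\delta\}|^{1-1/q}\|u_{l_j}-u\|_{L^q}$ for any $q>p$, and vanishes since $|\{|u_{l_j}|>\delta\}| \leq \|u_{l_j}\|_p^p\delta^{-p}$ is bounded and $\|u_{l_j}-u\|_{L^q} \to 0$. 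Hence $\int \tilde f(u_{l_j})(u_{l_j}-u) \to 0$. For the monotone part, the standard pointwise inequality $(|s|^{p-2}s - |t|^{p-2}t)(s-t) \geq 0$ gives $\int |u_{l_j}|^{p-2}u_{l_j}(u_{l_j}-u) \geq \int |u|^{p-2}u(u_{l_j}-u)$, and the right-hand side tends to $0$ by weak $L^p$-convergence of $u_{l_j}$ together with $|u|^{p-2}u \in L^{p/(p-1)}(\Rn)$. Combining these bounds yields $\limsup_j \int g(u_{l_j})(u_{l_j}-u) \leq 0$, and therefore
\[
\liminf_j \Phi'(u_{l_j})(v - u_{l_j}) = \lim_j \int g(u_{l_j})\, v - \limsup_j \int g(u_{l_j})\, u_{l_j} \geq \Phi'(u)(v - u),
\]
which is $(\Phi_2)$.
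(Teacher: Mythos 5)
There is a genuine gap in your treatment of $\tilde f(s) \equiv f(s) + \mm |s|^{p-2}s$. You assert that by (f1), $\tilde f(s) = o(|s|^{p-1})$ as $s \to 0$, and use the two-sided pointwise bound $|\tilde f(s)| \le \eps |s|^{p-1}$ for $|s| \le \delta$ to control the small-value part. But ${\rm (f1_{\mm,p})}$ only fixes the $\limsup$: $\limsup_{s\to 0} f(s)/(|s|^{p-2}s) = -\mm$, while the $\liminf$ is merely required to be finite and may be strictly smaller than $-\mm$. Consequently $\tilde f(s)/(|s|^{p-2}s)$ has $\limsup 0$ but possibly a strictly negative $\liminf$, so $\tilde f(s)$ is only $O(|s|^{p-1})$ and the inequality $|\tilde f(s)| \le \eps |s|^{p-1}$ fails (e.g. take $f(s) = -\mm|s|^{p-2}s - A\,|s|^{p-2}s\,\chi(s)$ with $\chi$ continuous, $0\le \chi\le 1$, oscillating near $0$). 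What (f1) does give is only the one-sided, product-form bound $\tilde f(s)\,s \le \eps |s|^p$ for $|s|$ small, which is not enough for your H\"older estimate of $\int_{\{|u_{l_j}|\le\delta\}} \tilde f(u_{l_j})(u_{l_j}-u)$, since the cross-term $\tilde f(u_{l_j})u$ has no controllable sign.

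The paper's proof is built precisely to sidestep this. Rather than subtracting off the "limit" power, it splits $g(s)s = \tilde g_+(s) - \tilde g_-(s)$ into its nonnegative and nonpositive parts: the $\limsup$ in (f1) gives the one-sided bound $\tilde g_+(s) \le \eps|s|^p + C_{\eps,q}|s|^q$, which together with the $L^q$-convergence from Proposition \ref{prop:1} yields $\int \tilde g_+(u_k) \to \int \tilde g_+(u)$, while for $\tilde g_- \ge 0$ Fatou's lemma alone suffices, with no growth bound required. That asymmetry (dominated convergence on the positive part, Fatou on the negative part) is exactly what the one-sided hypothesis calls for. Your other steps are fine and, in places, add something: the observation that the $D(\Psi)$ hypothesis in Proposition \ref{prop:1} and Remark \ref{rem_D12_in_Linfty} can be replaced by the $W^{1,p_1} \hookrightarrow C^{0,\alpha}_\loc$ embedding is correct (since $p_1 > n$) and shows $(\Phi_2)$ holds for genuinely arbitrary bounded sequences in $X_G$; and the monotonicity argument $(|s|^{p-2}s - |t|^{p-2}t)(s-t) \ge 0$ handled with weak $L^p$-convergence is clean. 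If you want to repair the proof while keeping your decomposition, you would still need to reproduce a one-sided Fatou-type argument in place of the two-sided small-value estimate.
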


	\begin{proof}
Assumption (f1) and Lemma \ref{lem_phi1} guarantee that $\Phi \in C^1( \cX_{\mm,p}(\R^n), \R)$. Let 
\[
\{u_k\}_{k=1}^\infty \subset \cX_{\mm,p}(\R^n)_G \cap D(\Psi)
\]
be a bounded sequence. By Lemma \ref{lem_reflexi_embe} and Remark \ref{rem_D12_in_Linfty}, up to a subsequence, we may assume that $u_k \rightharpoonup u$ weakly in $\cX_{\mm,p}(\R^n)$ and locally uniformly in $\R^n$, and that $\|u_k\|_\infty \le c_0$ for some $c_0>0$. We shall prove that 
\begin{equation}\label{eq_liminf_todo}
\liminf_{k \to \infty} \Phi'(u_k)(v-u_k) \ge \Phi'(u)(v-u).
\end{equation}
Define
\[
g(s) \equiv f(s) + \frac{\mm}{2}|s|^{p-2}s.
\]
For any fixed $\eps>0$, choose $R>0$ so that $\norm{v}_{L^p(\Rn\backslash B_R)} < \eps$. Since $u_k \to u$ uniformly in $B_R$, it holds
\[
\disp [\Phi'(u_k) - \Phi'(u)](v) = \disp \int_{\Rn} [g(u_k)-g(u)]v = o_k(1) + \int_{\Rn \backslash B_R} [g(u_k)-g(u)]v 
\]
as $k \to \infty$. On the other hand, by (f1) there exists a constant $C$ such that $|g(s)| \le C|s|^{p-1}$ for $|s| \le c_0$, hence
\[
\begin{array}{lcl}
\disp \left|\int_{\Rn \backslash B_R} [g(u_k)-g(u)]v\right| & \le & \left( \norm{g(u_k)}_{\frac{p}{p-1}} + \norm{g(u)}_\frac{p}{p-1} \right)  \norm{v}_{L^p(\Rn \backslash B_R)} \\[0.5cm]
& \le & C_2\left( \norm{u_k}^{p-1}_p + \norm{u}^{p-1}_p \right) \norm{v}_{L^p(\Rn \backslash B_R)} \\[0.5cm]
& \le & C_3 \eps \left(\sup_k \norm{u_k}_{\cX_{\mm,p}}\right)^{p-1}.
\end{array}
\]	
Therefore, letting first $k \to \infty$ and then $\eps \to 0$, 
we deduce that $\Phi'(u_k)(v) \to \Phi'(u)(v)$ as $k \to \infty$. 
Write $g(s) s = \wt{g}_+(s) - \wt{g}_-(s)$ where $\wt{g}_\pm (s) \equiv \max \set{ 0, \pm g(s) s }$. 
Then, \eqref{eq_liminf_todo} follows provided that
\[
\liminf_{k \to \infty} \int_{\Rn} \qty( \wt{g}_-(u_k) - \wt{g}_+ (u_k) ) \ge \int_{\Rn} \qty( \wt{g}_- (u) - \wt{g}_+(u) ).
\]
We will prove that
\begin{equation}\label{eq_liminf_todo_2}
\lim_{k \to \infty} \int_{\Rn} \wt{g}_+(u_k) = \int_{\Rn} \wt{g}_+(u), \qquad  \liminf_{k \to \infty} \int_{\Rn} \wt{g}_-(u_k) \ge \int_{\Rn} \wt{g}_-(u).  
\end{equation}
The second claim follows by Fatou's lemma. 
As for the first, fix $q>p$ and $\eps > 0$. By ${\rm (f1_{\mm})}$, there exists $C_{\e,q,c_0} > 0$ such that 
	\[
	\wt{g}_+(u_k) \leq \e \abs{u_k}^{p} + C_{\e,q,c_0} \abs{u_k}^{q}, \qquad \wt{g}_+(u) \leq \e \abs{u}^{p} + C_{\e,q,c_0} \abs{u}^{q}.
	\]
Because of Proposition \ref{prop:1}, we can assume that $\lim_{k \to \infty}\norm{u_k -u}_q = 0$, up to subsequence. Therefore, we can fix $R$ large enough so that
\[
C_{\e,q,c_0} \int_{\Rn \backslash B_R} \left( \abs{u_k}^q + \abs{u}^q \right) < \e \qquad \text{for all } k \in \mathbb{N}.
\]
Since $u_k \to u$ in $L^\infty_\loc (\Rn)$, we compute 
\[
\begin{aligned}
	\abs{\int_{\Rn} \qty( \wt{g}_+(u_k) - \wt{g}_+(u) ) } & \le 
	o_k(1) + \int_{\Rn \backslash B_R} \abs{ \wt{g}_+(u_k) - \wt{g}_+(u) } \\
	& \le o_k(1) + \eps \int_{\Rn \backslash B_R} \left( \abs{u_k}^p + \abs{u}^p \right) + C_{\e,q,c_0} \int_{\Rn \backslash B_R} \left( \abs{u_k}^q + \abs{u}^q \right) \\
	& \le o_k(1) + \eps C_3 \left( \norm{u_k}_{\cX_{\mm,p}}^p + \norm{u}_{\cX_{\mm,p}}^p\right) + \eps \\
	& \le o_k(1) + C_4 \eps.
\end{aligned}
\]
By letting $k \to \infty$ and then $\eps \to 0$ we obtain the first in \eqref{eq_liminf_todo_2}, concluding the proof. 
\end{proof}
	
\begin{remark}
By modifying the above argument, in the case $\mm = 0$ and if
\[
f(s) = o \left( |s|^{2^*-1} \right) \qquad \text{as } \, s \to 0,
\]
one can actually show that
		\[
		\Phi' \ \ : \ \ \cX_0(\R^n)_G \cap D(\Psi) \longrightarrow \big(\cX_0(\Rn)\big)^\ast
		\]
is compact, a stronger property than $(\Phi_2)$.
\end{remark}

We are ready to prove our main existence result. 

\begin{proof}[Proof of Theorem \ref{teo_exist_zeromass_intro}]
First of all, by using Proposition \ref{prop_criticalpoints_2} and Lemma \ref{lem_reflexi_embe} we deduce that any weak solution $u \in \cX_{\mm,p}(\R^n)$ to \eqref{nbi} satisfies $u\in W^{2,q}_\loc(\R^n)$ for each $q \in [2,\infty)$ and $\|Du\|_\infty<1$. Referring to Examples \ref{ex_rad} and \ref{ex_skew}, we set for notational convenience
\begin{equation}\label{def_Xj}
X_{\mm,1} \equiv \cX_{\mm,p}(\R^n)_{{\rm O}(n)}, \qquad X_{\mm,2} \equiv \big( \cX_{\mm,p}(\R^n)_H \big)_{\la\tau\ra}.  
\end{equation}
In view of \eqref{eq_confronorm}, to prove the result we shall construct: 
\begin{itemize}
\item a positive solution $u \in X_{\mm,1}$ with $I(u) \in (0,\infty)$;
\item when $f$ is odd, for each $j \in \{1,2\}$, infinitely many distinct solutions $\{u_k\}_k \subset X_{\mm,j}$ with 
$I(u_k) \in (0,\infty)$ and $I(u_k) \to \infty$ as $k \to \infty$ 
(provided $n,d$ verify \eqref{nek_intro} when $j=2$).
\end{itemize}
%
%
%
Define $\Psi,\Phi$ as in \eqref{eq_PhiPsi_BI}, and let 
\[
I_\lambda \equiv \lambda\Psi - \Phi \ : \ \cX_{\mm,p}(\Rn) \to \R. 
\]
In our stated assumptions, Lemmas \ref{lem_psi123} and \ref{lem_phi1} ensure the validity of $(\Psi_1),(\Psi_2),(\Psi_3),(\Phi_1)$ on $\cX_{\mm,p}(\R^n)$. In particular, from $(\Phi_1)$ and the inclusion $\cX_{\mm,p}(\Rn) \hookrightarrow \mathcal{D}^{1,2}_{\mm,p}(\R^n)$, we can apply Proposition \ref{prop_sol_crit} and (a) in \eqref{eq_confronorm} to deduce that $u \in \cX_{\mm,p}(\R^n)$ is a weak solution to \eqref{nbi} if and only if 
\[
\Psi(v) - \Psi(u) - \Phi'(u)(v-u) \ge 0 \qquad \text{for each $v \in D(\Psi)$}. 
\]
By the definition of $\Psi$, the above inequality also holds when $\norm{Dv}_\infty>1$ and thus weak solutions to \eqref{nbi} correspond to critical points of $I = I_1$ in the sense of Szulkin. 

The groups $G= {\rm O}(n), H$, respectively, in Examples \ref{ex_rad} and \ref{ex_skew}, are compact and act continuously on $\cX_{\mm,p}(\R^n)$, and $\la \tau \ra$ acts continuously by isometries on $\cX_{\mm,p}(\R^n)_G$. Moreover, $\cX_{\mm,p}(\R^n)$ is reflexive, thus so is $\cX_{\mm,p}(\R^n)_G$. Therefore, by the principle of nonsmooth symmetric criticality proved in \cite[Theorem 3.16]{koba_ota} 
(see Proposition \ref{prop:sym-critical} in Appendix \ref{s:PSC} for another proof which does not require reflexivity) 
a function $u \in X_{\mm,j}$ is a critical point of $I_\lambda : \cX_{\mm,p}(\R^n) \to \R$ if and only if it is critical for the restriction of $I_\lambda$ to $X_{\mm,j}$, namely, if and only if
\[
\lambda \big(\Psi(v) - \Psi(u)\big) - \Phi'(u)(v-u) \ge 0 \qquad \text{for any }v \in X_{\mm,j}.
\]
Moreover, by Proposition \ref{prop:3} and Remark \ref{rem_propriety1}, $\Phi$ enjoys $(\Phi_2)$ provided that we restrict its domain to either $X_{\mm,1}$ or to $\cX_{\mm,p}(\R^n)_H$ in Example \ref{ex_skew}. In the second case $(\Phi_2)$ is therefore inherited by the restriction of $\Phi$ to $X_{\mm,2}$. Lemma \ref{lem_IB} guarantees that ${\rm (IB)}$ holds for $I_\lambda : X_{\mm,j} \to \R$. In summary, $(\Psi_1),(\Psi_2),(\Psi_3)$ and $(\Phi_1),(\Phi_2),{\rm (IB)}$ are satisfied. To apply Theorem \ref{mpt} (respectively, Theorem \ref{smt} if $f$ is odd), 
it is therefore sufficient to check the uniform mountain pass condition \eqref{mp} (respectively, \eqref{smp}) for, say, $\lambda \in [1/2,2]$.

	In our assumptions, $I_\lambda(0)=0$ for each $\lambda$. Set $\rho_0 > 0$ and consider $u$ with $\|u\|_{\cX_{\mm,p}} = \rho_0$. 
By \eqref{eq_embe}, there exists a constant $C>0$ such that $\|u\|_\infty \le C\rho_0$. 
Hence, because of (f1), for each $\eps > 0$ there exists $\rho_0$ such that the following inequality holds: 
\[
F(u) + \frac{\mm}{2p} \abs{u}^p \leq \e \abs{u}^p \qquad \text{for any $u \in \cX_{\mm}(\Rn)$ with $\norm{u}_{\cX_{\mm,p}} = \rho_0$}. 
\]
Moreover, by using Remark \ref{rem_useful_identities},
\[
\Psi(u) \ge \frac{\mm}{4p} \norm{u}_p^p + \frac{1}{2}\norm{Du}_2^2.
\]
We therefore have the following chain of inequalities for each $\lambda \ge 1/2$:
	\[
	\begin{aligned}
	I_\lambda(u) & \ge \frac{1}{2}\Psi(u) - \int_{\R^n} \left( F(u) + \frac{\mm}{2p}|u|^p\right) \\
	& \ge \left(\frac{\mm}{4p} - \eps\right) \norm{u}_p^p + \frac{1}{4}\norm{Du}_2^2 \ge \left(\frac{\mm}{4p} - \eps\right) \norm{u}_p^p + c_1 \norm{u}_{2^*}^2 +  \frac{1}{8}\norm{Du}_2^2, 
	\end{aligned}
	\]
where in the last line Sobolev's inequality was used. 
In both cases $\mm = 0$ ($p = 2^\ast$) and $\mm > 0$, 
by choosing $\eps >0$ small enough and $\rho_0$ accordingly 
there exists $C_2>0$ such that 
	\[
		I_\lambda (u) \geq C_2 \norm{u}_{\cD_{\mm,p}^{1,2}}^p 
		\quad \text{for each $u \in \cX_{\mm,p}(\Rn)$ with $\norm{u}_{\cX_{\mm,p}} = \rho_0$}. 
	\]
Hence, by \eqref{eq_confronorm}, there exists $\alpha_0>0$ such that $I_\lambda(u) \ge \alpha_0$ on $\set{u | \norm{u}_{\cX_{\mm,p}} = \rho_0}$ 
for each $\lambda \ge 1/2$.

\begin{itemize}
\item[(i)] Property \eqref{mp}.\\
In the radial case, we choose a radially symmetric function $u_0 \in C_c^\infty(\Rn)$ such that  
	\[
		u_0(x) \equiv \begin{cases}
			t_0 & \text{for  $|x| \leq L$},\\
			0 & \text{for  $L+3t_0 \leq |x|$}
		\end{cases}
	\]
	and $\norm{D u_0}_\infty  < 1/2$, where $t_0$ is the value in (f2). For sufficiently large $L > 0$, a direct computation gives $I_2(u_0) < 0$, so \eqref{mp} holds.\\
\item[(ii)] Property \eqref{smp}.\\
In the radial setting, by \cite[Theorem 10]{BL-2}, for each $k \in \N$ there exist $R_k, M_k$ independent of $\zeta \in \partial \DD^k$ and odd maps 
\[
\pi_k \in C\big(\partial \DD^k, H^1_\rad(\Rn)\big) 
\]
valued in the set $H^1_\rad(\Rn)$ of radial functions in $H^1(\Rn)$ and satisfying 
	\begin{equation}\label{eq_prop_pik_BI}
		\supp \left( \pi_k (\zeta) \right) \subset B(0,R_k), \quad 
		\norm{D\pi_k(\zeta)}_\infty \leq M_k, \quad 
		\int_{\Rn} F ( \pi_k(\zeta) ) \geq 1 \quad \text{for any } \zeta \in \partial \DD^k.  
	\end{equation}
Define $\gamma_{0,k}$ by 
	\begin{equation}\label{def_gamma0k}
		\gamma_{0,k} (\zeta) (x) \equiv \pi_k(\zeta) \left( \frac{x}{L} \right) \quad 
		\quad \text{for $L \gg 1$}. 
	\end{equation}
Notice that for sufficiently large $L$, $\norm{D\gamma_{0,k} (\zeta)}_\infty \leq M_k/ L < 1$. 
Thus, by using \eqref{eq_confronorm} and \eqref{eq_embe}, $\pi_k(\partial \DD^k)$ is bounded in $\cX_{\mm,p}(\Rn)$, hence in $C(\Rn)$. 
From the first in \eqref{eq_prop_pik_BI} and the dominated convergence theorem, 
one readily sees that $\gamma_{0,k} \in C(\partial \DD^k, X_{\mm,1})$, and by  Remark \ref{rem_useful_identities} 
	\[
		\begin{aligned}
			I_2(\gamma_{0,k} (\zeta) ) 
			&\leq 2L^{n} \int_{B(0,R_k)} 
			\Big( 1 - \sqrt{1 - M_k^2/L^2 }\Big) - L^n \int_{\Rn} F(\pi_k (\zeta)) 
			\\
			&\leq L^n \left[2\frac{M_k^2}{L^2}|B(0,R_k)| - 1 \right]  < 0,
		\end{aligned}
	\]
which implies \eqref{smp}. Likewise, in dimension $n \ge 4$ and for each $2 \le d \le n/2$, 
in the argument of \cite[Lemma 4.3]{jeanjean_lu} the authors produced, for each $k \in \N$, an odd map 
$\pi_k \in C ( \partial \DD^k , \big(H^1(\Rn)_H\big)_{ \la \tau \ra } )$ 
such that \eqref{eq_prop_pik_BI} holds for suitable $M_k,R_k$ independent of $\zeta$. 
Defining again $\gamma_{0,k}$ as in \eqref{def_gamma0k}, the same computation as above yields 
$\pi_k \in C( \partial \DD^k , X_{\mm, 2} )$ and $\sup_{\zeta \in \partial \DD^k} I_2(\gamma_{0,k} (\zeta) ) < 0$, as required. 
\end{itemize} 	
By applying Theorem \ref{mpt} (resp. Theorem \ref{smt} if $f$ is odd), 
we get all of our conclusions apart from the claim that a positive solution in $X_{\mm,1}$ can be found. To see this, we consider the problem with continuous nonlinearity 
\[
\hat f(s) = \left\{ \begin{array}{ll}
f(s) & \quad \text{if } \, s \ge 0, \\[0.2cm]
- \mm |s|^{p-2}s & \quad \text{if } \, s < 0.
\end{array}\right.
\] 
Then, $\hat f$ satisfies ${\rm (f1_{\mm,p})}$ and (f2) since so does $f$, and therefore there exists a solution $u \in X_{\mm,1}$ to
\[
\diver \left(\frac{D u}{\sqrt{1-|D u|^2}}\right) + \hat f(u) = 0 \qquad \text { in } \ \ \R^n.
\]
Moreover,  by Proposition \ref{prop_criticalpoints_1} and Lemma \ref{lem_reflexi_embe}, $u \in C^1(\Rn)$ and $\|Du\|_\infty<1$. 
Assume that $\{u<0\} \neq \emptyset$. 
As in the proof of Theorem \ref{teo_exist_zeromass_intro}, we may use $u_-(x) \equiv \max \set{ -u(x) , 0 } \in \cX_{\mm,p} (\Rn)$ to deduce 
\[
\int_{\{u<0\}} \left(\frac{|D u|^2}{\sqrt{1-|D u|^2}}\right) + \mm \int_{\{u<0\}} |u|^{p} = 0,
\]
a contradiction. Therefore, $u \ge 0$ and thus $u$ solves \eqref{nbi}. 
The positivity of $u$ follows from the weak Harnack inequality (see \cite[Theorem 8.18]{GiTr01}) and the fact that $u \geq 0$ satisfies 
	\[
		0 = -\diver (a Du ) - f(u(x)) \leq -\diver (a Du) + \qty( \frac{f(u(x))}{u(x)} )_- u(x), \quad 
		a(x) \equiv \frac{1}{\sqrt{1-\abs{Du(x)}^2}}. 
	\]
%
%
This concludes the proof.
\end{proof}

\section{Deformation lemmas}\label{sec:Deformation}

The aim of this section is to prove general deformation lemmas in order to establish the monotonicity method pioneered in \cite{St88,St88b,J99,JT98} for the family $\{I_\lambda\}_{\lambda}$. The results below relate to \cite[Proposition 2.3]{S86}. Although our conclusions are slightly weaker, differently from \cite{S86} we do not require a priori the validity of the Palais-Smale condition
(in this respect, see also \cite[Theorem 3.1]{alves}). Let us recall that a chief difficulty in proving a deformation lemma for lower semicontinuous functionals 
is that the closure of a relatively compact subset contained in a strip $I_\lambda^{-1}([a,b])$ may not lie in the same strip. 
This problem does not occur for continuous functionals as those treated in \cite[Theorems 2.14 and 2.15]{CDeGM93} 
and \cite[Theorem 2.3]{C99}, 
and makes the construction of an energy decreasing flow which produces a bounded Palais-Smale sequence a challenging task. 
Notice that in Lemmas \ref{l:defor-lem} and \ref{sydefolemma} below, we allow $I_\lambda$ to increase along the flow in some regions, albeit in a controlled way.

We introduce some notation: for each $\lambda > 0$, we denote by $\cK_\lambda$ the set of all critical points of $I_\lambda$: 
		\[
		\cK_\lambda \equiv \left\{ u \in X \ | \ \text{$u$ is a critical point of $I_\lambda$} \right\}.
		\]
Also, for a subset $C \subset X$, the symbol $\overline{C}$ stands for the closure of $C$ in $X$. We shall prove two results, the second one by assuming that $\Psi,\Phi$ are even.

\subsection{The Deformation Lemma}

	\begin{lemma}\label{l:defor-lem}
		Suppose that $(\Psi_1)$ and $(\Phi_1)$ hold. 
		Fix $\lambda \in \R^+$ and let $A \subset X$ be a closed and bounded set. 
		For $c \in \R$ and $\sigma >0$, define 
		\[
			\cA_{c,3\sigma} \equiv A \cap \left\{ c - 3 \sigma \leq I_\lambda \leq c + 3 \sigma \right\}.
		\]
		Assume that there exists $\delta_0 > 0$ such that 
		for each $u \in \cA_{c,3\sigma}$ we may find $v=v(u) \in X \setminus \{u\}$ with  
		\begin{equation}\label{ineq-I'}
			\lambda \left(  \Psi(v) - \Psi(u) \right) - \Phi'(u) \left( v - u \right) < - 5 \delta_0 \| v - u \|.
		\end{equation}
		Then	for every set $K \subset \cA_{c,3\sigma}$ which is relatively compact in $X$ 
		there exist small neighborhoods $W$ and $\wt{W}$ of $\overline{K}$ with 
		\[
			\overline{K} \subset W \subset \overline{W} \subset \wt{W}, 
		\]
		$s_0 > 0$ and $\eta \in C([0, \infty) \times X, X)$ such that 
		\begin{enumerate}[{\rm (i)}]
			\item
			$\| \eta(s,w) - w \| \leq s$ for $(s,w) \in [0,\infty) \times X \quad$ and 
			$\quad \eta(s,w) = w$ for $s \in [0,\infty)$ if $w \not \in \wt{W}$; 
			\item
			$I_\lambda ( \eta(s,w) ) \leq I_\lambda (w) + \delta_0 s$ for all $(s,w) \in [0,s_0] \times X$; 
			\item
			$I_\lambda (\eta (s,w)) \leq I_\lambda (w) - 2 \delta_0 s$ for all $(s,w) \in [0,s_0] \times W$ with $I_\lambda (w) \geq c - \sigma$. 
			
		\end{enumerate}
	\end{lemma}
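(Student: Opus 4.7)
The plan is to build $\eta$ as a flow driven by local descent directions, glued via a Lipschitz partition of unity. First, I would upgrade the pointwise hypothesis \eqref{ineq-I'} to a neighborhood statement: since $\Phi' : X \to X^*$ is continuous and $\Psi$ is lower semicontinuous, for each fixed $v \in X$ the map $u \mapsto \lambda(\Psi(v) - \Psi(u)) - \Phi'(u)(v-u)$ is upper semicontinuous in $u$. Hence at each $u_0 \in \cA_{c,3\sigma}$ the strict inequality with $v_0 := v(u_0)$ propagates to a sufficiently small open ball $B(u_0)$, yielding
\[
\lambda(\Psi(v_0) - \Psi(u)) - \Phi'(u)(v_0 - u) < -4\delta_0 \|v_0 - u_0\| \qquad \text{for all } u \in B(u_0),
\]
where I have traded $5\delta_0$ for $4\delta_0$ and $\|v_0 - u\|$ for $\|v_0 - u_0\|$, which costs only shrinking $B(u_0)$.

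Next, since $\overline{K}$ is compact I would extract a finite subcover $\{B(u_i)\}_{i=1}^N$ with $u_i \in \cA_{c,3\sigma} \cap \overline{K}$. A technical point here is that $\overline{K}$ need not be contained in $\cA_{c,3\sigma}$ — because $I_\lambda$ is only lower semicontinuous, limit points of $K$ may satisfy $I_\lambda < c - 3\sigma$; however, by lsc, the portion $\overline{K} \cap \{I_\lambda > c - 2\sigma\}$ is relatively open in $\overline{K}$ and automatically lies in $\cA_{c,3\sigma}$, while the complementary compact set $\overline{K} \cap \{I_\lambda \le c - 2\sigma\}$ may be covered by inactive neighborhoods where the flow does nothing, since condition (iii) is vacuous at points with $I_\lambda < c - \sigma$. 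By metric paracompactness I would pick a Lipschitz partition of unity $\{\rho_i\}$ subordinate to the descent balls and a Lipschitz cutoff $\chi : X \to [0,1]$ equal to $1$ on a small neighborhood $W \supset \overline{K}$ and vanishing outside a larger $\wt{W}$, arranged so that $\sum_i \rho_i \equiv 1$ on $\overline{W}$. The candidate flow is
\[
\eta(s, u) = u + s\chi(u) V(u), \qquad V(u) := \sum_i \rho_i(u)\,\frac{v_i - u}{\|v_i - u_i\|},
\]
which gives $\|\eta(s,u) - u\| \le s$ at once, hence property (i).

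The heart of the argument is a convexity computation. For $s$ small enough, $\eta(s,u)$ is a convex combination $\beta_0(u) u + \sum_i \beta_i(u) v_i$ with nonnegative weights $\beta_0 = 1 - s\chi\sum_i \rho_i/\|v_i-u_i\|$, $\beta_i = s\chi\rho_i/\|v_i-u_i\|$, summing to $1$. Convexity of $\Psi$ then yields
\[
\Psi(\eta(s,u)) - \Psi(u) \le s\chi(u) \sum_i \rho_i(u)\,\frac{\Psi(v_i) - \Psi(u)}{\|v_i - u_i\|},
\]
while the $C^1$ regularity of $\Phi$ provides $\Phi(\eta(s,u)) - \Phi(u) = s\Phi'(u)V(u) + o(s)$. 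Substituting into $I_\lambda = \lambda\Psi - \Phi$ and invoking the neighborhood descent bound on each $\supp \rho_i$, the $i$-th summand in $\lambda(\Psi(v_i)-\Psi(u)) - \Phi'(u)(v_i-u)$ divided by $\|v_i-u_i\|$ is strictly less than $-4\delta_0$, whence
\[
I_\lambda(\eta(s,u)) - I_\lambda(u) \le -4\delta_0\, s\, \chi(u) \sum_i \rho_i(u) + o(s).
\]
On $W$ we have $\chi = 1$ and $\sum_i \rho_i = 1$, and by picking $s_0$ small the $o(s)$ term is absorbed to give (iii); for general $u \in X$ the leading coefficient is nonpositive, which yields (ii).

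The chief obstacle I expect is reconciling the local construction with the lack of upper semicontinuity of $I_\lambda$: one must ensure that every $w \in W$ with $I_\lambda(w) \ge c - \sigma$ really lies in the region where $\chi(w)\sum_i \rho_i(w) = 1$, since near limit points of $K$ with $I_\lambda < c - 3\sigma$ the value of $I_\lambda$ may jump above $c - \sigma$ by lsc. This requires organizing $W \subset \wt{W}$ so that its intersection with $\{I_\lambda \ge c - \sigma\}$ is captured by the descent balls $\{B(u_i)\}$, which in turn forces the choice of neighborhoods around low-energy points of $\overline{K}$ to be genuinely small and compatible with the cover.
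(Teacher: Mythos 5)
Your proposal correctly identifies the central difficulty — that $\overline{K}$ need not lie in $\cA_{c,3\sigma}$ because $I_\lambda$ is only lower semicontinuous — but the resolution you propose, covering the low-energy part $\overline{K}\cap\{I_\lambda\le c-2\sigma\}$ by ``inactive neighborhoods where the flow does nothing,'' does not work. The obstruction is exactly the one you name in your last paragraph and then leave unresolved: take a limit point $u\in\overline{K}$ with $I_\lambda(u)<c-3\sigma$. By lower semicontinuity, there can be $w$ arbitrarily close to $u$ with $I_\lambda(w)\ge c-\sigma$, and any neighborhood $W\supset\overline{K}$ contains a whole ball around $u$ and hence such $w$. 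Property (iii) must then produce genuine energy decrease $I_\lambda(\eta(s,w))\le I_\lambda(w)-2\delta_0 s$ at these $w$, which is impossible if the flow fixes them. Nor can you ensure such $w$ are ``captured by the descent balls $\{B(u_i)\}$''\,: those balls are centered at high-energy points of $K\cap\cA_{c,3\sigma}$, and as that sequence converges to $u$ the radii in your upper-semicontinuity argument can shrink to zero, leaving $u$ uncovered. You must therefore build an \emph{active} pseudogradient block around each $u\in\overline{K}\setminus\cA_{c,3\sigma}$.

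The paper's proof contains precisely the idea you are missing (Step 1, cases (b) and (c)). Around a low-energy limit point $u$ one takes $v=u$ if $u\in\cK_\lambda$, or $v=v_t=(1-t)u+tv(u)$ very close to $u$ otherwise; the key estimate is obtained not from the hypothesis \eqref{ineq-I'} (which is unavailable, since $u\notin\cA_{c,3\sigma}$) but from the \emph{energy gap}: if $w\in B(u,r)$ has $I_\lambda(w)\ge c-2\sigma$ while $I_\lambda(u)<c-3\sigma$, then $I_\lambda(w)-I_\lambda(u)\ge\sigma$, and expanding $I_\lambda(w)-I_\lambda(u)=\lambda(\Psi(w)-\Psi(u))+\Phi(u)-\Phi(w)$ gives, after Taylor expansion of $\Phi$ and shrinking $r$, a lower bound $\sigma\le\lambda(\Psi(w)-\Psi(u))-\Phi'(w)(w-u)$. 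Rearranging yields the uniform descent $\lambda(\Psi(v)-\Psi(w))-\Phi'(w)(v-w)\le-\sigma\le-3\delta_0\|v-w\|$ for $w$ in a small ball with $I_\lambda(w)\ge c-2\sigma$, while for low-energy $w$ the same direction only costs $\tfrac{\delta_0}{2}\|v-w\|$, so property (ii) is preserved. This dichotomy is what makes the partition-of-unity flow work on all of $W$, and it has no counterpart in your sketch. Your convexity computation and the upper-semicontinuity propagation for high-energy points are correct and essentially match the paper's Step 1(a), but as written your argument proves (iii) only on a neighborhood of $\overline{K}\cap\cA_{c,3\sigma}$, not on a full neighborhood $W$ of $\overline{K}$.
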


	\begin{remark}\label{rem:0.1}
		Since $I_\lambda$ is only assumed to be lower semicontinuous, the set $\cA_{c,3\sigma}$ may not be closed and 
		its closure may possibly contain points in $\{ I_\lambda < c - 3\sigma \}$. 
		Consequently, \eqref{ineq-I'} may fail at points of $\overline{K} \setminus K$, 
		a major obstacle  to construct an energy decreasing deformation flow. 
		This marks one of differences with the deformation in \cite[Proposition 2.3]{S86}, where 
		it is assumed that the set $K$ in Lemma \ref{l:defor-lem} is compact.  
	\end{remark}
	
	\begin{remark}\label{rem:Ekeland}
		Observe that Lemma \ref{l:defor-lem} holds for a fixed $\lambda \in \R^+$, 
		and it seems difficult to control the deformation for $I_\lambda$ locally uniformly in $\lambda$. 
		As a consequence, in the proof of Theorems \ref{mpt} and \ref{smt} 
		Ekeland's variational principle is hardly applicable, and 
		 we have to devise a different strategy based on an iteration method. 
	\end{remark}

	\begin{proof}[Proof of Lemma \ref{l:defor-lem}]
The following argument is adapted from the proof of \cite[Lemma 2.2 and Proposition 2.3]{S86}. 
Let $K \subset \cA_{c,3\sigma}$ be relatively compact. Without loss of generality, we may suppose 
	\begin{equation}\label{e:del-sig}
		\delta_0 < \sigma < 1. 
	\end{equation}
From the lower semicontinuity of $I_\lambda$ it follows that 
	\begin{equation}\label{eq:uppest-u}
		I_\lambda (u) \leq c + 3\sigma \quad \text{for each $u \in \ov{\cA_{c,3\sigma}}$}.
	\end{equation}
In particular, since $\overline{K} \subset \ov{\cA_{c,3\sigma}}$, the estimate in \eqref{eq:uppest-u} holds on $\ov{K}$.

We divide our arguments into several steps. 
In Step 1, we identify the building blocks to construct a pseudogradient vector field in a neighborhood of $K$, namely, for each $u \in \overline{K}$ we find $v = v(u) \in X$ and a small radius $r = r(u)$ such that, loosely speaking, for $w \in B(u,r)$ the vector $v-w$ acts as a pseudogradient vector: the value of $I_\lambda$ does not increase too much with respect to $I_\lambda(w)$ in direction $v-w$, and decreases at least at a fixed rate whenever $I_\lambda(w) \ge c-2\sigma$. Notice that the choice $v(u)$ in \eqref{ineq-I'} is admissible (and works well) only if $u \in \cA_{c,3\sigma}$. However, as $\cA_{c,3\sigma}$ is not closed, this is not always the case and the analysis for points $u \in \overline{K} \backslash \cA_{c,3\sigma}$ is more involved.

	\medskip
%

	\noindent
	\textbf{Step 1:} 
	\textsl{The following hold:
		\begin{enumerate}[{\rm (a)}]
			\item
			Suppose $u \in \ov{K} \cap \cA_{c,3\sigma}$ and 
			let $v=v(u) \in X \setminus \{u\}$ be as in \eqref{ineq-I'}. 
			Then there exists $r=r(u) > 0$ such that $ B(u,r) \subset \overline{\cA_{c,3\sigma} } + B(0,1) $, 
			$v \not \in \overline{B(u,r)}$ and
			\[
				\lambda \left( \Psi(v) - \Psi(w) \right) - \Phi'(w) (v-w) \leq - 3 \delta_0 \| v - w \| 
				\quad \text{for all $w \in B(u,r)$}. 
			\]
			\item
			Suppose $u \in \left( \ov{K} \setminus \cA_{c,3\sigma} \right)  \cap \cK_\lambda$ and set $v \equiv u$. 
			Then there exists $r=r(u) > 0$ such that $B(u,r) \subset \overline{\cA_{c,3\sigma}} + B(0,1) $ and 
				\[
					\lambda \left( \Psi(v) - \Psi(w) \right) - \Phi'(w) (v-w) \leq \frac{\delta_0}{2} \| v - w\| \quad \text{for any $w \in B(u,r)$}. 
				\]
			In addition, if $w \in B(u,r)$ satisfies $I_\lambda(w) \geq c - 2 \sigma$, then 
				\[
					\lambda \left( \Psi(v) - \Psi(w) \right) - \Phi'(w) (v-w) \leq -3 \delta_0 \| v - w \|. 
				\]
			\item
			For each $u \in \ov{K} \setminus \left( \cA_{c,3\sigma} \cup \cK_\lambda\right) $, 
			there exist $v=v(u) \in X \setminus \{u\}$ and $r=r(u) > 0$ such that 
			$B(u,r) \subset \overline{\cA_{c,3\sigma}} + B(0,1)$, $v \not \in \ov{B(u,r)}$ and 
				\[
					\lambda \left( \Psi(v) - \Psi(w) \right) - \Phi'(w) (v-w) \leq \frac{\delta_0}{2} \| v - w \| \quad \text{for any $w \in B(u,r)$}. 
				\]
			In addition, if $w \in B(u,r)$ satisfies $I_\lambda(w) \geq c - 2 \sigma$, then 
				\[
				\lambda \left( \Psi(v) - \Psi(w) \right) - \Phi'(w) (v-w) \leq -3 \delta_0 \| v - w \|. 
				\]
		\end{enumerate}
	}
	\begin{proof}
To prove (a), let $u \in \ov{K} \cap \cA_{c,3\sigma}$ and $v$ be as in \eqref{ineq-I'}. 
By the lower semicontinuity of $\Psi$ and the fact that $v \neq u$, 
there exists $r = r(u) > 0$ such that $B(u,r) \subset \ov{\cA_{c,3\sigma}} + B(0,1)$, 
$v \not \in \ov{B(u,r)}$ and for all $w \in B(u,r)$
	\[
		\lambda \left( \Psi(v) - \Psi(w) \right) - \Phi'(w) (v- w) < -4\delta_0 \| v - w \|.
	\]
Hence, (a) holds.

	To prove (b) and (c), recall $\ov{K} \subset \ov{\cA_{c,3\sigma}}$ and \eqref{eq:uppest-u}:
	\[
		I_\lambda(u) \leq c + 3 \sigma \quad \text{for any $u \in \ov{K}$}. 
	\]
Therefore, if $u \in \ov{K} \setminus \cA_{c,3\sigma}$, 
then the closedness of $A$ and the fact $K \subset A$ yield $u \in A$ and 
	\begin{equation}\label{e:ulevel}
		I_\lambda(u) < c - 3\sigma. 
	\end{equation}

	We prove (b). Let $u \in \left( \ov{K} \setminus \cA_{c,3\sigma} \right)  \cap \cK_\lambda$ 
and put $v \equiv u$. By $u \in \cK_\lambda$,  
	\[
		\lambda \left( \Psi (w) - \Psi(v) \right) - \Phi'(v) (w-v) \geq 0 \quad \text{for any $w \in X$}.
	\]
Thus, 
	\[
		\begin{aligned}
			\lambda \left( \Psi(v) - \Psi(w)  \right) - \Phi'(w) (v-w) 
			&\leq (\Phi'(w)-\Phi' (v) ) (w-v) \\
			&\leq \| \Phi'(w) - \Phi'(v) \|_* \| w - v \|. 
		\end{aligned}
	\]
By the continuity of $\Phi'$, if $r = r(u)>0$ is sufficiently small, 
then for any $w \in B(u,r) \subset \overline{\cA_{c,3\sigma}} + B(0,1) $
	\[
		\lambda \left( \Psi(v) - \Psi(w) \right) - \Phi'(w) (v-w) 
		\leq \frac{\delta_0}{2} \| w - v \|. 
	\]
Hence, the first inequality in (b) holds.

	When $w \in B(u,r)$ satisfies $I_\lambda(w) \geq c - 2 \sigma$, 
by recalling \eqref{e:ulevel} and writing $I_\lambda (u) = c - 3\sigma - a_u$ with some $a_u>0$, 
it follows that 
	\begin{equation}\label{e:wu-est}
		\begin{aligned}
			\sigma + a_u 
			&\leq  
			I_\lambda(w) - I_\lambda (u) 
			\\
			&= 
			\lambda \left( \Psi(w) - \Psi(u) \right) + \left( \Phi(u) - \Phi(w) \right)
			\\
			&= 
			\lambda \left( \Psi(w) - \Psi(u) \right) + \Phi'(w) (u-w) 
			+ \int_0^1 \left\{ \Phi'(w+\theta (u-w)) - \Phi'(w) \right\} \left(u -w \right) \rd{\theta} .
		\end{aligned}
	\end{equation}
From the continuity of $\Phi'$ at $u$, shrinking $r=r(u)$ if necessary, we obtain 
	\[
		\sigma \leq \lambda \left( \Psi(w) - \Psi(u) \right) + \Phi'(w) (u-w) 
		\quad \text{for every $w \in B(u,r)$ with $I_\lambda (w) \geq c -  2 \sigma$}
	\]
or equivalently, 
	\[
		\lambda \left( \Psi(u) - \Psi(w) \right) - \Phi'(w) (u-w) \leq - \sigma
		\quad \text{for every $w \in B(u,r)$ with $I_\lambda (w) \geq c -  2 \sigma$}. 
	\]
Recalling that we have chosen $v=u$, up to further shrinking $r=r(u) > 0$ if necessary we infer
	\[
			\lambda \left( \Psi(v) - \Psi(w) \right) - \Phi'(w) (v-w) 
			\leq - \sigma \leq -3\delta_0 \| v - w \| 
			\quad \text{for any $w \in B(u,r)$ with $I_\lambda (w) \geq c - 2\sigma$,}
		\]
which completes the proof of (b).

	For (c), by $u \not \in \cK_\lambda$ we may find $v=v(u) \in X \setminus \{u\}$ and $b_u>0$ such that 
\[
\lambda \left( \Psi(v) - \Psi(u) \right) - \Phi'(u) (v-u) = -2b_u < 0.
\]
We claim that $v$ can be chosen arbitrarily close to $u$. In fact, consider 
\[
v_t \equiv t v + (1-t) u \qquad t \in (0,1). 
\]
From the convexity of $\Psi$, it follows that 
\[
\Psi(v_t) \leq t \Psi(v) + (1-t) \Psi(u)
\]
and 
\begin{equation}\label{e:v_tu}
	\lambda ( \Psi(v_t) - \Psi(u) ) - \Phi'(u) (v_t - u) 
	\leq t \left\{ \lambda \left( \Psi(v) - \Psi(u) \right) - \Phi'(u) (v-u) \right\} = -2b_u t < 0. 
\end{equation}
For each $t \in (0,1)$, there exists $r_t > 0$ such that 
$B(u,r_t) \subset \overline{\cA_{c,3\sigma}} + B(0,1)$, $v_t \not \in \ov{B(u,r_t)}$ and 
\[
\lambda \left( \Psi (v_t) - \Psi(w) \right) - \Phi'(w) (v_t - w) < - b_u t \leq \frac{\delta_0}{2} \| v_t - w \| \quad \text{for every $w \in B(u,r_t)$}. 
\]
Thus, the first inequality in (c) holds.

	Let $w \in B(u,r_t)$ satisfy $I_\lambda (w) \geq c - 2 \sigma$. 
Writing $I_\lambda(u) = c - 3\sigma - a_u$ with $a_u>0$, in a similar way to obtain \eqref{e:wu-est}, we have 
	\[
		\sigma + a_u \leq \lambda \left( \Psi(w) - \Psi(u) \right) - \Phi'(u) (w-u) + o(\| w - u \|). 
	\]
By shrinking $r_t>0$ if necessary, we get 
\begin{equation}\label{e:wu}
	\sigma \leq \lambda \left( \Psi(w) - \Psi(u) \right)  - \Phi'(u) (w-u) \quad 
	\text{for all $w \in B(u,r_t)$ with $I_\lambda (w) \geq c - 2\sigma$}.
\end{equation}
Hence, \eqref{e:v_tu} and \eqref{e:wu} imply 
\[
\begin{aligned}
	&\lambda \left( \Psi(v_t) - \Psi(w) \right) - \Phi'(w) (v_t - w) 
	\\
	= \ &
	\lambda \left\{ \Psi(v_t) - \Psi(u) + \Psi(u) - \Psi(w) \right\}
	- \Phi'(w) (v_t-w) + \Phi'(u) (w-u) 
	- \Phi'(u) (w-u)
	\\
	\leq \ & \lambda \left( \Psi(v_t) - \Psi(u) \right) - \Phi'(w) (v_t-w) - \Phi'(u) (w-u) - \sigma 
	\\
	< \ & \Phi'(u) (v_t - u)- \Phi'(w) (v_t-w) - \Phi'(u) (w-u) - \sigma 
	\\
	= \ & 
	\left( \Phi'(u) - \Phi'(w) \right) (v_t - w) - \sigma
	\\
	\leq \ & 
	\| \Phi'(u) - \Phi'(w) \|_* \| v_t - w \| - \sigma 
	\\
	= \ & \left\{ \| \Phi'(u) - \Phi'(w) \|_* - \frac{\sigma}{\| v_t - w \|} \right\} \| v_t - w \|. 
\end{aligned}
\]
Since $t \in (0,1)$ is arbitrary and we may take a smaller $r_t$, for sufficiently small $t$ and $r_t$, we get 
\[
\lambda \left( \Psi(v_t) - \Psi(w) \right) - \Phi'(w) (v_t - w) \leq -3 \delta_0 \| v_t - w \| \quad 
\text{for each $w \in B(u,r_t)$ with $I_\lambda (w) \geq c - 2\sigma$}
\]
and (c) holds. 
	\end{proof}

To proceed, since $\overline{K}$ is compact and $\Phi' \in C(X,X^\ast)$, there exists $r_1 > 0$ such that 
\begin{equation}\label{e:diff-Phi'}
	\sup \Set{ \norm{ \Phi'(v) - \Phi'(u) }_*  | u \in \ov{K}, \ \ \| v - u \| \leq r_1 } 
	\leq \frac{\delta_0}{4}. 
\end{equation}

\smallskip 

	\noindent
	\textbf{Step 2:} 
	\textsl{Construction of $\eta$. }

	\begin{proof}
For each $u \in \ov{K}$, we have $v = v(u)$ and $B(u,r)$ as in Step 1. 
Since $r = r(u)$ can be taken small, we may suppose 
	\[
	r(u) \le \frac{r_1}{2}, \qquad \text{that is,} \qquad 	\| u - w \| \leq \frac{r_1}{2} \quad \text{for all }\, w \in B(u,r(u)), 
	\]
with $r_1$ as in \eqref{e:diff-Phi'}. Using the compactness of $\ov{K}$, from $\ov{K} \subset \bigcup_{u \in \ov{K}} B(u,r(u))$ we infer the existence of finitely many distinct points $u_1,\dots, u_k \in \ov{K}$ so that 
	\[
		\ov{K} \subset \bigcup_{i=1}^k B(u_i,R_i), \qquad 
		\dist \left( \overline{K} , w \right) \leq \frac{r_1}{2} \qquad  \quad \text{for all } \, w \in \bigcup_{i=1}^k B(u_i,R_i), 
	\]
where we write $R_i \equiv r(u_i)$. We also use the symbol $v_i = v(u_i)$. 
Since the $u_i$'s are finite, we may find $r_0 \in (0,r_1)$ such that 
	\[
		\begin{array}{ll}
			\ov{B(u_i,2 r_0)} \subset B(u_i,R_i) & \qquad \text{for } \, 1 \leq i \leq k, 
			\\[0.3cm]
			\ov{B(u_i, 2r_0)} \cap \ov{B(u_j,2 r_0)} = \emptyset 
			& \qquad \text{for each $i,j \in \{1, \dots, k\}$ with $i \neq j$}. 
		\end{array}
	\]

	Put 
	\[
		V_i \equiv B(u_i, R_i) \setminus \bigcup_{j \in \{ 1,\dots, k \}, \, j\neq i} \overline{B (u_j, r_0)} \quad 
		\text{for $i=1,\dots, k$}, 
	\]
and define 
	\[
	\wt{W} \equiv \bigcup_{i=1}^k V_i.
	\]	
From the definition of $V_i$ and the choice of $r_0 \in (0,r_1)$, we observe that 
\begin{equation}\label{e:ball-Vi}
		\begin{aligned}
			&\overline{B(u_i,2r_0)} \subset V_i \quad (1 \leq i \leq k), \quad 
			\dist \left( \overline{K} , w \right) \leq \frac{r_1}{2} \quad \text{for any $w \in \wt{W}$},  
			\\
			&
			\wt{K} \equiv \overline{K} \cup \bigcup_{i=1}^k \overline{B(u_i,r_0)} \subset \wt{W}, 
			\qquad  0 < \dist \left( \wt{K} , \, \partial \wt{W} \right). 
		\end{aligned}
	\end{equation}
Notice that the last property in \eqref{e:ball-Vi} follows from
	\[
		\dist \left( \overline{K} , \, \partial \wt{W} \right) > 0, \qquad 
		\dist \left(  \bigcup_{i=1}^k \overline{B(u_i,r_0)} , \, \partial \wt{W} \right) > 0.
	\]
Moreover, by the very definition of $V_j$,  
	\begin{equation}\label{e:pos-dist}
		\ov{B(u_i,r_0)} \cap V_j = \emptyset \qquad \text{for each $i,j \in \{1,\dots, k\}$ with $i \neq j$}.
	\end{equation}

For $\e>0$, write 
	\[
		V_{i,\e} \equiv \left\{ x \in V_i \ | \ \dist (x, V_i^c ) > \e  \right\} \quad \text{for $i=1,\dots, k$}.
	\]
From \eqref{e:ball-Vi}, there exists $\zeta_0>0$ such that
	\[
		\overline{B(u_i,r_0)} \subset V_{i, 3\zeta_0} \quad \text{for $1 \leq i \leq k$}, \quad 
		\wt{K} \subset \bigcup_{i=1}^k V_{i,3\zeta_0} \equiv W_1. 
	\]
We also set 
	\[
		W_2 \equiv \bigcup_{i=1}^k V_{i,2\zeta_0}, \quad W_3 \equiv \bigcup_{i=1}^k V_{i,\zeta_0}, \quad 
		W_4 \equiv \wt{W}. 
	\]
Then, 
	\begin{equation*}
		\overline{W_j} \subset W_{j+1} \quad \text{and} \quad \dist \left( W_j, X \setminus W_{j+1} \right) > 0 \quad (1 \leq j \leq 3). 
	\end{equation*}
Define 
	\[
		V_0 \equiv X \setminus \overline{W}_1, \quad V_{0,\zeta_0} \equiv X \setminus \overline{W}_2. 
	\]
It is easily seen that
	\[
		X = \left( X \setminus \overline{W}_2\right) \cup W_3 = \bigcup_{i=0}^k V_{i,\zeta_0}, \quad 
		\dist \left( V_{0,\zeta_0} , \, X \setminus V_0 \right) = \dist \left( \overline{W_1} , X \setminus \overline{W_2} \right) > 0.
	\]
Furthermore, from $\wt{K} \subset W_1$, it follows that 
	\begin{equation}\label{int-V0-B}
		V_0 \cap \overline{B(u_i,r_0)} = \emptyset \qquad (1 \leq i \leq k). 
	\end{equation}

	For $i=0,\dots,k$, pick $\wt{\rho}_i \in C(X,\R)$ so that 
	\[
		0 < \wt{\rho}_i \quad \text{on} \ V_{i,\zeta_0}, \quad 
		\supp \wt{\rho}_i \subset V_i, \quad \dist \left( \supp \wt{\rho}_i, \, X \setminus V_i \right) > 0
	\]
and set 
	\[
		\rho_i (w) \equiv \frac{ \wt{\rho}_i(w) }{ \sum_{j=0}^k \wt{\rho}_j(w) }.
	\]
Then for each $i=0,1,\dots,k$, 
	\begin{equation}\label{e:partofunity}
		\begin{aligned}
			&\rho_i \in C( X , [0,1] ), \quad 
			\sum_{i=0}^k \rho_i = 1 \quad \text{on} \ X, 
			\quad 
			\supp \rho_i \subset V_i, \quad \dist \left(  \supp \rho_i, X \setminus V_i \right) > 0. 
		\end{aligned}
	\end{equation}
In particular, $\rho_i = 0$ on $X \setminus W_4$ for every $i =1,\dots, k$.

	To define $\eta(s,w)$, we consider the decomposition $\{1,\dots, k\} = J_1 \cup J_2$, where 
	\begin{equation}\label{def_J1J2}
		J_1 \equiv \left\{ i \ | \ u_i \in \cK_\lambda \right\}, \quad J_2 \equiv \{1,\dots, k\} \setminus J_1. 
	\end{equation}
By the definition of $v_i$ and by Step 1, 
	\begin{equation}\label{eq_nice_vi}
	v_i = u_i \qquad \text{if } \, i \in J_1, \qquad 
	v_i \not \in \ov{V}_i  \qquad \text{if } \, i \in J_2.
	\end{equation}
For any $i \in J_1$, we define $\eta_i \in C([0,\infty) \times X , X)$ by 
		\[
			\eta_i (s,w) \equiv 
			\begin{dcases}
				\rho_i(w) \left[w + s \frac{v_{i} - w}{\| v_i - w\|} \right]
				& \text{if $0 \leq s < \| v_i - w\|$},\\
				\rho_i(w) v_i & \text{if $s \ge \| v_i - w \|$}.
			\end{dcases}			 
		\]
On the other hand, for $i \in J_2$ we put
	\[
		\eta_i(s,w) \equiv \rho_i(w) \left[w + s \frac{v_{i} - w}{\| v_i - w\|} \right].
	\]
By \eqref{eq_nice_vi}, $v_i \not\in \overline{V}_i \supset \supp \rho_i$, so setting $\eta_i(s,w) = 0$ for $w=v_i$ yields a continuous map $\eta_i \in C([0,\infty) \times X, X  )$. 
Finally, for $i=0$, define 
	\[
		\eta_0(s,w) \equiv \rho_0 (w)w \in C( [0,\infty) \times X, X ). 
	\]
Using these $\eta_i$'s, we define $\eta(s,w)$ by 
	\[
		\eta(s,w) \equiv \sum_{i=0}^k \eta_i(s,w) \in C([0,\infty) \times X , X)
	\]
and set 
	\[
		W \equiv W_1 = \bigcup_{i=1}^{k} V_{i, 3\zeta_0}.
	\]
Remark that $W$ is an open set satisfying 
$\overline{K} \subset W \subset \overline{W} \subset \wt{W}$. 
Then, we shall prove that $\eta$, $W$ and $\wt{W}$ satisfy (i)--(iii) in Lemma \ref{l:defor-lem}. 
	\end{proof}


	\noindent
	\textbf{Step 3:} 
	\textsl{$\eta$ satisfies {\rm (i)--(iii)}.}

	\begin{proof}
We first prove (i). Notice that for each $(s,w) \in [0,\infty) \times X$ and $i \in J_2 \cup \{0\}$, 
	\[
		\| \eta_i(s,w) - \rho_i (w) w \| \leq \rho_i(w) s.
	\]
When $i \in J_1$, if $\rho_i(w) > 0$ and $ 0 \leq s < \| v_i - w\|$, then 
	\[
		\| \eta_i(s,w) - \rho_i (w) w \| \leq \rho_i(w) s. 
	\]
On the other hand, if $i \in J_1$, $\rho_i(w)>0$ and  $s \geq \| v_i - w\|$, then 
	\[
		\| \eta_i(s,w) - \rho_i (w)w \| \leq \rho_i (w) \| v_i - w\| \leq \rho_i (w) s.
	\]
Combining these estimates, we observe that for every $(s,w) \in [0,\infty) \times X$, 
	\[
		\| \eta(s,w) - w\| \leq \sum_{i=0}^k \| \eta_i(s,w) - \rho_i (w)w \| \leq \sum_{i=0}^k s \rho_i(w) = s.
	\]
Thus, the first assertion in (i) holds. 

Regarding the second assertion in (i), from the definition of $\wt{W}$ and \eqref{e:partofunity}, it follows that 
if $w \not\in \wt{W}$, then $\rho_j(w) = 0$ for every $j=1,\dots,k$ and $\rho_0(w) = 1$. 
Therefore, $\eta(s,w) = \rho_0 (w)w  = w$ holds for any $(s,w) \in [0,\infty) \times (X \setminus \wt{W})$, as claimed.

It remains to prove (ii) and (iii). 
Without loss of generality, we may assume $w \in \wt{W}$. We split the proof into two cases:

\medskip 

\noindent \textbf{First case}: assume that 
\[
w \in \bigcup_{i \in J_1} B(u_i,r_0),
\]
where $J_1$ is as in \eqref{def_J1J2}. Choose an index $i_0 \in J_1$ for which $w \in B(u_{i_0},r_{0})$. In this case, by $\overline{B(u_{i_0} , r_0)} \subset V_{i_0,3\zeta_0}$, \eqref{e:pos-dist}, \eqref{int-V0-B} 
and \eqref{e:partofunity}, we remark that $\rho_j(w) = 0$ for any $j \in \{0,\dots,k\} $ with $j \neq i_0$, hence 
\begin{equation}\label{eq_propriety_case1}
\rho_{i_0}(w) = 1, \qquad \eta(s,w) = \eta_{i_0}(s,w).
\end{equation} 
Also, $u_{i_0} \in \cK_\lambda$ and $v_{i_0} = u_{i_0}$ by \eqref{eq_nice_vi}.

	If $w = v_{i_0}$, then by definition we have $\eta_{i_0}(s,w) = w$ for any $s \geq 0$. 
Also, we claim that $I_\lambda(w) < c - 3 \sigma$, which directly implies (ii) and there is nothing to prove for (iii). 
Indeed, otherwise, by combining \eqref{eq:uppest-u}, $w = v_{i_0} \in \cK_\lambda \cap \overline{K}$ and 
the closedness of $A$, from $\overline{K} \subset A$ we would deduce $w \in A \cap \{ c - 3 \sigma \leq I_\lambda \leq c + 3 \sigma \} = \cA_{c,3\sigma}$, 
which violates \eqref{ineq-I'}.

Assume therefore that $w \in B(u_{i_0} , r_0) \setminus \{ u_{i_0} \}$. 
From Step 1 and the definition of $\eta_{i_0}$, if $ 0 \leq s \leq \| w - v_{i_0} \|$, then
	\[
		\eta(s,w) = \eta_{i_0}(s,w) = \left( 1 -  \tau_{i_0}   \right) w + \tau_{i_0} v_{i_0} \quad 
		\text{where} \ \tau_{i_0} = \tau_{i_0} (s,w) \equiv \frac{s}{\| w - v_{i_0} \|} \leq 1 
	\]
and by the convexity of $\Psi$, 
	\[
		\begin{aligned}
			I_\lambda(\eta (s,w)) 
			&\leq 
			\lambda (1-\tau_{i_0}) \Psi(w)
			+ \lambda \tau_{i_0} \Psi(v_{i_0}) - \Phi(\eta_{i_0} (s,w) )
			\\
			&= 
			I_\lambda (w) + \Phi(w) + \tau_{i_0} \lambda ( \Psi(v_{i_0}) - \Psi(w)  )  - \Phi(\eta_{i_0} (s,w) ) .
		\end{aligned}
	\]
Notice that by $v_{i_0} = u_{i_0} \in \cK_\lambda \cap \overline{K}$, $\tau_{i_0} \leq 1$ and $r_0 < r_1$,  
on the segment
	\[
		[0,1] \ni \theta \ \ \mapsto \ \ w_\theta \equiv w + (1-\theta) \tau_{i_0} (v_{i_0} - w ),
	\]
it holds  
	\[
		\begin{aligned}
			\left\|w_\theta - v_{i_0} \right\| 
			&= 
			\left\{ 1 - (1 -\theta) \tau_{i_0} \right\} \left\| w - v_{i_0} \right\| 
			\leq r_0 < r_1 \qquad \text{for any } \theta \in [0,1].
		\end{aligned}
	\]
From \eqref{e:diff-Phi'}, we get
	\[
		\begin{aligned}
			\Phi(w) - \Phi( \eta_{i_0} (s,w) ) 
			&= 
			\int_0^1 \frac{\rd}{\rd \theta} \Phi \left( w_\theta \right) \dd{\theta} =  - \tau_{i_0} (v_{i_0} - w) \int_0^1 \Phi' \left(w_\theta\right) \dd{\theta}
			\\
			&= 
			- \left( \tau_{i_0} (v_{i_0} - w) \right) \left\{ \Phi'(w) + \int_0^1 \left[ \Phi'(w_\theta) - \Phi'(v_{i_0}) + \Phi'(v_{i_0})- \Phi'(w) \right]\dd{\theta} \right\}.
			\\
			&\le -\tau_{i_0} \Phi'(w) (v_{i_0} - w) + 2  \tau_{i_0} \|v_{i_0} - w\| \max_{\theta \in [0,1]} \big\|\Phi'(w_\theta) - \Phi(v_{i_0})\big\|.
			\\
			& \leq 
			- \tau_{i_0} \Phi'(w)  (v_{i_0} - w) 
			+ \frac{ \delta_0}{2} \tau_{i_0} \| v_{i_0} - w \| 
			\\
			&= 
			- \tau_{i_0} \Phi'(w)  (v_{i_0} - w) + \frac{ \delta_0}{2} s. 
		\end{aligned}
	\]
This implies that for all $w \in B(u_{i_0}, r_0) \setminus \{u_{i_0}\}$ ($i_0 \in J_1$) 
and $s \in [0, \| v_{i_0} - w \|]$, 
	\begin{equation}\label{e:bas-inq}
		I_\lambda(\eta(s,w)) 
		\leq 
		I_\lambda(w) 
		+ \tau_{i_0} \left[ \lambda \left\{ \Psi(v_{i_0}) - \Psi(w) \right\} - \Phi'(w) (v_{i_0} - w)  \right] 
		+ \frac{\delta_0}{2} s. 
	\end{equation}
Recalling $\tau_{i_0} \leq 1$ and Step 1 (b), we observe that 
for every $w \in B(u_{i_0}, r_0) \setminus \{u_{i_0}\} \subset V_{i_0} $ and $s \in [0, \| v_{i_0} - w \|]$, 
	\begin{equation}\label{e-ii-smalls}
		I_\lambda(\eta (s,w)) 
		\leq 
		I_\lambda (w) + \tau_{i_0} \frac{\delta_0}{2} \| v_{i_0} - w \| + \frac{\delta_0}{2}s  
		= 
		I_\lambda (w) + \delta_0 s. 
	\end{equation}
Finally, if $s > \| v_{i_0} - w \|$, then $\eta (s,w) = \eta_{i_0} (s,w)$ and the definition of $\eta_{i_0}$ yield 
	\[
		I_\lambda (\eta(s,w)) = I_\lambda (\eta ( \| v_{i_0} - w \| , w  ) ) \leq I_\lambda (w) + \delta_0 \| v_{i_0} - w \| 
		\leq I_\lambda (w) + \delta_0 s. 
	\]
In conclusion, (ii) is verified for all $(s,w) \in [0,\infty) \times B(u_{i_0} , r_0)$ with $i_0 \in J_1$. 

To check (iii), let $w \in B(u_{i_0}, r_0) \setminus \{u_{i_0}\} $ satisfy $I_\lambda(w) \geq c - \sigma$. 
For any $s \in [0, \| v_{i_0} - w \| ]$, and taking into account the definition of $\tau_{i_0}$,  
Step 1 (b) and \eqref{e:bas-inq} imply 
	\[
		I_\lambda (\eta(s,w)) 
		\leq 
		I_\lambda (w) - \tau_{i_0} 3\delta_0 \| v_{i_0} - w \| + \frac{\delta_0}{2} s 
		\leq 
		I_\lambda (w) - 2 \delta_0 s.
	\]
On the other hand, when $\| v_{i_0} - w \| < s$, it follows from $I_\lambda (v_{i_0}) < c - 3\sigma$ that 
	\[
		I_\lambda (\eta(s,w)) = I_\mu(v_{i_0}) < c - 3 \sigma \leq I_\lambda (w) - 2 \sigma. 
	\]
Since $\delta_0 < \sigma$ holds due to \eqref{e:del-sig}, if $\| v_{i_0} - w \| < s \leq 1$, then 
	\[
		I_\lambda (\eta(s,w)) < I_\lambda (w) - 2\delta_0 s.
	\]
Hence, for every $(s,w) \in [0,1] \times B(u_{i_0}, r_0)$ with $i_0 \in J_1$ and $I_\lambda (w) \geq c - \sigma$, 
	\[
		I_\lambda (\eta(s,w)) \leq I_\lambda (w) - 2 \delta_0 s.
	\]
Thus, (iii) holds on $[0,1] \times \bigcup_{i \in J_1} B(u_{i} , r)$, too. 

\medskip 
\noindent \textbf{Second case}: assume that 
\[
w \in \wt{W} \setminus \bigcup_{i \in J_1} B(u_i,r_0).
\]
By the definition of $\eta_i$, for any $s \in [0,r_0/2]$ (so that $s < \|w-v_i\|$ for each $i \in J_1$) we have
	\begin{equation}\label{e:u+pert}
		\eta(s,w) = w + s \sum_{j=1}^k \rho_j(w) \frac{v_j-w}{\| v_j - w \|} 
		= w + s x, \quad x \equiv \sum_{j=1}^k \rho_j(w) \frac{v_j - w}{\| v_j - w \|}, \quad 
		\| x \| \leq 1.
	\end{equation}
Moreover,  it follows from \eqref{e:partofunity} and the second in \eqref{eq_nice_vi} that 
	\[
		\dist \left( v_j , \supp \rho_j \right) > 0 \quad \text{for every $j \in J_2$}. 
	\]	
Also, since $v_j = u_j$ for all $j \in J_1$, we may choose $s_1 \in (0, r_0/2)$ so that 
	\[
		s \sum_{j=1}^k \frac{\rho_j (w) }{\| v_j - w \|} \leq 1 \quad 
		\text{for every $s \in [0,s_1]$ and $w \in \wt{W} \setminus \bigcup_{i \in J_1} B(u_{i} , r_0)$}.
	\]
The inequality can be rewritten as
	\[
		\sum_{j=1}^k \tau_j(s,w) \leq 1, \quad 
		\tau_j (s,w) \equiv s \frac{\rho_j (w)}{\| v_j - w \|} \qquad \text{for each } \, 1 \leq j \leq k,
	\]
so that
	\[
		\eta(s,w) = \left(1 - \sum_{j=1}^k \tau_j  \right) w + \sum_{j=1}^k \tau_j v_j.
	\]
From the convexity of $\Psi$, we infer 
	\begin{equation}\label{e:est-Imu1}
		\begin{aligned}
			I_\lambda \left( \eta(s,w) \right) 
			&\leq
			\lambda \left[ \left( 1 - \sum_{j=1}^k \tau_j \right) \Psi(w) 
			+ \sum_{j=1}^k \tau_j \Psi(v_j) \right] - \Phi \left( \eta(s,w) \right)
			\\
			&= I_\lambda (w) + \sum_{j=1}^k 
			\tau_j \left\{ \lambda \left( \Psi(v_j) - \Psi(w) \right)  \right\} + \Phi(w) - \Phi(\eta(s,w)).
		\end{aligned}
	\end{equation}
We shall estimate the term $\Phi(w) - \Phi(\eta(s,w))$. 
Since $w \in \wt{W}$, $\dist (w, \overline{K}) \leq r_1/2$ holds due to \eqref{e:ball-Vi}. 
The compactness of $\overline{K}$ implies that there exists $y_w \in \overline{K}$ such that
	\[
		\| w - y_w \| \leq \frac{r_1}{2}. 
	\]
Therefore, if $0 \leq s \leq s_1$ and 
$\theta \in [0,1]$, then by $s_1 < r_0/2 < r_1/2$, 
	\[
		\left\| w + (1-\theta) s x - y_w \right\| \leq \frac{r_1}{2} + s < r_1. 
	\]
By \eqref{e:u+pert} and \eqref{e:diff-Phi'}, 
for every $s \in [0,s_1]$ we have  
		\begin{align*}
			\Phi(w) - \Phi(\eta(s,w)) 
			&= 
			\int_0^1 \frac{\rd}{\rd \theta } \Phi( w + (1-\theta) s x ) \dd{\theta}
			\\
			&= 
			- \Phi'(w) s x + \int_0^1 \left( \Phi'( w + (1-\theta) s x ) - \Phi'(w) \right) (-sx) \dd{\theta}
			\\
			&= 
			-\Phi'(w)  sx + \int_0^1 \left( \Phi'( w +(1-\theta) s x ) - \Phi'( y_w ) \right) (-sx) \dd{\theta}
			\\
			&\quad + \int_0^1 \left( \Phi'( y_w ) - \Phi'(w) \right) (-s x) \dd{\theta}
			\\
			&\leq - \Phi'(w) sx + \frac{\delta_0}{2} s 
			= - \sum_{j=1}^k \tau_j \Phi'(w) (v_j - w) + \frac{\delta_0}{2}s. 
		\end{align*}
Plugging into \eqref{e:est-Imu1}, for $s \in [0,s_1]$ we get
	\begin{equation}\label{e:basic-ineq2}
		I_\lambda (\eta (s,w)) 
		\leq 
		I_\lambda (w) 
		+ \sum_{j=1}^k \tau_j 
		\left\{ \lambda \left( \Psi( v_j ) - \Psi(w) \right) - \Phi'(w) (v_j - w) \right\} + \frac{\delta_0}{2} s .
	\end{equation}
Notice that if $\tau_j > 0$ for some $j =1,\dots, k$, then $w \in V_j$ and thus Step 1 gives 
	\begin{align*}
	I_\lambda (\eta(s,w) ) 
	\leq I_\lambda (w) + \sum_{j=1}^k \tau_j \frac{\delta_0}{2} \| v_j - w \| + \frac{\delta_0}{2}s 
	= I_\lambda (w) + \sum_{j=1}^k \rho_j(w) \frac{\delta_0}{2} s + \frac{\delta_0}{2} s 
	\leq  I_\lambda (w) + \delta_0 s.
	\end{align*}
This concludes the proof of (ii) whenever $(s,w) \in [0,s_1] \times \left( \wt{W} \backslash \bigcup_{i \in J_1} B(u_i,r_0)\right)$.

	To check the validity of (iii), assume that 
\[
w \in W \setminus \bigcup_{i \in J_1} B(u_{i}, r_0), \qquad I_\lambda (w) \geq c - \sigma.
\] 
Since $W \equiv W_1$ and $V_0 \cap W = \emptyset$, \eqref{e:partofunity} yields $\rho_0(w) = 0$. 
Thus, from \eqref{e:basic-ineq2} and Step 1 it follows $ \sum_{j=1}^k \rho_j(w) = 1$ and 
	\begin{align*}
	I_\lambda (\eta(s,w)) 
	&\leq I_\lambda (w) + \sum_{j=1}^k \tau_j (-3\delta_0 \| v_j - w \| ) 
		+ \frac{\delta_0}{2} s \\ 
	&= I_\lambda (w) -3\delta_0 \sum_{j=1}^k \rho_j(w) s + \frac{\delta_0}{2} s 
	\leq I_\lambda (w) - 2\delta_0 s
	\end{align*}
for each $s \in [0,s_1]$, and (iii) holds. 
We have thus shown properties (ii) and (iii) in all cases. 
	\end{proof}

	From Step 3, by choosing $s_0 \equiv \min \{ s_1, 1 \}$, 
all the conclusions in Lemma \ref{l:defor-lem} hold and we complete the proof. 
	\end{proof}

\subsection{Symmetric Deformation Lemma}

	Throughout this subsection we assume condition \eqref{even}, which is defined as  
\[
	\Psi(-u) = \Psi(u), \quad \Phi(-u) = \Phi(u) \quad \text{for any $u \in X$}.
\]
For a subset $A \subset X$, we say that $A$ is \textit{symmetric} if $-A = A$ holds, where $-A \equiv \{x \in X \mid -x \in A\}$. 

\begin{remark}\label{r:criticality-0}
	Under \eqref{even} and $(\Phi_1)$, it is easily seen that $\Phi'(0) = 0$. 
	Moreover, if $\Psi$ satisfies $(\Psi_1)$, then by $\Psi(u) \geq 0 = \Psi(0) $ for every $u \in X$, 
	we infer that $0$ is a critical point of $I_\lambda$. 
\end{remark}

To establish multiplicity results for even functionals, we need to construct an odd deformation which is the counterpart of Lemma \ref{l:defor-lem}.

\begin{lemma}[Symmetric Deformation Lemma]\label{sydefolemma}
	Assume that $(\Psi_1)$, $(\Phi_1)$ and \eqref{even} hold. 
	Fix $\lambda \in \R^+$ and let $A \subset X$ be a bounded, closed and symmetric set and 
	for $c \in \R$ and $\sigma > 0$, define
	\[
	\cA_{c,3\sigma} \equiv A \cap \left\{ c - 3\sigma \leq I_\lambda \leq c + 3 \sigma \right\}.
	\]
	Suppose that there exists $\delta_0 > 0$ such that for each $u \in \cA_{c,3\sigma}$ 
	we may find $v = v(u) \in X \setminus \{u\}$ with 
	\begin{equation}\label{noPS2}
		\lambda \left( \Psi(v) - \Psi(u) \right) - \Phi'(u) (v-u) < - 5 \delta_0 \| v - u \|.
	\end{equation}
	Then for every relatively compact and symmetric set $K \subset \cA_{c,3\sigma}$ 
	there exist symmetric neighborhoods $W$ and $\wt{W}$ of $\overline{K}$ with 
		\[
		\overline{K} \subset W \subset \overline{W} \subset \wt{W}, 
		\]
		$s_0>0$ and $\eta \in C([0,\infty) \times X, X)$ such that 
	\begin{enumerate}[{\rm (i)}]
		\item
		$\| \eta(s,w) - w\| \leq s$ and $\eta(s,-w) = -\eta(s,w)$ for all $(s,w) \in [0,\infty) \times X$, 
		and $\eta(s,w) = w$ for all $s \in [0,\infty)$ if $w \not \in \wt{W}$; 
		\item
		$I_\lambda ( \eta(s,w) ) \leq I_\lambda(w) + \delta_0 s$ for all $(s,w) \in [0,s_0] \times X$; 
		\item
		$I_\lambda(\eta(s,w)) \leq I_\lambda(w) - 2\delta_0 s$ for all $(s,w) \in [0,s_0] \times W$ with $I_\lambda(u) \geq c - \sigma$. 
	\end{enumerate} 
\end{lemma}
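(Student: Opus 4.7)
The plan is to replay the proof of Lemma \ref{l:defor-lem} making every choice compatible with the involution $w \mapsto -w$, so that the resulting flow satisfies $\eta(s,-w) = -\eta(s,w)$. Three consequences of \eqref{even} together with $(\Phi_1)$ will be used throughout: $\Phi'(-u) = -\Phi'(u)$ (by differentiating the identity $\Phi(-u)=\Phi(u)$); the estimate \eqref{noPS2} at $u$ with vector $v$ becomes the corresponding inequality at $-u$ with vector $-v$; and by Remark \ref{r:criticality-0} the origin is critical, so if $0 \in \cA_{c,3\sigma}$ then \eqref{noPS2} fails at $u=0$ (the left-hand side equals $\lambda \Psi(v) \geq 0$), forcing $0 \notin K$.

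The first task is to select the pseudogradient directions of Step~1 of the proof of Lemma \ref{l:defor-lem} symmetrically, namely with $v(-u) = -v(u)$. In case (a), $u \in \ov{K} \cap \cA_{c,3\sigma}$, pick $v(u)$ satisfying \eqref{noPS2} and define $v(-u) \equiv -v(u)$; evenness preserves the estimate. In case (b), when $u \notin \cA_{c,3\sigma}$ is a critical point, the natural choice $v(u) = u$ already obeys $v(-u) = -u$, and by evenness $-u$ is critical as well. In case (c), the convex-combination construction of \cite{S86} can be run simultaneously at $u$ and $-u$. All intervening radii $r(u)$ can be chosen so that $r(-u) = r(u)$.

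Next I extract a symmetric finite cover. Since $\ov{K}$ is compact and symmetric, it can be covered by finitely many balls $B(u_i, R_i)$, $i=1,\dots,k$, indexed in such a way that there exists an involution $i \mapsto i^\ast$ on $\{1,\dots,k\}$ with $u_{i^\ast} = -u_i$, $R_{i^\ast} = R_i$ and $v_{i^\ast} = -v_i$; the fixed points of the involution can occur only at $u_i = 0$, which is allowed only under case (b) with $v_i = 0$. Keeping the disjointness and separation requirements from the non-symmetric proof, the associated sets $V_i, W_j, \wt{W}$ automatically satisfy $V_{i^\ast} = -V_i$, $W_j = -W_j$, $\wt{W}= -\wt{W}$. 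Symmetrizing the auxiliary functions $\wt\rho_i$ of the non-symmetric proof — for instance, replacing $\wt\rho_i$ by $w \mapsto \wt\rho_i(w) + \wt\rho_{i^\ast}(-w)$ before normalizing — I obtain a partition of unity $\{\rho_i\}_{i=0}^k$ with $\rho_{i^\ast}(w) = \rho_i(-w)$ and $\rho_0(-w) = \rho_0(w)$.

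Defining the flow $\eta$ by the same formulas as in Step~2 of the proof of Lemma \ref{l:defor-lem}, a direct computation using $\rho_{i^\ast}(-w) = \rho_i(w)$ and $v_{i^\ast} - (-w) = -(v_i - w)$, together with the change of summation index $i \mapsto i^\ast$, shows that the vector field $w \mapsto \sum_i \rho_i(w)(v_i-w)/\|v_i-w\|$ is odd; hence $\eta(s,-w) = -\eta(s,w)$. Properties (i)--(iii) then follow from the exact same estimates carried out in Step~3 of the proof of Lemma \ref{l:defor-lem}, since those computations are pointwise in $w$ and are unaffected by the symmetrization. The main point to check carefully is the compatibility of the symmetric choices at the fixed point $u = 0$ of the involution (relevant only when $0 \in \ov{K} \setminus K$, so that $I_\lambda(0) < c - 3\sigma$); this is handled by reducing to case (b) with $v(0) = 0$, which preserves oddness trivially. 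Apart from this bookkeeping, the argument is a faithful symmetric variant of the original.
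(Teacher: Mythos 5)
Your proposal is correct and follows essentially the same strategy as the paper: replay Steps~1--3 of the proof of Lemma~\ref{l:defor-lem}, choosing all data (pseudogradient directions $v$, radii $r$, centers $u_i$, partition of unity $\rho_i$) equivariantly under $w\mapsto -w$, using $\Phi'(-u)=-\Phi'(u)$ and $\Psi(-u)=\Psi(u)$ to transport the estimates, and handling the potential fixed point $0\in\overline K\setminus K$ via case~(b) with $v(0)=0$. The only cosmetic difference is that the paper produces an odd pseudogradient field by averaging, $\wt v(u)=\{v(u)-v(-u)\}/2$ (validated by the convexity of $\Psi$), whereas you fix a representative of each orbit $\{u,-u\}$ and extend by oddness; both are valid.
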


\begin{proof}
	We follow the same scheme as in the proof of Lemma \ref{l:defor-lem}, just pointing out the differences needed to get the oddness of the deformation flow.

	By \eqref{even}, notice that $\cA_{c,3\sigma}$ is symmetric, namely $u \in \cA_{c,3\sigma}$ yields $-u \in \cA_{c,3\sigma}$. 
	Therefore, for every $u \in \cA_{c,3\sigma}$, \eqref{noPS2} and \eqref{even} imply 
	\[
	\lambda ( \Psi(-v) - \Psi(-u) ) - \Phi^\prime(-u) (-v - (-u))
	< - 5 \delta_0 \Vert -v - (-u)\Vert. \]
For each $u \in \cA_{c,3\sigma}$, by defining $\wt{v}(u) \equiv \{ v(u)-v(-u) \} /2,$
	we see from \eqref{even}, \eqref{noPS2} and the convexity of $\Psi$ that 
	\[ \begin{aligned}
		\lambda ( \Psi(\wt{v}(u)) - \Psi(u) ) - \Phi^\prime(u) (\wt{v}(u) - u ) 
		&\le  \frac 12 \left[ \lambda \left\{ \Psi(v(u)) - \Psi(u) \right\} - \Phi^\prime(u) (v(u) - u) \right] 
		\\
		& \quad +
		\frac 12 \left[ \lambda \left\{ \Psi(v(-u)) - \Psi(-u) \right\} - \Phi^\prime(-u) (v(-u) - (-u) )\right]  \\
		&<  \frac 12 \left( - 5 \delta_0 \Vert v(u) - u\Vert \right)+ \frac 12 \left( - 5 \delta_0 \Vert v(-u) - (-u)\Vert \right) \\
		&\leq  -5 \delta_0 \Vert \wt{v}(u) - u\Vert. \end{aligned}
	\]
Remark that $\wt{v} (u) \neq u$ since the second-last inequality is strict. 
Thus, we may assume that $v$ is odd with respect to $u \in X$.

	For Step 1 in the proof of Lemma \ref{l:defor-lem}, we remark that $\overline{K}$ and $\cK_\lambda$ are symmetric. 
	Therefore, in (a) and (b), we can take $v(u)$ and $r(u)$ so that $v(-u) = - v(u)$ and $r(-u) = r(u)$, and 
	prove Step 1 (a) and (b). 
	For Step 1 (c), if $v \in X \setminus \{u\}$ satisfies 
	\[
	\lambda \left( \Psi(v) - \Psi(u) \right) - \Phi'(u) (v-u) = - 2b_u < 0,
	\]
	then \eqref{even} gives 
	\[
	\lambda \left( \Psi(-v) - \Psi(-u) \right) - \Phi'(-u) \left( - v - (-u) \right) = - 2b_u < 0.
	\]
	Hence, following the proof of Step 1 (c), we can see that the claim in (c) holds with $v(-u) = - v(u)$ and $r(-u) = r(u)$.

	Since $\overline{K}$ is symmetric and compact, we notice that 
	there are $u_1,\dots, u_k \in \overline{K}$ such that 
	by setting $u_{k+1} = 0$ when $0 \in \overline{K}$, 
	\[
	\overline{K} \subset 
	\begin{dcases} 
		\bigcup_{i=1}^k \left\{ B(u_i, r(u_i)) \cup B( - u_i , r(u_i) ) \right\} & \text{if $0 \not \in \overline{K}$},
		\\
		B(u_{k+1} , r(0) ) \cup \bigcup_{i=1}^k \left\{ B(u_i, r(u_i)) \cup B( - u_i , r(u_i) ) \right\} & \text{if $0 \in \overline{K}$}.
	\end{dcases}
	\]
	Write $u_{-j} \equiv - u_j$ for $1 \leq j \leq k+1$ and $R_i \equiv r(u_i)$ for $1 \leq |i| \leq k+1$, and 
	when $0 \not \in \overline{K}$ define 
	\[
	\begin{aligned}
		V_i &\equiv B(u_i,R_i) \setminus \bigcup_{1 \leq |j| \leq k, \, j \neq i} \overline{B(u_j,r_0)} & &\text{for $1 \leq |i| \leq k$}
	\end{aligned}
	\]
and when $ 0 \in \overline{K}$, 
	\[
		\begin{aligned}
			V_i& \equiv B(u_i,R_i) \setminus \bigcup_{1 \leq |j| \leq k+1, \, j \neq i}  \overline{B(u_j,r_0)} & &\text{for $1 \leq |i| \leq k$}, 
			\quad 
			V_{k+1} \equiv B(0,R_{k+1}) \setminus \bigcup_{1 \leq |j| \leq k} \overline{B (u_j, r_0)}. 
	\end{aligned}
	\]
	In any case, we have 
		\[
			V_{-j} = - V_j \quad \text{for $1 \leq j \leq k$}, \quad - V_{k+1} = V_{k+1}, \quad 
			- \bigcup_{1 \leq |i| \leq k} V_i = \bigcup_{1 \leq |i| \leq k} V_i. 
		\]
	Arguing as in the proof of Step 2 of Lemma \ref{l:defor-lem}, 
	we may find $( \wt{\rho}_i )_{1 \leq |i| \leq k}$ (resp. $ (\wt{\rho}_i)_{1 \leq |i| \leq k} $ and $\wt{\rho}_{k+1}$) such that 
	$ 0< \wt{\rho}_i$ on $V_{i,\zeta_0}$, $\supp \wt{\rho}_i \subset V_i$, $\wt{\rho}_{-j} (u) = \wt{\rho}_{j} (-u)$ for $1 \leq j \leq k$, 
	$\wt{\rho}_{k+1} (-u) = \wt{\rho}_{k+1} (u)$ and $\wt{\rho}_0(-u) = \wt{\rho}_0(u)$. 
	From this family of functions, we construct the partition of unity $(\rho_i)_{1 \leq |i| \leq k}$ (resp. $(\rho_{i})_{ 1 \leq |i| \leq k }$ and $\rho_{k+1}$) 
	as before.

	To define $\eta(s,u)$, consider 
	\[
	\begin{aligned}
		J_1 &\equiv \left\{ i \ | \ 1 \leq |i| \leq k , \ u_i \in \cK_\lambda \right\} \quad 
		\left(\text{resp. } \left\{ i \ | \ \text{$1 \leq |i| \leq k$ or $i=k+1$}, \ u_i \in \cK_\lambda \right\}  \right),
		\\
		J_2 & \equiv \left\{ -k, \dots, -1, 1, \dots, k \right\} \setminus J_1 .
	\end{aligned}
	\]
	We remark that $k+1 \in J_1$ if $0 \in \overline{K}$ and 
	see from the properties of $V_i$  that for $1 \leq j \leq k$, $j \in J_1$ if and only if $-j \in J_1$. 
	Defining $\eta_i(s,u)$ and $\eta(s,u)$ as in Step 2, we see that 
	$\eta_{-j} (s,u) = -\eta_{j} (s,-u)$ for $1 \leq j \leq k$, $\eta_{k+1} (s,-u) = -\eta_{k+1} (s,u)$ 
	and $\eta_0(s,-u) = - \eta_0(s,u)$, 
	hence $\eta(s,-u) = - \eta(s,u)$.

	The rest of the argument is unchanged compared to the proof of Lemma \ref{l:defor-lem}. 
\end{proof}

\section{Proof of Theorem \ref{mpt} and Theorem \ref{smt}}
\label{sec:pf-main}

In the proof of of Theorems \ref{mpt} and \ref{smt} we assumed $({\rm IB})$, that is, the boundedness of the set of critical points $\cK_{[a,b]}^c$ for each $[a,b] \subset (0,\infty)$ and $c \in \R$. As a matter of fact, under the further validity of $(\Psi_1),(\Psi_3),(\Phi_1)$ and $(\Phi_2)$ a stronger property holds: $\cK_{[a,b]}^c$ is compact. More precisely, we prove the next


\begin{prop}\label{IBtoIC}
Suppose that $(\Psi_1),(\Psi_3),(\Phi_1), (\Phi_2)$ hold. Assume that there exist $c \in \R$ and sequences $\{\e_j\}, \{\lambda_j\} \subset (0,\infty)$ and $\{u_j\} \subset X$ with 
\[
\e_j \to 0, \qquad \lambda_j \to \lambda \in (0,\infty), \qquad u_j \rightharpoonup u \ \ \text{weakly in $X$} \qquad \text{ as } \, j \to \infty
\]
and 
\[
\lambda_j( \Psi(v)-\Psi(u_j)) - \Phi'(u_j)(v-u_j) \ge - \e_j \norm{v-u_j} \qquad \text{ for each } \, v \in X.
\]
Then, 
\begin{equation}\label{eq_conclu_weakstrong}
\lim_{j \to \infty}\Vert u_j -u\Vert = 0, \qquad \text{$u$ is critical for $I_\lambda$,} \qquad I_{\lambda}(u) = \lim_{j \to \infty} I(u_j) \in \R.
\end{equation}
In particular, in the stated assumptions, 
\begin{itemize}
\item[-] for each $\lambda \in (0,\infty)$ and $c \in \R$, $I_\lambda$ satisfies the bounded Palais-Smale condition at level $c$, namely 
any bounded Palais-Smale sequence for $I_\lambda$ at level $c$ has a strongly convergent subsequence in $X$;
\item[-] property $({\rm IB})$ implies that the set $\cK_{[a,b]}^c$ is compact for each $[a,b] \subset (0, \infty)$ and $c \in \R$.
\end{itemize} 
\end{prop}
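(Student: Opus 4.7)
The plan is to deduce from the hypotheses, in order: (1) $u \in D(\Psi)$; (2) $\Psi(u_j) \to \Psi(u)$; (3) strong convergence $u_j \to u$ via $(\Psi_3)$; (4) criticality of $u$ at level $\lambda$ by passing to the limit in the inequality; (5) convergence of the energies. The ``in particular'' clauses then follow almost immediately.

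\medskip

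\emph{Step 1 (the limit $u$ lies in $D(\Psi)$).} Plug any fixed $v_0 \in D(\Psi)$ into the Palais--Smale inequality. Rearranging,
\[
\lambda_j \Psi(u_j) \le \lambda_j \Psi(v_0) - \Phi'(u_j)(v_0-u_j) + \e_j \|v_0-u_j\|.
\]
Because $\{u_j\}$ is weakly convergent hence norm bounded, and $\Phi' \in C(X,X^*)$ sends bounded sets to bounded sets by $(\Phi_1)$, the right-hand side is bounded. Since $\lambda_j \to \lambda > 0$, the sequence $\{\Psi(u_j)\}$ is bounded. The convexity and lower semicontinuity of $\Psi$ in $(\Psi_1)$ imply weak lower semicontinuity, so $\Psi(u) \le \liminf_j \Psi(u_j) < \infty$, i.e.\ $u \in D(\Psi)$.

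\medskip

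\emph{Step 2 (upgrade to $\Psi(u_j) \to \Psi(u)$ and strong convergence).} Now choose $v = u \in D(\Psi)$ in the inequality:
\[
\lambda_j \Psi(u_j) \le \lambda_j \Psi(u) - \Phi'(u_j)(u-u_j) + \e_j \|u-u_j\|.
\]
Using $(\Phi_2)$ with the test element $v = u$ gives $\liminf_j \Phi'(u_j)(u-u_j) \ge \Phi'(u)(u-u) = 0$, hence $\limsup_j[-\Phi'(u_j)(u-u_j)] \le 0$. Combined with $\lambda_j \to \lambda$ and $\e_j \to 0$ we obtain $\limsup_j \Psi(u_j) \le \Psi(u)$, which together with weak lower semicontinuity yields $\Psi(u_j) \to \Psi(u)$. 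Property $(\Psi_3)$ then gives $\|u_j - u\| \to 0$.

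\medskip

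\emph{Step 3 (criticality and energy convergence).} With strong convergence in hand, fix any $v \in X$ and pass to the limit in
\[
\lambda_j\bigl(\Psi(v)-\Psi(u_j)\bigr) - \Phi'(u_j)(v-u_j) \ge -\e_j\|v-u_j\|.
\]
The term $\Phi'(u_j)(v-u_j) \to \Phi'(u)(v-u)$ by continuity of $\Phi'$, $\Psi(u_j) \to \Psi(u)$ from Step~2, $\lambda_j \to \lambda$, and $\e_j\|v-u_j\| \to 0$, yielding
\[
\lambda\bigl(\Psi(v)-\Psi(u)\bigr) - \Phi'(u)(v-u) \ge 0 \quad \text{for all } v \in X,
\]
i.e.\ $u$ is a critical point of $I_\lambda$. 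Finally, $\Phi(u_j) \to \Phi(u)$ by $(\Phi_1)$ and strong convergence, so $I_{\lambda_j}(u_j) = \lambda_j \Psi(u_j) - \Phi(u_j) \to \lambda\Psi(u) - \Phi(u) = I_\lambda(u) \in \R$.

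\medskip

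\emph{Step 4 (the two ``in particular'' statements).} For the bounded Palais--Smale claim, take any bounded PS sequence $\{u_j\}$ for $I_\lambda$ at level $c$, extract a weakly convergent subsequence, and apply the main statement with $\lambda_j \equiv \lambda$. For the compactness of $\mathcal{K}_{[a,b]}^c$, given a sequence $\{u_j\} \subset \mathcal{K}_{[a,b]}^c$ with corresponding $\lambda_j \in [a,b]$, use $({\rm IB})$ to ensure boundedness, extract weakly convergent $u_j \rightharpoonup u$ and $\lambda_j \to \lambda \in [a,b]$. The critical point definition gives the inequality with $\e_j = 0$, so the main statement applies: $u_j \to u$ strongly, $u$ is critical for $I_\lambda$, and $I_\lambda(u) = \lim I_{\lambda_j}(u_j) \le c$, hence $u \in \mathcal{K}_{[a,b]}^c$.

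\medskip

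The only nontrivial obstacle is getting $\Psi(u_j) \to \Psi(u)$: the subtlety lies in correctly combining the $\liminf$-direction provided by $(\Phi_2)$ with the $\limsup$-direction needed from the Palais--Smale inequality, i.e.\ ensuring that the asymmetric weak convergence condition in $(\Phi_2)$ is applied with the right sign. Once Step~2 is secured, property $(\Psi_3)$ does the rest and the remaining steps are straightforward limit passages.
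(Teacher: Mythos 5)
Your proof is correct, and the core technical step---using $(\Phi_2)$ with the test vector $v = u$ in the Palais--Smale inequality to obtain $\limsup_j \Psi(u_j) \le \Psi(u)$, then combining with weak lower semicontinuity and $(\Psi_3)$---is exactly the engine the paper uses. The organization differs slightly. The paper's proof establishes $u \in D(\Psi)$ and criticality of $u$ \emph{first}, before strong convergence: it fixes $\eps \in (0,\lambda)$, uses $\lambda_j(\Psi(v) - \Psi(u_j)) \ge (\lambda-\eps)(\Psi(v)-\Psi(u_j))$-type manipulations combined with weak lsc of $\Psi$ and $(\Phi_2)$ to pass to the limit directly in the inequality, and then lets $\eps \to 0$. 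You instead establish $u \in D(\Psi)$ via a boundedness argument with a fixed $v_0$, get strong convergence, and only then prove criticality, at which point the limit passage is trivial since $\Phi' \in C(X, X^\ast)$. Your route is arguably cleaner, avoiding the $\eps$-trick; the paper's route has the minor advantage of not requiring the preliminary Step~1, since the $\eps$-argument yields $\Psi(u) < \infty$ as a byproduct. Both versions work.

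One point you leave implicit, as does the paper: $(\Phi_2)$ only guarantees its $\liminf$ conclusion along a subsequence of any bounded sequence. To conclude convergence of the \emph{full} sequence $u_j \to u$, one should run the standard argument that every subsequence admits a further subsequence converging strongly to $u$ (uniqueness of the weak limit forces $(\Phi_2)$'s subsequential weak limit to coincide with $u$). This is routine but worth stating explicitly, particularly since the proposition's conclusion asserts $\lim_{j}\|u_j - u\| = 0$ for the entire sequence.
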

\begin{proof}
We first claim that $u \in D(\Psi)$ and $u$ is a critical point of $I_\lambda$. 
Remark that the sequence $\{u_j\}$ is bounded in $X$ due to the weak convergence. 
Hence, up to passing to a subsequence, 
we can assume that $\{u_j\}$ itself satisfies the conclusion in $(\Phi_2)$. 
Having fixed $v \in D(\Psi)$ and $\eps \in (0,\lambda)$, we select $j_0$ large enough so that 
$\lambda_j > \lambda-\eps$ for $j \ge j_0$. Since $\Psi \ge 0$, if $j \ge j_0$, then we get 
\[
	\begin{aligned}
		\lambda_j\Psi(v) - (\lambda-\eps)\Psi(u_j) - \Phi^\prime(u_j)(v-u_j) 
		&\ge \lambda_j(\Psi(v) - \Psi(u_j)) - \Phi^\prime(u_j)(v-u_j) \\
		&\ge - \e_j\norm{v-u_j} \ge - C\e_j
	\end{aligned}
\]
for some constant $C$. 
Taking a $\limsup$ as $j \to \infty$ and using $(\Psi_1)$ and $(\Phi_2)$, we infer
\[ 
\lambda\Psi(v) - (\lambda-\eps)\Psi(u) - \Phi^\prime(u)(v-u) \ge 0 \qquad \text{for every }v \in X.
\]
Hence, $\Psi(u)<\infty$, and letting $\eps \to 0$ we deduce that $u$ is critical for $I_\lambda$.

	Next, taking the $\limsup$ as $j \to \infty$ in the inequality 
\[
	\lambda_j \Psi(u) - \Phi'(u_j)(u-u_j) \ge \e_j \norm{u-u_j} + \lambda_j \Psi(u_j)
\]
and using $(\Phi_2)$ we get $\limsup_{j \to \infty} \Psi(u_j) \le \Psi(u).$ Since $\Psi$ is weakly lower-semicontinuous, it follows that $\lim_{j \to \infty} \Psi(u_j) = \Psi(u)$ and thus, by $(\Psi_3)$, $\lim_{j  \to \infty}\Vert u_j -u\Vert = 0$. 
Hence, by $(\Phi_1)$, $I_\lambda(u) = \lim_{j \to \infty}I_{\lambda_j}(u_j)$, 
which completes the proof of \eqref{eq_conclu_weakstrong}.

To check the bounded Palais-Smale condition, it is enough to extract from a bounded Palais-Smale sequence $\{u_j\}$ at level $c$ a weakly convergent subsequence $u_j \rightharpoonup u$ by using $(\Phi_2)$. Applying the above, $u_j$ converges strongly to $u$, as claimed.

	Similarly, if (IB) holds, then pick any bounded sequence $\{u_j\} \subset \cK_{[a,b]}^c$. 
Because of $(\Phi_2)$, we can extract a weakly convergent subsequence and, up to further subsequences, we can assume that $\lambda_j \to \lambda  \in [a,b]$ and $u_j \rightharpoonup u$ weakly in $X$ as $j \to \infty$. 
The above guarantees that $u_j \to u$ strongly in $X$ and that $u \in \cK_{[a,b]}^c$. Hence, $\cK_{[a,b]}^c$ is compact.
\end{proof}

To proceed, we collect some basic properties of $ c_k(\lambda),  k=0,1,2\ldots,$ defined in \eqref{mpv} and \eqref{smpv}. 
We first show 
	\begin{lemma}\label{lem:Gkck}
		If \eqref{mp}, $(\Psi_1)$ and $(\Phi_1)$ hold, then $ \Gamma _0 \neq \emptyset$ and $ 0< \alpha_0 \leq c_0(\lambda) < \infty$ 
		for each $\lambda \in [1-\overline{\e} , 1 + \overline{\e} ]$. 
		Similarly, $\Gamma_k \neq \emptyset$ and $\alpha_0 \leq c_k(\lambda) < \infty$ hold 
		for every $k \geq 1$ and $\lambda \in [1-\overline{\e} , 1 + \overline{\e} ]$ under \eqref{smp}, $(\Psi_1)$ and $(\Phi_1)$. 
	
	\end{lemma}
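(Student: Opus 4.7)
The plan is to handle the two cases \eqref{mp} and \eqref{smp} in parallel, splitting each into an upper bound (nonemptiness of $\Gamma_k$ together with $c_k(\lambda) < \infty$) via an explicit admissible class element, and a lower bound $c_k(\lambda) \geq \alpha_0$ via an elementary topological crossing argument. The monotonicity of $\lambda \mapsto c_k(\lambda)$, which follows from $\Psi \geq 0$, will let me argue that it suffices to prove the upper bound at $\lambda = 1 + \overline{\e}$ and the lower bound at $\lambda = 1 - \overline{\e}$ to get the conclusion throughout $[1 - \overline{\e}, 1 + \overline{\e}]$.

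For the upper bounds, in the mountain pass case I will use the linear path $\gamma(t) \equiv t u_0$, which trivially belongs to $\Gamma_0$. The hypothesis $I_{1+\overline{\e}}(u_0) \leq 0$ together with $\Phi(u_0) \in \R$ forces $\Psi(u_0) < \infty$, and then convexity of $\Psi$ with $\Psi(0) = 0$ gives $\Psi(t u_0) \leq t \Psi(u_0)$ for $t \in [0,1]$. Since $\Phi$ is continuous and the image is compact in $X$, $\sup_t I_\lambda(t u_0) < \infty$, so $c_0(\lambda) < \infty$. In the symmetric case I will define
\[
\gamma(\zeta) \equiv |\zeta|\, \pi_{0,k}\!\left(\zeta/|\zeta|\right) \quad \text{for } \zeta \in \DD^k \setminus \{0\}, \qquad \gamma(0) \equiv 0.
\]
Oddness of $\gamma$ is inherited from that of $\pi_{0,k}$, and continuity at $0$ holds since $\pi_{0,k}(\partial \DD^k)$ is bounded by compactness. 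The assumption $\sup_{\partial \DD^k} I_{1+\overline{\e}}(\pi_{0,k}) < 0$ combined with continuity of $\Phi$ yields $M \equiv \sup_{\partial \DD^k} \Psi(\pi_{0,k}) < \infty$, and convexity then gives $\Psi(\gamma(\zeta)) \leq |\zeta| M$, so $\sup_{\zeta \in \DD^k} I_\lambda(\gamma(\zeta)) < \infty$.

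For the lower bounds, a standard radial crossing argument suffices. In the mountain pass case, for any $\gamma \in \Gamma_0$ the continuous function $t \mapsto \|\gamma(t)\|$ satisfies $\|\gamma(0)\| = 0 < \rho_0 < \|u_0\| = \|\gamma(1)\|$, so there exists $t^\ast$ with $\|\gamma(t^\ast)\| = \rho_0$. In the symmetric case, oddness forces $\gamma(0) = 0$, and for any fixed $\zeta_0 \in \partial \DD^k$ the restriction of $\gamma$ to the segment $\{t \zeta_0 : t \in [0,1]\}$ is a continuous path from $0$ to $\pi_{0,k}(\zeta_0)$ with $\|\pi_{0,k}(\zeta_0)\| > \rho_0$, so again the sphere $\|u\| = \rho_0$ is crossed at some $\zeta^\ast$. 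In both situations, $\Psi \geq 0$ and $\lambda \geq 1 - \overline{\e}$ give the pointwise bound $I_\lambda \geq I_{1-\overline{\e}}$, whence $\sup_{\zeta} I_\lambda(\gamma(\zeta)) \geq I_{1 - \overline{\e}}(\gamma(\zeta^\ast)) \geq \alpha_0$ and $c_k(\lambda) \geq \alpha_0$. No step presents real difficulty; the only mild subtlety is exhibiting an admissible path on which $\Psi$ stays finite, and this is precisely where the convexity of $\Psi$ together with $\Psi(0) = 0$ plays the key role.
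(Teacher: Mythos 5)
Your proof is correct and follows essentially the same route as the paper: the same explicit paths $t\mapsto tu_0$ and $\zeta\mapsto|\zeta|\pi_{0,k}(\zeta/|\zeta|)$, the same use of convexity of $\Psi$ with $\Psi(0)=0$ to keep $\Psi$ finite along them (extracting $\sup_{\partial\DD^k}\Psi(\pi_{0,k})<\infty$ from $\sup_{\partial\DD^k}I_{1+\overline\e}(\pi_{0,k})<0$ and the continuity of $\Phi$), and the same sphere-crossing argument combined with $\Psi\ge 0$ for the lower bound. Your extra observation that it suffices by monotonicity to treat the endpoints $\lambda=1\pm\overline\e$ is harmless but unnecessary, since the path-wise bounds you derive already hold uniformly for all $\lambda$ in the interval.
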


	\begin{proof}
For $\Gamma_0$ and $c_0(\lambda)$, 
consider a path $\gamma_0(t) \equiv t u_0$ ($t \in [0,1]$) where $u_0$ appears in \eqref{mp}. 
It is easily seen that $\gamma_0 \in \Gamma_0$, and $(\Psi_1)$ and $I_{1+\ov{\e}} (u_0) \leq 0$ yield 
$u_0 \in D(\Psi)$ and 
\[
0 \leq \Psi(\gamma_0(t)) \leq t \Psi(u_0) + (1-t) \Psi(0) = t \Psi(u_0) < \infty.
\] 
From $I_{1+\ov{\e}} (u_0) < 0$ and $(\Phi_1)$ we infer that  $\sup_{0 \leq t \leq 1} I_\lambda (\gamma(t)) < \infty $ and $c_0(\lambda) < \infty$ 
for every $\lambda \in [1-\ov{\e} , 1 + \ov{\e}]$. 
The assertion $\alpha_0 \leq c_0(\lambda)$ follows from the fact $\gamma([0,1]) \cap \partial B (0,\rho_0)$ and \eqref{mp}.

	For $\Gamma_k$ and $c_k(\lambda)$, consider a map $\wt{\gamma}_k \in C( \DD^k , X )$ defined by 
	\[
	\wt{\gamma}_k (\zeta) \equiv \begin{dcases}
		\abs{\zeta} \pi_{0,k} \qty( \frac{\zeta}{\abs{\zeta}} ) & \text{if} \ 0 < \abs{\zeta} \leq 1, \\
		0 & \text{if} \ \zeta = 0.
	\end{dcases}
	\]
	Clearly $\wt{\gamma}_k \in \Gamma_k$ holds. Moreover, $(\Psi_1)$, $(\Phi_1)$ and \eqref{smp} imply 
	\[
	\begin{aligned}
	\disp \sup_{\zeta \in \DD^k} \Psi( \wt{\gamma}_k (\zeta) ) 
	\leq \disp \sup_{\zeta \in \DD^k} \abs{\zeta} \Psi \qty( \pi_{0,k} \qty( \frac{\zeta}{\abs{\zeta}} ) )
	&\leq \sup_{\zeta \in \partial \DD^k} \Psi (\pi_{0,k} (\zeta)) \\ 
	&= (1+\overline{\e})^{-1} \sup_{\zeta \in \partial \DD^k} 
	\qty{  I_{1+ \overline{\e} } \qty( \pi_{0,k} (\zeta) ) + \Phi \qty( \pi_{0,k} (\zeta) )  } 
	\\
	&\leq \disp (1+\ov{\e})^{-1} \sup_{ \zeta \in \partial \DD^k } \Phi( \pi_{0,k} (\zeta) ) < \infty.
	\end{aligned}
	\]
	Hence, $c_k(\lambda) < \infty$ holds for all $\lambda \in [1-\ov{\e}, 1 + \ov{\e}]$. 
	Furthermore, $\alpha_0 \leq c_k(\lambda)$ follows from the fact $\partial B(0,\rho_0) \cap \gamma (\DD^k) \neq \emptyset$ for each $\gamma \in \Gamma_k$. 
	\end{proof}

	\begin{prop}\label{monot1}
	For each $k=0,1\ldots,$ the function $\lambda \mapsto c_k(\lambda)$ is nondecreasing on $[1 - \ov{\e} , 1 + \ov{\e}]$, hence differentiable at almost all $\lambda \in [1 - \ov{\e}, 1 + \ov{\e}]$. 
		Moreover, $c_k$ is continuous from the right in $[1 - \ov{\e} , 1 + \ov{\e})$; thus, $c_k$ is upper semicontinuous. 
	\end{prop}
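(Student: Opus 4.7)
The plan is to derive monotonicity essentially for free from $\Psi \geq 0$, obtain differentiability a.e.\ from a classical theorem of Lebesgue, and get right-continuity through a near-optimal path argument, which then upgrades to upper semicontinuity via monotonicity.

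For the monotonicity step, let $1 - \ov{\e} \leq \lambda_1 \leq \lambda_2 \leq 1 + \ov{\e}$. Since $\Psi(u) \geq 0$ for every $u \in X$ by $(\Psi_1)$, we immediately have
\[
I_{\lambda_1}(u) = \lambda_1 \Psi(u) - \Phi(u) \leq \lambda_2 \Psi(u) - \Phi(u) = I_{\lambda_2}(u) \quad \text{for all } u \in X.
\]
Thus, for any $\gamma \in \Gamma_k$ (with $k = 0$ or $k \geq 1$), $\sup_\zeta I_{\lambda_1}(\gamma(\zeta)) \leq \sup_\zeta I_{\lambda_2}(\gamma(\zeta))$, and taking the infimum over $\gamma \in \Gamma_k$ yields $c_k(\lambda_1) \leq c_k(\lambda_2)$. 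Differentiability almost everywhere is then a direct consequence of Lebesgue's theorem on a.e.\ differentiability of monotone functions on a compact interval.

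The main content is right-continuity; fix $\lambda \in [1-\ov{\e}, 1+\ov{\e})$ and $\lambda_n \searrow \lambda$. By monotonicity $c_k(\lambda) \leq c_k(\lambda_n)$, so it suffices to show $\limsup_n c_k(\lambda_n) \leq c_k(\lambda)$. Given $\delta > 0$, pick $\gamma \in \Gamma_k$ with $\sup_{\zeta} I_\lambda(\gamma(\zeta)) \leq c_k(\lambda) + \delta$. The key observation is that $\Psi$ is bounded on the image of $\gamma$: since $\gamma$ is continuous on the compact set $\DD^k$ (or $[0,1]$ when $k=0$), its image $\gamma(\DD^k)$ is compact in $X$; by $(\Phi_1)$, $\Phi$ is continuous, hence bounded on $\gamma(\DD^k)$ by some constant $M_0$. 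From $\lambda \Psi(\gamma(\zeta)) = I_\lambda(\gamma(\zeta)) + \Phi(\gamma(\zeta)) \leq c_k(\lambda) + \delta + M_0$ and $\lambda \geq 1 - \ov{\e} > 0$, we obtain
\[
M \equiv \sup_{\zeta} \Psi(\gamma(\zeta)) \leq \frac{c_k(\lambda) + \delta + M_0}{1 - \ov{\e}} < \infty.
\]
Consequently, for any $n$,
\[
\sup_{\zeta} I_{\lambda_n}(\gamma(\zeta)) = \sup_{\zeta} \bigl[ I_\lambda(\gamma(\zeta)) + (\lambda_n - \lambda)\Psi(\gamma(\zeta)) \bigr] \leq c_k(\lambda) + \delta + (\lambda_n - \lambda) M,
\]
and since $\gamma \in \Gamma_k$ is admissible also for $I_{\lambda_n}$, this yields $c_k(\lambda_n) \leq c_k(\lambda) + \delta + (\lambda_n - \lambda) M$. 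Letting $n \to \infty$ and then $\delta \to 0$ gives $\limsup_n c_k(\lambda_n) \leq c_k(\lambda)$, as desired.

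Finally, upper semicontinuity follows by combining the two properties: at any $\lambda$, one has $\limsup_{\mu \nearrow \lambda} c_k(\mu) \leq c_k(\lambda)$ by monotonicity, and $\limsup_{\mu \searrow \lambda} c_k(\mu) = c_k(\lambda)$ by right-continuity, so $\limsup_{\mu \to \lambda} c_k(\mu) \leq c_k(\lambda)$. The only subtle point is the bound on $\Psi$ along the near-optimal path, which is the one place where both the compactness of $\gamma(\DD^k)$ (from continuity of $\gamma$) and the $C^1$ regularity of $\Phi$ are essential; everything else is formal manipulation of the monotone family $\{I_\lambda\}_\lambda$.
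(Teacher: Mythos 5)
Your proof is correct and follows the same strategy as the paper: monotonicity from $\Psi \geq 0$, a.e.\ differentiability from Lebesgue, and right-continuity by bounding $\Psi$ along a near-optimal path (via compactness of $\gamma(\DD^k)$ and boundedness of $\Phi$ there) so that $(\mu - \lambda)\Psi$ contributes only a small perturbation. Your write-up simply spells out the boundedness estimate and the upper-semicontinuity deduction in more detail than the paper does.
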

\begin{proof}
The monotonicity of $c_k$ follows from $\Psi \ge 0$. For the right continuity of $c_k$, fix $\lambda \in [1- \ov{\e} , 1 + \ov{\e})$ and 
$\e \in (0,\infty)$ arbitrarily. Choose $\gamma_\e \in \Gamma_k$ so that 
$\sup_{ \zeta \in \DD^k } I_\lambda (\gamma_\e(\zeta)) < c_k(\lambda) + \e$. 
Since $\sup_{\zeta \in \DD^k} \Psi(\gamma_\e(\zeta)) < \infty$ due to 
$\max_{\zeta \in \DD^k} | \Phi(\gamma_\e(\zeta)) | < \infty$ and $\sup_{ \zeta \in \DD^k } I_\lambda (\gamma_\e(\zeta)) < c_k(\lambda) + \e$, 
 it follows that $ 0 \leq I_\mu (\gamma_\e(\zeta)) - I_\lambda (\gamma_\e (\zeta)) < \e$ for every $\zeta \in \DD^k$ 
 if $ \mu - \lambda >0$ is sufficiently small.
Then,   
	\[
		c(\mu) \leq \sup_{\zeta \in \DD^k } I_\mu(\gamma_\e(\zeta)) \le \sup_{ \zeta \in \DD^k } I_\lambda (\gamma_\e(\zeta)) + \e \le  c_k(\lambda) + 2\e
	\]
if $ \mu - \lambda >0$ is sufficiently small. This implies that $c_k$ is right continuous. 
	\end{proof}

Here is our main result on the existence of bounded Palais-Smale sequences.

\begin{thm}\label{amps}
Suppose that $(\Psi_1)$, $(\Psi_2),$ $(\Phi_1)$ and either \eqref{mp}, or \eqref{even} and \eqref{smp}, hold. 
Assume also that at $\lambda \in (1 - \ov{\e} , 1 + \ov{\e})$, $c'_k(\lambda)$ exists $($with $k=0$, under assumption \eqref{mp}$)$. 
Then there exists a Palais-Smale sequence $\{u_j\}_{j=1}^\infty$ for $I_\lambda$ such that 
$I_\lambda (u_j) \to c_k(\lambda)$ and $\{u_j\}_{j=1}^\infty$ is bounded in $X$. 
	\end{thm}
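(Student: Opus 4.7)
The strategy is proof by contradiction using Lemmas~\ref{l:defor-lem} and~\ref{sydefolemma}: assuming that no bounded Palais--Smale sequence for $I_\lambda$ exists at level $c_k(\lambda)$, one modifies an approximating minimax path to one with strictly smaller supremum of $I_\lambda$, contradicting the very definition of $c_k(\lambda)$.

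First, I exploit the right-hand differentiability of $c_k$ at $\lambda$. Choose $\tau_n \downarrow 0$, set $\mu_n := \lambda + \tau_n$, and select minimax paths $\gamma_n$ (from $\Gamma_0$ under \eqref{mp} or from $\Gamma_k$ under \eqref{smp}) with
\[
\sup_{\zeta} I_{\mu_n}(\gamma_n(\zeta)) \leq c_k(\mu_n) + \tau_n^2.
\]
Since $\Psi \geq 0$ and $\mu_n > \lambda$, one has $I_\lambda \leq I_{\mu_n}$ pointwise, so by right-continuity of $c_k$ (Proposition~\ref{monot1}) $\sup I_\lambda \circ \gamma_n \to c_k(\lambda)$. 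Writing $\tau_n \Psi(\gamma_n(\zeta)) = I_{\mu_n}(\gamma_n(\zeta)) - I_\lambda(\gamma_n(\zeta))$, every $\zeta$ with $I_\lambda(\gamma_n(\zeta)) \geq c_k(\lambda) - \tau_n$ satisfies
\[
\Psi(\gamma_n(\zeta)) \;\leq\; \frac{c_k(\mu_n) - c_k(\lambda)}{\tau_n} + 1 + \tau_n \;\longrightarrow\; c_k'(\lambda) + 1.
\]
Invoking $(\Psi_2)$ then furnishes an $R > 0$ independent of $n$ with $\|\gamma_n(\zeta)\| \leq R$ on
\[
S_n := \{\zeta : I_\lambda(\gamma_n(\zeta)) \geq c_k(\lambda) - \tau_n\}.
\]

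Second, assuming by contradiction that no bounded Palais--Smale sequence at level $c_k(\lambda)$ exists, one finds $R_0 > R$ and $\delta_0, \sigma_0 > 0$, with $\sigma_0 < c_k(\lambda)/4$ (so that the fixed compact boundary set of the minimax scheme---either $\{0, u_0\}$ or $\pi_{0,k}(\partial\DD^k)$---lies in $\{I_\lambda < c_k(\lambda) - 3\sigma_0\}$), such that for every $u \in \overline{B_{R_0}}$ with $|I_\lambda(u) - c_k(\lambda)| \leq 3\sigma_0$ there is $v \neq u$ with $\lambda(\Psi(v) - \Psi(u)) - \Phi'(u)(v-u) < -5\delta_0 \|v - u\|$. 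For $n$ large with $\tau_n < \sigma_0$, the set $K_n := \gamma_n(S_n)$ is relatively compact (sitting in the compact set $\gamma_n(\DD^k)$), lies in $\overline{B_R} \cap \{c_k(\lambda) - \tau_n \leq I_\lambda \leq c_k(\lambda) + \tau_n^2\} \subset \cA_{c_k(\lambda), 3\sigma_0}$ (with $A = \overline{B_{R_0}}$), and---in the symmetric setting---is symmetric by oddness of $\gamma_n$ and evenness of $I_\lambda$. Apply Lemma~\ref{l:defor-lem} (or Lemma~\ref{sydefolemma}) to obtain neighborhoods $W_n \subset \tilde W_n$ of $\overline{K_n}$, a time $s_n > 0$, and a flow $\eta_n \in C([0,\infty) \times X, X)$.

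Third, the push-down. Pick $s_n' \in \bigl(\tau_n^2/(2\delta_0),\, \tau_n/(4\delta_0)\bigr] \cap (0, s_n]$, nonempty for $n$ large since $\tau_n < 1/2$, and set $\tilde\gamma_n(\zeta) := \eta_n(s_n', \gamma_n(\zeta))$. The boundary values of $\tilde\gamma_n$ match those of $\gamma_n$ (so $\tilde\gamma_n$ remains admissible) provided $\tilde W_n$ is disjoint from the fixed minimax boundary set, which can be arranged by shrinking $\tilde W_n$ in the construction of the deformation lemma, using that this boundary set lies strictly below the strip containing $\overline{K_n}$. Splitting the parameter domain according to whether $\gamma_n(\zeta)$ belongs to $W_n$, $\tilde W_n \setminus W_n$, or $X \setminus \tilde W_n$, and crucially observing that $I_\lambda(\gamma_n(\zeta)) \geq c_k(\lambda) - \tau_n$ forces $\gamma_n(\zeta) \in K_n \subset W_n$, properties (ii)--(iii) of the deformation lemma combined with the calibration of $s_n'$ yield $\sup I_\lambda \circ \tilde\gamma_n < c_k(\lambda)$, producing the contradiction. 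The main obstacle---underscored by Remarks~\ref{rem:0.1} and~\ref{rem:Ekeland}---is that the lower semicontinuity of $I_\lambda$ prevents Ekeland's principle from being applied directly, and the deformation is only almost-decreasing away from the core region $W_n$; the choice of $s_n'$ must simultaneously be small enough to cap the $+\delta_0 s_n'$ increase allowed by (ii) on $\tilde W_n \setminus W_n$ and large enough that the strong decrease $-2\delta_0 s_n'$ in (iii) dominates the slackness $\tau_n^2$ of the approximate optimality of $\gamma_n$.
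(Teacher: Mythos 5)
Your framework — near-optimal paths at $\mu_n=\lambda+\tau_n$, the bound on $\Psi$ over the high-energy set $S_n$ via $c_k'(\lambda)$, the uniform radius $R$ from $(\Psi_2)$, and the application of the deformation lemma to the relatively compact $K_n=\gamma_n(S_n)$ — matches the paper's. The gap is in the single-step push-down, and it is exactly the obstacle that drives the bulk of the paper's argument. The interval $\bigl(\tau_n^2/(2\delta_0),\,\tau_n/(4\delta_0)\bigr]\cap(0,s_n]$ has no reason to be nonempty: $s_n$ (the time $s_0$ from Lemma~\ref{l:defor-lem}) depends on the particular compact $K_n$ and on how finely the pseudogradient covering must localize near $\overline{K_n}\setminus K_n$, and nothing prevents $s_n\to 0$ faster than $\tau_n^2$. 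The condition $\tau_n<1/2$ only makes the left endpoint smaller than the right one; it says nothing about $s_n$. Moreover, even granting a usable $s_n'$, the calibration aims at the wrong quantity: the overshoot to be absorbed is $\sup I_\lambda\circ\gamma_n - c_k(\lambda)\le\bigl(c_k(\mu_n)-c_k(\lambda)\bigr)+\tau_n^2$, which is of order $c_k'(\lambda)\tau_n$, not $\tau_n^2$; the lower end of your window produces a decrease $2\delta_0 s_n'>\tau_n^2$ that fails to dominate $c_k'(\lambda)\tau_n$, and the upper end gives a decrease at most $\tau_n/2$, which already fails once $c_k'(\lambda)\ge 1/2$. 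Since $c_k$ is nondecreasing, $c_k'(\lambda)\ge 0$ and may well be large, so no choice in your window yields $\sup I_\lambda\circ\tilde\gamma_n<c_k(\lambda)$ in general.

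The paper circumvents both problems by iterating rather than applying the deformation lemma once. It fixes $M_0=c_k'(\lambda)+3$, restricts to the subclass $\Gamma_k^h$ of paths with $\sup I_{\lambda+h}\le c_k(\lambda)+M_0 h$, and at each stage $j$ chooses a deformation time $\tau_j$ close to the supremum $\wt T_j$ of admissible deformation times for the compact $K_j$, then reparameterizes via the dilation $L_{\theta_j}$ to preserve boundary values, yielding $\gamma_{j+1}$ with $\sup I_\lambda\circ\gamma_{j+1}\le c_k(\lambda)+M_0 h-2\delta_0\sum_{i\le j}\tau_i$. The sets $C_j=\gamma_j(\DD^k)$ are Cauchy in Hausdorff distance, so $K_\infty=\bigcup_j K_j$ is relatively compact; applying Lemma~\ref{sydefolemma} to $K_\infty$ produces a \emph{uniform} lower bound $\tau_\infty>0$ on all subsequent $\wt T_j$, which forces the dichotomy: either $\sum\tau_j$ reaches $2M_0 h/\delta_0$ (whence $\sup I_\lambda\circ\gamma_{j+1}<c_k(\lambda)$, contradiction), or $\tau_j\ge\frac34\min\{\tau_\infty,\cdot\}$ stays bounded away from zero, contradicting summability. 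This ``almost-maximal time'' iteration is what your proposal is missing: it is precisely what lets the cumulative decrease reach order $M_0 h$ even though each individual deformation time is uncontrolled, and it is the reason Ekeland's principle cannot simply be substituted (cf.\ Remark~\ref{rem:Ekeland}).
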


%

\begin{proof}
For a fixed $k \in \{0,1,\ldots\},$  assume that $c_k'(\lambda)$ exists at some  $\lambda \in (1 - \ov{\e} , 1 + \ov{\e})$. 
We put 
	\[
		M_0 \equiv c_k'(\lambda) + 3.
	\]
From now on, we just assume $k \ge 1$ since the case $k=0$ can be handled in the same manner.
From $(\Psi_2)$, there exists $R_0>0$ such that 
	\begin{equation}\label{R_0}
		\left\{ u \in X \ | \ \Psi(u) \leq 6 M_0 \right\} \subset B(0, R_0 )
	\end{equation}
and we set 
	\begin{equation}\label{Aset}
		A \equiv \overline{B(0,R_0 + 1)}, \quad 
		\cA_{c_k(\lambda), 3\sigma} \equiv 
		A \cap \left\{ c_k(\lambda) - 3 \sigma \leq I_\lambda \leq c_k(\lambda) + 3 \sigma \right\}.
	\end{equation}
To complete the proof of Theorem \ref{amps}, it suffices to show that 
$\cA_{c_k(\lambda) , 3 \sigma}$ contains a Palais-Smale sequence of $I_\lambda$ for every $\sigma \in (0,1)$. 
For this purpose, we argue by contradiction and suppose that there is no Palais-Smale sequence 
in $\cA_{c_k(\lambda), 3\sigma}$ for some $\sigma \in (0,1)$. 
Since $\cA_{c_k(\lambda),3\sigma}$ shrinks as $\sigma$ decreases, without loss of generality we may assume that 
	\begin{equation}\label{i:sigma-clam}
		4 \sigma < \alpha_0 = \inf_{\| u \| = \rho_0} I_{ 1 - \overline{\e}_1 } (u) \leq c_k(\lambda). 
	\end{equation}
Since $\cA_{c_k(\lambda), 3\sigma}$ does not contain any Palais-Smale sequence of $I_\lambda$, 
we may find $\delta_0 \in (0, \sigma)$ such that for each $u \in \cA_{c_k(\lambda), 3 \sigma}$ 
there exists $v=v(u) \in X \setminus \{u\}$ such that 
	\[
		\lambda \left( \Psi(v) - \Psi(u) \right) - \Phi'(u) (v-u) < - 5 \delta_0 \| v - u\|. 
	\]
To derive a contradiction, we shall find a suitable relatively compact set $K \subset \cA_{c_k(\lambda), 3 \sigma}$ and exploit Lemmas \ref{l:defor-lem} 
and \ref{sydefolemma}. 
Since $c_k'(\lambda)$ exists, there exists $h_1>0$ such that 
	\begin{equation}\label{e:c'}
	\begin{array}{lcl}
	c_k(\lambda + h) & < & \disp c_k(\lambda) + \left( c_k'(\lambda) + 1 \right) h = c_k(\lambda) + (M_0-2) h \\[0.3cm]
	& < & \disp c_k(\lambda) + M_0h
		\qquad \text{for any $h \in (0,h_1)$}.
	\end{array} 
	\end{equation}
In addition, we may also suppose 
	\begin{equation}\label{e:h1}
		5 M_0 h_1 < \delta_0 < \sigma.
	\end{equation}
For $h \in (0,h_1)$, consider the following subclass $\Gamma_k^h$ of $\Gamma_k$
	\[
		\Gamma_k^h \equiv \left\{ \gamma \in \Gamma_k \ \Big| \ \sup_{\zeta \in \DD^k} I_{\lambda + h} \left( \gamma (\zeta) \right) \leq c_k(\lambda) + M_0 h \right\}.
	\]
Notice that \eqref{e:c'} implies $\Gamma_k^h \neq \emptyset$ for each $h \in (0,h_1)$. 
For each $h \in (0,h_1)$ and $\gamma \in \Gamma_k^h$, if $\zeta \in \DD^k$ satisfies 
	\[
		I_\lambda (\gamma (\zeta)) \geq c_k(\lambda) - 5M_0 h,
	\]
 then from $\gamma \in \Gamma_k^h$ we deduce 
	\[
		\Psi(\gamma(\zeta)) = \frac{ I_{\lambda + h} (\gamma (\zeta)) - I_\lambda (\gamma(\zeta)) }{h} 
		\leq 
		\frac{c_k(\lambda) + M_0 h - (c_k(\lambda) - 5M_0 h ) }{h} = 6M_0.
	\]
By the choice of $R_0$ in \eqref{R_0}, this means that 
for any $h \in (0,h_1)$ and $\gamma \in \Gamma_k^h$, the set 
	\[
		K_\gamma \equiv \left\{ \gamma(\zeta)  \  | \  I_\lambda (\gamma (\zeta)) \geq c_k (\lambda) - 5M_0 h \right\}
	\]
satisfies 
	\[
		K_\gamma \subset  B(0,R_0) \subset A. 
	\]
Furthermore, since $K_\gamma \subset \gamma (\DD^k)$, $K_\gamma$ is relatively compact in $X$ and 
\eqref{e:h1} yields $K_\gamma \subset \cA_{c_k(\lambda), 3\sigma}$. 
Moreover, since we are considering the case $k \ge 1$ (thus, we are assuming \eqref{even} and \eqref{smp}), $K_\gamma$ is symmetric.

Next, we choose any $h \in (0,h_1)$ and $\gamma_1 \in \Gamma_k^h$, and write 
	\[
		C_1 \equiv \gamma_1 (\DD^k), \quad 
		K_1 \equiv K_{\gamma_1} = \Set{ \gamma_1(\zeta)  \  | \  I_\lambda (\gamma_1 (\zeta)) \geq c_k(\lambda) - 5M_0 h } 
		\subset B(0,R_0) \cap \cA_{ c_k(\lambda), \sigma}  .
	\]
By  Lemma \ref{sydefolemma},  there exist $ \wt{\tau} > 0$, $\eta \in C([0,\wt{\tau}] \times X, X)$ and open (symmetric) sets $W , \wt{W}$ in $X$ such that 
	\begin{enumerate}
		\item[(a)] $\overline{K_1} \subset W \subset \overline{W} \subset \wt{W}$;
		\item[(b)] $\| \eta(s,w) - w\| \leq s,$ $ \eta(s,-w) =-\eta(s,w)$ for every $(s,w) \in [0,\wt{\tau}] \times X$
		and $\eta(s,w) = w$ for all $s \in [0,\wt{\tau}]$ if $w \not \in \wt{W}$; 
	\item[(c)] $I_\lambda (\eta(s,w)) \leq I_\lambda (w) + \delta_0 s$ for each $(s,w) \in [0,\wt{\tau}] \times X$; 
		\item[(d)] $I_\lambda (\eta(s,w)) \leq I_\lambda (w) - 2 \delta_0 s$ for any $(s,w) \in [0,\wt{\tau}] \times W$ with $I_\lambda(w) \geq c_k(\lambda) - \sigma$. 
	\end{enumerate}
	
We briefly explain the issues to overcome and the strategy. First, flowing $\gamma_1$ by $\eta$ may not lead to a map in $\Gamma_k$, as $\eta$ may not preserve the boundary condition on $\partial \DD^k$. For this reason, we have to modify the map $\wt{\gamma_2} \equiv \eta(\tau_1,\gamma_1)$ in a neighborhood of $\partial \DD^k$ to get an element 
$\gamma_2 \in \Gamma_k$ (perhaps, not in $\Gamma_k^h$) satisfying
\[
\sup_{\zeta \in \DD^k} I_\lambda( \gamma_2(\zeta)) \le c_k(\lambda) + M_0h - 2\delta_0 \tau_1 
\quad \text{for some $\tau_1 \geq \wt{\tau} > 0$}. 
\]
A second issue is that the time interval $\tau_1$ might be too small to obtain the following contradiction: 
\[
\sup_{\zeta \in \DD^k} I_\lambda( \gamma_2(\zeta)) < c_k(\lambda).
\] 
To remedy, we first choose $\tau_1$ close to the maximal time for which a deformation with properties (a)--(d) exists (for fixed $K_1,\delta_0$). Next, we proceed inductively to construct Cauchy sequences of maps $\{\gamma_j\} \subset \Gamma_k$ and of almost maximal times $\{\tau_j\} \subset (0,\infty)$ with the property
\[
\sup_{\zeta \in \DD^k} I_\lambda( \gamma_j(\zeta)) \le c_k(\lambda) + M_0h - 2\delta_0 \sum_{i=1}^j \tau_i.
\]
We shall prove that the relatively compact sets
\[
K_j \equiv \Set{ \gamma_j(\zeta)  \  | \  I_\lambda (\gamma_j (\zeta)) \geq c (\lambda) - 5M_0 h + \delta_0 \sum_{i=1}^{j-1} \tau_i }
\]
of points at which $I_\lambda\circ \gamma_j$ takes higher values 
are all contained in $\cA_{c_k(\lambda),3\sigma}$, and 
that their union $K_\infty \equiv \bigcup_{j=1}^\infty K_j \subset \cA_{c_k(\lambda),3\sigma}$ is relatively compact, too. 
By choosing a deformation $\eta_\infty$ in Lemma \ref{sydefolemma} with $K = K_\infty$ we are then able to reach our desired contradiction. 
Here is the point where $\tau_j$ being ``almost maximal" plays a role.

\medskip	
	\noindent
	\textbf{Step 1:} \emph{construction of $\gamma_2$, and its properties}. 
	
\medskip

For the given relatively compact symmetric set $K_1$, we may consider
	\[
		\cD_1 \equiv \left\{ \left( \eta, \tau, W , \wt{W} \right) \  \Big| \  \text{$\eta, W, \wt{W}$ satisfy (a)--(d)  for $s \in [0,\tau]$}
		\right\}.
	\]
We define 
	\[
		\wt{T}_1 \equiv \sup_{ \left( \eta , \tau , W, \wt{W} \right) \in \cD_1 } \tau \in (0,\infty] \quad \text{ and } \quad 
		T_1 \equiv \min \left\{ \frac{2M_0 h}{\delta_0} , \ \wt{T}_1 \right\}.
	\]
Remark that $\cD_1 \neq \emptyset$, and that \eqref{e:h1} with $h \in (0,h_1)$ gives $T_1 \leq 2M_0 h / \delta_0 \leq 2/5$. 
We choose $(\eta_1,\tau_1, W_1 , \wt{W}_1) \in \cD_1$ so that 
	\[
		\frac{3}{4} T_1 < \tau_1 < T_1
	\]
and define $\wt{\gamma}_2 \in C(\DD^k , X)$  by 
	\[
		\wt{\gamma}_2(\zeta) := \eta_1 (\tau_1 , \gamma_1(\zeta) ). 
	\]
Since $\wt{\gamma}_2$ may not coincide with $\pi_{0,k}$ on $\partial \DD^k$, in order to obtain a map in $\Gamma_k$ we define, for $\theta \in (0,1/4)$, the dilation 
	\[
	L_\theta \ \ : \ \ 	\DD^k(1-\theta) \to \DD^k, \qquad L_\theta (\zeta) \equiv \frac{\zeta}{1-\theta}.
	\]
Notice that 
	\[
	\max_{\zeta \in \DD^k(1-\theta)} \left|L_\theta (\zeta) - \zeta \right| = \frac{\theta}{1-\theta}. 
	\]
When $k=0$, we consider $L_\theta: [\theta,1-\theta] \to [0,1]$. 
By the uniform continuity of $\gamma_1$, we may find $\theta_1 \in (0,1/4)$ such that 
	\begin{equation}\label{i:theta1}
		\max_{|\zeta| \le 1-\theta_1} \| \gamma_1(\zeta) - \gamma_1(L_{\theta_1} (\zeta)) \| 
		+ \max_{ 1-\theta_1 \le |\zeta| \le 1}\| \gamma_1(\zeta) - \gamma_1(\zeta/|\zeta|) \|  \leq \frac{\tau_1}{2}. 
	\end{equation}
Then we define $\gamma_2 (\zeta)$ by 
	\[
		\gamma_2(\zeta) \equiv 
		\begin{dcases}
		      \eta_1\left(\tau_1,\gamma_1 ( L_{\theta_1}( \zeta) )\right) & \quad \text{if} \ |\zeta| \le 1-\theta_1, \\[0.3cm]
			\eta_1 \left( \tau_1  \frac{1- |\zeta|}{\theta_1}, \ \gamma_1 \qty( \frac{\zeta}{\abs{\zeta}} )  \right) 
			& \quad \text{if} \ 1 - \theta_1 \leq |\zeta| \leq 1.
		\end{dcases}
	\]
It is easily seen that $\gamma_2 \in \Gamma_k$ and 
\begin{align*}
		& \| \gamma_2(\zeta) - \gamma_1(\zeta) \|  \leq \\[0.3cm]
& \begin{dcases}
	  	\| \eta_1(\tau_1 , \gamma_1 (L_{\theta_1} (\zeta)) ) - \gamma_1(L_{\theta_1} (\zeta)) \| 
			+ \| \gamma_1 (L_{\theta_1} (\zeta)) - \gamma_1 (\zeta) \| & \text{if} \ |\zeta| \le 1-\theta_1, \\[0.3cm]
			\norm{\eta_1 \left( \tau_1  \frac{1- |\zeta|}{\theta_1}, \ \gamma_1 \qty( \frac{\zeta}{\abs{\zeta}} ) \right) - \gamma_1 \qty( \frac{\zeta}{\abs{\zeta}} ) }
			+ \norm{ \gamma_1 \qty( \frac{\zeta}{\abs{\zeta}} ) - \gamma_1(\zeta) }
			& \text{if} \ 1 - \theta_1 \leq |\zeta| \leq 1.
		\end{dcases} \end{align*}
Therefore, property (b) and \eqref{i:theta1} yield 
	\begin{equation}\label{diff:ga1ga2}
		\max_{\zeta \in \DD^k} \| \gamma_1(\zeta) - \gamma_2(\zeta) \| \leq \frac{3}{2} \tau_1. 
	\end{equation}
It follows from property (c), \eqref{smp}, \eqref{i:sigma-clam} and \eqref{e:h1} that if $\zeta  \in \DD^k \setminus \DD^k( 1 - \theta_1)$, then 
	\begin{equation}\label{est:g2-1}
		I_{\lambda} (\gamma_2(\zeta)) \leq \delta_0 \tau_1 < \delta_0 < \sigma <  c_k(\lambda) - 3\sigma < c_k(\lambda) - 5 M_0 h  . 
	\end{equation}
Similarly, when $\zeta \in \DD^k( 1 - \theta_1) \setminus  (\gamma_1 \circ L_{\theta_1})^{-1} ( K_{1} )$, we have 
$I_\lambda( \gamma_1(L_{\theta_1} (\zeta))) < c_k(\lambda) - 5M_0 h$ by the definition of $K_1$, and 
	\begin{equation}\label{est:g2-2}
		I_\lambda (\gamma_2(\zeta)) \leq I_\lambda ( \gamma_1( L_{\theta_1} (\zeta) ) ) + \delta_0 \tau_1 
		< c_k(\lambda) + \delta_0 \tau_1 - 5M_0 h. 
	\end{equation}
On the other hand, if $\zeta \in (\gamma_1 \circ L_{\theta_1})^{-1} (K_1) $, then $\gamma_1(L_{\theta_1} ( \zeta )) \in K_1$, and 
property (d) with the fact $\gamma_1 \in \Gamma_k^h$ implies 
	\[
		I_\lambda (\gamma_2(\zeta)) \leq I_\lambda ( \gamma_1(L_{\theta_1} (\zeta)) ) - 2\delta_0 \tau_1 
		\leq c_k(\lambda) + M_0 h - 2\delta_0 \tau_1. 
	\]
By the  properties $3T_1 /4 < \tau_1 < T_1 \leq 2M_0h/\delta_0$, we see that 
	\[
		\delta_0 \tau_1 - 5M_0 h \leq M_0 h - 2 \delta_0 \tau_1
	\]
and 
	\begin{equation}\label{est:g2Ilam}
		c_k(\lambda) \leq \sup_{\zeta \in \DD^k} I_{\lambda} (\gamma_2 (\zeta)) \leq c_k(\lambda) + M_0 h - 2 \delta_0 \tau_1.
	\end{equation}


\medskip	
	\noindent
	\textbf{Step 2:} \emph{construction of the sequence $\{\gamma_j\}_{j=1}^\infty$}. 
	
\medskip

	Next, put 
	\[
		C_2 \equiv \gamma_2(\DD^k), \quad 
		K_{2} \equiv \left\{ \gamma_2(\zeta)  \ |  \ I_\lambda (\gamma_2(\zeta)) \geq c_k(\lambda) + \delta_0 \tau_1 - 5M_0 h \right\}. 
	\]
By \eqref{diff:ga1ga2}, 
	\begin{equation*}
		\dist_H \left( C_1 ,C_2\right) \leq \frac{3}{2} \tau_1, 
	\end{equation*}
where $\dist_H$ stands for the Hausdorff distance between $C_1$ and $C_2$ defined by 
	\[
		\dist_H(A,B) \equiv \max \left\{ \sup_{x \in A} \dist (x, B), \ \sup_{y \in B} \dist (A,y)   \right\}.
	\]
Moreover, from \eqref{est:g2-1} and \eqref{est:g2-2} we infer that 
	\[
		\gamma_2^{-1} (K_2) \subset (\gamma_1 \circ L_{\theta_1})^{-1} (K_1) \subset \DD^k(1-\theta_1).
	\]
Hence, if $\zeta \in \gamma_2^{-1} (K_2)$, then property (b) yields 
	\[
		\| \gamma_2 (\zeta) - \gamma_1(L_{\theta_1} (\zeta)) \| 
		= \| \eta_1( \tau_1 , \gamma_1 (L_{\theta_1} (\zeta) ) ) - \gamma_1 (L_{\theta_1} (\zeta)) \| \leq \tau_1,
	\]
and the fact $\gamma_1(L_{\theta_1} (\zeta)) \in K_1 \subset B(0,R_0)$ gives 
	\[
		K_2 \subset B \left( 0 , \,  R_0 + \tau_1 \right).
	\]
Combining \eqref{est:g2Ilam} with $ \tau_1 < T_1 \leq 2/5$, 
we observe that $K_2 \subset \cA_{c_k(\lambda), 3\sigma}$ is relatively compact. 
Thus, Lemma \ref{sydefolemma} ensures the existence of a deformation $(\eta, \wt{\tau},W_2,\wt{W}_2)$ satisfying (a)--(d) with $K_2$ replacing $K_1$. 
Then, we consider
	\[
		\cD_2 \equiv \left\{ \left( \eta, \tau, W , \wt{W} \right) \  \Big| \ 
		\text{$(\eta, \tau, W, \wt{W})$ satisfies (a)--(d) for $s \in [0,\tau]$ and $K_2$ instead of $K_1$} \right\}
	\]
and define 
	\[
		\wt{T}_2 \equiv \sup_{ \left( \eta, \tau, W , \wt{W} \right) \in \cD_2  }  \tau, \quad 
		T_2 \equiv \min \left\{ \frac{2M_0h}{\delta_0}  - \tau_1 , \, \wt{T}_2 \right\}. 
	\]
Select $(\eta_2,\tau_2,W_2, \wt{W}_2 ) \in \cD_2$ and $\theta_2 \in (0,1/4)$ so that 
	\[
		\frac{3}{4} T_2 < \tau_2 < T_2
	\]
and 
	\[
			\max_{|\zeta| \le 1-\theta_2} \| \gamma_2(\zeta) - \gamma_2(L_{\theta_2} (\zeta)) \| 
		+ \max_{1-\theta_2 \le |\zeta| \le 1}\| \gamma_2(\zeta) - \gamma_2(\zeta/\zeta|) \|  \leq \frac{\tau_2}{2}.
	\]
Define $\gamma_3 \in \Gamma_k$ by 
	\[
		\gamma_3(\zeta) \equiv 
		\begin{dcases}
		      \eta_2\left(\tau_2,\gamma_2 ( L_{\theta_2}( \zeta) )\right) & \quad \text{if} \ |\zeta| \le 1-\theta_2, \\[0.3cm]
			\eta_2 \left( \tau_2  \frac{1- |\zeta|}{\theta_2}, \ \gamma_2 \qty( \frac{\zeta}{\abs{\zeta}} ) \right) 
			& \quad \text{if} \ 1 - \theta_2 \leq |\zeta| \leq 1.
		\end{dcases}
	\]
Arguing as before, we may prove 
	\[
		\max_{\zeta \in \DD^k} \| \gamma_2(\zeta) - \gamma_3(\zeta) \| \leq \frac{3}{2} \tau_2,
	\]
and 
	\[
		I_\lambda (\gamma_3(\zeta)) 
			\begin{dcases}
				\leq 
				c_k(\lambda) + M_0 h - 2\delta_0(\tau_1 + \tau_2) 
				&\text{if} \ \zeta \in (\gamma_2 \circ L_{\theta_2})^{-1} (K_2), 
				\\[0.2cm]
				< c_k(\lambda) + \delta_0(\tau_1 + \tau_2) - 5M_0h 
				&\text{otherwise}. 
			\end{dcases}
	\]
Since $\tau_1 + \tau_2 < 2M_0h/\delta_0$, we see that
	\[
	\begin{array}{l}
	\delta_0(\tau_1+\tau_2) - 5M_0 h < M_0 h - 2\delta_0 (\tau_1 + \tau_2), \\[0.3cm]
	c_k (\lambda) \le \sup_{\zeta \in \DD^k} I_\lambda \left( \gamma_3(\zeta) \right) \leq c_k (\lambda) + M_0 h - 2\delta_0 (\tau_1 + \tau_2).
	\end{array}
	\]
By putting 
	\[
		C_3 \equiv \gamma_3 (\DD^k), \quad K_3 \equiv 
		\left\{ \gamma_3(\zeta) \ | \ I_\lambda (\gamma_3(\zeta)) \geq c_k(\lambda) + \delta_0(\tau_1 + \tau_2) - 5M_0h   \right\},
	\]
we see from the same argument as before that 
	\[
	\begin{array}{l}
	\disp \dist_H (C_2,C_3) \leq \frac{3}{2} \tau_2, \qquad 
		\gamma_3^{-1} (K_3) \subset (\gamma_2 \circ L_{\theta_2})^{-1} (K_2) \subset \DD^k(1-\theta_2) , \\[0.4cm]
	K_3 \subset B \left( 0 , R_0 +  \tau_1 + \tau_2 \right). 
	\end{array}
	\]
Thus, $K_3 \subset \cA_{c_k(\lambda), 3\sigma}$ is relatively compact. We may therefore repeat the same procedure inductively to obtain 
$\{K_j\}_{j=1}^\infty$, $\{\tau_j\}_{j=1}^\infty \subset (0,\infty)$, $(\eta_j,\tau_j,W_j,\wt{W}_j) \in \cD_j$, 
$\{\theta_j\}_{j=1}^\infty \subset (0,1/4)$, $\{L_{\theta_j}\}_{j=1}^\infty$, $\{\gamma_j\}_{j=1}^\infty \subset \Gamma_k$ 
such that 
	\begin{equation}\label{prop:n}
		\begin{aligned}
			& C_{j} \equiv \gamma_{j} ( \DD^k ), \quad \dist_H(C_{j-1}, C_{j}) \leq \frac{3\tau_{j-1}}{2}, \\[0.3cm] 
			& K_{j} \equiv \left\{ \gamma_{j}(\zeta) \  \Big| \  I_{\lambda} (\gamma_{j}(\zeta)) \geq c_k(\lambda) - 5M_0 h  + \delta_0 \sum_{i=1}^{j-1} \tau_i \right\} 
			\subset B \left( 0 , R_0 + \sum_{i=1}^{j-1} \tau_i \right), 
			\\
			&
			\wt{T_j} \equiv \sup_{ (\eta, \tau, W, \wt{W}) \in \cD_j } \tau, 
			\quad T_j \equiv \min \left\{ \frac{2M_0h}{\delta_0} - \sum_{i=1}^{j-1} \tau_i, \, \wt{T}_j \right\}, \quad 
			\frac{3}{4} T_j < \tau_j < T_j, 
			\\
			&
			\sup_{\zeta \in \DD^k} I_\lambda (\gamma_{j+1}(\zeta)) \leq c_k(\lambda) + M_0h - 2\delta_0 \sum_{i=1}^{j} \tau_i, \quad \sum_{i=1}^j \tau_i \leq \frac{M_0h}{2\delta_0} < \frac{2}{5},
		\end{aligned}
	\end{equation}
where 
	\[
		\gamma_{j+1}(\zeta) \equiv 
		\begin{dcases}
		      \eta_j\left(\tau_j,\gamma_j ( L_{\theta_j}( \zeta) )\right) & \quad \text{if} \ |\zeta| \le 1-\theta_j, \\[0.3cm]
			\eta_j \left( \tau_j  \frac{1- |\zeta|}{\theta_j}, \ \gamma_j \qty( \frac{\zeta}{\abs{\zeta}} ) \right) 
			& \quad \text{if} \ 1 - \theta_j \leq |\zeta| \leq 1,
		\end{dcases}
		\quad 
		\max_{\zeta \in \DD^k} \norm{ \gamma_j(\zeta) - \gamma_{j+1}(\zeta) } \leq \frac{3\tau_j}{2}.
	\]

\medskip	
	\noindent
	\textbf{Step 3:} \emph{a compactness argument, and conclusion}. 
	
\medskip
	
Consider
	\[
		\scrS \equiv \left\{ K \subset X \ | \ K \neq \emptyset, \ \text{$K$ is compact} \right\}.
	\]
It is known that $(\scrS, \dist_H)$ is a complete metric space (we refer e.g. to \cite[Theorem 10.1.6]{Is81}).
Since $\sum_{i=1}^\infty \tau_i < \infty$ due to \eqref{prop:n}, we have 
	\[
		\dist_H(C_j, C_{j+\ell}) \leq \sum_{i=1}^\ell \dist_H ( C_{j+i-1} , C_{j+i} ) \leq \sum_{i=1}^{\ell} \frac{3}{2} \tau_{j+i-1};
	\]
thus $\{C_j\}_{j=1}^\infty$ is a Cauchy sequence in $(\scrS,\dist_H)$. Hence there exists $C_\infty \in \scrS$ such that 
	\[
		\dist_H(C_j, C_\infty) \to 0 \text{ as } j \to \infty. 
	\]
Consider 
	\[
		K_\infty \equiv \bigcup_{j=1}^\infty K_j.
	\]
We next claim that $K_\infty$ is relatively compact.  Indeed, let $\{u_p\}_{p=1}^\infty \subset K_\infty$ be a sequence and choose 
 $\{j_p\}_{p=1}^\infty$ satisfying $u_{p} \in K_{j_p}$. 
Without loss of generality, we may assume $j_p \to \infty$ as $p \to \infty$. 
By recalling that $K_j \subset C_j$, $\dist_H(C_j, C_\infty) \to 0$ and $C_\infty$ is compact, $\lim_{p \to \infty}\| u_{p} - w_p \| = 0$  for some $\{w_p\}_{p=1}^\infty \subset C_\infty$. 
From the compactness of $C_\infty$, up to a subsequence $w_p \to w_\infty \in C_\infty$ as $p \to \infty,$
whence $u_p \to w_\infty$ as $p \to \infty.$ This proves that $K_\infty$ is relatively compact.

	Since $K_j \subset \cA_{c_k(\lambda), 3 \sigma}$ for every $j \geq 1$, 
$K_\infty \subset \cA_{c_k(\lambda), 3\sigma}$ holds and Lemma \ref{sydefolemma} can be applied to obtain 
$\tau_\infty > 0$, $\eta_\infty \in C([0,\tau_\infty] \times X, X)$ and open sets $W_\infty$, $\wt{W}_\infty$ such that 
$\overline{K_\infty} \subset W_\infty \subset \overline{W_\infty} \subset \wt{W_\infty}$ and (a)--(d) hold with $K_\infty$ instead of $K_1$. 
Since $\sum_{i=1}^\infty \tau_i < \infty$, there are two alternatives: 
	\begin{enumerate}
		\item[(A)] $\displaystyle \sum_{i=1}^\infty \tau_i = \frac{2M_0h}{\delta_0} <\frac{2}{5}$;
		\item[(B)] $\displaystyle \sum_{i=1}^{\infty} \tau_i < \frac{2M_0h}{\delta_0}$. 
	\end{enumerate}
In case (A), choose a large $j$ such that $M_0 h/ \delta_0 < \sum_{i=1}^j \tau_i$. Then 
	\[
		c_k(\lambda) \leq \sup_{\zeta \in \DD^k} I_\lambda (\gamma_{j+1} (\zeta)) \leq c_k(\lambda) + M_0 h - 2 \delta_0 \sum_{i=1}^j \tau_i < c_k(\lambda) - M_0 h,
	\]
which is a contradiction. 
In case (B),  it follows from the definition of  $\wt{T}_j$ and the fact $K_j \subset K_\infty$ that $ \wt{T}_j \ge \tau_\infty$. 
Since $\sum_{i=1}^{\infty} \tau_i < \frac{2M_0h}{\delta_0}$, it follows that 
\[
T_j = \min \left\{ \frac{2M_0h}{\delta_0} - \sum_{i=1}^{j-1} \tau_i, \, \wt{T}_j \right\} \ge 
 \min \left\{ \frac{2M_0h}{\delta_0} - \sum_{i=1}^{\infty} \tau_i, \,  \tau_\infty \right\} > 0.
\] 
Since $\tau_j \ge \frac 34 T_j$, this contradicts the fact that $\sum_{i=1}^{\infty} \tau_i  \le \frac 25 < \infty.$
Thus, in either case, we get a contradiction and $\cA_{c_k(\lambda), 3\sigma}$ contains a Palais-Smale sequence. 
This completes the proof of Theorem \ref{amps}. 
\end{proof}


We are ready to prove Theorem \ref{mpt} and part of Theorem \ref{smt}, as a direct consequence of the next

\begin{thm}
Assume that $(\Psi_1)$, $(\Psi_2)$, $(\Psi_3),$ $(\Phi_1)$, $(\Phi_2)$ and \textup{(IB)}  hold. 
Then, $c_0(1)$ is a critical value of $I=I_1$  if $\eqref{mp}$ holds,
and $\{c_k(1)\}_{k=1}^\infty $ are critical values  of $I$ if  \eqref{even}  and  \eqref{smp} hold.
	\end{thm}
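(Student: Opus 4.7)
The plan is to combine the bounded Palais--Smale sequences produced by the monotonicity-based Theorem~\ref{amps} with the compactness provided by Proposition~\ref{IBtoIC}. Treat both cases at once, writing $c_k(\lambda)$ with $k=0$ (resp.\ $k\ge1$) when \eqref{mp} (resp.\ \eqref{even}+\eqref{smp}) is assumed.

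First, by Proposition~\ref{monot1}, $\lambda\mapsto c_k(\lambda)$ is nondecreasing on $[1-\ov{\e},1+\ov{\e}]$, hence differentiable at a.e.\ $\lambda$. I would therefore pick a sequence $\lambda_n\in(1,1+\ov{\e})$ with $\lambda_n\searrow 1$ such that $c_k'(\lambda_n)$ exists for each $n$. Applying Theorem~\ref{amps} at each $\lambda_n$ yields a bounded Palais--Smale sequence $\{u_j^{(n)}\}_j\subset X$ for $I_{\lambda_n}$ at level $c_k(\lambda_n)$. Then Proposition~\ref{IBtoIC} (the bounded Palais--Smale condition it establishes from $(\Psi_3)$ and $(\Phi_2)$) allows me to extract, for each $n$, a strongly convergent subsequence $u_j^{(n)}\to u_n$ with $u_n$ critical for $I_{\lambda_n}$ and $I_{\lambda_n}(u_n)=c_k(\lambda_n)$.

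Next I would pass to the limit $n\to\infty$. Since $\lambda_n\in[1,1+\ov{\e}]$ and by Lemma~\ref{lem:Gkck} the values $c_k(\lambda_n)$ are uniformly bounded above by $c_k(1+\ov{\e})<\infty$, each $u_n$ belongs to the set
\[
\cK_{[1,1+\ov{\e}]}^{\,c_k(1+\ov{\e})}
= \bigl\{u\in X \,\big|\, u \text{ critical for } I_\lambda \text{ with some } \lambda\in[1,1+\ov{\e}],\ I_\lambda(u)\le c_k(1+\ov{\e})\bigr\}.
\]
By hypothesis \textup{(IB)} this set is bounded, and the second part of Proposition~\ref{IBtoIC} in fact upgrades this to relative compactness. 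Up to a subsequence, therefore, $u_n\to u$ strongly in $X$. Since $u_n$ is a critical point of $I_{\lambda_n}$ — that is, an ``exact'' Palais--Smale datum with $\e_n=0$ — and $\lambda_n\to 1$, a direct application of Proposition~\ref{IBtoIC} with sequences $(\e_n,\lambda_n,u_n)=(0,\lambda_n,u_n)$ yields simultaneously that $u$ is a critical point of $I=I_1$ and that $I(u)=\lim_{n\to\infty}I_{\lambda_n}(u_n)=\lim_{n\to\infty}c_k(\lambda_n)$. Because $c_k$ is right-continuous at $1$ (Proposition~\ref{monot1}) and $\lambda_n\searrow 1$, this limit equals $c_k(1)$, so $c_k(1)$ is a critical value of $I$.

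The main obstacle is the very first step, namely ensuring that the ``$\lambda$-approximate'' critical points can be produced in the first place. Indeed, under the lower-semicontinuity of $\Psi$ alone one cannot hope to build a deformation flow of $I_1$ along which the functional decreases while remaining Palais--Smale bounded, because a priori bounds on Palais--Smale sequences for $I_1$ may fail — this is precisely why $(\Psi_3),(\Phi_2),\textup{(IB)}$ are not used inside Theorem~\ref{amps}. The monotonicity trick circumvents this by producing, at a.e.\ $\lambda$, a sequence whose boundedness is encoded in $\Psi\le 6M_0$ via $(\Psi_2)$; once such a sequence is in hand at $\lambda_n$, the passage $\lambda_n\searrow 1$ is handled by \textup{(IB)} and Proposition~\ref{IBtoIC}, which bridge the gap between approximate and exact criticality at the target level $\lambda=1$.
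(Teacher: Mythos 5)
Your proof is correct and follows essentially the same route as the paper: apply the monotonicity result (Theorem~\ref{amps}) at a sequence of differentiability points $\lambda_n\searrow 1$, use Proposition~\ref{IBtoIC} to convert each bounded Palais--Smale sequence into an exact critical point $u_n$ of $I_{\lambda_n}$ at level $c_k(\lambda_n)$, invoke \textup{(IB)} to bound $\{u_n\}$, and apply Proposition~\ref{IBtoIC} once more with $\e_n=0$ together with the right-continuity of $c_k$ to pass to the limit $\lambda_n\to 1$. The only cosmetic difference is that you explicitly appeal to the compactness of $\cK_{[1,1+\ov{\e}]}^{c_k(1+\ov{\e})}$ before the final application of Proposition~\ref{IBtoIC}, whereas the paper only needs boundedness at that stage; this is harmless redundancy, not a gap.
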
	
\begin{proof}
We only prove that $\{c_k(1)\}_{k=1}^\infty $ are critical values of $I$, the other case being analogous. 
For $k \ge 1,$ $c_k(\lambda)$ is nondecreasing with respect to $\lambda \in  (1 - \ov{\e} , 1 + \ov{\e}].$
Thus, there exists a sequence $\{\lambda_l\}_l \subset (1, 1 + \ov{\e})$ such that $\lim_{l \to \infty}\lambda_l =1$ and $c^\prime_k(\lambda_l)$ exists for each $l=1,2,\ldots.$
By Theorem \ref{amps}, there exists a bounded Palais-Smale sequence $\{u_{l,j}\}_j$ for $I_{\lambda_l}$ with $\lim_{j \to \infty}I_{\lambda_l}(u_{l,j}) =c_k(\lambda_l).$
Then, from Proposition \ref{IBtoIC}, up to a subsequence $u_{l,j}$ converges strongly to some $u_l$ in $X$ as $j \to \infty$, and $u_l$ is a critical point of $I_{\lambda_l}$ with $I_{\lambda_l}(u_l) = c_k(\lambda_l)$. 
By Proposition \ref{monot1} and $\lambda_l > 1$, we have $c_k(\lambda_l) \to c_k(1)$ as $l \to \infty$. 
Furthermore, by using $({\rm IB})$, $\{u_l\} \subset \cK_{[1,1+\ov{\e}]}^{c_k(1+\ov{\e})+1}$ is bounded in $X$. 
Therefore, applying again Proposition \ref{IBtoIC}, we conclude that 
$u_l \to u$ strongly in $X$, for some $u$ which is critical for $I$ and satisfies $I(u) = c_k(1)$. This completes the proof.
\end{proof}

The proof of Theorem \ref{smt} will be completed if we prove the following divergence property of $c_k=c_k(1).$
\begin{thm}\label{t:diverge}
Assume that $(\Psi_1)$, $(\Psi_2)$, $(\Psi_3),$ $(\Phi_1)$, $(\Phi_2)$, \textup{(IB)},  \eqref{even}  and  \eqref{smp} hold. 
Then, 
\[
\lim_{k\to\infty} c_k(1) = \infty.
\]  
\end{thm}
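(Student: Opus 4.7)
The plan is to argue by contradiction following a Krasnoselskii-genus scheme \`a la Rabinowitz, combining the symmetric deformation Lemma~\ref{sydefolemma} with the iterative monotonicity-trick mechanism of Theorem~\ref{amps} as a replacement for a Palais-Smale-based flow.

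First, suppose $\limsup_k c_k(1)\le M<\infty$ and introduce the intrinsic, nondecreasing genus-based minimax values
\[
b_m:=\inf\Set{\sup_{u \in A} I(u) | A\subset\{\|u\|\ge\rho_0\},\ A=-A,\ A\text{ closed},\ A\cap\partial B(0,\rho_0)\ne\emptyset,\ \genus(A)\ge m},
\]
where $\genus$ denotes the Krasnoselskii genus. By \eqref{smp} we have $b_m\ge\alpha_0>0$. To compare with $c_k(1)$, I would observe that, for any $\gamma\in\Gamma_k$, the set $A_\gamma:=\gamma(\DD^k)\cap\{\|u\|\ge\rho_0\}$ is admissible for $b_k$: indeed, $\gamma(\DD^k)$ is connected, contains $\gamma(0)=0$ and meets $\{\|u\|>\rho_0\}$ by \eqref{smp}, whence $A_\gamma\cap\partial B(0,\rho_0)\ne\emptyset$; furthermore, $\gamma^{-1}(A_\gamma)\subset\DD^k\setminus\{0\}$ is symmetric, closed and contains $\partial\DD^k$, so has genus $\ge k$, and oddness of $\gamma$ transfers this bound to $A_\gamma$. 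Hence $b_k\le c_k(1)$, and the contradiction hypothesis combined with the monotonicity of $b_m$ yields $b_m\le M$ for all $m$.

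Next, Proposition~\ref{IBtoIC} applied with $\lambda_j\equiv 1$, $\e_j\equiv 0$, together with $(\Psi_3),(\Phi_2)$ and $(\mathrm{IB})$, shows the compactness of the symmetric set
\[
\mathcal{K}:=\{u\in X\mid u\text{ is critical for }I_1,\ \alpha_0/2\le I(u)\le M+1\},
\]
which avoids $0$ (since $I(0)\le 0<\alpha_0/2$), so that $N:=\genus(\mathcal{K})<\infty$. Setting $b^*:=\lim_m b_m\in[\alpha_0,M]$ and assuming for simplicity $b^*>\alpha_0$ (the degenerate case $b^*=\alpha_0$ is handled by an analogous but simpler argument in which $K^*$ below can be taken empty), fix $\e\in(0,(b^*-\alpha_0)/2)$, define $K^*:=\mathcal{K}\cap\{|I-b^*|\le\e\}$ with $N^*:=\genus(\overline{K^*})\le N$, and select $\delta_0>0$ together with a symmetric open neighborhood $U$ of $K^*$ such that $\overline U\subset\{\|u\|>\rho_0+\delta_0\}$ and $\genus(\overline U)\le N^*$.

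The main obstacle is the construction of an odd continuous map $\eta:X\to X$ satisfying (i) $\eta=\mathrm{id}$ on $\{\|u\|\le\rho_0+\delta_0/2\}$ and (ii) $\eta(\{I\le b^*+\e\}\setminus U)\subset\{I\le b^*-\e\}$. The construction proceeds by iterating the local symmetric deformations of Lemma~\ref{sydefolemma} along nearly-maximal times, in the same Cauchy-sequence spirit as Steps~2--3 of the proof of Theorem~\ref{amps}, now applied to the compact symmetric set $\mathcal{K}\cap\{|I-b^*|\le\e\}\setminus U$. Granted such $\eta$, I would choose $m>N$ large enough that $b_{m-N}>b^*-\e$ (possible since $b_m\to b^*$), and pick an almost-optimal $A$ in the class defining $b_m$, i.e.\ $\sup_A I\le b^*+\e$. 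Then $A\setminus U$ is symmetric, closed, contained in $\{\|u\|\ge\rho_0\}$, intersects $\partial B(0,\rho_0)$ (since $\overline U\cap\partial B(0,\rho_0)=\emptyset$), and has $\genus(A\setminus U)\ge m-N^*\ge m-N$ by the subadditivity of the genus. Properties (i)--(ii) then make $\eta(A\setminus U)$ admissible for $b_{m-N}$ with $\sup I\le b^*-\e$, giving $b_{m-N}\le b^*-\e$, contradicting the choice of $m$.
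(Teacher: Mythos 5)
Your proposal is a direct adaptation of the classical Rabinowitz genus argument at the fixed parameter $\lambda=1$, but this misses the essential difficulty the paper flags in Remark~\ref{rem_compa_mainthm}: here there is no standard (global) deformation lemma, so the classical scheme cannot be run verbatim. Lemma~\ref{sydefolemma} only produces a deformation that is the identity outside a small neighborhood of a prescribed \emph{relatively compact} set $K\subset\cA_{c,3\sigma}$, and the no-Palais--Smale-sequence hypothesis \eqref{noPS2} must be verified on $\cA_{c,3\sigma}=A\cap\{c-3\sigma\le I_\lambda\le c+3\sigma\}$ for a \emph{fixed bounded} closed set $A$ chosen in advance. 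Property (IB) together with Proposition~\ref{IBtoIC} gives you compactness of the critical set $\mathcal K$ (this part of your argument is fine), but it gives you no information about where the high-energy portion of a nearly-optimal admissible set $A$ in your class for $b_m$ lies. Without an a priori bound, you cannot choose a bounded $A_0$ large enough to contain these points, so you cannot even invoke the deformation lemma, let alone produce a global odd $\eta$ with $\eta(\{I\le b^*+\e\}\setminus U)\subset\{I\le b^*-\e\}$ as you require. (Also, the set $\mathcal K\cap\{|I-b^*|\le\e\}\setminus U$ you propose to feed into the iteration is empty by construction, since $U$ is a neighborhood of $K^*=\mathcal K\cap\{|I-b^*|\le\e\}$; the set that actually needs to be deformed is the high-energy part of $A\setminus U$, which is the unbounded object you have no control over.)

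The paper resolves this precisely by \emph{not} working at $\lambda=1$: it introduces the genus-parametrized minimax values $d_k(\lambda)$ built from the compact admissible sets $\gamma(\overline{\DD^n\setminus\Sigma})$, shows $d_k\le c_k$, defines $d_\infty(\lambda)=\lim_k d_k(\lambda)$, and then uses the monotonicity trick to find a differentiability point $\wt\lambda_0$ of $d_\infty$. The differentiability yields Claim~2 and the uniform bound $\Psi(u)\le 12M_0$ in \eqref{bdd-Psi} for the high-$I_{\wt\lambda_0}$ part of nearly-optimal admissible sets, and via $(\Psi_2)$ this places them inside the fixed ball $B(0,R(12M_0))$. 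Only then can the paper fix a bounded $A$, verify the pseudogradient condition on $\cA$, and run the Cauchy-sequence iteration (Steps~2--3 of Theorem~\ref{amps}) to reach a contradiction. Your proposal omits this $\lambda$-differentiation step entirely, and I do not see how to supply the missing boundedness by any other means from the stated hypotheses; this is a genuine gap, not a detail to be filled in.
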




\begin{proof}
To prove Theorem \ref{t:diverge}, we combine the arguments in \cite[Chapter 9]{Ra86} (cf. \cite{HIT10}) and \cite{S86} 
with the iteration scheme as in the proof of Theorem \ref{amps}. 
To this end, we introduce 
		\[
			\begin{aligned}
				\cE_{\Rn} &\equiv \left\{ E \subset \Rn \ | \ \text{$E$ is closed, symmetric and $0 \not \in E$} \right\},
				\\
				\cE &\equiv \left\{ F \subset X \ | \ \text{$F$ is closed, symmetric and $0 \not \in F$} \right\},
				\\
				\genus (E) &\equiv \text{Krasnoselski's genus of $E$}. 
			\end{aligned}
		\]
For $k \geq 1$ and $\lambda \in [ 1 - \overline{\e}_1, 1 ]$, we set 
	\[
	\begin{aligned}
		\Lambda_k 
		&\equiv \left\{ 
		\gamma \left( \overline{\DD^n \setminus \Sigma} \right) 
		\ \big| \ n \geq k, \ \gamma \in \Gamma_n, \ \Sigma \in \cE_{\Rn}, \ \Sigma \cap \partial \DD^n = \emptyset, \   \genus(\Sigma) \leq n - k
		\right\},
		\\[0.2cm]
		d_k(\lambda) & \equiv 
		\inf_{ B \in \Lambda_k } \sup_{ u \in B} I_\lambda.
	\end{aligned}
	\]
By $\genus (\emptyset) = 0$, $\gamma (\DD^k) \in \Lambda_k$ holds for all $\gamma \in \Gamma_{k}$. 
Moreover, since $\Lambda_{k+1} \subset \Lambda_k$ and $\Psi \ge 0$, 
for every $k \geq 1$ and $\lambda, \lambda_1, \lambda_2 \in [1 - \overline{\e}_1, 1]$, 
	\[
	d_k(\lambda) \leq c_{k}(\lambda), \quad 
	d_k(\lambda) \leq d_{k+1} (\lambda) \quad \textup{ and }  \quad  d_k(\lambda_1) \leq d_k(\lambda_2) \quad \text{if} \ \lambda_1 \leq \lambda_2.	\]
Therefore, it is enough to prove 
	\begin{equation}\label{e:d_k-div}
		d_k (1) \to \infty \quad \text{as $k \to \infty$}.
	\end{equation}


Following  \cite{Ra86} and  \cite{S86} (see also \cite{HIT10}), we prove the next result. 
	
	\begin{lemma}\label{p:prop-d_k}  The following properties hold:
		\begin{enumerate}[{\rm (i)}]
			\item
			if $C \in \Lambda_{k+s}$ and $Z \in \cE$ satisfies $\genus (Z) \leq s$ and $I_1|_{Z} > 0$,
			then $\ov{C \setminus Z} \in \Lambda_k$;
			\item
			for every $k \in \N$ and $C \in \Lambda_k$, it holds $C \cap \partial B(0,\rho_0) \neq \emptyset$, hence 
			\[
			0< \alpha_0 = \inf_{ \| u \| = \rho_0 } I_{1 - \overline{\e}_1} (u) 
			\leq d_k(1-\overline{\e}_1) \leq d_k(\lambda)  \quad \text{for each $\lambda \in [1- \overline{\e}_1 , 1]$}.
			\]
		\end{enumerate}
	\end{lemma}

	\begin{proof}
\noindent \textbf{(i)} 
Write $C = \gamma ( \ov{\DD^n \setminus \Sigma} ) \in \Lambda_{k+s}$ where $n \geq k+s$, $\gamma \in \Gamma_{n}$, 
$\Sigma \cap \partial \DD^n = \emptyset$, $\Sigma \in \cE_{\R^n}$ and $\genus (\Sigma) \leq n-(k+s)$. 
Let $Z \in \cE$ satisfy $\genus (Z) \leq s$ and $I_1|_{Z} > 0$. Remark that $\gamma^{-1} (Z) \in \cE_{\R^n}$. 
We claim  that 
		\begin{equation}\label{e:AZ1}
			\ov{C \setminus Z} = \gamma \left( \ov{ \DD^n \setminus \left( \Sigma \cup \gamma^{-1} (Z) \right) } \right).
		\end{equation}
		Notice that 	
		\[
		\begin{array}{lcl}
		\ov{C \setminus Z} & \supset & \disp 
		C \setminus Z \supset \gamma ( \DD^n \setminus \Sigma ) \setminus Z \\[0.3cm]
		& = & \disp \gamma \left( (\DD^n \setminus \Sigma) \setminus \gamma^{-1} (Z) \right) 
		= \gamma \left( \DD^n \setminus \left(  \Sigma \cup \gamma^{-1} (Z) \right) \right).
		\end{array}
		\]
Hence, it holds that $\ov{C \setminus Z} \supset \gamma ( \ov{\DD^n \setminus ( \Sigma \cup \gamma^{-1} (Z) )} )$. 
On the other hand, since $\gamma^{-1} (Z)$ is closed, we have 
		\[
		\ov{\DD^n \setminus \Sigma} \setminus \gamma^{-1} (Z) \subset \ov{\DD^n \setminus ( \Sigma \cup \gamma^{-1} (Z) )}, 
		\]
which implies 
		\[
		C \setminus Z = \gamma \left( \ov{\DD^n \setminus \Sigma} \setminus \gamma^{-1} (Z) \right) 
		\subset \gamma \left(  \ov{\DD^n \setminus ( \Sigma \cup \gamma^{-1} (Z) )}  \right).
		\]
From the compactness of $ \ov{\DD^n \setminus ( \Sigma \cup \gamma^{-1} (Z) )},$ it follows that 
$\ov{C \setminus Z} \subset \gamma (  \ov{\DD^n \setminus ( \Sigma \cup \gamma^{-1} (Z) )} )$. 
Thus, \eqref{e:AZ1} holds.

	The fact $I_{1} |_{Z} > 0 > I_{1}|_{ \pi_{0,n} (\partial \DD^n) }$ 
gives $\partial \DD^n \cap ( \Sigma \cup \gamma^{-1} (Z) )  = \emptyset$. By \eqref{e:AZ1} and the estimate 
		\[
		\genus \left( \Sigma \cup \gamma^{-1} (Z) \right) \leq \genus (\Sigma) + \genus \left( \gamma^{-1} (Z) \right) 
		\leq \genus (\Sigma) + \genus (Z) \leq n- k,
		\]
we conclude $\ov{C \setminus Z} \in \Lambda_{k}$.

		\noindent \textbf{(ii)} 
Write $C= \gamma ( \ov{\DD^n \setminus \Sigma} ) \in \Lambda_{k}$ where $n \geq k$, $\gamma \in \Gamma_{n}$, 
$\Sigma \cap \partial \DD^n = \emptyset$ and 
		$\genus (\Sigma) \leq n -k $. Set 
		\[
		\wh{U} \equiv \left\{ \zeta \in \Int \DD^n \ | \ \gamma(\zeta) \in B(0,\rho_0) \right\}.
		\]
It is easily seen that $0 \in \wh{U}$, $-\wh{U} = \wh{U}$ and $\wh{U} \subset \Rn$ is open. 
Denote by $U$ the connected component of $\wh{U}$ containing $0$. 
Since $U$ is a symmetric neighborhood of $0$, $\genus (\partial U) = n$ holds. 
Since $\|\gamma(\zeta) \| = \| \pi_{0,n} (\zeta) \| > \rho_0$ for each $\zeta \in \partial \DD^n$, we see that
		\begin{equation}\label{i:pOB1}
			\gamma (\partial U) \subset \partial B(0,\rho_0).
		\end{equation}

		Define $W \equiv \left\{ \zeta \in \DD^n \ | \ \gamma(\zeta) \in \partial B(0,\rho_0) \right\} \in \cE_{\R^{n}} $. 
Thanks to \eqref{i:pOB1}, we see that $\partial U \subset W$ and $n = \genus (\partial U) \leq \genus (W)$. 
On the other hand, letting $\eps$ small enough so that $\DD^n(\eps) \cap W = \emptyset$, by the properties of Krasnoselki's genus, $\genus(W) \le \genus(\overline{\DD^n \backslash \DD^n(\eps)}) = \genus(\partial \DD^n) = n$. Summarizing, 
		\[
		\genus (W) = n, \quad 
		\genus \left( \gamma \left(  \ov{W \setminus \Sigma} \right) \right) \geq \genus \left( \ov{W \setminus \Sigma} \right) 
		\geq \genus \left( W \right) - \genus \left( \Sigma \right) \geq k > 0.
		\]
In particular, $ \gamma ( \ov{W \setminus \Sigma} ) \neq \emptyset$. 
Since $\gamma ( \ov{W \setminus \Sigma} ) \subset C \cap \partial B(0,\rho_0) $, 
we have $\emptyset \neq C \cap \partial B(0,\rho_0)$. 
	\end{proof}

	To prove \eqref{e:d_k-div}, we argue by contradiction and suppose that $\{d_k(1)\}_k$ is bounded. 
From the monotonicity of $d_k (\lambda)$ in $k$ and $\lambda$, it follows that 
	\[
	d_\infty(\lambda) \equiv \lim_{k \to \infty} d_k(\lambda) \in [\alpha_0 , \infty) \quad 
	\text{for every $\lambda \in [1 - \overline{\e}_1 , 1]$}. 
	\]
Notice that $\lambda \mapsto d_\infty (\lambda)$ is nondecreasing on $[1 - \overline{\e}_1 , 1]$. 

\medskip 	
	\noindent
	\textbf{Claim 1:} \emph{There exists $\wt{\lambda}_0 \in \left[ 1 - \overline{\e}_1, \ 1 - \frac{\overline{\e}_1}{2} \right]$ such that 
	\[  d_\infty' ( \wt{\lambda}_0 )  \ \textup{ exists and } \ 
		d_\infty' ( \wt{\lambda}_0 ) \leq 3 \overline{\e}_1^{-1} \nu_0, \quad 
		\text{where} \quad \nu_0 \equiv d_\infty( 1 ) - d_\infty( 1 - \overline{\e}_1 ) \in [0,\infty).
	\]
} 
\begin{proof}
Since $d_\infty$ is nondecreasing, we obtain 
	\[
	\int_{1 - \overline{\e}_1}^{1} d_\infty'(\lambda) \dd{\lambda}
	\leq d_\infty (1) - d_\infty (1 - \overline{\e}_1) = \nu_0. 
	\]
Write 
	\[
	J_0 \equiv \left\{ \lambda \in [1 - \overline{\e}_1, 1] \ | \ d_\infty'(\lambda) \ \textup{ exists and } \ d_\infty'(\lambda) \leq 3 \overline{\e}_1^{-1} \nu_0 \right\}. 
	\]
When $\nu_0 > 0$, since $0 \leq d_\infty'(\lambda)$ a.e.  in $[1-\overline{\e}_1,1]$ and 
	\[
	\nu_0 \geq \int_{ [1 - \overline{\e}_1,1] \setminus J_0 } d_\infty' (\lambda) \dd{\lambda} 
	\geq 3 \overline{\e}_1^{-1}  \nu_0 \left| [1 - \overline{\e}_1 , 1] \setminus J_0 \right|, 
	\]
we obtain the following estimation
	\begin{equation}\label{estJ0}
	|J_0| \geq \frac{2}{3} \overline{\e}_1. 
	\end{equation}
Since \eqref{estJ0} also holds when $\nu_0 = 0$, Claim 1 directly follows.
\end{proof}	

	Choose $h_0 \in (0,  \overline{\e}_1/2)$ so that for any $h \in [0,h_0]$, 
	\begin{equation}\label{ie:h_0}
		d_\infty( \wt{\lambda}_0 + h ) \leq d_\infty( \wt{\lambda}_0 ) + \left( d_\infty'(\wt{\lambda}_0) + 1 \right) h 
		\leq d_\infty ( \wt{\lambda}_0 ) + \left( 3 \overline{\e}_1^{-1} \nu_0 + 1 \right) h .
	\end{equation}
From Proposition \ref{IBtoIC} and $I_\lambda(0) = 0$, the set 
	\[
	\begin{aligned}
		\cK_0 \equiv 
		\left\{ u \in X \ \Big| \ \text{$u$ is a critical point of $I_{\wt{\lambda}_0}$}, \ \frac{\alpha_0}{2} \leq I_{\wt{\lambda}_0} \leq d_\infty(1) + 1 \right\} 
	\end{aligned}
	\]
is compact and symmetric with $0 \not \in \cK_0$, hence $\cK_0 \in \cE$. 
Let 
	\[
	s_\infty \equiv \genus \left( \cK_0 \right) \in \N.
	\]
Since $\cK_0$ is compact and $I_{ \wt{\lambda}_0 }$ is lower semicontinuous, 
we may find $r_\infty \in (0, 1 )$ so that 
	\begin{equation}\label{low-est}
	\ov{\cK_0 +B(0,3r_\infty)} \in \cE, \quad 
	s_\infty = \genus \left( \ov{\cK_0 + B(0,3r_\infty)} \right), \quad 
	\frac{\alpha_0}{4} < I_{\wt{\lambda_0}} |_{ \cK_0 + \overline{B(0,3r_\infty)} } 
	\leq  I_1|_{ \cK_0 + \overline{B(0,3r_\infty)} } .
	\end{equation}
	According to ($\Psi_2$), for each $M>0$ there exists $R(M) > 0$ so that 
	\[
	\left\{ u \in X \ | \ \Psi(u) \leq M \right\} \subset B(0, R(M) ). 
	\]
	In what follows, we shall write 
	\[
	\begin{aligned}
		O_{r_\infty} &\equiv \cK_0 + B(0,r_\infty), \quad O_{3r_\infty} \equiv  \cK_0 + B(0,3r_\infty), \quad M_0  \equiv \overline{\e}_1^{-1} \nu_0 + 1, \\
		A 
		&\equiv 
		\ov{ B \left( 0, R(12M_0) +  3r_\infty \right) } \setminus O_{r_\infty}, \quad 
		\cA \equiv A \cap \left\{ \frac{\alpha_0}{2} \leq I_{\wt{\lambda}_0} \leq d_\infty(1) + 1 \right\}.
	\end{aligned}
	\]
Notice that $A \subset X$ is bounded, closed and symmetric. 
Furthermore, from Proposition \ref{IBtoIC} and the definition of $O_{r_\infty}$,
there is no Palais-Smale sequence in $\cA$. Therefore, 
there exists $\delta_0 \in (0,1)$ such that for every $u \in  \cA$ we may find $v=v( u ) \in X \setminus \{u\}$ such that 
	\begin{equation}\label{derivative}
		\wt{\lambda}_0 \left( \Psi(v) - \Psi(u) \right) - \Phi'(u) (v-u) < - 5 \delta_0 \| v - u \|.
	\end{equation}
We next choose $h_1 > 0$ so that 
	\begin{equation}\label{e:h_1}
		0 < h_1 \leq h_0 < \frac{\overline{\e}_1}{2} , 
		\quad 12 \delta_0^{-1} M_0 h_1  < \min \left\{ r_\infty, \,  \frac{\alpha_0}{6}, \, \frac{1}{3} \right\} \equiv \sigma 
	\end{equation}
and then, we can find $k_\infty \in \N$ such that 
	\begin{equation}\label{e:k_infity}
		d_\infty( \wt{\lambda}_0 + h_1 ) - d_k( \wt{\lambda}_0 + h_1 ) \leq h_1, \quad 
		d_\infty( \wt{\lambda}_0 ) - d_k (\wt{\lambda}_0) \leq h_1  \qquad \textup{for } \ \ k \geq k_\infty.
	\end{equation}

\medskip 	
	\noindent
	\textbf{Claim 2:} \emph{For every $ \wt{\lambda}_0 \leq \lambda_1 \leq \lambda_2 \leq \wt{\lambda}_0 + h_1$ and 
		$k_\infty \leq k_1, k_2$, 
		\[
		d_{k_2} (\lambda_2) - d_{k_1} (\lambda_1) \leq 3h_1 M_0.
		\]
}
	\begin{proof}
		By \eqref{ie:h_0}, \eqref{e:h_1} and \eqref{e:k_infity} with the monotonicity of $d_k(\lambda)$ on $k$ and $\lambda$, 
		\[
		\begin{aligned}
			d_{k_2} (\lambda_2) - d_{k_1} (\lambda_1) 
			&\leq d_{k_2} (\wt{\lambda}_0 + h_1) - d_{k_1} (\wt{\lambda}_0) 
			\leq d_\infty (\wt{\lambda}_0 + h_1) - d_{k_\infty} (\wt{\lambda}_0) 
			\\
			&\leq d_\infty ( \wt{\lambda}_0 + h_1 ) - d_\infty (\wt{\lambda}_0) + h_1 			
			\leq \left( 3 \overline{\e}_1^{-1} \nu_0 + 1 \right) h_1 + h_1 
			\\
			& \leq 3 h_1 M_0.  \qedhere
		\end{aligned}
		\]
	\end{proof}

	Recalling \eqref{derivative}, we shall apply Lemma \ref{sydefolemma} with
	\[
		\begin{aligned}
			\sigma = \min \left\{  r_\infty ,  \frac{\alpha_0}{6}, \frac{1}{3} \right\}, \quad 
			\cA_{ d_{k_\infty} (\wt{\lambda}_0 ) ,3\sigma} \equiv A \cap 
			\left\{ d_{k_\infty} (\wt{\lambda}_0) - 3 \sigma \leq I_{\wt{\lambda}_0} \leq d_{k_\infty} (\wt{\lambda}_0) + 3\sigma \right\} 
			\subset \cA. 
		\end{aligned}
	\]
Therefore, 
for every relatively compact symmetric set $K \subset \cA_{d_{k_\infty}(\wt{\lambda}_0),3\sigma}$, we may construct an odd deformation satisfying (i)--(iii) in Lemma \ref{sydefolemma}. To find suitable sets $K$, in what follows we put 
	\[
		\mu \equiv \wt{\lambda}_0 + h_1. 
	\]
For the given integers $k_\infty$ in \eqref{e:k_infity} and $s_\infty$ in \eqref{low-est},  choose $C \in \Lambda_{k_\infty + s_\infty}$ so that 
	\[
	\sup_{C} I_\mu \leq d_{k_\infty + s_\infty} (\mu) + h_1. 
	\]
Consequently, Claim 2, \eqref{e:h_1} and the inequalities $\delta_0 < 1$, $r_\infty < 1$, $M_0 \geq 1$ guarantee that
	\begin{equation}\label{ie:B}
		\sup_{C} I_\mu \leq d_{k_\infty} (\wt{\lambda}_0) + (3M_0 + 1) h_1 \leq d_{k_\infty} (\wt{\lambda}_0) + 4M_0 h_1 
		< d_{k_\infty} (\wt{\lambda}_0) + \sigma. 
	\end{equation}
Moreover, if $u \in C$ satisfies
	\[
		d_{k_\infty} (\wt{\lambda}_0) - \sigma <  d_{k_\infty} (\wt{\lambda}_0) - 8 M_0 h_1 \leq I_{\wt{\lambda}_0} (u),
	\]
then \eqref{ie:B}  yields 
	\begin{equation}\label{bdd-Psi}
		\begin{aligned}
			\Psi(u) = 
			\frac{ I_\mu (u) - I_{\wt{\lambda}_0} (u) }{\mu - \wt{\lambda}_0} 
			&\leq 
			\frac{d_{k_\infty} (\wt{\lambda_0}) + 4M_0 h_1 - \left( d_{k_\infty} (\wt{\lambda}_0) - 8M_0 h_1 \right)  }{h_1}
			= 12M_0. 
		\end{aligned}
	\end{equation}
From \eqref{bdd-Psi} it follows that  
	\begin{equation}\label{e:B}
		C \cap \left\{ I_{\wt{\lambda}_0}  \ge d_{k_\infty} (\wt{\lambda}_0) - 8M_0 h_1  \right\}
		\subset \left\{ u \in X \ | \ \Psi(u) \leq 12M_0 \right\} 
		\subset B(0, R(12M_0) ) .
	\end{equation}
	Set 
	\[
	C_0 \equiv \ov{C \setminus \ov{O_{3r_\infty}}}. 
	\]
We infer from \eqref{low-est}, \eqref{ie:B}, \eqref{e:B} and Lemma \ref{p:prop-d_k} that 
	\[
	C_0 \in \Lambda_{k_\infty}, \quad 
	K_0 \equiv C_0 \cap \left\{ d_{k_\infty} (\wt{\lambda}_0) - 8M_0 h_1 \leq I_{\wt{\lambda}_0}   \right\}
	\subset \cA_{ d_{k_\infty} (\wt{\lambda}_0 ) , 3\sigma }. 
	\]
Furthermore, 
	\[
	\di_X \left( C_0 , O_{r_\infty} \right) \geq 2 r_\infty.
	\]
Since $C_0 \in \Lambda_{k_\infty},$ we can write $C_0$ as 
	\[
		C_0 = \gamma_0 \left( \overline{\DD^n \setminus \Sigma_0 } \right), \quad 
		n \geq k_\infty, \quad \gamma_0 \in \Gamma_{n}, \quad \Sigma_0 \in \cE_{\R^{n}}, \quad 
		\Sigma_0 \cap \partial \DD^n = \emptyset, \quad 
		\genus \left( \Sigma_0 \right) \leq n - k_\infty. 
	\]
Then $K_0$ is rewritten as 
	\[
		K_0 = \left\{ \gamma_0(\zeta) \ \big| \ \zeta \in \overline{\DD^n \setminus \Sigma_0}, \ 
		 \ I_{ \wt{\lambda}_0 } (\gamma_0(\zeta)) \geq d_{k_\infty} (\wt{\lambda}_0) - 8M_0 h_1  \right\}. 
	\]
From now on, we will repeatedly use the following inequality: 
	\begin{equation}\label{basic-1}
		-8 M_0 h_1 + \delta_0 T < 4M_0 h_1 - 2\delta_0 T \quad \text{for all $T \in [ 0, 4 \delta_0^{-1} M_0 h_1 )$}. 
	\end{equation}
Since $K_0$ is relatively compact and symmetric, we may apply Lemma \ref{sydefolemma} and 
find $s_0>0$, $\eta \in C([0,s_0] \times X, X)$ and symmetric open sets $W,\wt{W}$ such that 
	\begin{enumerate}
		\item[(a)] $\overline{K}_0 \subset W \subset \overline{W} \subset \wt{W}$. 
		\item [(b)]$\| \eta(s,w) - w\| \leq s$ and $\eta(s,-w) = - \eta(s,w)$ for every $(s,w) \in [0,s_0] \times X$. 
		\item [(c)]$I_{\wt{\lambda}_0} (\eta(s,w)) \leq I_{\wt{\lambda}_0} (w) + \delta_0s$ 
		for every $(s,w) \in [0,s_0] \times X$. 
		\item[(d)] $I_{\wt{\lambda}_0} (\eta(s,w)) \leq I_{\wt{\lambda}_0} (w) - 2 \delta_0 s$ 
		for every $(s,w) \in [0,s_0] \times W$ with $I_{\wt{\lambda}_0} (w) \geq d_{k_\infty} (\wt{\lambda}_0) - \sigma$. 
	\end{enumerate}
As in the proof of Theorem \ref{amps}, we consider a family of deformations of $K_0$: 
	\[
		\cD_0 \equiv \left\{ \left( \eta, \tau , W ,  \wt{W} \right) \ \Big| \ 
		\text{$\eta ,  W , \wt{W}$ satisfy (a)--(d) for  $s \in [0,\tau]$} \right\}
	\]
and set 
	\[
		\wt{T}_0 \equiv \sup_{ (\eta , \tau , W, \wt{W} ) \in \cD_0 } \tau \ge s_0,\quad 
		T_0 \equiv \min \left\{ \frac{3M_0h_1}{\delta_0}, \, \wt{T}_0 \right\} .
	\]
Choose $(\eta_0,\tau_0,W_0,\wt{W}_0) \in \cD_0$ so that 
	\[
		\frac{3}{4} T_0 < \tau_0 < T_0 \leq 3 \delta_0^{-1} M_0 h_1 < \frac{r_\infty}{4},
	\] 
where the last inequality comes from \eqref{e:h_1}. Since for small $\theta_{0}  > 0$ the dilation map
	\[
	L_{\theta_0} \ \ : \ \ 	\DD^n(1-\theta_0) \to \DD^n, \qquad L_{\theta_0} (\zeta) \equiv \frac{\zeta}{1-\theta_0}
	\]	
satisfies 
	\[
		\max_{ |\zeta| \leq 1 - \theta_0 } \| \gamma_0(\zeta) - \gamma_0( L_{\theta_0} (\zeta) ) \| + 
		\max_{ 1 - \theta_0 \leq |\zeta| \leq 1 } \| \gamma_0 (\zeta) - \gamma_0( \zeta/|\zeta| ) \| < \frac{\tau_0}{2},
	\]
we define 
	\[
		\gamma_1 (\zeta) \equiv \begin{dcases}
			\eta_0 \left( \tau_0 , \gamma_0 \left( L_{\theta_{0}} (\zeta) \right) \right) & \quad \text{if} \ |\zeta| \leq 1 - \theta_{0},
			\\[0.2cm]
			\eta_0 \left(  \tau_0 \frac{1-|\zeta|}{\theta_0}, \ \gamma_0 \left( \frac{\zeta}{\abs{\zeta}}  \right)  \right) 
			& \quad \text{if} \ 1 - \theta_{0} \leq |\zeta| \leq 1.
		\end{dcases}
	\]
Then, $\gamma_1 \in \Gamma_{n}.$ Since $\theta_{0}>0$ is small and $L_{\theta_{0}}$ is an odd homeomorphism, 
	\[
		\Sigma_1 \equiv L_{\theta_{0}}^{-1} ( \Sigma_0 )  \in \cE_{\R^{n}}, \quad \Sigma_1 \cap \partial \DD^n = \emptyset, \quad 
		\genus ( \Sigma_1 ) \leq n - k_\infty,
	\]
and moreover
	\[
	\max_{\zeta \in \DD^n} \| \gamma_1(\zeta) - \gamma_0 (\zeta) \| \leq \frac{3}{2} \tau_0. 
	\]

Define  
	\[
		C_1 \equiv \gamma_1 \left( \overline{\DD^n \setminus \Sigma_1} \right) \in \Lambda_{k_\infty}.
	\]
As in the proofs of Theorem \ref{amps}, 
it follows from \eqref{basic-1} and \eqref{ie:B} that 
	\[
		d_{k_\infty} (\wt{\lambda}_0) \leq \sup_{C_1} I_{\wt{\lambda}_0} 
		\leq d_{k_\infty} (\wt{\lambda}_0) + 4M_0 h_1 - 2\delta_0 \tau_0;
	\]this implies that \[\tau_0 \leq 2\delta_0^{-1} M_0 h_1< \frac{r_\infty}{6}. \]
We next claim 
	\begin{equation}\label{dist:C0C1}
		\dist_H (C_0, C_1) \leq \tau_0. 
	\end{equation}
Let $C_1 \ni u_1 = \gamma_1(\zeta)$, where $\zeta \in \ov{\DD^n \setminus \Sigma_1}$. If $1 - \theta_0 \leq |\zeta| \leq 1$, then by 
	\[
		\gamma_1 \left( \zeta/|\zeta| \right) = \gamma_0 \left( \zeta/|\zeta| \right) 
		= \pi_{0,n} \left( \zeta/|\zeta| \right) \in C_0 \cap C_1,
	\]
the choice of $\theta_0$ gives $\| u_1 - \gamma_0(\zeta/|\zeta|) \| \leq \tau_0$. 
When $|\zeta| \leq 1 - \theta_0$, we find 
	\[
		L_{\theta_0} (\zeta) \in \overline{\DD^n \setminus \Sigma_0}, \quad \gamma_0 (L_{\theta_0} (\zeta)) \in C_0, 
		\quad \| u_1 - \gamma_0(L_{\theta_0} (\zeta) ) \| \leq \tau_0.
	\]
Thus, $\di_X(C_0,u_1) \leq \tau_0$ and since $u_1 \in C_1$ is arbitrary, $\max_{u_1 \in C_1} \di_X(C_0,u_1) \leq \tau_0$.

	On the other hand, let $C_0 \ni u_0 = \gamma_0(\zeta)$ where $\zeta \in \overline{\DD^n \setminus \Sigma_0}$. 
From the property  
\[
L_{\theta_0}^{-1} (\zeta) \in \overline{ \DD^n(1-\theta_0)  \setminus \Sigma_1 }, 
\]
it follows that 
	\[
		\gamma_1 \left( L_{\theta_0}^{-1} (\zeta) \right) \in C_1, \quad 
		\left\| u_0 - \gamma_1 ( L_{\theta_0^{-1}} (\zeta) ) \right\| 
		= 
		\left\| \gamma_0(\zeta) - \eta_0 \left( \tau_0, \gamma_0(\zeta) \right) \right\| \leq \tau_0.
	\]
Hence, $\di_X(u_0, C_1) \leq \tau_0$ and $\max_{u_0 \in C_0} \di_X(u_0,C_1) \leq \tau_0$, 
which yields \eqref{dist:C0C1}.

	Furthermore, \eqref{dist:C0C1} and $\di_X(C_0,O_{r_\infty}) \geq 2 r_\infty$ imply the inequality 
	\[
		\di_X(C_1 , O_{r_\infty}) \geq 2r_\infty - \tau_0. 
	\]
We next consider the set 
	\[
			K_1 \equiv 
			\left\{ \gamma_1(\zeta) \ \big| \  \zeta \in \ov{\DD^n \setminus \Sigma_1}, 
			\ I_{\wt{\lambda}_0} (\gamma_1(\zeta)) \geq d_{k_\infty} (\wt{\lambda}_0) - 8M_0 h_1 + \delta_0 \tau_0  \right\}.
	\]
Remark that $K_1$ is symmetric and relatively compact. 
Moreover, by combining (c) with (ii) in Lemma \ref{p:prop-d_k} and with $\delta_0 \in (0,1)$, for $1-\theta_0 \le |\zeta| \le 1$ we have 
	\begin{align*}
	I_{\wt{\lambda}_0}(\gamma_1(\zeta)) & \le I_{\wt{\lambda}_0}(\gamma_0(\zeta/|\zeta|)) + \delta_0 \tau_0 < \delta_0 \tau_0 \\[0.2cm]
	& \le \alpha_0 - 8 M_0h_1 + \delta_0 \tau_0 \le d_{k_\infty}(\wt{\lambda}_0) - 8 M_0h_1 + \delta_0 \tau_0.
	\end{align*} 	
Hence, we obtain
	\[
		\gamma_1^{-1} (K_1) \subset (\gamma_0 \circ L_{\theta_0})^{-1} (K_0), \quad 
		\max_{ \zeta \in \DD^n(1-\theta_0) \cap \overline{\DD^n \setminus \Sigma_1} } \| \gamma_1(\zeta) - \gamma_0(L_{\theta_0} (\zeta) ) \| \leq \tau_0.
	\]
Since $K_0 \subset B(0,R(12M_0))$ holds due to \eqref{e:B}, 
	\[
		K_1 \subset B(0, R(12M_0) + \tau_0 ), \quad \di_X \left( K_1, O_{r_\infty} \right) \geq 2r_\infty - \tau_0, \quad 
		K_1 \subset \cA_{ d_{k_\infty} (\wt{\lambda}_0), 3 \sigma }  .
	\]
By Lemma \ref{sydefolemma}, there exist $s_1 > 0$, an odd map $\eta \in C([0,s_1] \times X, X)$ and symmetric open sets $W,\wt{W}$ of $K_1$ satisfying (a)--(d) with $K_1$ instead of $K_0$. 
Thus, define  
	\[
		\cD_1 \equiv \left\{ \left( \eta, \tau , W ,  \wt{W} \right) \ \Bigg| \ \text{ $\eta , W , \wt{W}$ satisfy (a)--(d)  for $s \in [0,\tau]$} \right\} 
\]
and 
	\[
		\wt{T}_1 \equiv \sup_{ (\eta, \tau, W, \wt{W}) \in \cD_1 } \tau  \ge s_1, \quad 
		T_1 \equiv \min \left\{ \frac{3M_0 h_1}{\delta_0} - \tau_0, \  \wt{T}_1  \right\} . 
	\]
Select $(\eta_1,\tau_1,W_1,\wt{W}_1) \in \cD_1$ and $0 < \theta_1 \ll 1$ such that 
	\[
		\frac{3}{4}T_1 < \tau_1 < T_1 \leq 3 \frac{M_0 h_1}{\delta_0} - \tau_0 < \frac{r_\infty}{4} - \tau_0
	\]
and 
	\[
		\max_{|\zeta| \leq 1 - \theta_1} \| \gamma_1(\zeta) - \gamma_1 ( L_{\theta_1} (\zeta) ) \| 
		+ \max_{1 - \theta_1 \leq |\zeta | \leq 1} \| \gamma_1(\zeta) - \gamma_1(\zeta/ |\zeta|) \|   < \frac{\tau_1}{2}, 
	\]
where 
	\[
		L_{\theta_1} (\zeta) \equiv \frac{\zeta}{1-\theta_1} : \DD^n(1-\theta_1) \to \DD^n. 
	\]
Define $\gamma_2$ by  
	\[
		\Gamma_n \ni \gamma_2 (\zeta) \equiv \begin{dcases}
			\eta_1 \left( \tau_1 , \gamma_1 \left( L_{\theta_1} (\zeta) \right) \right) & \quad \text{if} \ |\zeta| \leq 1 - \theta_1, \\[0.2cm]
			\eta_1 \left( \tau_1 \frac{1-|\zeta|}{\theta_1}, \ \gamma_1 \left( \zeta/|\zeta| \right) \right) 
			& \quad \text{if} \ 1 - \theta_1 \leq |\zeta| \leq 1.
		\end{dcases}
	\]
From  $\genus (\Sigma_1) \leq n - k_\infty$ it follows that 
	\[
		\Sigma_2 \equiv L_{\theta_1}^{-1} (\Sigma_1) \in \cE_{\R^n}, \quad 
		\Sigma_2 \cap \partial \DD^n = \emptyset, \quad \genus (\Sigma_2) \leq n - k_\infty. 
	\]
Writing 
	\[
		C_2 \equiv \gamma_2 \left( \overline{\DD^n \setminus \Sigma_2} \right) \in \Lambda_{k_\infty}, 
	\]
we may prove 
	\[
		d_{k_\infty} (\wt{\lambda}_0) \leq \sup_{C_2} I_{\wt{\lambda}_0} 
		\leq d_{k_\infty} (\wt{\lambda}_0) + 4M_0 h_1 - 2\delta_0 (\tau_0 + \tau_1),
	\]
which yields 
	\[
		\tau_0 + \tau_1 \leq \frac{2M_0h_1}{\delta_0}  < \frac{r_\infty}{6}. 
	\]
Furthermore, by an argument analogous to the one we just employed for $C_0$ and $C_1,$  we see that 
	\[
		\dist_H (C_1,C_2) \leq \tau_1 , \quad \di_X (C_2 , O_{r_\infty}) \geq 2 r_\infty - \left( \tau_0 + \tau_1 \right). 
	\]
We next define 
	\[
		K_2 \equiv 
		\left\{ \gamma_2(\zeta) \ \big| \ 
		\zeta \in \ov{\DD^n \setminus \Sigma_2}, \ I_{\wt{\lambda}_0} (\gamma_2(\zeta)) \geq d_{k_\infty} (\wt{\lambda}_0) - 8M_0 h_1 + \delta_0 (\tau_0 + \tau_1) \right\}
	\]
and notice that $K_2$ is symmetric, relatively compact, $\gamma_2^{-1} (K_2) \subset (\gamma_1 \circ L_{\theta_1})^{-1} (K_1)$ and 
	\[
		\begin{aligned}
			& \di_X(K_2,O_{r_\infty}) \geq 2r_\infty - \tau_0 -\tau_1, \quad 
			K_2 \subset B( 0, R(12M_0) + \tau_0 + \tau_1 ), \quad K_2 \subset \cA_{ d_{k_\infty} (\wt{\lambda}_0) ,3\sigma}.
		\end{aligned}
	\]
Inductively, we obtain $\{(\eta_j,\tau_j,W_j,\wt{W}_j)\}_{j=0}^\infty$, $\{C_j\}_{j=0}^\infty$, 
$\{ \Sigma_j\}_{j=0}^\infty$, $\{\gamma_j\}_{j=0}^\infty$, $\{K_j\}_{j=0}^\infty$ 
and a sequence $\{\cD_j\}_{j=0}^\infty$ of deformations of $K_j$ such that 
	\[
		\begin{aligned}
			&C_j = \gamma_j \qty( \DD^n \setminus \Sigma_j ) \in \Lambda_{k_\infty}, \quad 
			\genus \qty( \Sigma_j ) \leq n - k_\infty, \quad 
			d_{k_\infty} (\wt{\lambda}_0) \leq \sup_{C_j} I_{\wt{\lambda}_0} \leq d_{k_\infty} (\wt{\lambda}_0) +4M_0 h_1 -2 \delta_0 \sum_{\ell =0}^j \tau_\ell, \\
			&\dist_H (C_j, C_{j+1}) \leq \tau_j, \qquad \di_X( C_j , O_{r_\infty} ) \geq 2r_\infty - \sum_{\ell =0}^{j-1} \tau_\ell, \qquad \sum_{\ell =0}^{j} \tau_\ell  \leq \frac{2M_0h_1}{\delta_0} < \frac{r_\infty}{6}, \\
			& \wt{T}_j \equiv \sup_{ (\eta , \tau , W, \wt{W}) \in \cD_j } \tau, \qquad T_j \equiv \min \left\{ \wt{T}_j, \ \frac{3M_0h_1}{\delta_0} - \sum_{\ell =0}^{j-1} \tau_\ell \right\}, \qquad  \frac{3}{4}T_j \le \tau_j < T_j,
			\\
			& K_j \equiv \Set{ \gamma_j(\zeta) |
			\zeta \in \overline{\DD^n \setminus \Sigma_j}, \ 
			I_{\wt{\lambda}_0} ( \gamma_j(\zeta) ) \geq d_{k_\infty} (\wt{\lambda}_0) - 8M_0 h_1 + \delta_0 \sum_{\ell =0}^{j-1} \tau_\ell 
			 }, \\
			& K_j \subset B \left( 0, R(12M_0) + \sum_{\ell =0}^{j-1} \tau_\ell \right).
		\end{aligned}
	\]
In particular, 
	\[
		K_j \subset \cA_{ d_{k_\infty} (\wt{\lambda}_0) , 3 \sigma } \quad \text{for each $j \geq 0$}. 
	\]
Recalling that $(\scrS, \dist_{H})$ is a complete metric space and the above properties, we see that 
$\{C_j\}_{j=0}^\infty$ is a Cauchy sequence in $(\scrS, \dist_H)$. 
Therefore, there exists $C_\infty \in \scrS$ such that $\dist_{H} (C_j, C_\infty) \to 0$ as $j \to \infty$.

	As before, a set defined by 
	\[
		K_\infty \equiv \bigcup_{j=0}^\infty K_j \subset \cA_{ d_{k_\infty} (\wt{\lambda}_0) , 3 \sigma }
	\]
is relatively compact and Lemma \ref{sydefolemma} with $K \equiv  K_\infty$ guarantees the existence of  deformation $(\eta_\infty, \tau_\infty, W_\infty , \wt{W}_\infty)$ of $K_\infty$; then
$(\eta_\infty, \tau_\infty, W_\infty, \wt{W}_\infty) \in \cD_j$ for each $j$ large enough. Thus, 
	\[
		\tau_j \ge \frac{3}{4}\min\left\{ \tau_\infty, \frac{3M_0h_1}{\delta_0} - \sum_{\ell =0}^{\infty} \tau_\ell \right\} \ge \frac{3}{4}\min\left\{ \tau_\infty, \frac{M_0h_1}{\delta_0}\right\} \qquad \text{for large enough $j$},
	\]
which contradicts 
	\[
		\sum_{\ell=0}^\infty \tau_\ell \leq \frac{r_\infty}{6}. 
	\]
Hence, we complete the proof. 
\end{proof}

		\section*{Acknowledgements}
		J.B. was supported by the National Research Foundation of Korea(NRF) grant funded by the Korea government(MSIT)(No. NRF-2023R1A2C1005734).
		N.I. was supported by JSPS KAKENHI Grant Numbers JP 19H01797, 19K03590 and  24K06802. 
		A.M. has been supported by the project 
		``Geometric problems with loss of compactness'' 
		from Scuola Normale Superiore, and by the PRIN Project 2022AKNSE4.
		L. Mari is supported by the PRIN project no. 20225J97H5 ``Differential-geometric aspects of manifolds via Global Analysis"
	
	\vspace{0.4cm}
	
	\noindent \textbf{Conflict of Interest.} The authors have no conflict of interest.

%
%
%
%
%
%

\appendix

\section{The principle of symmetric criticality}
\label{s:PSC}

The principle of nonsmooth symmetric criticality due to Kobayashi \& \^Otani \cite{koba_ota} considers a vast range of linear actions of a topological groups $G$ on a Banach space $X$. In this appendix, we provide a simpler proof of the principle in a setting which applies to our main theorems. 
Compared to \cite[Theorem 3.16]{koba_ota}, notice that Proposition \ref{prop:sym-critical} does not require $X$ to be reflexive.

Let $X$ be a Banach space and $G$ a compact topological group acting on $X$ linearly.  
Assume that the map $G \times X \ni (g,u) \mapsto g\cdot u \in X$ is continuous, and consider the closed subspace 
	\[
		X_G \equiv \Set{ u \in X | g \cdot u = u \ \text{for all $g \in G$} }.
	\]
We consider two functionals $\Psi$ and $\Phi$ satisfying 
	\begin{enumerate}[(i)]
		\item $\Psi : X \to (-\infty, \infty]$ is convex and lower semicontinuous with $D(\Psi) \neq \emptyset$. 
		\item $\Phi \in C^1(X,\R)$. 
		\item $\Psi$ and $\Phi$ are $G$-invariant, that is $\Psi(g \cdot u) = \Psi(u)$ and $\Phi(g \cdot u) = \Phi(u)$ 
		for every $g \in G$ and $u \in X$. 
	\end{enumerate}
Finally, set $I(u) \equiv \Psi(u) - \Phi(u) : X \to (-\infty, \infty]$.

	\begin{prop}\label{prop:sym-critical}
		Assume $u \in X_G$ is a critical point of $I |_{X_G} : X_G \to (-\infty, \infty] $. 
		Then $u$ is a critical point of $I : X \to (-\infty,\infty]$. 
	\end{prop}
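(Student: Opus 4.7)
My plan is to prove the principle via an averaging argument using the Haar measure on the compact group $G$, which is the classical approach and fits naturally with the convex/lower-semicontinuous setting.

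Since $G$ is compact, let $\mu$ denote the (bi-invariant) normalized Haar measure on $G$. For each $v \in X$, the continuity of the action makes the map $g \mapsto g \cdot v$ continuous, hence its image is compact and so separable in $X$; thus the Bochner integral
\[
P(v) \equiv \int_G g \cdot v \, d\mu(g)
\]
is well defined and gives a bounded linear projection $P : X \to X_G$. Indeed, by left-invariance of $\mu$, for any $h \in G$,
\[
h \cdot P(v) = \int_G (hg) \cdot v \, d\mu(g) = \int_G g \cdot v \, d\mu(g) = P(v),
\]
so $P(v) \in X_G$; and if $u \in X_G$ then $P(u) = u$. In particular, $P(v) - u = P(v-u)$ whenever $u \in X_G$.

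Now fix a critical point $u \in X_G$ of $I|_{X_G}$ and an arbitrary $v \in X$; set $w \equiv P(v) \in X_G$. The plan is to compare the inequality for $v$ to the one already available for $w$. For the $\Phi$-part, the $G$-invariance of $\Phi$ together with $u \in X_G$ gives, for every $g \in G$,
\[
\Phi'(u)(g \cdot v) = \tfrac{d}{dt}\bigl|_{t=0} \Phi(u + t g\cdot v) = \tfrac{d}{dt}\bigl|_{t=0} \Phi(g^{-1}\cdot u + t v) = \Phi'(u)(v),
\]
so by linearity and continuity of $\Phi'(u)$ we get $\Phi'(u)(w) = \Phi'(u)(v)$, hence $\Phi'(u)(w-u) = \Phi'(u)(v-u)$. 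For the $\Psi$-part, I will use a Jensen-type inequality: since $\Psi$ is proper convex and lower semicontinuous, it equals the pointwise supremum of the continuous affine functions $\ell \leq \Psi$ (Hahn–Banach); for each such $\ell$, the $G$-invariance of $\Psi$ and linearity yield
\[
\ell(w) = \int_G \ell(g \cdot v)\, d\mu(g) \leq \int_G \Psi(g \cdot v)\, d\mu(g) = \Psi(v),
\]
and taking the supremum over $\ell$ gives $\Psi(w) \leq \Psi(v)$. Combining these two facts with the critical-point inequality $\Psi(w) - \Psi(u) - \Phi'(u)(w-u) \geq 0$, valid because $w \in X_G$, we conclude
\[
\Psi(v) - \Psi(u) - \Phi'(u)(v-u) \geq \Psi(w) - \Psi(u) - \Phi'(u)(w-u) \geq 0 \qquad \text{for all } v \in X,
\]
which is exactly the desired criticality of $u$ in all of $X$.

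The only nontrivial technical point I expect is the justification of Jensen's inequality in the lower-semicontinuous convex setting with a Bochner integral, but it is taken care of cleanly by the Hahn–Banach representation of $\Psi$ as a supremum of continuous affine minorants; all other steps (Bochner integrability from the continuity of the action and compactness of $G$; the $G$-invariance of $P(v)$; the identity $\Phi'(u)(g\cdot v) = \Phi'(u)(v)$ at a $G$-fixed point $u$) are direct consequences of the hypotheses.
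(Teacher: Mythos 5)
Your proof is correct and follows the same overall strategy as the paper's: both define a Haar average $\hat w = \int_G g\cdot w\,d\mu$ (you call it $P(v)$), show it lands in $X_G$, and reduce the criticality inequality for arbitrary $v \in X$ to the one already known on $X_G$. The single place where the arguments genuinely diverge is the inequality $\Psi(\hat w) \le \Psi(w)$. The paper establishes it by invoking Rudin's characterization of the weak integral as an element of the closed convex hull of the orbit $G\cdot w$, then approximating $\hat w$ in norm by finite convex combinations $\sum_k c_{j,k}\, g_{j,k}\cdot w$, applying convexity of $\Psi$ to those combinations (and convexity once more on the $\Phi'$-term), and finally passing to the limit using lower semicontinuity of $\Psi$ together with continuity of $\Phi'(u)$. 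You instead invoke the Fenchel--Moreau / Hahn--Banach representation of the proper convex lower-semicontinuous $\Psi$ as the supremum of its continuous affine minorants, commute these minorants with the Bochner integral, and deduce the Jensen-type inequality directly. Both routes are standard; yours is arguably shorter and more conceptual, but rests on the biconjugate theorem, whereas the paper's is more elementary, working only with convexity of finite sums and lower semicontinuity. Your treatment of the $\Phi$-term via $\Phi'(u)(g\cdot v) = \Phi'(u)(v)$ (valid because $u$ is a $G$-fixed point) is essentially the same identity the paper uses (there phrased as $\Phi'(g\cdot u)(g\cdot v) = \Phi'(u)(v)$), just pushed through the integral rather than through finite sums.
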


	\begin{proof}
Let $u \in X_G$ be a critical point of $I |_{X_G}$. Then 
	\begin{equation}\label{u-cri-FixG}
		\Psi(v) - \Psi(u) - \Phi'(u) (v-u) \geq 0 \qquad \text{for each $v \in X_G$}.
	\end{equation}
Fix any $w \in X$ and let $\mu$ be a normalized Haar measure on $G$ ($\mu(G) = 1$). 
Since $(g,u) \mapsto g \cdot u$ is continuous and $G$ is compact, we set (cf. Rudin \cite[Theorems 3.20 and 3.27, and Definition 3.26]{Ru91})
	\begin{equation}\label{ave-w}
		\hat w \equiv  \int_G g \cdot w \dd{\mu},
	\end{equation}
that is, $\hat w$ is the unique element satisfying 
	\[
		\Lambda (\hat w) = \int_G \Lambda (g \cdot w) \dd{\mu} \qquad \text{for every $\Lambda \in X^\ast$}.
	\]
We first show $\hat w \in X_G$. To this end, let $h \in G$ and write $T_h : X \ni u \mapsto h \cdot u \in X$. 
For each $\Lambda \in X^\ast$, it follows from $\Lambda \circ T_h \in X^\ast$ and the left invariance of $\mu$ that 
	\begin{align*}
	\Lambda (h \cdot \hat w) &= (\Lambda \circ T_h) (\hat w) 
		= \int_G \left( \Lambda \circ T_h \right) (g \cdot w) \dd{\mu} \\
		&= \int_G \Lambda \left( hg \cdot w \right) \dd{\mu} 
		= \int_G \Lambda (g \cdot w ) \dd{\mu}  = \Lambda (\hat w). 
	\end{align*}
Since $\Lambda \in X^\ast$ is arbitrary, $h \cdot \hat w = \hat w$ holds for each $h \in G$.

	We now prove that $u$ is a critical point of $I : X \to (-\infty,\infty]$. For $w \in X$, consider $\hat w$ in \eqref{ave-w}. 
From \cite[Theorem 3.27]{Ru91}, $\hat w$ belongs to the closure of the convex hull of the orbit $G \cdot w$. Therefore, we can take convex combinations
	\[
		w_j \equiv \sum_{k=1}^{L_j} c_{j,k}(g_{j,k} \cdot w), \quad c_{j,k} \geq 0, \quad \sum_{k=1}^{L_j} c_{j,k} = 1
	\]
such that $\| w_j - \hat w\|\to 0$. Since $\Phi \in C^1(X,\R)$ satisfies $\Phi(g\cdot u ) = \Phi(u)$ for every $g \in G$, 
it follows that 
	\[
		\Phi'(g\cdot u) (g \cdot v) = \Phi'(u) v \qquad \text{for any $g \in G$ and $v \in X$}.
	\]
By the convexity and the lower semicontinuity of $\Psi$, we infer from \eqref{u-cri-FixG} and $\hat w,u \in X_G$ (hence, $g_{j,k}\cdot  u = u$) that 
	\[
		\begin{aligned}
			0 \leq \Psi(\hat w) - \Psi(u) - \Phi'(u) (\hat w -u) 
			& \leq \liminf_{j \to \infty} \Psi(w_j) - \Psi(u) - \lim_{j \to \infty} \Phi'(u) (w_j - u)
			\\
			& \leq \liminf_{j \to \infty} \left[ \Psi(w_j) - \Psi(u) - \Phi'(u) (w_j - u) \right] 
			\\
			& \leq \liminf_{j \to \infty} 
			\sum_{k=1}^{L_j} c_{j,k} \left[ \Psi( g_{j,k} \cdot w ) - \Psi(u) - \Phi'(u) ( g_{j,k} \cdot w - u ) \right] 
			\\
			&= \liminf_{j \to \infty}
			\sum_{k=1}^{L_j} c_{j,k} \left[ \Psi( w ) - \Psi(u) - \Phi'( g_{j,k} \cdot u)(g_{j,k} \cdot w) + \Phi'(u) u ) \right] 
			\\
			&= \liminf_{j \to \infty} 
			\sum_{k=1}^{L_j} c_{j,k} \left[ \Psi( w ) - \Psi(u) - \Phi'( u) ( w - u ) \right] 
			\\
			&= \Psi(w) - \Psi(u) - \Phi'(u) (w - u). 
		\end{aligned}
	\]
This means that $u$ is a critical point of $I: X \to (-\infty,\infty]$ and the proof is completed. 
	\end{proof}

\section{Regularity for the Born-Infeld solution} \label{appe_1}

In this section, we prove Proposition \ref{prop_criticalpoints_1}.


\begin{proof} We first address the equivalence in item (i). 

\smallskip 
\noindent
\textbf{(a) $\Rightarrow$ (b).}

\smallskip 
Assume that $u$ is a weak solution to \eqref{eq:BIrho} (in the sense of Remark \ref{rem_defboundary} if $\partial \Omega$ is not smooth). 
Fix a domain $\Omega \Subset \R^n$ and a function $\psi \in \cX_u(\Omega)$. 
Then, $u-\psi \in W^{1,\infty}(\Omega)$ and has zero boundary value. By a zero extension of $u-\psi$ outside of $\Omega,$ we see that $u-\psi \in \lip_c(\R^n)$.
We then test $u-\psi$ to \eqref{eq:BIrho} to deduce
\[
\int_{\Omega} \frac{Du \cdot (Du-D\psi)}{\sqrt{1-|Du|^2}} = \int_{\Omega} \rho(u-\psi) . 
\]
Here,  due to the second in \eqref{eq_condi_energy},  the set of points where $u$ is differentiable and 
$|Du|< 1$ has full measure in $\Omega$.  Inequality \eqref{eq_cauchyschwarz} holds therein; thus we see that
\[
\int_{\Omega} \sqrt{1-|D\psi|^2} - \sqrt{1-|Du|^2} \le \int_{\Omega} \rho(u-\psi), 
\]
which implies $I_{\rho}^\Omega(u) \le I_\rho^\Omega(\psi)$.

\smallskip 
\noindent
\textbf{(b) $\Rightarrow$ (a) and $u \in W^{2,q}_\loc(\R^n)$ for each $q \in [2,\infty)$, $|Du|<1$ on $\R^n$.}

\smallskip

For a local minimizer  $u$ of  $I_\rho$, we define a Lorentzian distance $\ell : \R^n \to \R$ from $0$ by  
\[
\ell(x) \equiv \sqrt{|x|^2 - (u(x)-u(0))^2}.  
\]
Since $|Du| \le 1$, we have $\ell \ge 0$ on $\R^n$ and moreover, in view of \eqref{eq_growth_infty}, the Lorentzian balls
\[
L_R \equiv \Set{ x \in \R^n | \ell(x) < R }
\]
are relatively compact in $\R^n$. 
Note that $L_R$ contains the Euclidean ball $B_R(0)$, which for convenience we denote by $B_R$. 
Fix $R>0$, define $\Omega \equiv L_{4R}$ and 
let $\{\rho_j\}$ be a sequence of smooth functions converging to $\rho$ in $L^1(\Omega)$. 
Taking $t_j \in (0,1)$ with $\lim_{j \to \infty}t_j =1$,  
we consider the solution $u_j \in \cY_{t_ju}(\Omega)$ to
	\[
	\left\{ \begin{aligned}
		- \diver \left( \frac{D u_j}{\sqrt{1-|D u_j|^2}} \right) &= \rho_j& & \text{on } \, \Omega, \\
		u_j &= t_j u& & \text{on } \, \partial \Omega. 
	\end{aligned}\right.
	\]
Since $t_j u$ is spacelike on $\partial\Omega$, more precisely $|t_ju(y) - t_ju(x)|< |y-x|$ for each $x,y \in \partial \Omega$ with $x \neq y$, 
by \cite{BS82} there exists a solution $u_j$, which is smooth and $|Du_j|<1$ on $\Omega$. 
We claim that $u_j \to u$ strongly in $C(\overline{\Omega})$ and in $W^{1,q}(\Omega)$ for each $q \in [1,\infty)$. 
This can be shown by adapting an argument in \cite[Proposition 3.11]{bimm} in the following way. 
First, since $\{t_j u\}$ is relatively compact in $C(\partial \Omega)$, by \cite[Proposition 3.5]{bimm} 
$\{u_j\}$ is relatively compact in $C(\ov{\Omega})$. 
In particular, since $|Du_j| \le 1$ for each $j$, $\{u_j\}$ is bounded in $W^{1,q}(\Omega)$ for any $q \in [1,\infty]$. Then there exists $v \in \cY_u(\Omega)$ such that 
$u_j \to v$, up to a subsequence,  in $C(\overline\Omega)$ and weakly in $W^{1,q}(\Omega)$ for any $q \in (1,\infty)$. 
To show that $v \equiv u$, notice that from the second in Remark \ref{rem_useful_identities} we get 
	\begin{equation}\label{ineq:vu_k}
		\begin{aligned}
			\int_{\Omega} \left( 1 - \sqrt{1 - |Dv|^2} \right) 
			= 
			\sum_{k=1}^\infty b_k \norm{D v}_{2k,\Omega}^{2k} 
			&\leq \sum_{k=1}^\infty b_k \liminf_{j \to\infty} 
			\norm{ Du_{j} }_{2k,\Omega}^{2k} 
			\\
			&\leq \lim_{n \to \infty} \liminf_{j \to \infty} \sum_{k=1}^n b_k \norm{ Du_{j} }_{2k,\Omega}^{2k}
			\\
			&\leq \liminf_{j\to\infty} \int_{ \Omega } 
			\left( 1 - \sqrt{1 - |Du_{j}|^2 } \right)  .
		\end{aligned}
	\end{equation}
Moreover, since $u_j \to v$ in $C( \overline{\Omega} )$ and $\rho_j \to \rho$ in $L^1(\Omega)$, 
it holds $\int_\Omega \rho_j u_j  \to  \int_\Omega \rho v $ as $j \to \infty$. Thus we get
	\[
	I_\rho^\Omega(v) \le \liminf_{j \to \infty} I_{\rho_j}^\Omega(u_j).
	\]
On the other hand, $u_j \in \cY_{t_ju}(\Omega)$ minimizes $I_{\rho_j}$; thus $I^\Omega_{\rho_j}(u_j) \le I^\Omega_{\rho_j}(t_ju)$. A simple computation gives
	\[
	I^\Omega_{\rho_j}(t_ju) \to I^\Omega_\rho(u),
	\]
	and therefore $I^\Omega_\rho(v) \le I^\Omega_\rho(u)$. Since the reverse inequality holds by our assumption on $u$ and the minimizer for $I_\rho$ is unique, we conclude that $v = u$ in $\Omega$, as claimed. The argument to prove the strong convergence $u_j \to u$ in $W^{1,q}(\Omega)$ for $q \in [1,\infty)$ then follows verbatim that in \cite[Proposition 3.11]{bimm}, see the second half of p.32 therein.

		Next, we prove the regularity of $u$ on $B_R$. 
Because of \eqref{eq_growth_infty} and a direct comparison between the Euclidean and Lorentzian distances, 
there exists $\hat{R}$ depending on $R, c_1,h$ such that 
\begin{equation}\label{eq_compabolla}
\Omega \equiv L_{4R} \Subset B_{\hat R}.
\end{equation}
Indeed, in our assumption, $\abs{u(0)} \leq c_1 - h(0)$ and 
	\[
		\abs{u(x) - u(0)} \leq c_1 + \abs{u(0)} + \abs{x} - h(x) \leq \bar{c}_1 + \abs{x} - h(x),
	\] 
where $\bar{c} \equiv 2c_1 -h(0)$. If $x \in L_{4R}$, then by expanding
\[
|x|^2 \le 16R^2 + (u(x)-u(0))^2 \le 16 R^2 + ( \bar{c}_1 + |x| - h(x) )^2
\]
and rearranging we get
\[
2|x| (h(x) - \bar{c}_1) \leq 16R^2 + (\bar{c}_1- h(x))^2
\]
from which the bound $|x| \le \hat R$ follows and recalling our growth assumptions on $h$. By a simple argument by contradiction, we deduce from $u_j \to u$ in $C(\overline\Omega)$ that for all sufficiently large $j$, 
	\[
		 L^{\rho_j}_{3R} \equiv \Set{ x \in \Omega | \ell^{p_j} (x) \equiv \sqrt{\abs{x^2} - \qty( u_j(x) - u_j(0) )^2 } < 3R } \Subset \Omega. 
	\]
We apply \cite[Lemma 2.1]{BS82} and the reasoning in the proof of Theorem 4.1 therein, 
to deduce that there exists a constant $\delta > 0$ depending on the $L^\infty$ bounds for $u_j$ and $\rho_j$ on $L_{3R}^{\rho_j}$, which are thus uniformly bounded in terms of 
\[
	\hat R, \quad c_1, \quad h \quad \text{and} \quad \|\rho\|_{L^\infty(B_{\hat R})}, 
\]
such that
\[
|Du_j| < 1-\delta \qquad \text{on } \, L_{2R}^{\rho_j}.
\] 
Remark also that $\{u_j\}$ is bounded in $W^{2,2}(L_{2R})$. 
Since $B_{2R} \subset L_{2R}^{\rho_j}$, we get local uniform estimates for $|Du_j|$ and for the $L^2$ norm of $|D^2u_j|$ on $B_{2R}$. Thus, for any $\eta \in \Lip_c(B_{2R
})$, by letting $j \to \infty$ in 
	\[
		\int_{\Rn} \frac{Du_j \cdot D\eta}{\sqrt{1 - \abs{Du_j}^2}} = \int_{\Rn} \rho \eta,
	\]
the dominated convergence theorem implies that \eqref{soleq} holds.

	Finally, for any $q \in [2,\infty)$ we show the existence of $C_q = C_q(\hat R,c,q)$ such that $\|u\|_{W^{2,q}(B_R)} \le C_q$. To this goal, we first argue as in \cite[Step 5 in Section 5.2]{bimm} to obtain $Du \in C^{\alpha_R} (B_R(0))$ 
for some $\alpha_R \in (0,1)$. Next, from $u \in W^{2,2}_{\rm loc} (\Rn)$ and writing 
	\[
		a_{ij} (x) \equiv \frac{\delta_{ij}}{\sqrt{1 - \abs{Du(x)}^2}} + \frac{ D_iu (x) D_j u (x) }{\qty( 1 - \abs{Du(x)}^2 )^{3/2}} \in C(\Rn), 
	\]
we infer that $u$ is a strong solution to 
	\[
		- \sum_{i,j=1}^n a_{ij} (x) D_{ij} u(x) = \rho(x) \quad \text{in} \ \Rn. 
	\]
Since $\|\rho\|_{L^\infty(B_{2R})} \le \|\rho\|_{L^\infty(B_{\hat R})} \le c$, elliptic regularity yields the desired $W^{2,q}$ estimate in $B_R$, see for instance \cite[Chapter 9]{GiTr01}. This completes the proof of (i). 

\smallskip

(ii) Suppose $u \in L^\infty(\Rn)$ and $\rho \in L^\infty(\Rn)$ satisfy $\|\rho\|_\infty + \|u\|_\infty \le c$. By (i), $u \in C^1(\Rn)$ and $\abs{Du (x)} < 1$ in $\Rn$. 
We prove the assertion by contradiction and suppose that there exist $\{\rho_i\}_i , \{u_i\}_i \in L^\infty(\Rn)$, with $u_i$ a solution with source $\rho_i$, and points $\{x_i\}_i \subset \Rn$ such that
\[
\|\rho_i\|_\infty + \|u_i\|_\infty \le c, \qquad \abs{Du(x_i)} \to 1 \ \ \text{ as } \, i \to \infty.
\]
Define 
	\[
		\bar{u}_i(x) \equiv u_i(x+x_i), \qquad \bar{\rho}_i(x) \equiv \rho_i(x+x_i).
	\]
Then, $\bar{u}_i$ is a (strong) solution with source $\bar{\rho}_i$ on $\Rn$, hence a local minimizer of $I_{\bar{\rho}_i}$. Since $\bar u_i$ is $1$-Lipschitz, 
we may also assume that $\bar u_i \to \bar u_\infty$ in $C_\loc (\Rn)$, and $\bar u_\infty$ is also $1$-Lipschitz with $\|\bar{u}\|_\infty \le c$. The inequality $\norm{\bar{\rho}_i} + \norm{\bar{u}_i}_\infty \le c$ and \cite[Theorem 3.2]{BS82} imply that $\bar u_i$ has no light segments in the following quantitative sense: for each $r>0$, there exists $R=R(r) > 0$ such that for every $i \geq 1$, 
	\[
		L^{\bar{\rho}_i}_{r} \equiv \Set{ x \in \Rn | \ell^{\bar{\rho}_i} (x) \equiv \sqrt{ \abs{x}^2 - \qty( \bar{u}_i(x) - \bar{u}_i(0) )^2 } < r} \subset B_R. 
	\]
According to the monotonicity formula in \cite[Lemma 2.1]{BS82}, we may find $\alpha \in (0,1/n)$ and $C = C(n) >0$ such that 
	\[
		C \exp \qty( r^2 c^2 + 1  ) \qty( 1 - \abs{D\bar{u}_i(0)} )^{\frac{\alpha}{2}} 
		\geq  r^{-n} \int_{L_r^{\bar{\rho}_i}} \qty( 1 - \abs{D\bar{u}_i}^2 )^{\alpha + 1} + r^{2-n} 
		\int_{L^{\bar{\rho}_i}_r} \abs{D^2\bar{u}_i}^2 .
	\]
From $\abs{D\bar{u}_i (0) } \to 1$ and the fact that $r>0$ can be arbitrary, it follows that $\abs{D \bar{u}_\infty} = 1$ and $D^2 \bar{u}_\infty = 0$ a.e. in $\Rn$. 
Thus, $\bar{u}_\infty(x) = a \cdot x + b$ for some $a \in \Rn$, $b \in \R$ with $\abs{a} = 1$, 
however, this contradicts $\norm{\bar {u}_\infty}_\infty \leq c$ and concludes the proof.
\end{proof}

\end{document}